\newtheorem{thm}{Theorem}[section]
\newtheorem{prop}[thm]{Proposition}
\newtheorem{lem}[thm]{Lemma}
\newtheorem{cor}[thm]{Corollary}
\newtheorem{mthm}{Theorem}
\theoremstyle{definition}
\newtheorem{defn}[thm]{Definition}
\newtheorem{eqdefn}[thm]{Equivalent definition}
\newtheorem{example}[thm]{Example}
\newtheorem{rmk}[thm]{Remark}
\newcommand{\Afree}{\ensuremath{A_{\operatorname{fr}}}}
\newcommand{\op}[1]{\ensuremath{{#1}^{\operatorname{op}}}}
\newcommand{\opmap}{\ensuremath{\operatorname{op}}}
\newcommand{\PreFus}{\mathcal{P}}
\newcommand{\PreOrb}{\curs{Pre\text{-}Orb}}
\newcommand{\PreFix}{\curs{Pre\text{-}Fix}}
\author{K\'ari Ragnarsson}
\address{Department of Mathematics\\
         Depaul University\\ 
         2320 N.~Kenmore Avenue\\ 
         Chicago, IL 60614\\
         USA}
\author{Radu Stancu}
\address{CNRS UMR 6140 - LAMFA,
Universite de Picardie,
33, Rue Saint-Leu,
80039 Amiens CX 1,
France}
\title{Saturated fusion systems as idempotents in the double Burnside ring}
\begin{document}

\date{October 30, 2009}

\begin{abstract}
We give a new, unexpected characterization of saturated fusion systems on a $p$-group $S$ in terms of idempotents in the $p$-local double Burnside ring of $S$ that satisfy a Frobenius reciprocity relation, and reformulate fusion-theoretic phenomena in the language of idempotents. Interpreting our results in stable homotopy, we answer a long-standing question on stable splittings of classifying spaces of finite groups, and generalize the Adams--Wilkerson criterion for recognizing rings of invariants in the cohomology of an elementary abelian $p$-group.  This work is partly motivated by a conjecture of Haynes Miller which proposes retractive transfer triples as a purely homotopy-theoretic model for $p$-local finite groups.  We take an important step toward proving this conjecture by showing that a retractive transfer triple gives rise to a $p$-local finite group when two technical assumptions are made, thus reducing the conjecture to proving those two assumptions.
\end{abstract}

\subjclass[2010]{Primary 20D20; Secondary 19A22, 55R35, 55P42}
\keywords{Fusion systems, Burnside ring, finite groups, classifying spaces}

\maketitle

\section{Introduction}

Fusion systems are an abstract model for the $p$-local structure of a finite group. To a finite group $G$ with Sylow $p$-subgroup $S$ one associates the category $\Ff_S(G)$ whose objects are the subgroups of $S$, and whose morphisms are the group homomorphisms induced by conjugation in $G$ and inclusion. Alperin--Brou\'e showed in \cite{AB} that a similar structure arises when one considers the $G$-conjugation among Brauer pairs in a block of defect $S$ in the group algebra of $G$ in characteristic $p$, and this prompted Puig to give an axiomatic definition for an abstract fusion system. More precisely, a fusion system on a finite $p$-group $S$ is a category $\Ff$ whose objects are the subgroups of $S$, and whose morphism sets model a system of conjugations among subgroups of $S$ induced by the inclusion of $S$ in an ambient object, without reference to the ambient object. (In particular every morphism is a group monomorphism and $\F$ contains all morphisms induced by conjugation in $S$.) 

Among fusion systems, the important ones are the \emph{saturated} fusion systems. Informally, a saturated fusion system on $S$ models a ``Sylow inclusion'' of $S$ in an ambient object. Formally, a fusion system is saturated if its morphism sets satisfy two axioms that mimic the Sylow theorems. One is a ``prime to $p$'' axiom, corresponding to the index of a Sylow subgroup being prime to $p$; and the other is a ``maximal extension'' axiom, replacing the result that a Sylow subgroup contains all $p$-subgroups up to conjugacy. Saturated fusion systems are now widely studied. In modular representation theory they are considered a helpful venue in which to reformulate and approach the Alperin weight conjecture \cite{markus:AlperinWeight}.
Building on work of Martino--Priddy, Broto--Levi--Oliver popularized saturated fusion systems among homotopy theorists as a model for studying the $p$-completed classifying space of a finite group. Lately saturated fusion system have been embraced by group theorists as a possible framework for one of the masterpieces of modern mathematics: the classification of finite simple groups. Recent work of Aschbacher in \cite{aschbacher:NormalSubsystems}, \cite{aschbacher:S3Free}, \cite{aschbacher:Characteristic2Type} and \cite{aschbacher:GenFittingSubsystem}, and Aschbacher--Chermak \cite{aschbacher-chermak:GroupApproachToSol} transports deep group theoretic tools into the world of fusion systems.

Saturated fusion systems were first defined by Puig, who originally called them full Frobenius systems. His definition was simplified by Broto--Levi--Oliver in \cite{BLO2}, and we follow their conventions and terminology. Further simplifications of the saturation axioms were made by the second author and Kessar in \cite{kessarstancu}, and by Roberts and Shpectorov in \cite{RobertsShpectorov}. These simplified (but equivalent) definitions are all of a similar nature: each consists of a prime to $p$ axiom and an extension axiom on the morphism sets, with one or both axioms being weaker than in the Broto--Levi--Oliver definition. In this paper we give a substantially different characterization of saturated fusion systems. Instead of axioms on morphism sets, we formulate the saturation property for a fusion system on $S$ in the double Burnside ring $A(S,S)$ of left-free $(S,S)$-bisets. 

The idea of relating fusion systems to bisets originates from Linckelmann--Webb. Looking at the $\Fp$-cohomology of fusion systems, they realized that in the case of a Sylow inclusion $S \leq G$, the $(S,S)$-biset $G$ plays a special role. Cohomology is a Mackey functor, and so an $(S,S)$-biset induces a map $H^*(BS;\Fp) \to H^*(BS;\Fp)$. The map induced by the $(S,S)$-biset $G$ is idempotent up to scalar with image isomorphic to $H^*(BG;\Fp)$. Linckelmann--Webb synthesized the properties of the $(S,S)$-biset $G$ and found the appropriate replacement $\Omega$ for an abstract fusion system $\F$ on $S$ that allows one to think of the cohomology of $\F$ as the image of the map  $H^*(BS;\Fp) \to H^*(BS;\Fp)$ induced by $\Omega$. To this end they defined a \emph{characteristic biset} $\Omega$ for $\F$ to be an $(S,S)$-biset with augmentation $|S\backslash \Omega|$ prime to $p$ that is $\F$-stable in the sense that restricting either $S$-action to a subgroup $P$ via a morphism in $\F$ yields a result isomorphic to restriction along the inclusion $P \hookrightarrow S$, and also satisfies an additional condition relating the irreducible components of $\Omega$ to $\F$ (see Section \ref{sec:Char} for a precise definition). More generally a \emph{characteristic element} for $\F$ is an element in the $p$-localized double Burnside ring $A(S,S)\pLoc$ that has the Linckelmann--Webb properties. 

The existence of characteristic elements for saturated fusion systems was established by Broto--Levi--Oliver in \cite{BLO2}. Characteristic elements are by no means unique. Indeed, a saturated fusion system $\F$ has infinitely many of them. However, the first author showed in \cite{KR:ClSpectra} that among characteristic elements there is exactly one that is idempotent in the $p$-local double Burnside ring, and we refer to this as the \emph{characteristic idempotent} of $\F$. Another result from \cite{KR:ClSpectra} shows that a fusion system can be reconstructed as the stabilizer fusion system of any characteristic element $\Omega$, meaning the largest fusion system $\F$ with respect to which $\Omega$ is $\F$-stable. Thus a characteristic element contains exactly the same information as its fusion system. In light of this, it is natural to ask whether non-saturated fusion systems might admit characteristic elements (and hence idempotents). Our first main result answers this question and also provides the first novel characterization of saturated fusion systems.

\begin{mthm}[\cite{puig:book}]\label{mthm:CharImpliesSat}
A fusion system is saturated if and only it has a characteristic element.
\end{mthm}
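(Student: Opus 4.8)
The plan is to treat the two implications separately. The forward implication --- that a saturated fusion system admits a characteristic element --- is immediate from the characteristic biset constructed by Broto--Levi--Oliver in \cite{BLO2}, since a characteristic biset is in particular a characteristic element. The substance lies entirely in the converse (due to Puig \cite{puig:book}), which the plan is to reprove through the double Burnside ring. So assume $\F$ is a fusion system on $S$ admitting a characteristic element $X\in A(S,S)\pLoc$, and let us verify the saturation axioms for $\F$.

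The first step is to replace $X$ by the characteristic idempotent $\omega$. The construction of \cite{KR:ClSpectra} produces from $X$ an idempotent $\omega$, with $\omega\circ\omega=\omega$ and augmentation $\epsilon(\omega)=1$, that is again characteristic for $\F$; since that construction uses only the $\F$-stability of $X$ and the invertibility of its augmentation, it applies here verbatim. By the stabilizer theorem of \cite{KR:ClSpectra}, $\F$ is precisely the stabilizer fusion system of $\omega$: concretely, the transitive $(S,S)$-bisets $[Q,\varphi]$ occurring with nonzero multiplicity in $\omega$ are exactly those with $\varphi\in\operatorname{Hom}_\F(Q,S)$, so $\omega$ both detects and is determined by the fusion of $\F$.

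The next step is to turn this algebraic data into combinatorics. I would apply the mark homomorphisms $\phi_{(Q,\psi)}\colon A(S,S)\pLoc\to\mathbb{Z}\pLoc$, $\phi_{(Q,\psi)}(Z)=|Z^{\Delta_\psi Q}|$ (fixed points under the graph $\Delta_\psi Q=\{(q,\psi(q)):q\in Q\}$ in $S\times S$), to the relations that characterize an $\F$-characteristic idempotent: $\omega\circ\omega=\omega$, the left and right $\F$-stability relations $[Q,\varphi]\circ\omega=[Q,\iota]\circ\omega$ and $\omega\circ[Q,\varphi]=\omega\circ[Q,\iota]$ for $\varphi\in\operatorname{Hom}_\F(Q,S)$ (with $\iota$ the inclusion), and $\epsilon(\omega)=1$. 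Using the double-coset formula that expresses $\phi_{(Q,\psi)}(Z\circ Z')$ bilinearly in the marks of $Z$ and of $Z'$, these become a system of congruences and counting identities for the multiplicities $c_{(Q,\varphi)}$ of $\omega$. The consequences I would extract are: (i) $c_{(Q,\varphi)}$ depends only on the $\operatorname{Aut}_\F(Q)$--$\operatorname{Aut}_\F(\varphi Q)$ double coset of $\varphi$; and (ii) when $Q$ is fully normalized in $\F$ the leading multiplicity $c_{(Q,\iota)}$ is a $p$-local unit, while for $Q$ not fully normalized the analogous count is divisible by $p$.

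Finally I would verify the saturation axioms for a fully normalized subgroup $P\le S$. The extension (receptivity) axiom I expect to be the softer of the two: given $\varphi\in\operatorname{Hom}_\F(Q,S)$ with $\varphi Q$ fully centralized, comparing --- via the right $\F$-stability of $\omega$ --- the constituents of $\omega\circ[Q,\varphi]$ and of $\omega\circ[Q,\iota]$ supported over $N_\varphi$ should force $\varphi$ to extend to some $\bar\varphi\in\operatorname{Hom}_\F(N_\varphi,S)$, since $\omega$ cannot register a conjugation that does not genuinely extend. The main obstacle is the Sylow axiom --- that $\operatorname{Aut}_S(P)$ is a Sylow $p$-subgroup of $\operatorname{Aut}_\F(P)$ for $P$ fully normalized, together with full centralization of $P$. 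For this I would run a $p$-adic valuation argument: compute $\phi_{(P,\iota)}(\omega)=\phi_{(P,\iota)}(\omega\circ\omega)$ by the double-coset formula, isolate the term indexed by $\iota$ itself, and use idempotency together with $\epsilon(\omega)=1$ to pin down its $p$-adic valuation; the contribution of the remaining terms is governed by the index $[\operatorname{Aut}_\F(P):\operatorname{Aut}_S(P)]$, so matching valuations forces that index to be prime to $p$. Full centralization of $P$ falls out of the same computation by instead tracking the constituent indexed by $P\cdot C_S(P)$, or can be bootstrapped from the extension axiom in the usual way. Assembling (i), (ii) and these two verifications --- and invoking the simplified saturation criteria of \cite{kessarstancu} or \cite{RobertsShpectorov}, so that it suffices to check the axioms for fully normalized subgroups --- then shows that $\F$ is saturated. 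The delicate point throughout, and the step I expect to demand the most care, is keeping the bookkeeping in the double-coset formula tight enough that these $p$-adic valuations can be compared term by term.
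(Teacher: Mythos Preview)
Your overall instinct---use fixed-point (mark) homomorphisms to squeeze $p$-adic congruences out of a characteristic element---is exactly right, and the forward implication via \cite{BLO2} is of course what the paper does. But your route to the converse diverges from the paper's in two significant ways, and one of them is a genuine gap.

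First, the detour through the characteristic idempotent is unnecessary. The paper never replaces the given characteristic element by an idempotent; all the congruences needed are derived for \emph{any} left or right characteristic element $\Omega$. The central tool is not the relation $\omega\circ\omega=\omega$ but rather the elementary congruence $|(S\backslash X)^P|\equiv |S\backslash X|\pmod p$ for a biset $X$, which after a bit of bookkeeping (Lemma~\ref{lem:Cong}) yields
\[
  \sum_{[\varphi]\in\Rep_\PreFus(P,S)} \frac{\Phi_{\langle P,\varphi\rangle}(\Omega)}{|C_S(\varphi(P))|}\ \equiv\ \countS(\Omega)\pmod p,
\]
and a companion congruence with $|N_S(Q)|$ in the denominator. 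Left $\F$-stability of $\Omega$ then forces all the numerators $\Phi_{\langle P,\varphi\rangle}(\Omega)$ for $\varphi\in\Hom_\F(P,S)$ to be equal (Lemma~\ref{lem:RephraseStable}), and Lemma~\ref{lem:CongSpecial} reads off directly that $\varphi(P)$ is fully centralized (resp.\ fully normalized) exactly when the corresponding quotient is a unit mod $p$. From there Proposition~\ref{prop:SatAtP} verifies both saturation axioms at each $P$: the Sylow axiom drops out of the normalizer congruence by factoring $|N_S(\varphi(P))|=|\Aut_S(\varphi(P))|\cdot|C_S(\varphi(P))|$, and the extension axiom is obtained by locating a constituent $[Q,\psi]$ with $|N_{\varphi,\psi}|/|Q|$ prime to $p$ and extracting an extension from a fixed orbit of $Q\backslash N_{\varphi,\psi}$.

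Second---and this is the gap---your proposed Sylow argument via expanding $\Phi_{\langle P,\iota\rangle}(\omega\circ\omega)$ by the double coset formula does not obviously work. The double coset decomposition of $[R,\psi]\circ[Q,\chi]$ is indexed by $R\backslash S/\chi(Q)$, not by anything naturally parametrized by $\Aut_\F(P)/\Aut_S(P)$, so the assertion that ``the contribution of the remaining terms is governed by the index $[\Aut_\F(P):\Aut_S(P)]$'' is exactly the step that needs an argument, and I do not see one. The paper sidesteps this entirely: the index $[\Aut_\F(P):\Aut_S(P)]$ enters not through composition but through the normalizer congruence in Lemma~\ref{lem:Cong}(b), which counts morphisms with a prescribed \emph{image} rather than decomposing a product. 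So your plan is missing the key lemma, and the substitute you propose (idempotency plus double cosets) would require substantially more work to make precise, if it can be made to work at all.
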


This theorem was also proved by Puig in \cite{puig:book}. We attribute the result to Puig, but also present our independently discovered proof of Theorem \ref{mthm:CharImpliesSat} in Section \ref{sec:CharImpliesSat} as it is an easy consequence of the lemmas need to prove the more important results that follow. We are grateful to Serge Bouc for pointing out the connection.

Since the characteristic idempotent of a saturated fusion system is unique, Theorem \ref{mthm:CharImpliesSat} gives a bijection between saturated fusion systems and their characteristic idempotents. This interplay is interesting and useful, and raises the question of which fusion-related phenomena can be reformulated in terms of characteristic idempotents, which we address in Section \ref{sec:Reformulate}. However, since the Linckelmann--Webb properties of a characteristic element are defined in terms of the fusion system for which it is characteristic, we need to have an intrinsic criterion for recognizing characteristic elements, without referring to their fusion systems. This is the subject of our next main result.

For $(S,S)$-bisets $X$ and $Y$, let $(X \times Y)_\Delta$ be the biset with $(S\times S)$ acting coordinatewise on the left, and $S$ acting on the right via the diagonal. We say that $X$ satisfies \emph{Frobenius reciprocity} if there is an isomorphism of bisets
\begin{equation} \label{eq:FrobeniusIntro}
 (X \times X)_\Delta \cong (X \times 1)_\Delta \circ X,
\end{equation}
where $- \circ - = - \times_S -$ is the multiplication in the double Burnside ring and $1 = S$ is the unit. More generally, an element in $A(S,S)\pLoc$ satisfies Frobenius reciprocity if it satisfies the obvious linearized version of \eqref{eq:FrobeniusIntro}.  Frobenius reciprocity is discussed in more detail in Section \ref{sec:FrobImpliesSat}, and the connection to classical Frobenius reciprocity in group cohomology is explained in \ref{subsec:Translate}.

Our second main result shows that ---amazingly--- Frobenius reciprocity is equivalent to saturation. To state the theorem we need some technical conditions. An $(S,S)$-biset $X$ is \emph{bifree} if both the left and right $S$-actions are free, and a general element in $A(S,S)\pLoc$ is bifree if it is in the subring generated by bifree bisets. Considering only bifree elements imposes no real restriction since characteristic elements are always bifree. A \emph{right-characteristic element} for a fusion system $\F$ is one that is stable under restricting the right $S$-action along morphisms in $\F$, but not necessarily the left action. The \emph{right-stabilizer fusion system} of a right-characteristic element recovers the fusion system.

\begin{mthm}\label{mthm:FrobImpliesSat}
Let $S$ be a finite $p$-group. A  bifree element in $A(S,S)\pLoc$ with augmentation not divisible by $p$ is a right characteristic element for a saturated fusion system on $S$ if and only it satisfies Frobenius reciprocity. 
\end{mthm}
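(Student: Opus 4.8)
The plan is to work throughout with the Goursat description of bifree elements: every bifree $X\in A(S,S)\pLoc$ is a $\mathbb{Z}_{(p)}$-combination $X=\sum a_{(P,\varphi)}\,[P,\varphi]$ of transitive bifree bisets $[P,\varphi]=(S\times S)/\{(\varphi(u),u):u\in P\}$, indexed by $S\times S$-conjugacy classes of pairs $(P\le S,\ \varphi\colon P\hookrightarrow S)$, with composition $[P,\varphi]\circ[Q,\psi]$ given by the double coset formula (a sum over $\varphi(P)\backslash S/Q$ of transitive bisets whose associated homomorphism is a composite of a restriction of $\varphi$ with a restriction of $\psi$). The augmentation restricts on bifree elements to the ring homomorphism $[P,\varphi]\mapsto[S:P]$, so ``$\epsilon(X)$ prime to $p$'' says exactly that $\sum_{\varphi}a_{(S,\varphi)}\not\equiv0\pmod p$; since only $P=S$ constituents survive mod $p$, this both forces some $[S,\varphi]$ to occur and puts a congruence on the ``top'' constituents. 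I will also use the facts recalled in the introduction: saturated fusion systems have characteristic elements and a distinguished characteristic idempotent $\omega_\F$ (which acts as a two-sided unit on characteristic elements), and every (right-)characteristic element recovers its fusion system as its (right-)stabilizer.

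For the forward implication, let $X$ be right-characteristic for a saturated $\F$. Both $(X\times X)_\Delta$ and $(X\times 1)_\Delta\circ X$ are bifree $(S\times S,S)$-bisets, and I would show they are isomorphic by matching transitive constituents one $(S\times S)$-conjugacy class at a time. The left side expands at once; the right side expands, by the double coset formula, into constituents indexed by a constituent $[P,\varphi]$ of $X$, a constituent $[Q,\psi]$ of $X$, and a double coset, each carrying a ``straightened'' composite homomorphism. Right $\F$-stability of $X$ lets me rewrite restrictions of $\psi$ along $\F$-morphisms as restrictions along inclusions, and the saturation axioms --- concretely, the extension of $\F$-isomorphisms between fully normalized subgroups --- then identify each straightened composite on the right with the constituent it must equal on the left. (Alternatively one can first reduce to $\omega_\F$ via a ``mixed'' Frobenius identity $(X\times Y)_\Delta\cong(X\times1)_\Delta\circ Y$ valid for two right-characteristic elements of the same $\F$.)

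For the converse, let $X$ be bifree with $\epsilon(X)$ prime to $p$ and satisfying Frobenius reciprocity, and let $\F:=\F_X$ be its right-stabilizer fusion system, so that $X$ is right $\F$-stable by construction. It remains to check that $X$ is a genuine right-characteristic element for $\F$ --- namely (i) every constituent $[P,\varphi]$ of $X$ has $\varphi\in\operatorname{Hom}_\F(P,S)$ and (ii) $[S,\operatorname{id}]$ occurs in $X$ --- and then (iii) that $\F$ is saturated. For (i), Frobenius reciprocity does its first real work: the constituent-by-constituent comparison of $(X\times X)_\Delta$ with $(X\times1)_\Delta\circ X$ shows the set of homomorphisms occurring in $X$ is closed under exactly the composition and restriction operations that appear in the straightened composites, and this closure is what is needed to see $X$ is already right-stable along each such $\varphi$, i.e. $\varphi\in\F_X$. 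For (ii), by the augmentation observation some $[S,\varphi]$ occurs; by (i) it has $\varphi\in\operatorname{Aut}_\F(S)$; and right $\F$-stability makes the multiplicity of $[S,\alpha]$ depend only on the class of $\alpha$ in $\operatorname{Out}(S)$ modulo $\operatorname{Out}_\F(S)$, so $[S,\operatorname{id}]$ occurs too (and, by the congruence, with multiplicity prime to $p$).

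The crux is step (iii), that $\F=\F_X$ is saturated, which I would attack by verifying a simplified saturation criterion --- a ``Sylow at the top'' condition plus a single maximal-extension axiom --- in biset language. The Sylow conditions I expect to extract from augmentation: at $S$, the count above shows the number of $P=S$ constituents of $X$ (with multiplicity) is $\equiv\epsilon(X)\pmod p$ and equals $|\operatorname{Out}_\F(S)|$ times a genuine multiplicity, forcing both to be prime to $p$ and hence $\operatorname{Aut}_S(S)\in\operatorname{Syl}_p(\operatorname{Aut}_\F(S))$; the statements at smaller subgroups come from the same kind of count applied to the constituents of $X$ that ``see'' those subgroups, together with $\F$-stability. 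The extension axiom is where Frobenius reciprocity is indispensable: fixing $\varphi\in\operatorname{Hom}_\F(P,S)$ and an ``overgroup pattern'', the left side $(X\times X)_\Delta$ records $\varphi$ paired with an $S$-conjugation, while the right side $(X\times1)_\Delta\circ X$, built entirely from composites, can match that constituent only if $\varphi$ extends across the pattern to a strictly larger subgroup --- precisely the maximal-extension statement. Carrying this out requires isolating, among the many constituents on each side, exactly those that encode a given instance of the axiom, and controlling the $\mathbb{Z}_{(p)}$-coefficients so that cancellation does not destroy the witnessing extension; this double-coset bookkeeping --- together with choosing the most convenient form of the saturation axioms to target --- is the main obstacle I anticipate. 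Once $\F_X$ is saturated, Theorem~\ref{mthm:CharImpliesSat} (or the direct construction of $\omega_{\F_X}$) confirms $X$ is a right-characteristic element for it, completing the proof.
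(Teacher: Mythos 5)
Your plan diverges from the paper's at the crux, and the divergence opens a genuine gap. The paper does not analyze $X$ through its standard-basis coefficients $c_{\langle P,\varphi\rangle}(X)$ and double-coset expansions of $(X\times X)_\Delta$ versus $(X\times 1)_\Delta\circ X$; it works entirely with the fixed-point homomorphisms $\Phi_{\langle P,\varphi\rangle}$. The reason is exactly the obstacle you flag but do not resolve. Frobenius reciprocity, read on fixed points (Lemmas~\ref{lem:ProductsFixPts} and \ref{lem:FrobFixPts}), becomes the single multiplicative relation $\Phi_{\langle P,\varphi\rangle}(X)\cdot\Phi_{\langle P,\psi\rangle}(X)=\Phi_{\langle P,\varphi\rangle}(X)\cdot\Phi_{\langle\varphi(P),\psi\circ\varphi^{-1}\rangle}(X)$, with no double-coset sums and hence nothing to cancel in $\Z\pLoc$: whenever $\Phi_{\langle P,\varphi\rangle}(X)\neq 0$ you simply divide it out. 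The object the paper then proves saturated is the fixed-point pre-fusion system $\PreFix(X)$ defined by nonvanishing of $\Phi$, not the right stabilizer $\FusRStab(X)$ (they turn out equal, but the argument builds $\PreFix(X)$ up directly). Your proposal instead wants to read off closure under composition and restriction, and then the extension axiom, from constituent-by-constituent comparison of basis decompositions. At that level the Frobenius identity compares two $\Z\pLoc$-linear combinations indexed over double cosets, so equal constituents can arise with cancelling signs; controlling that cancellation is precisely the problem you name as ``the main obstacle I anticipate,'' and you offer no mechanism to tame it. This is not a bookkeeping nuisance the plan can defer: the paper's machinery (the congruences of Lemma~\ref{lem:Cong}, the index-$p$ extension congruence of Lemma~\ref{lem:CongExt}, and the cancellation trick of Lemma~\ref{lem:FrobFixPts}) exists to sidestep exactly this, and without a substitute your steps (i) and (iii) do not go through.

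Two smaller points. First, your ``(ii): $[S,\id]$ occurs in $X$'' is not one of the Linckelmann--Webb conditions --- a right characteristic element need not have $[S,\id]$ as a constituent (only the augmentation must be prime to $p$); so this is a spurious target, though harmless. Second, your extraction of Axiom I at subgroups below $S$ via ``the same kind of count applied to the constituents that see those subgroups, together with $\F$-stability'' is where Lemma~\ref{lem:CongSpecial} lives in the paper, and it is stated there for fixed points precisely because the divisibility $|C_S(\varphi(P))|\mid\Phi_{\langle P,\varphi\rangle}(X)$ (from the free left action) has no clean analogue for the coefficients $c_{\langle P,\varphi\rangle}(X)$; the count you want is not an augmentation count but the mod-$p$ count of $(S\backslash X)^P$, which is a fixed-point statement. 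If you want to salvage the coefficient-based plan, you would in effect need to rederive Lemmas~\ref{lem:Cong}, \ref{lem:CongSpecial}, \ref{lem:CongExt}, and \ref{lem:FrobFixPts} in coefficient language, at which point you have rediscovered the fixed-point approach.
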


The characteristic idempotent of a saturated fusion system is also the unique idempotent right-characteristic element. Therefore the correspondence between saturated fusion systems and characteristic idempotents can be combined to obtain a striking result.

\begin{mthm}\label{mthm:Bijection}
For a finite $p$-group $S$, there is a bijective correspondence between saturated fusion systems on $S$ and bifree idempotents in $A(S,S)\pLoc$ of augmentation $1$ that satisfy Frobenius reciprocity. The bijection sends a saturated fusion system to its characteristic idempotent, and an idempotent to its stabilizer fusion system.
\end{mthm}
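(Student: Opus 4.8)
The plan is to assemble Theorem \ref{mthm:FrobImpliesSat}, the uniqueness of the characteristic idempotent from \cite{KR:ClSpectra}, and the reconstruction of a fusion system from a (right-)characteristic element. I would define two maps $\Phi$ and $\Psi$ between the two sets in the statement and show they are mutually inverse. For $\Phi$, send a saturated fusion system $\F$ on $S$ to its characteristic idempotent $\omega_\F \in A(S,S)\pLoc$, which exists and is unique by \cite{BLO2} and \cite{KR:ClSpectra}. To see $\Phi$ lands in the target set I would check: $\omega_\F$ is bifree because every characteristic element is bifree; its augmentation is an idempotent of the $p$-local integers that is prime to $p$, hence equals $1$; and since $\omega_\F$ is in particular a right-characteristic element for the saturated fusion system $\F$, the ``only if'' direction of Theorem \ref{mthm:FrobImpliesSat} shows it satisfies Frobenius reciprocity.

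For $\Psi$, take a bifree idempotent $\omega$ of augmentation $1$ satisfying Frobenius reciprocity. Since its augmentation $1$ is not divisible by $p$, the ``if'' direction of Theorem \ref{mthm:FrobImpliesSat} says $\omega$ is a right-characteristic element for a uniquely determined saturated fusion system, namely its right-stabilizer fusion system; because $\omega$ is idempotent it is moreover the (two-sided) characteristic idempotent of that fusion system (here I use the remark preceding the theorem that the unique idempotent right-characteristic element is the characteristic idempotent), so its right-stabilizer coincides with its stabilizer fusion system. Define $\Psi(\omega)$ to be this saturated fusion system.

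It remains to verify $\Psi\circ\Phi=\mathrm{id}$ and $\Phi\circ\Psi=\mathrm{id}$. For the first: by the reconstruction result of \cite{KR:ClSpectra}, the stabilizer fusion system of the characteristic element $\omega_\F$ is $\F$ itself, i.e.\ $\Psi(\Phi(\F))=\F$. For the second: given $\omega$ as above with $\F:=\Psi(\omega)$, the element $\omega$ is an idempotent right-characteristic element for $\F$; since $\omega_\F=\Phi(\F)$ is the \emph{unique} idempotent right-characteristic element of $\F$, we conclude $\omega=\Phi(\F)=\Phi(\Psi(\omega))$. Bijectivity of $\Phi$ follows, and the two descriptions of the bijection in the statement are exactly $\Phi$ and $\Psi$.

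There is no deep obstacle here --- the argument is a short formal assembly once Theorems \ref{mthm:CharImpliesSat} and \ref{mthm:FrobImpliesSat} are available --- but the delicate point is matching the one-sided and two-sided notions: I must make sure that a bifree idempotent satisfying Frobenius reciprocity is genuinely a two-sided characteristic element (not merely right-characteristic), so that its stabilizer fusion system is defined and agrees with its right-stabilizer, and that the uniqueness statement of \cite{KR:ClSpectra} is invoked in the precise form ``unique idempotent right-characteristic element,'' which is what closes the loop $\Phi\circ\Psi=\mathrm{id}$. Both facts are supplied by the discussion preceding the theorem, so the proof is essentially bookkeeping.
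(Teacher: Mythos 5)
Your proposal is correct and follows exactly the route the paper intends: Theorem \ref{mthm:Bijection} is stated in the paper (as Corollary \ref{cor:Bijection}) without an explicit proof, as an immediate consequence of Theorem \ref{thm:FrobImpliesSat}, Proposition \ref{prop:CharIsFrob}, the uniqueness of the characteristic idempotent (Theorem \ref{thm:CharIdemExists} and the remark following it), and the reconstruction result $\FusRStab(\Omega)=\F$ (Theorem \ref{thm:StabOfChar}). You have merely made the bookkeeping explicit, including the two details the paper leaves tacit --- that an idempotent in $\Z\pLoc$ prime to $p$ must equal $1$, and that an idempotent right-characteristic element is automatically the (two-sided, symmetric) characteristic idempotent so that its right stabilizer coincides with its full stabilizer --- both of which are handled correctly.
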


This result gives us a completely new way to think of saturated fusion systems, offering the possibility of a much simpler definition. Rather than looking at a category of subgroups of $S$ with Sylow-like axioms on the morphism sets, we can encode saturation in the one-line Frobenius reciprocity relation \eqref{eq:FrobeniusIntro}. Theorem \ref{mthm:Bijection} opens up new avenues of research, and provides insight into the nature of saturated fusion systems and their role in representation theory and stable homotopy.
Theorems \ref{mthm:FrobImpliesSat} and \ref{mthm:Bijection} are proved in Section \ref{sec:FrobImpliesSat}.

A modified version of Theorem \ref{mthm:Bijection}, where we replace the bifreeness condition by demanding that an element is dominant, meaning that it is not contained in the Nishida ideal $J(S) \subseteq A(S,S)\pLoc$ (\cite{Ni}), is proved in \ref{subsec:Relax}. This formulation lends itself better to applications in the stable homotopy theory of classifying spaces, and in the second part of the paper ---Sections \ref{sec:Splittings} to \ref{sec:RTT}--- we consider some of these applications. The key to this is the Segal conjecture, proved by Carlsson in \cite{Car}, which allows us to identify the group of homotopy classes of stable selfmaps of $BS_+$, where the subscript $+$ denotes an added, disjoint basepoint, with the submodule of $\pComp{A(S,S)}$ consisting of elements with integer augmentation. Under this identification, the characteristic idempotent of a saturated fusion system $\F$ gives rise to a stable idempotent selfmap of $BS_+$. Taking the mapping telescope of this idempotent gives us a stable summand of $BS_+$, which we denote $\ClSpectrum{\F}$ and refer to as the \emph{classifying spectrum of $\F$}. This construction was suggested by Linckelmann--Webb and studied extensively in \cite{KR:ClSpectra}, where it was shown that classifying spectra of saturated fusion systems have all the important homotopy-theoretic properties associated to suspension spectra of $p$-completed classifying spaces. The Segal conjecture and classifying spectra are discussed in Section \ref{sec:Stable}.

The mapping telescope construction can be applied to any idempotent selfmap of $BS_+$ to obtain a stable summand. This gives rise to a theory of stable splittings of classifying spaces, which was the focus of much research in the 1980's and 1990's. An interesting example is that of a Sylow inclusion $S \leq G$, where a simple transfer argument shows that $\pComp{BG_+}$ is a stable summand of $BS_+$. A long-standing question in the subject asks when a stable summand of $BS_+$ has the stable homotopy type of a $p$-completed classifying space. Expanding the scope of the question to allow classifying spectra of saturated fusion systems, we obtain the following answer in Section \ref{sec:Splittings}.

\begin{mthm} \label{mthm:Splitting}
Let $S$ be a finite $p$-group. A stable summand of $BS_+$ has the homotopy type of the classifying spectrum of a saturated fusion system on $S$ if and only if it can be split off by an idempotent in $\pComp{A(S,S)}$ that is dominant and satisfies Frobenius reciprocity.
\end{mthm}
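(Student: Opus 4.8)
The plan is to move the statement across the Segal conjecture and then feed it into the dominant reformulation of Theorem \ref{mthm:Bijection} from \ref{subsec:Relax}. Recall from Section \ref{sec:Stable} that, by Carlsson's theorem \cite{Car}, the group $\{BS_+,BS_+\}$ of stable self-maps is identified --- compatibly with composition --- with the submodule of $\pComp{A(S,S)}$ of elements of integer augmentation; under this identification an idempotent self-map of $BS_+$ corresponds to an idempotent $e\in\pComp{A(S,S)}$, and $e^2=e$ forces $\epsilon(e)\in\{0,1\}$. Every stable summand of $BS_+$ is, up to equivalence, the mapping telescope $\mathrm{Tel}(e)$ of such an idempotent; I will say that a summand $X$ is \emph{split off by $e$} when there are maps $i\colon X\to BS_+$ and $r\colon BS_+\to X$ with $ri\simeq\mathrm{id}_X$ and $ir\simeq e$. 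With this dictionary fixed, the two directions proceed as follows.

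For the ``only if'' direction, suppose $X\simeq\ClSpectrum{\F}=\mathrm{Tel}(\omega_\F)$ for a saturated fusion system $\F$ on $S$, where $\omega_\F\in A(S,S)\pLoc\subseteq\pComp{A(S,S)}$ is the characteristic idempotent \cite{KR:ClSpectra}. By the dominant form of Theorem \ref{mthm:Bijection} proved in \ref{subsec:Relax}, $\omega_\F$ is dominant, has augmentation $1$, and satisfies Frobenius reciprocity. Since $\mathrm{Tel}(\omega_\F)$ is literally a retract of $BS_+$ realizing the idempotent $\omega_\F$, composing the retraction data with any chosen equivalence $X\simeq\mathrm{Tel}(\omega_\F)$ exhibits $X$ itself as split off by $\omega_\F$. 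Thus $X$ is split off by an idempotent that is dominant and satisfies Frobenius reciprocity; this direction is purely formal.

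For the ``if'' direction, suppose $X$ is split off by an idempotent $e\in\pComp{A(S,S)}$ that is dominant and satisfies Frobenius reciprocity, so that $X\simeq\mathrm{Tel}(e)$. The strategy is: (i) show that $\epsilon(e)=1$; (ii) deduce from (i) and the dominant form of Theorem \ref{mthm:Bijection} that $e=\omega_\F$ for a unique saturated fusion system $\F$ on $S$; (iii) conclude $X\simeq\mathrm{Tel}(\omega_\F)=\ClSpectrum{\F}$. For (i), since we already know $\epsilon(e)\in\{0,1\}$, it suffices to exclude $\epsilon(e)=0$, and the plan is to prove the auxiliary statement that an idempotent of augmentation $0$ satisfying Frobenius reciprocity lies in the Nishida ideal $J(S)$ (equivalently, its telescope is assembled from classifying spectra of proper subgroups of $S$), which contradicts dominance. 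For (ii) one may additionally need to check that $e\in\pComp{A(S,S)}$, being dominant and of augmentation $1$, is already detected by the $p$-local subring $A(S,S)\pLoc$, so that the dominant form of Theorem \ref{mthm:Bijection} applies verbatim; this should be part of \ref{subsec:Relax}.

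I expect step (i) --- the auxiliary statement about augmentation --- to be the main obstacle, since everything else (the Segal-conjecture translation, the matching of ``split off by $e$'' with the algebraic idempotent, and the invocation of the dominant form of Theorem \ref{mthm:Bijection}) is bookkeeping on top of results already in hand. Concretely, one must rule out, for instance, the augmentation-$0$ idempotent that splits off the reduced summand $BS\subseteq BS_+$ --- which is dominant yet is not the classifying spectrum of any saturated fusion system, as its $H^0$ vanishes --- and this forces a genuine use of the reciprocity relation \eqref{eq:FrobeniusIntro} to locate the sphere summand inside $\mathrm{Tel}(e)$. A secondary point to handle with care is that a stable summand does not determine its splitting idempotent; this is why the ``only if'' direction is phrased as producing one good witness $\omega_\F$ rather than analysing an arbitrary splitting idempotent of $X$.
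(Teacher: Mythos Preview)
Your plan is correct and follows the same route as the paper: translate across the Segal conjecture and invoke the dominant version of the bijection from \ref{subsec:Relax}. Two remarks will save you work. First, the step you flag as the ``main obstacle'' --- ruling out $\epsilon(e)=0$ --- is already handled in \ref{subsec:Relax}: the corollary immediately preceding Corollary~\ref{cor:BijectionForStable} shows that a \emph{dominant} idempotent satisfying Frobenius reciprocity automatically has augmentation not divisible by $p$ (hence equal to $1$), via the projection $\pComp{A(S,S)}\twoheadrightarrow\pComp{A(S,S)}/\pComp{J(S)}\cong\Zp\Out(S)$ together with Lemmas~\ref{lem:FrobFixPtsNotFree} and~\ref{lem:DomFrobImpliesFree}; your reduced-summand example $[S,\id]-[S,\mathrm{triv}]$ is exactly the kind of dominant, augmentation-$0$ idempotent this argument excludes by showing it cannot satisfy Frobenius reciprocity. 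Second, your worry in step~(ii) about passing from $\pComp{A(S,S)}$ back to $A(S,S)\pLoc$ is unnecessary: Corollary~\ref{cor:BijectionForStable} is already stated over $\pComp{A(S,S)}$, so it applies directly. With these in hand, the theorem is an immediate reformulation of Corollary~\ref{cor:BijectionForStable}, exactly as the paper records in Corollary~\ref{cor:StableSplittings}.
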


Another application allows us to generalize a variant of a theorem proved by Adams--Wilkerson in \cite{AW}. Adams--Wilkerson gave a criterion for determining when a subring of the $\Fp$-cohomology of a torus is a ring of invariants under the action of a group $W$ of order prime to $p$. Using ideas of Lannes, Goerss--Smith--Zarati showed in \cite{GSZ} that this criterion applies equally well to recognizing a ring of invariants in the $\Fp$-cohomology of an elementary abelian $p$-group $V$ under the action of a group $W \leq \Aut(V)$ of order prime to $p$, and in \cite{KR:Transfers&plfgs} the first author reformulated this criterion in terms of a Frobenius reciprocity condition. We can now prove a generalized version of this result, where we generalize from an elementary abelian $p$-group $V$ to an arbitrary finite $p$-group $S$, and from rings of invariants under a group of automorphisms to modules of elements that are stable with respect to a saturated fusion system $\F$ on $S$.  To accommodate this generalization, we need to lift the result from $\Fp$-cohomology to the double Burnside ring. This becomes slightly complicated, as the ring structure in cohomology is of a very different nature from the ring structure of the double Burnside ring. Instead of subrings of $A(S,S)\pLoc$, we must therefore consider submodules of $A(S,S)\pLoc$ that behave well with respect to the diagonal of $S$, yielding the following result. The proof can be found in Section \ref{sec:AW} along with definitions of the terms used in the statement. 

\begin{mthm} \label{mthm:AWAnalogue}
Let $S$ be a finite $p$-group, and let $R$ be a $\kappa$-preserving submodule of $A(S,S)\pLoc$. There exists a saturated fusion system $\F$ on $S$ such that $R$ is the module of $\F$-stable elements in $A(S,S)\pLoc$ if and only if $R$ is not contained in the Nishida ideal of $A(S,S)\pLoc$ and the inclusion $R \hookrightarrow A(S,S)\pLoc$ admits a retractive transfer.
\end{mthm}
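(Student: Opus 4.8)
The plan is to prove the two implications separately, in each case translating between the submodule $R$ and a single idempotent of $A(S,S)\pLoc$ so that the dominant reformulation of Theorem~\ref{mthm:Bijection} from~\ref{subsec:Relax} can be applied. Write $A=A(S,S)\pLoc$, let $J=J(S)$ be the Nishida ideal, let $\iota\colon R\hookrightarrow A$ be the inclusion, and for a saturated fusion system $\F$ with characteristic idempotent $\omega_{\F}$ write $A^{\F}\subseteq A$ for the module of $\F$-stable elements. We shall use freely the facts recorded in Section~\ref{sec:AW} that $A^{\F}$ is $\kappa$-preserving, that it is precisely the image of the operation $x\mapsto\omega_{\F}\circ x\circ\omega_{\F}$, and that $\omega_{\F}$ acts as a two-sided identity on $A^{\F}$; together with the facts that $\omega_{\F}$ has augmentation $1$ and is dominant, i.e.\ $\omega_{\F}\notin J$ (a nonzero idempotent cannot lie in the nil ideal $J$).

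For the forward implication, suppose $R=A^{\F}$. Then $\omega_{\F}\in R$, so $R\not\subseteq J$. For a retractive transfer we take $t\colon A\to R$, $t(x)=\omega_{\F}\circ x\circ\omega_{\F}$: this lands in $A^{\F}=R$ because conjugating by $\omega_{\F}$ produces $\F$-stable elements, and $t\circ\iota=\operatorname{id}_{R}$ because $\omega_{\F}$ is a two-sided identity on $A^{\F}$. That $t$ satisfies the Frobenius-reciprocity condition demanded of a retractive transfer is then a direct computation feeding the biset identity~\eqref{eq:FrobeniusIntro} for $\omega_{\F}$ (supplied by Theorem~\ref{mthm:Bijection}) through the definition of $\kappa$.

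For the reverse implication, suppose $R\not\subseteq J$ and let $t\colon A\to R$ be a retractive transfer for $\iota$. Set $e=\iota\circ t\colon A\to A$. Since $t\circ\iota=\operatorname{id}_{R}$, the map $t$ is surjective and $e$ is an idempotent additive endomorphism of $A$ with $\operatorname{im}(e)=\iota(R)$. The crucial step is to show that the Frobenius reciprocity of $t$, together with the hypothesis that $R$ is $\kappa$-preserving, rigidifies $e$ into conjugation $x\mapsto\omega\circ x\circ\omega$ by a $\circ$-idempotent $\omega\in A$, so that $\iota(R)=\omega\circ A\circ\omega$. Granting this: $\omega$ is an idempotent of the composition ring; it is dominant, since $\omega\in J$ would force $\iota(R)=\omega\circ A\circ\omega\subseteq J$; it has augmentation $1$ (built into the retractive-transfer structure, or checked as part of the same computation); and pushing the identity $e=\omega\circ(-)\circ\omega$ back through $\kappa$ yields the linearized form of~\eqref{eq:FrobeniusIntro} for $\omega$. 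Thus $\omega$ is a dominant idempotent of augmentation $1$ satisfying Frobenius reciprocity, so by the dominant version of Theorem~\ref{mthm:Bijection} it is the characteristic idempotent of a unique saturated fusion system $\F$ on $S$. Finally $A^{\F}=\operatorname{im}\bigl(x\mapsto\omega\circ x\circ\omega\bigr)=\operatorname{im}(e)=\iota(R)$, and since this identification is realized by $\iota$ itself we conclude $R=A^{\F}$.

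The step I expect to be the main obstacle is the rigidification in the reverse direction. A retractive transfer a priori furnishes only a retract of $A$ onto $R$ together with a Frobenius relation expressed through the diagonal product $\kappa$, whereas the conclusion demands that this data arise from an idempotent of the \emph{composition} ring $A(S,S)\pLoc$; reconciling these two unrelated multiplicative structures is the double-Burnside-ring incarnation of the realization argument of~\cite{KR:Transfers&plfgs}, and it is essentially the only place the $\kappa$-preservation hypothesis does real work. Once it is available, the remaining verifications are bookkeeping with~\eqref{eq:FrobeniusIntro} plus appeals to the dominant form of Theorem~\ref{mthm:Bijection} and to the description of $\F$-stable modules in Section~\ref{sec:AW}.
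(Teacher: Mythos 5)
Your overall shape — translate between the submodule $R$ and an idempotent $\omega$ of the composition ring, then invoke the dominant version of Theorem~\ref{mthm:Bijection} — is the same as the paper's, but the details contain a genuine error that propagates through both directions.

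In the forward direction you propose $t(x)=\omega_{\F}\circ x\circ\omega_{\F}$. This cannot be a retractive transfer in the paper's sense, because a retractive transfer is required to be a morphism of \emph{left} $A(S,S)\pLoc$-modules (where the module action is left composition), and $\omega_{\F}\circ(a\circ x)\circ\omega_{\F}\ne a\circ(\omega_{\F}\circ x\circ\omega_{\F})$ in general. The correct formula, which is what the paper uses, is $t(x)=x\circ\omega_{\F}$; correspondingly, the $\F$ of Theorem~\ref{thm:AWAnalogue} is pinned down by $R$ being the module of \emph{right} $\F$-stable elements, so $R=A\cdot\omega_{\F}$ rather than $\omega_{\F}\cdot A\cdot\omega_{\F}$. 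Your description of $R$ as the image of two-sided conjugation, and of $\omega_{\F}$ as a two-sided identity on $R$, is not the setup the theorem is built around: the left-module structure is inherently one-sided, and the theorem is stated for right $\F$-stable elements accordingly.

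In the reverse direction you say the crucial step is to use Frobenius reciprocity plus $\kappa$-preservation to "rigidify" $e=\iota\circ t$ into conjugation $x\mapsto\omega\circ x\circ\omega$. Both halves of this claim are off. First, the rigidification gives right multiplication, not conjugation: since $\iota$ and $t$ are morphisms of left $A(S,S)\pLoc$-modules, $e$ is a left-module endomorphism of the free rank-one module $A(S,S)\pLoc$, and so $e(x)=e(x\cdot 1)=x\cdot e(1)$; setting $\omega=\iota(t(1))$ gives $e=(-)\circ\omega$ immediately, with idempotence following in one line from $t\circ\iota=\operatorname{id}_R$. This needs neither Frobenius reciprocity nor $\kappa$-preservation — it is a one-line consequence of the module-map hypothesis, and is precisely where that hypothesis does its work. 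Second, Frobenius reciprocity of $t$ (condition $(\kappa 4)$) is used elsewhere: after $\omega$ is in hand, one feeds the pair $(t(1),1)$ through $(\kappa 4)$, using $(\kappa 1)$--$(\kappa 3)$ to simplify both sides, and reads off the relation $(\omega\times\omega)\circ[S,\Delta]=(\omega\times 1)\circ[S,\Delta]\circ\omega$, i.e.\ Frobenius reciprocity for $\omega$. Combined with dominance (from $R\not\subseteq J(S)\pLoc$, since $\operatorname{im}(e)=A\cdot\omega\subseteq J(S)\pLoc$ if $\omega$ were in the Nishida ideal), Theorem~\ref{thm:DomFrobImpliesSat} applies; there is no separate step where augmentation $1$ is "built in" — it comes out of the dominance-plus-Frobenius-reciprocity argument. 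So you have identified the right two ingredients but assigned each the wrong job, and the formulas you propose do not respect the module-map conditions that the definition of a retractive transfer imposes.
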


Our last application is to a conjecture of Haynes Miller, which provided the starting point for the work in this paper as well as \cite{KR:ClSpectra} and \cite{KR:Transfers&plfgs}.

In their seminal paper \cite{BLO2}, Broto--Levi--Oliver introduced $p$-local finite groups as an abstract framework for the $p$-local homotopy theory of classifying spaces of finite groups. A $p$-local finite group is a triple $(S,\F,\Link)$, where $\F$ is a saturated fusion system on a finite $p$-group $S$, and $\Link$ is a \emph{centric linking system} associated to $\F$. The latter is a category whose $p$-completed geometric realization $\ClSp$ is considered the classifying space of the $p$-local finite group. The definition of a centric linking system is recalled in Section \ref{sec:RTT}. 

Miller conjectured that an equivalent theory should be obtained by considering a homotopy monomorphism $f \colon BS \to X$, where $X$ is a $p$-complete, nilpotent space, and a stable map $t\colon \PtdStable{X} \to \PtdStable{BS}$ that satisfies $\Stable{f} \circ t \simeq 1_{\PtdStable{X} }$ and the Frobenius reciprocity relation 
\[ (1_{\PtdStable{X}}\wedge t) \circ \Delta_X \simeq (\PtdStable{f} \wedge 1_{\PtdStable{BS}}) \circ \Delta_{BS} \circ t\, ,\]
where $\Delta$ denotes diagonal maps. We refer to $t$ as a \emph{retractive transfer for $f$}, and to $(f,t,X)$ as a \emph{retractive transfer triple on $S$}. This question was taken up in the first author's thesis, where it was shown that a retractive transfer triple on an elementary abelian $p$-group does indeed have the homotopy type of a $p$-local finite group. The converse was treated generally, showing that for a $p$-local finite group $(S,\F,\Link)$ on \emph{any} finite $p$-group $S$, the natural inclusion $\theta \colon BS \to \ClSp$ admits a unique retractive transfer $t$, and $(\theta,t,\ClSp)$ is a retractive transfer triple on $S$. These results appeared in \cite{KR:Transfers&plfgs}. 

In Section \ref{sec:RTT} we make significant progress toward proving Miller's conjecture. Theorem \ref{mthm:Bijection} immediately implies that a retractive transfer triple $(f,t,X)$ gives rise to a saturated fusion system, corresponding to the idempotent $t \circ \PtdStable{f}$ (Proposition \ref{prop:F(RTT)}). Making two additional technical assumptions, we prove the following theorem.

\begin{mthm} \label{mthm:pLFGasRTT}
Let $(f,t,X)$ be a dominant retractive transfer triple on a finite $p$-group $S$ and assume that
\begin{enumerate}
  \item for every $P \leq S$, the map $\Stable \colon [BP,X] \to \{BP_+,X_+\}$ is injective; and
  \item $f$ preserves $\F_{S,f}(X)$-centric subgroups.        
\end{enumerate}
Then $(S,\F_{S,f}(X),\Link_{S,f}(X))$ is a $p$-local finite group with classifying space homotopy equivalent to $X$. 
\end{mthm}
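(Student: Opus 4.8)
The plan is to build the $p$-local finite group in three stages: extract the saturated fusion system together with its characteristic idempotent, verify that $\Link_{S,f}(X)$ is a centric linking system associated to it, and identify $X$ with the $p$-completed nerve of $\Link_{S,f}(X)$ by a mod-$p$ cohomology comparison. For the first stage, the retraction property of the retractive transfer $t$ makes $\omega:=t\circ\PtdStable{f}$ an idempotent self-map of $\PtdStable{BS}$ of augmentation $1$; it satisfies Frobenius reciprocity by hypothesis on $(f,t,X)$ and is dominant since the triple is. By Proposition~\ref{prop:F(RTT)}, which rests on the version of Theorem~\ref{mthm:Bijection} with bifreeness relaxed to dominance, $\omega$ is the characteristic idempotent of a saturated fusion system on $S$, which by assumption~(1) coincides with $\F:=\F_{S,f}(X)$, and $\PtdStable{X}$ is its classifying spectrum $\ClSpectrum{\F}$. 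Since the characteristic idempotent acts on $H^*(BS;\Fp)$ with image the $\F$-stable classes, we obtain a ring isomorphism $H^*(X;\Fp)\cong H^*(BS;\Fp)^{\F}$ onto the ring of $\F$-stable elements, compatible with restriction to subgroups through the maps $f|_{BP}\colon BP\to X$.

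\emph{Stage 2: the linking system.} Next I would show that the category $\Link:=\Link_{S,f}(X)$ of Section~\ref{sec:RTT} is a centric linking system associated to $\F$; this is where both technical hypotheses are consumed. For an $\F$-centric subgroup $P$, assumption~(2) together with the $p$-completeness and nilpotence of $X$ forces, via the usual analysis of maps out of $BP$ (the mapping-space theorems of Dwyer--Zabrodsky and Lannes, as exploited in \cite{BLO2}), that the component of $\operatorname{Map}(BP,X)$ through $f|_{BP}$ is homotopy equivalent to $BZ(P)$ and that the components of $\operatorname{Map}(BP,X)$ lying under $f$ are indexed by the morphisms $P\to S$ in $\F$ modulo $S$-conjugacy. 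Feeding this into the fibrations $\operatorname{Map}(BP,BQ)\to\operatorname{Map}(BP,X)$ for $\F$-centric $P,Q$, and invoking assumption~(1) to match the morphism sets of $\Link_{S,f}(X)$ with homotopy classes of maps over $X$ realized by genuine maps of spaces, one checks the axioms of a centric linking system: the projection $\operatorname{Mor}_\Link(P,Q)\to\operatorname{Hom}_\F(P,Q)$ is the orbit map of a free $Z(P)$-action compatible with inclusions and with the distinguished monomorphisms $Z(P)\to\operatorname{Aut}_\Link(P)$.

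\emph{Stage 3: the equivalence $|\Link|^\wedge_p\simeq X$.} The maps $f|_{BP}\colon BP\to X$ over $\F$-centric $P$ are compatible with the morphisms of $\Link$ (which, by Stage~2, are maps over $X$), so they assemble to a map $\Phi\colon|\Link|\to X$ from the nerve. Since $|\Link|^\wedge_p$ is $\Fp$-complete by \cite{BLO2} and $X$ is $p$-complete by hypothesis, it suffices to show that $\Phi$ is an $\Fp$-cohomology isomorphism. The Bousfield--Kan spectral sequence for the homotopy colimit, combined with the mapping-space computation of Stage~2 and the vanishing of the higher limits $\varprojlim^{i}_{\mathcal{O}^c(\F)}H^*(B(-);\Fp)$ for $i>0$ (valid for a saturated fusion system), yields $H^*(|\Link|;\Fp)\cong\varprojlim_{\mathcal{O}^c(\F)}H^*(B(-);\Fp)\cong H^*(BS;\Fp)^{\F}$ by the stable-elements theorem; under these identifications $\Phi^*$ is the isomorphism of Stage~1. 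Hence $\Phi^\wedge_p\colon|\Link|^\wedge_p\xrightarrow{\ \simeq\ }X$, so $(S,\F,\Link)$ is a $p$-local finite group with classifying space homotopy equivalent to $X$.

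\emph{Expected main obstacle.} The heart of the argument is Stage~2: promoting the stable, double-Burnside-ring description of $\Link_{S,f}(X)$ to an honest centric linking system. Assumption~(2) is precisely what pins down the homotopy type of the relevant components of $\operatorname{Map}(BP,X)$ as $BZ(P)$ --- without it these mapping spaces are uncontrolled --- and assumption~(1) is what permits the passage from stable maps back to the unstable maps needed both to realize $\Link$ geometrically and to know that $\F_{S,f}(X)$ agrees with the stabilizer fusion system of $\omega$. Stage~3, by contrast, is cohomological bookkeeping once Stage~2 is in place.
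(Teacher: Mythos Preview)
Your overall strategy matches the paper's: first extract the saturated fusion system via Proposition~\ref{prop:F(RTT)} and assumption~(1), then verify that $\Link_{S,f}(X)$ is a centric linking system using assumption~(2), and finally produce a map $|\Link|\to X$ that is a mod~$p$ cohomology equivalence. Stages~1 and~2 are essentially what the paper does (the paper invokes \cite[Lemma~1.8]{BLO4} directly for Stage~2 rather than rederiving the mapping-space analysis, but the content is the same).

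The genuine gap is in Stage~3. You write that the maps $f\vert_{BP}$ ``are compatible with the morphisms of $\Link$ \dots\ so they assemble to a map $\Phi\colon|\Link|\to X$.'' This is exactly the step that is not automatic. The morphisms of $\Link_{S,f}(X)$ are pairs $(\varphi,[H])$ where $[H]$ is only a \emph{homotopy class} of homotopies; choosing representatives, the simplices of $|\Link|$ only commute up to higher homotopy, and one is in the realm of Wojtkowiak obstruction theory \cite{Wo}. Passing to the homotopy left Kan extension $\B$ over $\Oo^c=\Oo(\F^c)$ as in \cite{BLO2}, the obstruction to extending over the $n$-skeleton lies in $\varprojlim^{\,n}_{\Oo^c}\pi_{n-1}\!\big(\Map(\B(-),X)_{h_{(-)}}\big)$. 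Your Stage~2 computation $\Map(BP,X)_{f\vert_{BP}}\simeq BZ(P)$ kills these groups for $n\geq 3$, but leaves a single potentially nonzero obstruction at $n=2$, living in $\varprojlim^{\,2}_{\Oo^c} Z(-)$. This group is in general nonzero (it parametrizes centric linking systems associated to $\F$), so your spectral-sequence argument for $H^*(|\Link|;\Fp)$, which uses the vanishing of the higher limits of $H^*(B(-);\Fp)$, is a different statement and does not dispose of it.

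The paper circumvents this by a neat self-referential trick rather than by proving the obstruction vanishes directly: call the obstruction $\alpha\in\varprojlim^{\,2}_{\Oo^c} Z(-)$. Since this group acts freely and transitively on isomorphism classes of centric linking systems for $\F$ (\cite{BLO2}), there is a linking system $\Link_\alpha$ for which the corresponding obstruction vanishes, and one obtains $h_\alpha\colon|\Link_\alpha|\to X$ with $\pComp{h_\alpha}$ an equivalence. But then $\Link=\Link_{S,f}(X)\cong\Link_{S,\theta}(\pComp{|\Link_\alpha|})\cong\Link_\alpha$ by Theorem~\ref{thm:F(X)andL(X)}, forcing $\alpha=0$. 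You should replace the ``assemble to a map'' sentence with this obstruction-theoretic construction; once the map exists, your cohomological identification in Stage~3 finishes the proof exactly as the paper does.
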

This theorem is proved in Section \ref{sec:RTT} as Theorem \ref{thm:RTTgivesPLFG}, where the second condition is stated more precisely and the remaining terminology is explained.

The additional conditions in Theorem \ref{mthm:pLFGasRTT} are quite strong, but reasonable in the sense that they are always satisfied by the classifying space of a $p$-local finite group. Theorem \ref{mthm:pLFGasRTT} is more than a special case of Miller's conjecture, since, combined with results from \cite{KR:Transfers&plfgs}, it establishes an equivalence between $p$-local finite groups and retractive transfer triples satisfying the two conditions. This is still not a satisfactory resolution to Miller's conjecture as the additional conditions are very hard to check in practice and a priori appear very restrictive. Evidence suggests that the conditions are actually always satisfied by a retractive transfer triple, and Theorem \ref{mthm:pLFGasRTT} should properly regarded as reducing Miller's conjecture to proving that this is the case. This challenge will be taken up in a subsequent paper.

\subsection*{Outline} The paper has three main parts. In the first part, we recall background material and establish notational conventions that will be used throughout the paper. The theory of fusion systems is recalled in Section \ref{sec:Fusion}; bisets, the double Burnside ring and fixed-point homomorphisms are covered in Section \ref{sec:Burnside}; and Section \ref{sec:Char} contains a discussion of characteristic bisets and idempotents. The second part of the paper deals with the new results on fusion systems. In Section \ref{sec:BisetFusion} we introduce the notions of stabilizer, fixed-point, and orbit-type fusion systems, list their basic properties and reformulate the Linckelmann--Webb properties in this context. In Section \ref{sec:CharImpliesSat} we set up congruences for the fixed points of characteristic bisets, and use these to tease out the saturation axioms, proving Theorem \ref{mthm:CharImpliesSat}. Section \ref{sec:FrobImpliesSat} is the focal point of the paper, in which we introduce Frobenius reciprocity in the double Burnside ring and, using tools from Section \ref{sec:CharImpliesSat}, show that it implies saturation. Section \ref{sec:Reformulate} contains reformulations of selected fusion-theoretic phenomena in the language of idempotents. The third part of the paper covers applications to algebraic topology. In Section \ref{sec:Stable} we make the transition from algebra to stable homotopy by recalling the Segal conjecture, proving versions of Theorems \ref{mthm:FrobImpliesSat} and \ref{mthm:Bijection} that lend themselves better to interpretation in stable homotopy, and recalling the construction of classifying spectra for saturated fusion systems. The application to stable splittings is then covered in Section \ref{sec:Splittings}, followed by the generalization of the Adams--Wilkerson theorem in Section \ref{sec:AW}, and finally the application to Miller's conjecture in Section \ref{sec:RTT}.

\subsection*{Acknowledgements} This project was started while the authors attended the program on Representation Theory of Finite Groups at MSRI in the spring of 2008, with loose ends tied up at the workshop on Representation Theory of Finite Groups at Oberwolfach in March 2009. The authors are grateful to these institutions for their support, and for providing the opportunity for this collaboration. The first author also thanks his thesis advisor, Haynes Miller, for posing the question that motivated this work. Without his suggestion the far-fetched idea of proving saturation through Frobenius reciprocity would probably never have seen the light of day.

\section{Fusion systems} \label{sec:Fusion}
Fusion systems and their saturation axioms were introduced by Puig \cite{puig} in an effort to axiomatize the $p$-local structure of a finite group and, more generally, of a block of a group algebra. Broto, Levi and Oliver developed this axiomatic approach in \cite{BLO2}, and gave a different set of saturation axioms which they prove to be equivalent to Puig's definition. In this section we present Broto, Levi and Oliver's axiomatic system, that is adopted through our paper. We include in this section some basic properties of fusion systems together with a new simplification of the saturation axioms. We also introduce the concept of pre-fusion system, which is a structure designed to keep track of a set of homomorphisms used to generate a fusion systems.

\subsection{Basic notations and definitions}
For subgroups $H$ and $K$ of a finite group $G$, denote the {\it transporter} from $H$ to $K$ in $G$ by $N_G(H,K)\defeq \{g\in G| c_g(H)\le K\}$ where $c_g(x) \defeq gxg^{-1}$ is the {\it conjugation homomorphism}. For $g \in G$ we write $\lsup{H}{g}$ for $gHg^{-1}$, and $H^g$ for $g^{-1}Hg$. We say that $H$  and $K$ are $G$-conjugate if $\lsup{H}{g}=K$ for some $g \in G$, and denote the $G$-conjugacy class of $H$ by $[H]_G$. Also, as usual, the {\it normalizer} of $P$ is denoted by $N_S(P) \defeq N_S(P,P)$ and the {\it centralizer} of $P$ is $C_S(P) \defeq \{y\in N_S(P)\,|\,c_y|_P=\id_P\}$. Other useful notations: $\Hom_S(P,Q) \defeq N_S(P,Q)/C_S(P)$ and $\Aut_S(P) \defeq \Hom_S(P,P)$. 

\begin{defn}
A {\it fusion system $\Ff$ on a finite $p$-group $S$} is a category whose objects are the subgroups of $S$ and whose set of morphisms between the subgroups $P$ and $Q$ of $S$, is a set $\Hom_\Ff(P,Q)$ of injective group homomorphisms from $P$ to $Q$, with the following properties:
\begin{enumerate}
\item[(1)] $\Hom_S(P,Q)\subset\Hom_\Ff(P,Q)$;
\item[(2)] for any $\varphi\in\Hom_\Ff(P,Q)$ the induced isomorphism $P\simeq\varphi(P)$ and its inverse are morphisms in $\Ff$;
\item[(3)] the composition of morphisms in $\Ff$ is the usual composition of group homomorphisms.
\end{enumerate}
\end{defn}

Let $\Ff$ be a fusion system on $S$. We say that two subgroups $P$ and $Q$ of $S$ are \emph{$\Ff$-conjugate} if there exist an isomorphism $\varphi\in\Hom_\Ff(P,Q)$. The $\Ff$-conjugacy class of $P$ is denoted by $[P]_\Ff$. 

\begin{defn}
Let $\F$ be a fusion system on a finite $p$-group $S$, and let $P \leq S$
\begin{enumerate}
\item[(a)] $P$ is {\it fully $\Ff$-centralized} if $|\C SP|\ge|\C S{Q}|$ for all $Q\in [P]_\Ff$,
\item[(b)] $P$ is {\it fully $\Ff$-normalized} if $|\N SP|\ge|\N S{Q}|$ for all $Q\in [P]_\Ff$,
\item[(c)] $P$ is {\it $\Ff$-centric} if $C_S(Q) = Z(Q)$ for all $Q\in [P]_\Ff$.
\end{enumerate}
\end{defn}
We define the  \emph{$\Ff$-representations} from $P$ to $Q$ as the quotient 
 \[ \Rep_\Ff(P,Q) \defeq \Aut_Q(Q) \backslash \Hom_\Ff(P,Q) \, , \] 
 and the \emph{outer automorphisms} in $\Ff$ as
 \[ \Out_\Ff(P) \defeq \Rep_\Ff(P,Q) = \Aut_P(P) \backslash \Aut_\Ff(P) \, .\]

\subsection{Morphisms of fusion systems and fusion-preserving homomorphisms}

Let $\Ff$ be a fusion system on $S$ and $\Ff'$ be a fusion system on $S'$. 

\begin{defn}\label{defn:FusMorph}
A {\it morphism} from $\Ff$ to $\Ff'$ is a pair $(\alpha,\alpha_0)$, where $\alpha:\Ff\to\Ff'$ is a covariant functor and $\alpha_0:S\to S'$ is a group homomorphism satisfying $\alpha(P)=\alpha_0(P)$ and $\alpha(\phi)\circ\alpha_0(u)=\alpha_0\circ\phi(u)$ for all $u\in P\le S$ and 
$\phi\in\Hom_\Ff(P,S)$.
The {\it kernel} of this morphism is $\Ker(\alpha):=\Ker(\alpha_0)\le S$ and the {\it image}, denoted by $\Image(\alpha)$ is the fusion system on $\alpha(S)$ with morphism sets given by the image of $\alpha$.
\end{defn}

\begin{defn}\label{defn:FusPres}
We say that a group homomorphism $\beta:S\to S'$ is $(\Ff,\Ff')$-\emph{fusion preserving} if 
$$\beta|_Q\circ\Hom_\Ff(P,Q)\subset\Hom_{\Ff'}(\beta(P),\beta(Q))\circ\beta|_P\,.$$
\end{defn}

If $(\alpha,\alpha_0):\Ff\to\Ff'$ is a morphism of fusion systems then $\alpha_0$ is $(\Ff,\Ff')$-fusion preserving. Conversely, if $\alpha_0 \colon S\to S'$ is an $(\Ff,\Ff')$-fusion-preserving homomomorphism then $\alpha_0$ induces a unique functor $\alpha \colon \Ff \to \Ff$ such that the pair $(\alpha,\alpha_0)$ is a morphism of fusion systems.

\subsection{Saturation axioms}
Fusion systems provide a model for the conjugation action on $S$ by an ambient object, but this model is far too general to be interesting in practice. Classically, the interesting fusion systems are the ones coming from the $p$-local structure of a finite group or of a block of the group algebra of a finite group in characteristic $p$. In both cases, the fusion systems satisfy certain axioms that correspond to the Sylow theorems, and this generalizes to the following definition, originally due to Puig \cite{puig} but presented here in the form developed by Broto--Levi--Oliver \cite{BLO2}. 

\begin{defn} \label{defn:SatAxioms}
A fusion system $\Ff$ on a finite $p$-group $S$ is {\it saturated} if it satisfies the following axioms:
\begin{enumerate}
\item[I] If $P\le S$ is fully $\Ff$-normalized then $P$ is fully $\Ff$-centralized and $\Aut_S(P)$ is a Sylow $p$-subgroup of $\Aut_\Ff(P)$.
\item[II] If $\varphi\in\Hom_\Ff(P,S)$ is a homomorphism such that $\varphi(P)$ is fully $\Ff$-centralized, then $\varphi$ extends to a morphism $\overline\varphi \in \Hom_\Ff(N_\varphi,S)$, where
$$N_\varphi=\{x\in\N SP\,|\,\exists y\in\N S{\varphi(P)},\,\varphi(\,^xu)=\,^y\varphi(u),\,\forall u\in P\}\,.$$
\end{enumerate}
\end{defn}

Notice that $N_\varphi$ is the largest subgroup of $\N SP$ such that
$^\varphi(N_\varphi/\C SP)\le\Aut_S(\varphi(P))$. Thus we always have $P\C SP\le N_\varphi \le N_S(P)$. Also notice that if $R \leq \N SP$ is a subgroup containing $P$ to which $\varphi$ can be extended in $\Ff$, then $R \leq N_\varphi$. More generally, if $\varphi$ extends in $\Ff$ to a group $R$ with  $P < R \leq S$, then $P \leq N_R(P) = N_\varphi \cap R$. In particular, if $N_\varphi = P$, then $\varphi$ can not be extend in $\F$.

\subsection{Simplified saturation axioms}
Even if very useful in this form, the set of axioms in Definition \ref{defn:SatAxioms} has redundancies. An equivalent set of axioms is given in \cite{kessarstancu}, and in this subsection we 
%
%
%
%
%
give another simplification of the Broto--Levi--Oliver set of axioms by imposing Axiom I only for $S$ while keeping Axiom II for all $P$.
\begin{eqdefn}
A fusion system $\Ff$ on $S$ is saturated if
\begin{enumerate}
 \item[I${}_S$] $\Aut_S(S)$ is a Sylow subgroup of $\Aut_\Ff(S)$.
 \item[II${}_{\hspace{5pt}}$] If $\varphi\in\Hom_\Ff(P,S)$ is a homomorphism such that $\varphi(P)$ is fully $\Ff$-centralized, then $\varphi$ extends to a morphism $\overline\varphi \in \Hom_\Ff(N_\varphi,S)$.
\end{enumerate}
\end{eqdefn}

We give here a proof that this new set of axioms implies the set of axioms in Definition \ref{defn:SatAxioms}. Suppose that we have a fusion system $\Ff$ on $S$ satisfying Axioms I${}_S$ and II. We prove Axiom I by induction.

\begin{lem} \label{lem:FCtoBLO}
Let $P$ be a fully $\Ff$-centralized subgroup of $S$ with $|\N SP|\ge|\N S{Q}|$ for any fully $\Ff$-centralized subgroup $Q\in [P]_\Ff$. Then $\Aut_S(P)$ is a Sylow $p$-subgroup of $\Aut_\Ff(P)$.
\end{lem}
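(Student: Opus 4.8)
The plan is to argue by induction on the index $[S:P]$. When $P=S$ the hypothesis is vacuous and the conclusion is exactly Axiom I${}_S$, so suppose $P<S$ and that the statement holds for all subgroups of smaller index. Then $N:=\N SP$ properly contains $P$, so $[S:N]<[S:P]$ and the inductive hypothesis is available for $N$ and for anything of that index. Arguing by contradiction, assume $\Aut_S(P)$ is \emph{not} a Sylow $p$-subgroup of $\Aut_\Ff(P)$, choose a Sylow $p$-subgroup $R$ with $\Aut_S(P)\le R$, and — since a proper subgroup of a finite $p$-group is properly contained in its normalizer — pick $\alpha\in N_R(\Aut_S(P))\setminus\Aut_S(P)$, an element of $p$-power order normalizing $\Aut_S(P)$.

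The first key point is the $N_\varphi$-computation advertised in the remark above. Since $\alpha$ normalizes $\Aut_S(P)$, every $c_x\in\Aut_S(P)$ satisfies $\alpha^{-1}c_x\alpha\in\Aut_S(P)$, so $N_{\alpha^{-1}}=\N SP=N$. As $\alpha^{-1}(P)=P$ is fully $\Ff$-centralized, Axiom II extends $\alpha^{-1}$ to $\bar\alpha\in\Hom_\Ff(N,S)$; because $\bar\alpha(P)=P$, the image $\bar\alpha(N)$ normalizes $P$ and has order $|N|$, so $\bar\alpha\in\Aut_\Ff(N)$ with $\bar\alpha(P)=P$. Replacing $\bar\alpha$ by a suitable power I may assume it has $p$-power order; its restriction to $P$ then generates the same cyclic group as $\alpha$, hence still lies outside $\Aut_S(P)$, and so $\bar\alpha\notin\mathrm{Inn}(N)=\Aut_N(N)$ (the latter restricts into $\Aut_S(P)$). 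Since $\mathrm{Inn}(N)$ is normal in $\Aut_\Ff(N)$, the set $\mathrm{Inn}(N)\langle\bar\alpha\rangle$ is a $p$-subgroup of $\Aut_\Ff(N)$ strictly larger than $\Aut_N(N)$.

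Now transport this to a well-behaved conjugate. Choose $M\in[N]_\Ff$ fully $\Ff$-centralized with $|\N SM|$ maximal among fully centralized conjugates of $N$; as $[S:M]=[S:N]<[S:P]$, the inductive hypothesis gives that $\Aut_S(M)$ is a Sylow $p$-subgroup of $\Aut_\Ff(M)$. Picking an $\Ff$-isomorphism $N\to M$ and, since $\bar\alpha$ has $p$-power order, post-composing with a suitable element of $\Aut_\Ff(M)$, I obtain an $\Ff$-isomorphism $\chi\colon N\to M$ with $\beta:=\chi\bar\alpha\chi^{-1}\in\Aut_S(M)$. Writing $P_2:=\chi(P)\le M$, the automorphism $\beta$ stabilizes $P_2$, and $\chi$ carries $\Aut_S(P)$ onto $\Aut_M(P_2)$ and $\bar\alpha|_P$ onto $\beta|_{P_2}$. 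If $P_2$ is fully $\Ff$-centralized, then — because $P$ has maximal normalizer among fully centralized conjugates, while $M\le\N S{P_2}$ and $|M|=|N|=|\N SP|\ge|\N S{P_2}|$ — we get $M=\N S{P_2}$, hence $\Aut_M(P_2)=\Aut_S(P_2)$; then the bijection $\Aut_\Ff(P)\to\Aut_\Ff(P_2)$ induced by $\chi|_P$ carries $\Aut_S(P)$ exactly onto $\Aut_S(P_2)$, yet carries $\bar\alpha|_P\notin\Aut_S(P)$ into $\Aut_S(P_2)$, a contradiction completing the induction. Finally Axiom I is recovered: a fully $\Ff$-normalized $P$ is, once one knows it is fully centralized, a fully centralized conjugate of maximal normalizer, so the lemma applies.

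The step I expect to be the main obstacle is exactly the one just used: arranging \emph{simultaneously} that the transporting isomorphism $\chi$ conjugates $\bar\alpha$ into $\Aut_S(M)$ and that $P_2=\chi(P)$ is fully $\Ff$-centralized. The Sylow-conjugacy argument that puts $\bar\alpha$ inside $\Aut_S(M)$ a priori moves the image of $P$, so making both demands compatible requires some care — a more careful choice of $M$ or of $\chi$, an auxiliary argument that $\Aut_S(M)\cap\mathrm{Stab}_{\Aut_\Ff(M)}(P_2)$ is a Sylow subgroup of the stabilizer, or folding the companion fact ``fully normalized $\Rightarrow$ fully centralized'' into the same induction. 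Everything else — the $N_\varphi$ computation, the extension via Axiom II, isolating the $p$-part, and the closing cardinality comparison — is routine.
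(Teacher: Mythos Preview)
Your approach is essentially the paper's, and the ``main obstacle'' you flag is not an obstacle at all: $P_2=\chi(P)$ is \emph{automatically} fully $\Ff$-centralized, for \emph{any} $\Ff$-isomorphism $\chi\colon N\to M$. Indeed $C_S(P)\le N_S(P)=N$, so $\chi$ carries $C_S(P)$ injectively into $C_M(P_2)\le C_S(P_2)$, giving $|C_S(P)|\le|C_S(P_2)|$; since $P$ is fully $\Ff$-centralized the reverse inequality holds, so $|C_S(P)|=|C_S(P_2)|$ and $P_2$ is fully $\Ff$-centralized. There is therefore no tension between the Sylow-conjugacy adjustment of $\chi$ (which forces $\beta\in\Aut_S(M)$) and the centrality of $P_2$; the two requirements are compatible for free.

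With this observation your argument is complete and coincides with the paper's proof line for line: the paper makes exactly this centralizer comparison (phrased as ``the inclusions $\psi(C_S(P))\subseteq C_S(\psi(P))$ and $\psi(N_S(P))\subseteq N_S(\psi(P))$ are both equalities'') to conclude $M=N_S(P_2)$, and then finishes just as you do by pulling the conjugating element $u$ back into $N$.
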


\begin{proof}
The proof is by induction on the index $n$ of $P$ in $S$. If $n=1$ then $P=S$ and we are done using Axiom I${}_S$. Let $n>1$ and suppose the assumption in the lemma true for all subgroups of $S$ of index smaller than $n$.

We suppose that the assumption is not true for $P$ and try to reach a contradiction. With this assumption, $\Aut_S(P)$ is properly contained in a Sylow $p$-subgroup $U$ of $\Aut_\Ff(P)$. Thus $\Aut_S(P)$ is a proper subgroup of its normalizer $\N U{\Aut_S(P)}$. Take a morphism $\varphi\in\N U{\Aut_S(P)}\setminus \Aut_S(P)$.

Then $N_\varphi=\N SP$ as $\varphi$ normalizes $\Aut_S(P)$. As $P$ is fully  $\Ff$-centralized, Axiom II implies that $\varphi$ extends to an automorphism $\overline\varphi$ of $\N SP$. Given that $\varphi$ is a $p$-automorphism, we can choose $\overline\varphi$ to be also a $p$-automorphism. Now, by induction, there exists a fully $\Ff$-centralized subgroup $T$ of $S$ which is $\Ff$-isomorphic to $\N SP$ and satisfying that $\Aut_S(T)$ is a Sylow $p$-subgroup of $\Aut_\Ff(T)$. 

Any isomorphism $\psi:\N SP\to T$ sends $\overline\varphi$ to an automorphism  $\theta:=\psi\overline\varphi\psi^{-1}\in\Aut_\Ff(T)$. Given that $\overline\varphi$ is a $p$-automorphism, by modifying $\psi$, if necessary, by an automorphism in $\Aut_\Ff(T)$, 
we can suppose that $\theta\in\Aut_S(T)$. In particular there exists $u\in S$ such that $\theta(x)=uxu^{-1},\,\forall x\in T$. Moreover $u\psi(P)u^{-1}=\psi\overline\varphi\psi^{-1}(\psi(P))=\psi(P)$ so $u\in\N S{\psi(P)}=T$. The last equality is true given that $P$ and $\varphi(P)$ are both fully  $\Ff$-centralized and $P$ is chosen so that $|\N SP|\ge|\N S{\varphi(P)}|$. Thus the inclusions $\psi(\C SP)\subseteq\C S{\psi(P)}$ and $\psi(\N SP)\subseteq\N S{\psi(P)}$ are both equalities.

Hence we have $\psi^{-1}(u)\in\N SP$ and, moreover, $\varphi(v)=\psi^{-1}(u)\,v\,(\psi^{-1}(u))^{-1},\,\forall v\in P$ which is a contradiction to the supposition that $\varphi\not\in\Aut_S(P)$.
\end{proof}

\begin{lem}
Every fully $\Ff$-normalized subgroup of $S$ is fully $\Ff$-centralized.
\end{lem}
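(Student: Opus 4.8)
The plan is to fix a fully $\Ff$-normalized subgroup $P$, and then to produce inside its $\Ff$-conjugacy class a carefully chosen fully $\Ff$-centralized subgroup $Q$ together with an $\Ff$-isomorphism $\varphi\colon P\to Q$ whose extension domain $N_\varphi$ equals all of $N_S(P)$. Extending $\varphi$ via Axiom~II and tracking how the extension acts on centralizers and normalizers will then force $|C_S(P)|=|C_S(Q)|$, which is exactly what is needed.

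First I would choose $Q$: among all fully $\Ff$-centralized subgroups in $[P]_\Ff$ — a finite, nonempty set — pick one for which $|N_S(Q)|$ is as large as possible. By construction $Q$ satisfies the hypothesis of Lemma~\ref{lem:FCtoBLO}, so $\Aut_S(Q)$ is a Sylow $p$-subgroup of $\Aut_\Ff(Q)$. Now pick any $\Ff$-isomorphism $P\to Q$. Since it sends $\Aut_S(P)$ to a $p$-subgroup of $\Aut_\Ff(Q)$ and $\Aut_S(Q)$ is Sylow there, composing with a suitable element of $\Aut_\Ff(Q)$ (which stays an $\Ff$-isomorphism $P \to Q$) lets me assume the isomorphism $\varphi\colon P\to Q$ satisfies $\lsup{\varphi}{\Aut_S(P)}\le\Aut_S(Q)$. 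Unwinding the definition of $N_\varphi$ — namely that $N_\varphi/C_S(P)$ is precisely the part of $\Aut_S(P)$ that $\varphi$ conjugates into $\Aut_S(\varphi(P))$ — this last condition says exactly that $N_\varphi=N_S(P)$.

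Now apply Axiom~II: since $\varphi(P)=Q$ is fully $\Ff$-centralized, $\varphi$ extends to $\overline\varphi\in\Hom_\Ff(N_S(P),S)$. A routine computation shows $\overline\varphi$ carries $C_S(P)$ into $C_S(Q)$ and $N_S(P)$ into $N_S(Q)$; since $\overline\varphi$ is injective this gives $|C_S(P)|\le|C_S(Q)|$ and $|N_S(P)|\le|N_S(Q)|$. Combining the second inequality with full $\Ff$-normalization of $P$ forces $|N_S(P)|=|N_S(Q)|$. Since also $|\Aut_S(P)|\le|\Aut_S(Q)|$ from $\lsup{\varphi}{\Aut_S(P)}\le\Aut_S(Q)$, the identity $|C_S(P)|\,|\Aut_S(P)|=|N_S(P)|=|N_S(Q)|=|C_S(Q)|\,|\Aut_S(Q)|$ is a product of two term-by-term inequalities and hence both are equalities; in particular $|C_S(P)|=|C_S(Q)|$. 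As $Q$ is fully $\Ff$-centralized, so is $P$.

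The one genuinely delicate point — the step I would flag as the crux — is the reduction to an isomorphism $\varphi$ with $N_\varphi=N_S(P)$: this needs both the right choice of $Q$ (so that Lemma~\ref{lem:FCtoBLO} applies and $\Aut_S(Q)$ is honestly Sylow) and the Sylow-conjugacy adjustment of $\varphi$. Everything afterward is the standard bookkeeping of how an extended morphism behaves on normalizers and centralizers.
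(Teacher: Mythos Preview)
Your proof is correct and follows essentially the same route as the paper's: choose $Q$ fully $\Ff$-centralized with maximal normalizer so that Lemma~\ref{lem:FCtoBLO} gives $\Aut_S(Q)$ Sylow in $\Aut_\Ff(Q)$, adjust $\varphi$ so that $N_\varphi=N_S(P)$, extend via Axiom~II, and compare orders. Your write-up is in fact a bit more explicit than the paper's about the Sylow-conjugacy adjustment and the final product-of-inequalities step.
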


\begin{proof}
Let $P$ be fully $\Ff$-normalized and take $Q$, $\Ff$-isomorphic to $P$, fully $\Ff$-centralized with $\Aut_S(Q)$ a Sylow $p$-subgroup of 
$\Aut_\Ff(Q)$. Such a subgroup $Q$ exists by Lemma \ref{lem:FCtoBLO}. Take $\varphi\in\Hom_\Ff(P,Q)$ such that $\,^\varphi\Aut_S(P)\le\Aut_S(Q)$.
Then $N_\varphi=\N SP$ and $\varphi$ extends to $\overline\varphi\in\Hom_\Ff(\N SP,\N SQ)$. But given that $P$ is fully $\Ff$-normalized we have that
$|\N SQ|=|\N SP|$ and thus $Q$ is also fully $\Ff$-normalized. The fact that $|\Aut_S(R)|\le|\Aut_S(Q)|$ combined with $|\N SP|=|\N SQ|$ gives 
$|\C SP|\ge|\C SQ|$ which in turn has to be an equality as $Q$ is fully $\Ff$-centralized. Hence $P$ is also fully $\Ff$-centralized.
\end{proof}

The previous two lemmas give Axiom I for all $P\le S$.

\subsection{Normal fusion subsystems and quotient fusion systems}

Analogous notions to normal subgroups of finite groups were introduced for fusion systems by Linckelmann \cite{markus}. Normal fusion subsystems of a fusion $\Ff$ on $S$ are constructed on subgroups of $S$ that are closed with respect to the fusion in $\Ff$.

\begin{defn}
Let $\Ff$ be a fusion system on $S$ and $P\le S$. We say that $P$ is 
{\it strongly $\Ff$-closed} if for all $Q\le P$ and all $\varphi\in\Hom_\Ff(Q,S)$ we have $\varphi(Q)\le P$.
\end{defn}
\begin{defn}
Let $\Ff$ be a fusion system on a finite $p$-group $S$ and $\Ee$ a fusion 
subsystem of $\Ff$ on a subgroup $P$ of $S$. We say that $\Ee$ is normal 
in $\Ff$ if $P$ is strongly $\Ff$-closed and, for every $\Ff$-isomorphism 
$\varphi:Q\to R$ and any two subgroups $T$, $U$ of $Q\cap P$, 
we have
$$\varphi\circ\Hom_{\Ee}(T,U)\circ\varphi^{-1}\subseteq
			\Hom_{\Ee}(\varphi(T),\varphi(U))\,.$$
\end{defn}

Aschbacher gave a very useful characterization of normal fusion subsystems in \cite{aschbacher:NormalSubsystems} which we recall below. Note that Aschbacher uses a different terminology, calling {\it invariant} fusion subsystem what we call normal fusion subsystem here.

\begin{thm}[\cite{aschbacher:NormalSubsystems}]\label{thm:NormalFusion} 
Let $\Ff$ be a saturated fusion system on $S$ and $\Ee$ be a saturated fusion subsystem of $\Ff$ on a strongly $\Ff$-closed subgroup $P$ of $S$. The following are equivalent:
\begin{enumerate}
\item[a)] $\Ee$ is a normal fusion subsystem of $\Ff$.
\item[b)] $\Aut_\Ff(P)\subseteq\Aut(\Ee)$ and for every $T,U\le P$ and $\psi\in\Hom_\Ff(T,U)$ there exist $\varphi\in\Hom_\Ee(T,U)$ and $\chi\in\Aut_\Ff(P)$ such that $\chi\vert_{\varphi(T)} \circ \varphi =\psi$.
\end{enumerate}
\end{thm}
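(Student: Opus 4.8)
The plan is to prove the two implications separately. Throughout I use that $\Ff$ and $\Ee$, both saturated, enjoy the standard consequences of the saturation axioms --- in particular that fully normalized subgroups are fully centralized, and that an $\Ff$-morphism onto a fully normalized subgroup extends over the appropriate $S$-normalizer --- and that every subgroup $\Ff$-conjugate to a subgroup of $P$ again lies in $P$, since $P$ is strongly $\Ff$-closed. The implication (b) $\Rightarrow$ (a) is essentially formal; the content lies in the Frattini clause of (b), which I expect to be the main obstacle in proving (a) $\Rightarrow$ (b).

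For (b) $\Rightarrow$ (a): the clause $\Aut_\Ff(P) \subseteq \Aut(\Ee)$ says exactly that conjugation by an element of $\Aut_\Ff(P)$ carries $\Ee$-morphisms to $\Ee$-morphisms. To verify the closure condition defining normality, let $\varphi \colon Q \to R$ be an $\Ff$-isomorphism and $T, U \le Q \cap P$; applying strong $\Ff$-closure to $\varphi$ and to $\varphi^{-1}$ shows that $\varphi$ restricts to an isomorphism $Q \cap P \xrightarrow{\sim} R \cap P$ between subgroups of $P$. Factor this restriction by the Frattini clause as $\varphi|_{Q \cap P} = \chi|_{\varphi'(Q \cap P)} \circ \varphi'$ with $\varphi' \in \Hom_\Ee(Q \cap P, R \cap P)$ and $\chi \in \Aut_\Ff(P)$. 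For $\theta \in \Hom_\Ee(T, U)$ one checks directly that $\varphi \circ \theta \circ \varphi^{-1}$ is the $\chi$-conjugate of $\varphi' \circ \theta \circ (\varphi')^{-1}$; the latter lies in $\Ee$ because conjugation by the $\Ee$-isomorphism $\varphi'$ preserves $\Ee$-morphisms, and conjugating by $\chi$ keeps it in $\Ee$ by the first clause. Hence $\varphi \circ \theta \circ \varphi^{-1} \in \Hom_\Ee(\varphi T, \varphi U)$, as required.

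For (a) $\Rightarrow$ (b): the first clause follows at once by applying the closure condition with $Q = R = P$, which gives that every $\chi \in \Aut_\Ff(P)$, and its inverse, conjugates $\Ee$-morphisms to $\Ee$-morphisms. For the Frattini clause I would reduce first to the case of an $\Ff$-isomorphism $\psi$ between subgroups of $P$ (a general morphism factors through its image, and inclusions of subgroups of $P$ lie in $\Ee$), and then induct on $[P:T]$, the case $T = P$ being immediate since then $\psi \in \Aut_\Ff(P)$. In the inductive step I would replace $T$ by a fully $\Ff$-normalized representative $R \in [T]_\Ff$, which lies in $P$ by strong closure, extend morphisms over $S$-normalizers using saturation of $\Ff$, and then use saturation of $\Ee$ and the closure condition to replace pieces of the resulting composite by $\Ee$-conjugates until the only factor not visibly in $\Ee$ is an automorphism of $P$; here it helps that the closure condition forces $\Aut_\Ee(R) \trianglelefteq \Aut_\Ff(R)$ for each $R \le P$, so that the $\Ff$-Sylow data at a fully $\Ff$-normalized $R$ restricts correctly to $\Aut_\Ee(R)$. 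Finally, since $\Aut_\Ff(P) \subseteq \Aut(\Ee)$, an $\Ee$-morphism may be commuted past an $\Aut_\Ff(P)$-restriction at the cost of a $\chi$-conjugation, so all $\Aut_\Ff(P)$-restrictions in a composite expressing $\psi$ can be gathered into a single automorphism of $P$ applied last, with an $\Ee$-morphism applied first --- precisely the form demanded by (b). The crux, which I expect to be the hardest step, is the inductive core: proving that the full subcategory of $\Ff$ on the subgroups of $P$ is generated by $\Ee$ together with the restrictions of $\Aut_\Ff(P)$, where the saturation of both systems and the comparison of $\Ee$-conjugacy with $\Ff$-conjugacy of subgroups of $P$ must be used carefully. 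The remaining bookkeeping is routine by comparison.
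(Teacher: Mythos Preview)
The paper does not prove this theorem; it is stated with attribution to Aschbacher \cite{aschbacher:NormalSubsystems} and used as a black box in Section~\ref{sec:Reformulate}. So there is no proof in the paper to compare against.

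On the merits of your sketch: the implication (b)~$\Rightarrow$~(a) is correct and complete as written. Your reduction via strong $\Ff$-closure to the restriction $\varphi\vert_{Q\cap P}$, the factorization through the Frattini clause, and the observation that conjugation by an $\Ee$-isomorphism and then by an element of $\Aut_\Ff(P)\subseteq\Aut(\Ee)$ preserves $\Ee$-morphisms, are all sound.

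For (a)~$\Rightarrow$~(b), the first clause is fine. The Frattini clause is, as you say, only a sketch. The strategy you describe --- induct on $[P:T]$, pass to a fully $\Ff$-normalized representative, extend over normalizers, and use $\Aut_\Ee(R)\trianglelefteq\Aut_\Ff(R)$ to compare Sylow data --- is the right shape and is essentially how Aschbacher proceeds. But the inductive step you allude to hides real work: one must show that the $\Ee$-conjugacy classes inside a given $\Ff$-conjugacy class of subgroups of $P$ are permuted transitively by $\Aut_\Ff(P)$, and this requires a careful Frattini-type argument using both saturated systems simultaneously. Your final remark about commuting $\Ee$-morphisms past $\Aut_\Ff(P)$-restrictions is correct and is exactly how one collapses a word in the generators to the required form. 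What is missing is the actual execution of the inductive step; as written this is an outline rather than a proof, though a correct one.
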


Also analogous to the case of finite groups, Puig \cite{puig} defines quotient fusion systems.
\begin{defn}
Let $\Ff$ be a fusion system on $S$, and let $P$ be a strongly $\Ff$-closed subgroup of $S$. By the {\it quotient system $\Ff/P$}, we mean the fusion system on $S/P$, such that for any two subgroups $R$ and $Q$ of $S$ containing $P$, we have that $\Hom_{\Ff/P}(R/P,Q/P)$ is the set of homomorphisms induced on the quotients by $\Hom_\Ff(R,Q)$.
\end{defn}

With the above notations, the canonical projection $\pi:S\to S/P$ is $(\Ff,\Ff/P)$-fusion preserving. When $\Ff$ is saturated, $\pi$ induces a morphism of fusion systems. This is a consequence of a theorem of Puig \cite{puig}.

\begin{thm}[\cite{puig}]\label{thm:QuotientFusion} 
Let $\Ff$ be a saturated fusion system on $S$ and let $P$ be a strongly $\Ff$-closed subgroup of $S$. Then the quotient system $\Ff/P$ is saturated. Moreover the canonical projection $\pi \colon S\to S/P$ induces a morphism of fusion systems from $\Ff$ to $\Ff/P$.
\end{thm}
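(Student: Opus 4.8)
The plan is to carry out everything on the level of the finite $p$-group $S$ and translate statements about $\Ff/P$ back into $\Ff$. Since $P$ is strongly $\Ff$-closed, conjugation by elements of $S$ gives $P\trianglelefteq S$, and the subgroups of $\bar S:=S/P$ are exactly the images $\bar R:=R/P$ of subgroups $R\le S$ with $P\le R$. Strong closure forces $\varphi(P)=P$ for every isomorphism $\varphi$ of $\Ff$ between overgroups of $P$, so each $\varphi\in\Hom_\Ff(R,S)$ with $R\ge P$ descends to an injection $\bar\varphi\colon\bar R\to\bar S$, and $\Hom_{\Ff/P}(\bar R,\bar Q)$ consists exactly of the maps arising in this way. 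This yields a clean dictionary for $R\ge P$: $N_{\bar S}(\bar R)=N_S(R)/P$; $C_{\bar S}(\bar R)=\{\,s\in S\mid [s,R]\le P\,\}/P$; $\Aut_{\Ff/P}(\bar R)$ and $\Aut_{\bar S}(\bar R)$ are the images under reduction mod $P$ of $\Aut_\Ff(R)$ and $\Aut_S(R)$ respectively; and the $\Ff/P$-conjugacy class of $\bar R$ is $\{\,\bar Q\mid Q\in[R]_\Ff\,\}$, all such $Q$ automatically containing $P$. In particular $\bar R$ is fully $\Ff/P$-normalized if and only if $R$ is fully $\Ff$-normalized. That $\Ff/P$ is a fusion system at all is routine from this dictionary (it is a category since $\Hom_\Ff$ is closed under composition), so the content of the theorem is the saturation of $\Ff/P$ together with the morphism statement.

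For saturation I would verify the simplified axioms I${}_S$ and II. Axiom~I${}_S$ for $\Ff/P$ is immediate: reduction mod $P$ is a surjection $\Aut_\Ff(S)\twoheadrightarrow\Aut_{\Ff/P}(\bar S)$ carrying $\Aut_S(S)$ onto $\Aut_{\bar S}(\bar S)$, and the image of a Sylow $p$-subgroup under a surjection of finite groups is again a Sylow $p$-subgroup, so Axiom~I${}_S$ for $\Ff$ gives it for $\Ff/P$. Axiom~II is the heart. Given $\bar\varphi\in\Hom_{\Ff/P}(\bar R,\bar S)$ with $\overline{\varphi(R)}$ fully $\Ff/P$-centralized, lift it to $\varphi\in\Hom_\Ff(R,S)$, so $\varphi(R)\ge P$. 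Unwinding the dictionary, $N_{\bar\varphi}=M/P$ where
\[ M=\{\,x\in N_S(R)\mid \overline{\varphi c_x\varphi^{-1}}\in\Aut_{\bar S}(\overline{\varphi(R)})\,\},\]
which is a subgroup with $R\le N_\varphi\le M\le N_S(R)$. One must produce $\varphi'\in\Hom_\Ff(M,S)$ with $\varphi'|_R\equiv\varphi\pmod P$; descending $\varphi'$ then extends $\bar\varphi$ over $N_{\bar\varphi}$ as required. The plan is to induct on the $p$-power index $[M:R]$, enlarging the domain one step at a time inside $M$: for a single step $R\le R_1\le M$ with $[R_1:R]=p$, one replaces the current image by a fully $\Ff$-normalized $\Ff$-conjugate, uses the Sylow property in Axiom~I for $\Ff$ to pre-adjust the conjugating isomorphism so that the $p$-element automorphism induced by the new generator lands inside $\Aut_S$ of the new image, and then applies Axiom~II of $\Ff$, the new generator now lying in the $N$-group of the adjusted morphism.

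The difficulty here, and what I expect to be the main obstacle, is precisely that $N_{\bar\varphi}$ corresponds not to $\overline{N_\varphi}$ but to the strictly larger ``$P$-relative normalizer'' $M/P$, whereas Axiom~II of $\Ff$ extends a morphism only over $N_\varphi$. Bridging this gap forces the stepwise induction, and the real work is the bookkeeping: keeping the successive partial extensions compatible modulo $P$ with the original $\varphi$, carrying the auxiliary conjugating isomorphisms along coherently, and checking that the induction exhausts all of $M$. The same device handles the morphism statement: by the remark after Definition~\ref{defn:FusPres} it is enough that $\pi\colon S\to S/P$ be $(\Ff,\Ff/P)$-fusion preserving, that is, that every $\varphi\in\Hom_\Ff(T,U)$ extend modulo $P$ to some $\widetilde\varphi\in\Hom_\Ff(TP,UP)$; one proves this by inducting on $[TP:T]$, using $P\trianglelefteq S$ to enlarge $T$ by an element of $P\cap N_S(T)$ at each step and extending via Axiom~II of $\Ff$ after passing to a fully normalized conjugate of the image, the Sylow property again placing the new element in the relevant $N$-group. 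Together with the saturation of $\Ff/P$, this completes the argument.
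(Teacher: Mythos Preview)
The paper does not give its own proof of this theorem; it is stated with attribution to Puig and used as a black box thereafter. (The machinery of Section~\ref{sec:Reformulate} does yield an independent route to the \emph{saturation} claim: Proposition~\ref{prop:BidefOfIdempotent} produces a characteristic element for $\Ff/P$, and Theorem~\ref{thm:CharImpliesSat} then gives saturation. But Lemma~\ref{lem:QuotientFGenAndFStable}, on which that proposition rests, already uses the fusion-preserving half of Theorem~\ref{thm:QuotientFusion}, so the paper nowhere makes the result self-contained.)

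On the merits of your sketch: the argument for Axiom~I${}_S$ is correct, and the inductive scheme for Axiom~II and for fusion-preservation is the natural line of attack. There is, however, a genuine gap beyond bookkeeping. In your inductive step you conjugate the current image to a fully $\Ff$-normalized subgroup via some $\Ff$-isomorphism $\gamma$ and then Sylow-adjust by some $\beta$; the extension you then build restricts on the old domain to $\beta\gamma\varphi$, so modulo $P$ it equals $\overline{\beta\gamma}\circ\bar\varphi$ rather than $\bar\varphi$. You cannot repair this by post-composing with $\overline{(\beta\gamma)^{-1}}$, since that inverse lives only on the smaller image and extending it to the larger one is precisely the problem at hand. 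The fix is to take the conjugator inside the kernel $K=\ker\bigl(\Aut_\Ff(\varphi(R))\to\Aut(\overline{\varphi(R)})\bigr)$, so that nothing changes modulo $P$; this works cleanly when $\varphi(R)$ is already fully $\Ff$-normalized (then $\Aut_S(\varphi(R))\cap K$ is Sylow in $K$ and the relevant $p$-subgroup can be $K$-conjugated into $\Aut_S$), but since $P\le R$ forces $\varphi(R)=\pi^{-1}(\bar\varphi(\bar R))$, the image is determined by $\bar\varphi$ and you cannot freely arrange full normalization. Deducing what is needed from the hypothesis that $\bar\varphi(\bar R)$ is merely fully $\Ff/P$-centralized takes a separate argument that your sketch does not supply; standard treatments close this gap via Alperin's fusion theorem or a more elaborate downward induction.
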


\begin{rmk}\label{rmk:KerAlpha}
It is easy to see that if $(\alpha,\alpha_0)$ is a morphism of fusion systems from $\Ff$ to $\Ff'$ then $\Ker(\alpha_0)$ is a strongly $\Ff$-closed subgroup. Moreover, if $\Ff$ is saturated, then $\Ff/\Ker(\alpha_0)$ is isomorphic to $\Image(\alpha)$. However, when $\Ff$ is not saturated, $\Image(\alpha)$ may not even be a fusion system, as non-composable morphisms may be sent to composable ones.
\end{rmk}

\subsection{Pre-fusion systems}

A fusion system on a given finite $p$-group $S$ is determined by its morphism sets. Thus one can construct a fusion system $\F$ by specifying a set of morphisms it should contain and then taking $\F$ to be the fusion system generated by those morphisms. This approach will be taken in Section \ref{sec:Char} when we construct fusion systems from bisets. To capture this construction we introduce the notion of pre-fusion systems.

\begin{defn} \label{defn:PreFus}
A \emph{pre-fusion system} on a finite $p$-group $S$ is a collection
\[ \PreFus = \{ \Hom_{\PreFus}(P,Q) \mid P,Q \leq S \} \,,\]
satisfying the following conditions
\begin{enumerate}
 \item $\Hom_{\PreFus}(P,Q) \subseteq \Inj(P,Q)$ for each pair of subgroups $P,Q \leq S$, 
  \item If $\varphi \in \Hom_{\PreFus}(P,Q)$ and $\varphi(P) \leq R \leq S$, then the composite $P \xrightarrow{\varphi} \varphi(P) \hookrightarrow R$ is in $\PreFus$. 
\end{enumerate}
\end{defn}

Note that a pre-fusion system need not be a category as we we require neither that the composite of two morphisms in $\PreFus$ is again in $\PreFus$, nor that the identity morphism of a subgroup in $S$ is in $\PreFus$. The second condition says we can restrict or extend morphisms in the target, and it follows that a pre-fusion system $\PreFus$ on $S$ is determined by the sets $\Hom_\PreFus(P,S)$.

We shall employ set operations on pre-fusion systems and fusion systems, with the understanding that the operations are applied to each morphism set. For example, if $\PreFus_1$ and $\PreFus_2$ are two pre-fusion systems on a finite $p$-group $S$, then $\PreFus_1 \cap \PreFus_2$ is the pre-fusion system with morphism sets $\Hom_{\PreFus_1}(P,Q) \cap \Hom_{\PreFus_2}(P,Q) $. It is easy to see that the intersection of two fusion systems is clearly a fusion systems, and this allows us to make the following definition.

\begin{defn}
The \emph{closure of $\PreFus$}, denoted $\overline{\PreFus}$, is the smallest fusion system on $S$ such that $\Hom_{\PreFus}(P,Q) \subseteq \Hom_{\overline{\PreFus}}(P,Q)$ for each pair of subgroups $P,Q \leq S$. We say that $\PreFus$ is \emph{closed} if $\overline{\PreFus} = \PreFus$.
\end{defn}

On occasion we will consider pre-fusion systems with a weaker form of closure.

\begin{defn}
A pre-fusion system $\PreFus$ on a finite $p$-group $S$ is \emph{level-wise closed} if the following holds for all $P,Q,R \leq S$.
\begin{enumerate}
 \item $\Hom_S(P,Q) \subseteq \Hom_\PreFus(P,Q)$.
 \item If $\varphi \in \Hom_\PreFus(P,Q)$ is a group isomorphism, then $\varphi^{-1} \in \Hom_\PreFus(Q,P)$.
 \item If $\varphi \in \Hom_\PreFus(P,Q)$ and $\psi \in \Hom_\PreFus(Q,R)$ are group isomorphisms, then $\psi \circ \varphi \in \Hom_\PreFus(P,R)$.
\end{enumerate}
\end{defn}

It is easy to show that a level-wise-closed pre-fusion system that is closed under restriction is closed. In a level-wise-closed pre-fusion system $\PreFus$ the morphism sets $\Hom_\PreFus(P,P)$ are groups of automorphisms, and we denote them by $\Aut_\PreFus(P)$. Furthermore, the notions of $\PreFus$-conjugacy, fully $\PreFus$-centralized and fully $\PreFus$-normalized subgroup extend to this context. Hence we can consider the following local saturation conditions.

\begin{defn} \label{defn:SatAtP}
Let $\PreFus$ be a level-wise-closed pre-fusion system on a finite $p$-group $S$. For a subgroup $P\leq S$, we say that $\PreFus$ is \emph{saturated at $P$} if the following two conditions hold.
\begin{enumerate}
 \item[I${}_P$] If $Q \in [P]_\PreFus$ is fully $\PreFus$-normalized, then $Q$ is fully $\PreFus$-centralized and $\Aut_S(Q)$ is a Sylow $p$-subgroup of $\Aut_\PreFus(Q)$.
 \item[II${}_P$] If $\varphi \in \Hom_\PreFus(P,S)$ is a homomorphism such that $\varphi(P)$ is fully $\PreFus$-centralized, then $\varphi$ extends to a homomorphism $\overline{\varphi} \in \Hom_{\overline{\PreFus}}(N_\varphi,S)$.
\end{enumerate}
\end{defn}

Clearly, a fusion system on $S$ is saturated if and only if it is saturated at every subgroup $P$ of $S$. As the notational distinction is slight, let us explicitly point out that in condition II${}_P$ we require only that the extension $\overline{\varphi}$ be in the closure $\overline{\PreFus}$, and not necessarily in $\PreFus$ itself.

\section{Bisets, the Burnside category and Mackey functors} \label{sec:Burnside}
In this section we recall the structure and main properties of the double Burnside ring $A(G,G)$ of $(G,G)$-bisets for a finite group $G$. In fact we work more generally, studying the modules $A(G,H)$ of finite $(G,H)$-bisets for finite groups $G$ and $H$. These modules form the morphism sets in the Burnside category $\Burnside$, and we think of Mackey functors as functors defined on this category. Throughout this section $G$, $H$ and $K$ will denote finite groups.

\subsection{The double Burnside module of bisets}
We begin by establishing our conventions for bisets.

\begin{defn}
A \emph{$(G,H)$-biset} is a set equipped with a right $G$-action and a left $H$-action, such that the actions commute. A biset is \emph{left-free} if the $H$-action is free, and \emph{right-free} if the $G$-action is free. A biset is \emph{bifree} if it is both left- and right-free. 
\end{defn} 

Given a $(G,H)$-biset $X$ one obtains a $(H \times G)$-set $\widehat{X}$, with the same underlying set, and $(H \times G)$-action given by $(h,g)x \defeq hxg^{-1}$. This gives a bijective correspondence between $(G,H)$-bisets and $(H\times G)$-sets, and it is often convenient to characterize a $(G,H)$-biset by the corresponding $(H \times G)$-set.

\begin{defn}
The \emph{Burnside module of $G$ and $H$} is the Grothendieck group $A(G,H)$ of isomorphism classes of finite, left-free $(G,H)$-sets. 
\end{defn}

That is, the isomorphism classes of finite, left-free $(G,H)$-bisets form a monoid with cancellation under disjoint union, and $A(G,H)$ is the group completion of this monoid.

The Burnside module $A(G,H)$ is a free abelian $\Z$-module with basis the isomorphism classes of indecomposable left-free $(G,H)$-sets. These are the $(G,H)$-sets corresponding to transitive $(H\times G)$-sets that restrict to free $(H \times 1)$-sets. We proceed to describe and parametrize this basis, starting with the next definition.

\begin{defn}
A \emph{$(G,H)$-pair} is a pair $(K,\varphi)$, consisting of a subgroup $K \leq G$ and a homomorphism $\varphi \colon K \to H$. 
\end{defn}

From a $(G,H)$-pair $(K,\varphi)$, one obtains a left-free $(G,H)$-set 
 \[ H \times_{(K,\varphi)} G \defeq (H\times G)/{\sim}\,, \]
where $\sim$ is the relation 
 \[ (x,ky) \sim (x\varphi(k),y),~ \text{for all~} x \in H, y \in G, k \in K\,, \]
and $G$ and $H$ act in the obvious way.
The corresponding $(H \times G)$-set is isomorphic to 
\[ (H\times G)/ \Delta(K,\varphi)\,, \]
where $\Delta(K,\varphi) \leq H\times G$ is the twisted diagonal (or graph) of $(K,\varphi)$, given by  
 \[ \Delta(K,\varphi) = \{  (\varphi(k),k) \mid k \in K  \} \,.\]
The isomorphism class of $(H\times G)/ \Delta(K,\varphi)$, and hence of $H \times_{(K,\varphi)} G$, is determined by the conjugacy class of $\Delta(K,\varphi)$. This motivates the next definition.

\begin{defn}
Let $(K,\varphi)$ and $(L,\psi)$ be
$(G,H)$-pairs. We say that $(K,\varphi)$ is \emph{conjugate} to $(L,\psi)$, 
and write $(K,\varphi) \con (L,\psi)$, if $\Delta(K,\varphi)$ is 
conjugate to $\Delta(L,\psi)$ in $H \times G$.  We refer to $(G,H)$-conjugacy classes of $(G,H)$-pairs as $(G,H)$-classes, and denote the $(G,H)$-class of $(K,\varphi)$ by $\langle K,\varphi \rangle$. 

Similarly we say that $(K,\varphi)$ is \emph{subconjugate} to $(L,\psi)$, and write $(K,\varphi) \subcon (L,\psi)$, if $\Delta(K,\varphi)$ is subconjugate to $\Delta(L,\psi)$. In this case we also say that the $(G,H)$-class $\langle K,\varphi \rangle$ is \emph{subconjugate} to $\langle L,\psi \rangle$, and write
 $\langle K,\varphi \rangle \subcon \langle L,\psi \rangle$.
\end{defn}

Notice that the subconjugacy relation $\langle K,\varphi \rangle \subcon \langle L,\psi \rangle$ implies that every representative of the $(G,H)$-class $\langle K,\varphi \rangle$ is subconjugate to every representative of the $(G,H)$-class $\langle L,\psi \rangle$. 

The following characterization of subconjugacy is sometimes useful. The proof is left to the reader.

\begin{lem} 
Let $(K,\varphi)$ and $(L, \psi)$ be  $(G,H)$-pairs. Then $(K,\varphi)$ is subconjugate to $(L,\psi)$ if and only if there exist $x \in N_G(K,L)$ and $y \in N_H(\varphi(K),\psi(L))$ 
such that $c_y \circ \varphi = \psi \circ c_x$. Conjugacy holds if and only if the additional condition $L=\lsup{K}{x}$ is satisfied.
\end{lem}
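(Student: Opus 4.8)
The plan is to prove both directions by unwinding the definitions of $\Delta(K,\varphi)$ and of conjugacy/subconjugacy in $H \times G$. First I would handle the subconjugacy statement, since the conjugacy statement follows by adding one bookkeeping condition. Suppose $(K,\varphi) \subcon (L,\psi)$, so that there is an element $(b,a) \in H \times G$ with $\lsup{(b,a)}{\Delta(K,\varphi)} \leq \Delta(L,\psi)$. Conjugating a generator $(\varphi(k),k)$ of $\Delta(K,\varphi)$ by $(b,a)$ yields $(b\varphi(k)b^{-1}, aka^{-1})$, and membership in $\Delta(L,\psi)$ says precisely that $aka^{-1} \in L$ and $\psi(aka^{-1}) = b\varphi(k)b^{-1}$, for all $k \in K$. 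Setting $x = a$ and $y = b^{-1}$ (or an appropriate sign convention, to be pinned down against the paper's definition of $c_g$ and the action $(h,g)x = hxg^{-1}$), the first condition gives $\lsup{K}{x} \leq L$, i.e. $x \in N_G(K,L)$; the second rearranges to $c_y \circ \varphi = \psi \circ c_x$ as maps $K \to H$, which in particular forces $y \in N_H(\varphi(K), \psi(L))$ once one checks that $\psi c_x$ has image $\psi(\lsup{K}{x}) \leq \psi(L)$ and $c_y \varphi$ has image $\lsup{\varphi(K)}{y}$.

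For the converse, given $x \in N_G(K,L)$ and $y \in N_H(\varphi(K),\psi(L))$ with $c_y \circ \varphi = \psi \circ c_x$, I would simply run the computation backwards: form $(b,a) = (y^{-1}, x)$ (matching whatever sign convention was fixed above) and verify directly that conjugating $\Delta(K,\varphi)$ by $(b,a)$ lands inside $\Delta(L,\psi)$, using the two hypotheses exactly as in the forward direction. This shows $\Delta(K,\varphi) \subcon \Delta(L,\psi)$, hence $(K,\varphi) \subcon (L,\psi)$.

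For the conjugacy refinement, note that $\lsup{(b,a)}{\Delta(K,\varphi)} = \Delta(L,\psi)$ exactly when the inclusion above is an equality, and since $\lsup{(b,a)}{\Delta(K,\varphi)}$ is the graph $\Delta(\lsup{K}{x}, c_y \circ \varphi \circ c_x^{-1})$, equality of subgroups of $H \times G$ is equivalent to equality of the underlying domains together with equality of the homomorphisms on that domain. The homomorphisms already agree on $\lsup{K}{x}$ by hypothesis, so the only extra requirement is $\lsup{K}{x} = L$. This gives the stated ``additional condition $L = \lsup{K}{x}$''.

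The main obstacle is purely one of bookkeeping: the paper uses the action convention $(h,g) \cdot x = hxg^{-1}$ in passing from a biset to an $(H\times G)$-set, so I must be careful about whether the relevant element of $G$ appearing in the lemma is $a$ or $a^{-1}$, and likewise for $H$, to make the statement ``$c_y \circ \varphi = \psi \circ c_x$'' come out with the signs exactly as written rather than with inverses. Once the convention is fixed consistently at the start, everything else is a routine verification, which is presumably why the authors left it to the reader.
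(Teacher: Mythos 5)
The paper leaves this lemma to the reader, so there is no paper proof to compare against; your argument follows the natural route of unwinding the definition of $\Delta(-,-)$ and subconjugacy in $H \times G$, and its logical structure is correct. One point to settle, since you explicitly flag uncertainty about it: the sign you want is $y = b$, not $y = b^{-1}$. From $(b,a)(\varphi(k),k)(b,a)^{-1} = (b\varphi(k)b^{-1}, aka^{-1}) \in \Delta(L,\psi)$ you read off $aka^{-1} \in L$ and $b\varphi(k)b^{-1} = \psi(aka^{-1})$, i.e.\ $c_b \circ \varphi = \psi \circ c_a$ on $K$, so taking $(x,y) = (a,b)$ gives exactly the relation in the statement with no inversion. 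Also, the biset-action convention $(h,g)\cdot x = hxg^{-1}$ that you worry about is not relevant here: subconjugacy of $\Delta(K,\varphi)$ in $\Delta(L,\psi)$ is a statement about subgroups of $H\times G$ and their conjugation, which is independent of how one regards $(G,H)$-bisets as $(H\times G)$-sets. With that clarification the rest of your argument, including the converse (taking $(b,a) = (y,x)$) and the conjugacy refinement via $\lsup{K}{x} = L$, goes through as written.
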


We can now describe the $\Z$-module structure of double Burnside modules.
\begin{lem} \label{lem:BurnsideBasis}
The Burnside module $A(G,H)$ is a free $\Z$-module with one basis element $[K,\varphi]$ for each $(G,H)$-class $\langle K, \varphi \rangle $.
\end{lem}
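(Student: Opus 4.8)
The plan is to establish the claimed basis by the standard orbit-decomposition argument for $(H\times G)$-sets, combined with the dictionary between left-free $(G,H)$-bisets and $(H\times G)$-sets that restrict freely to $H\times 1$, and the parametrization of the relevant subgroups of $H\times G$ by twisted diagonals. First I would recall that $A(G,H)$ is, by construction, the group completion of the monoid of isomorphism classes of finite left-free $(G,H)$-bisets under disjoint union, so it suffices to show that this monoid is free on the indecomposable (equivalently, transitive) objects and that these are in bijection with the $(G,H)$-classes $\langle K,\varphi\rangle$. The freeness part is formal: every finite biset decomposes uniquely, up to reordering, as a disjoint union of its orbits, each of which is transitive, and disjoint union has cancellation on isomorphism classes, so the group completion is the free $\Z$-module on the set of isomorphism classes of transitive left-free $(G,H)$-bisets.

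Next I would identify that set. Passing to the associated $(H\times G)$-set $\widehat{X}$, a transitive left-free $(G,H)$-biset corresponds to a transitive $(H\times G)$-set whose restriction to $H\times 1$ is free; every transitive $(H\times G)$-set is of the form $(H\times G)/D$ for a subgroup $D\leq H\times G$, unique up to conjugacy, and the freeness of the $H\times 1$-action translates exactly into $D\cap (H\times 1)=\{1\}$. I would then invoke the elementary fact (Goursat-type) that a subgroup $D\leq H\times G$ meeting $H\times 1$ trivially is precisely a twisted diagonal $\Delta(K,\varphi)=\{(\varphi(k),k)\mid k\in K\}$ for a unique $(G,H)$-pair $(K,\varphi)$ — namely $K$ is the image of $D$ under projection to $G$, and $\varphi$ is the well-defined homomorphism sending $k$ to the unique element of $H$ with $(\varphi(k),k)\in D$, well-definedness being exactly the condition $D\cap(H\times 1)=1$. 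Under this correspondence, conjugacy of subgroups $D$ in $H\times G$ matches the relation $\con$ on $(G,H)$-pairs by definition, so transitive left-free $(G,H)$-bisets up to isomorphism are in bijection with $(G,H)$-classes $\langle K,\varphi\rangle$. Writing $[K,\varphi]$ for the basis element corresponding to $H\times_{(K,\varphi)}G$, which the excerpt has already shown realizes the $(H\times G)$-set $(H\times G)/\Delta(K,\varphi)$, finishes the proof.

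I do not anticipate a serious obstacle here; the only point requiring a little care is the bookkeeping in the Goursat step — checking that $\varphi$ is a genuine homomorphism and that the assignment $D\mapsto(K,\varphi)$ is inverse to $(K,\varphi)\mapsto\Delta(K,\varphi)$ on the nose, and that this bijection is conjugation-equivariant. Since the excerpt already records the description of $H\times_{(K,\varphi)}G$, its associated $(H\times G)$-set, and the fact that the isomorphism class depends only on the conjugacy class of $\Delta(K,\varphi)$, most of this is in hand, and the proof is essentially an assembly of those observations together with the orbit decomposition. I would state it concisely, citing the preceding discussion for the biset/$(H\times G)$-set dictionary and for the identification of $H\times_{(K,\varphi)}G$ with $(H\times G)/\Delta(K,\varphi)$.
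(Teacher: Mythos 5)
Your proposal is correct and follows the same route as the paper: orbit decomposition reduces to classifying transitive left-free $(G,H)$-bisets, and the translation to $(H\times G)$-sets of the form $(H\times G)/\Delta(K,\varphi)$ is exactly the step the paper says is ``left to the reader.'' Your Goursat-type identification of subgroups meeting $H\times 1$ trivially with twisted diagonals is precisely the omitted verification, so you've supplied the details the paper elides rather than taken a genuinely different path.
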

\begin{proof}
A proof has already been outlined in the discussion above. The only thing left to show 
is that a transitive $(H\times G)$-set that restricts to a free $(H \times 1)$-set must 
be isomorphic to $(H\times G)/\Delta(K,\varphi)$ for some $(G,H)$-pair $(K,\varphi)$. This is left to the reader.
\end{proof}

Notice in particular that $A(G,H)$ is a finitely generated $\Z$-module, and hence Noetherian. Therefore the $p$-localization ${A(G,H)}\pLoc$ and $p$-completion $\pComp{A(G,H)}$ can be obtained by tensoring with $\Z\pLoc$ and $\Zp$, respectively. It follows that Lemma \ref{lem:BurnsideBasis} holds after $p$-localization or $p$-completion. 

The basis described in Lemma \ref{lem:BurnsideBasis} will be used throughout the paper, and we refer to it as the \emph{standard basis} of $A(G,H)$. 

\begin{defn}
For each $(G,H)$-class $\langle K,\varphi \rangle$, let 
$c_{\langle K,\varphi \rangle} \colon A(G,H) \to \Z$ be the homomorphism sending $X \in A(G,H)$ to the coefficient at $[K,\varphi]$ in the standard basis decomposition of $X$.
\end{defn}
The homomorphisms $c_{\langle K,\varphi \rangle}$ are equivalently defined by requiring that
 \[ X = \sum_{\langle K,\varphi \rangle} c_{\langle K,\varphi \rangle}(X) [K,\varphi]  \]
for every $X \in A(G,H)$. At times we will break this up into a double sum
 \[ X = \sum_{[K]_G} \left( \sum_{[\varphi] \in \Rep(K,H)}c_{\langle K,\varphi \rangle}(X) \; [K,\varphi] \right) \, ,\]
where the outer sum runs over $G$-conjugacy classes of subgroups, and the inner sum runs over $H$-conjugacy classes of morphsisms.

We also denote the $p$-localization or $p$-completion of $c_{\langle K,\varphi \rangle}$ by $c_{\langle K,\varphi \rangle}$.

\subsection{Fixed points} An alternative and extremely useful way to keep track of $(G,H)$-bisets is by fixed points, and this technique is fundamental to the proofs of the main theorems in this paper.

For a $(G,H)$-biset $X$, and a $(G,H)$-pair $(K,\varphi)$, we set 
 \[ X^{ (K,\varphi) } \defeq \{ x \in X \mid \forall k \in K: xk = \varphi(k)x  \}\,. \]  
Notice that $X^{ (K,\varphi) } = \widehat{X}^{\Delta(K,\varphi)}$ (as sets). As the number $|\widehat{X}^{\Delta(K,\varphi)}|$ does not change when we conjugate $\Delta(K,\varphi)$,
the same is true for the number $|X^{ (K,\varphi)}|$, and we can make the following
definition. 

\begin{defn}
For a $(G,H)$-class $\langle  K,\varphi \rangle$, let $\Phi_{\langle  K,\varphi \rangle} \colon A(G,H) \to \Z$ be the $\Z$-module homomorphism defined by setting
\[ \Phi_{\langle  K,\varphi \rangle}(X) = \left| X^{(K,\varphi)} \right| \]
for $(G,H)$-set $X$, and extending linearly.
\end{defn}

Collecting the numbers $\Phi_{\langle  K,\varphi \rangle}(X)$ for all $(G,H)$-classes $\langle  K,\varphi \rangle$, one obtains the \emph{table of marks} for $X$, so called because it determines $X$ up to isomorphism. This is recorded in the following proposition, which we use extensively throughout the paper.

\begin{prop}[\cite{Burnside}]\label{prop:PhiIsInjective}
For finite groups $G$ and $H$, the morphism
 \[ \Phi \colon A(G,H) \xrightarrow{\prod\limits_{\langle K,\varphi \rangle} \Phi_{\langle K,\varphi \rangle} } \prod_{\langle K,\varphi \rangle}  \Z\,,  \]
where the products run over all $(G,H)$-classes $\langle K,\varphi \rangle $, is injective.
\end{prop}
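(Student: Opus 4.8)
The plan is to show that the map $\Phi$ is injective by exhibiting it as a triangular map with respect to the subconjugacy partial order on $(G,H)$-classes, so that a non-zero element cannot lie in the kernel.

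First I would observe that $\Phi$ is a $\Z$-module homomorphism between free $\Z$-modules, and by Lemma \ref{lem:BurnsideBasis} both the domain and the index set of the codomain are in bijection with the set of $(G,H)$-classes $\langle K,\varphi\rangle$. So it suffices to understand the matrix of $\Phi$ in the standard basis, i.e.\ the numbers
\[ \Phi_{\langle K,\varphi\rangle}\bigl([L,\psi]\bigr) = \bigl|\,(H\times_{(L,\psi)}G)^{(K,\varphi)}\,\bigr| = \bigl|\,\bigl((H\times G)/\Delta(L,\psi)\bigr)^{\Delta(K,\varphi)}\,\bigr|\,. \]
The key point is the elementary fact about coset spaces: for a subgroup $\Delta \leq H\times G$ and any subgroup $\Gamma \leq H\times G$, the fixed-point set $\bigl((H\times G)/\Delta\bigr)^{\Gamma}$ is non-empty if and only if $\Gamma$ is subconjugate to $\Delta$. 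Applying this with $\Gamma = \Delta(K,\varphi)$ and $\Delta = \Delta(L,\psi)$, I get that $\Phi_{\langle K,\varphi\rangle}([L,\psi]) = 0$ unless $\langle K,\varphi\rangle \subcon \langle L,\psi\rangle$, and moreover when $\langle K,\varphi\rangle = \langle L,\psi\rangle$ the fixed-point set is non-empty, so the ``diagonal'' entries $\Phi_{\langle K,\varphi\rangle}([K,\varphi])$ are strictly positive integers.

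Next I would fix any total order refining the subconjugacy partial order (this is possible since subconjugacy is a partial order on the finite set of $(G,H)$-classes, once we note it is antisymmetric: mutual subconjugacy of subgroups of a finite group forces conjugacy). With respect to this order the matrix $\bigl(\Phi_{\langle K,\varphi\rangle}([L,\psi])\bigr)$ is upper (or lower) triangular with non-zero diagonal entries. A standard argument then finishes it: given a non-zero $X = \sum c_{\langle L,\psi\rangle}(X)[L,\psi]$, let $\langle L_0,\psi_0\rangle$ be a \emph{minimal} element (with respect to subconjugacy) among those with $c_{\langle L_0,\psi_0\rangle}(X)\neq 0$; then every other $\langle L,\psi\rangle$ with $c_{\langle L,\psi\rangle}(X)\neq 0$ fails to have $\langle L_0,\psi_0\rangle \subcon \langle L,\psi\rangle$, so $\Phi_{\langle L_0,\psi_0\rangle}([L,\psi]) = 0$ for those, and hence
\[ \Phi_{\langle L_0,\psi_0\rangle}(X) = c_{\langle L_0,\psi_0\rangle}(X)\cdot \Phi_{\langle L_0,\psi_0\rangle}([L_0,\psi_0]) \neq 0\,, \]
so $\Phi(X)\neq 0$.

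The main obstacle — really the only non-formal ingredient — is the fixed-point criterion for transitive $G$-sets together with the translation between the biset $H\times_{(K,\varphi)}G$, its associated $(H\times G)$-set $(H\times G)/\Delta(K,\varphi)$, and the identity $X^{(K,\varphi)} = \widehat{X}^{\Delta(K,\varphi)}$; all of this has already been set up in the excerpt, so here it is a matter of assembling it carefully and checking that subconjugacy of twisted diagonals is genuinely antisymmetric modulo conjugacy. Everything else is the standard ``triangular matrix over $\Z$ is injective'' argument.
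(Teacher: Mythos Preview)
Your overall strategy is correct and is the standard proof (the paper itself does not supply one, simply citing Burnside). However, you have the direction of the triangularity reversed in the final step. You correctly observe that $\Phi_{\langle K,\varphi\rangle}([L,\psi]) = 0$ unless $\langle K,\varphi\rangle \subcon \langle L,\psi\rangle$; but then, to isolate a single term, you must choose $\langle L_0,\psi_0\rangle$ \emph{maximal} (not minimal) among classes with nonzero coefficient. If $\langle L_0,\psi_0\rangle$ is merely minimal, the other classes $\langle L,\psi\rangle$ appearing in $X$ may well satisfy $\langle L_0,\psi_0\rangle \subcon \langle L,\psi\rangle$, so $\Phi_{\langle L_0,\psi_0\rangle}([L,\psi])$ need not vanish and the displayed equation can fail. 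With a maximal $\langle L_0,\psi_0\rangle$ the argument goes through verbatim. (Compare the paper's own use of this triangularity in the proof of part (a) of Lemma~\ref{lem:RephraseStable}, where a maximal class is chosen for exactly this reason.)
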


Again, this proposition holds with $\Z\pLoc$ or $\Zp$ coefficients, and we also use $\Phi_{\langle  K,\varphi \rangle}$ to denote the $p$-localization or $p$-completion of $\Phi_{\langle  K,\varphi \rangle}$. 

The following lemma describes the relationship between the standard basis and fixed-point methods of bookkeeping for $(G,H)$-bisets.

\begin{lem} \label{lem:FixedPtsOnBasis}
Let $G$ and $H$ be finite groups, and let $\langle K,\varphi \rangle$ and $\langle L,\psi \rangle$ be $(G,H)$-classes. Then
\[ \Phi_{\langle K,\varphi \rangle}([L,\psi]) = \frac{|N_{\varphi,\psi}|}{|L|} \cdot |C_H(\varphi(K))|\,, \]
where 
\[ N_{\varphi,\psi} = \{ x \in N_G(K,L) \mid \exists y \in H : c_y \circ \varphi = \psi \circ c_x  \}\,.\]
In particular, $\Phi_{\langle K,\varphi \rangle}([L,\psi]) = 0$ unless $\langle K,\varphi \rangle$ is subconjugate to $\langle L,\psi \rangle$.
\end{lem}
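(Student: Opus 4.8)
The plan is to compute $\Phi_{\langle K,\varphi\rangle}([L,\psi])$ directly by counting the fixed-point set $([L,\psi])^{(K,\varphi)}$, recalling that $[L,\psi]$ is represented by the $(G,H)$-biset $H\times_{(L,\psi)}G$ whose underlying set is $(H\times G)/{\sim}$ with $(x,\ell y)\sim(x\psi(\ell),y)$. First I would unwind the definition: an element $[x,y]$ lies in $([L,\psi])^{(K,\varphi)}$ precisely when $[x,yk]=[\varphi(k)x,y]$ for all $k\in K$, i.e.\ when for each $k\in K$ there exists $\ell=\ell(k)\in L$ with $yk=\ell y$ and $\varphi(k)x=x\psi(\ell)$. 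The first equation says $yky^{-1}=\ell\in L$ for all $k\in K$, which is exactly the condition $y\in N_G(K,L)$ together with $\ell=c_y(k)$; the second then reads $\varphi(k)=x\psi(c_y(k))x^{-1}$, i.e.\ $c_{x^{-1}}\circ\psi\circ c_y=\varphi$, equivalently $x$ witnesses that $c_{y}$ intertwines $\varphi$ and $\psi$ in the sense of the displayed condition defining $N_{\varphi,\psi}$.

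Next I would organize the count by the $G$-component. Having fixed a representative $(x,y)$, I note that the class $[x,y]$ only depends on $y$ up to left multiplication by $L$, so $y$ ranges over $L\backslash N_G(K,L)$ — but one must be careful, since changing the representative of a class also changes $x$, so the cleaner bookkeeping is to count pairs $(x,y)\in H\times G$ satisfying the two conditions above and then divide by $|L|$ (the size of the equivalence classes, using left-freeness of the $H$-action to see the classes have size exactly $|L|$). For the count to be nonempty we need $y\in N_{\varphi,\psi}$ by the analysis above; and for each such $y$, the set of admissible $x\in H$ is a coset of $C_H(\varphi(K))$: if $x_0$ is one solution of $c_{x_0^{-1}}\circ\psi\circ c_y=\varphi$ then $x$ is another solution iff $x x_0^{-1}$ centralizes $\varphi(K)$. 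This gives $|C_H(\varphi(K))|$ choices of $x$ for each of the $|N_{\varphi,\psi}|$ choices of $y$, hence $|([L,\psi])^{(K,\varphi)}| = \frac{|N_{\varphi,\psi}|\cdot|C_H(\varphi(K))|}{|L|}$, as claimed.

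Finally, the ``in particular'' clause follows because $N_{\varphi,\psi}=\varnothing$ unless there exist $x\in N_G(K,L)$ and $y\in H$ with $c_y\circ\varphi=\psi\circ c_x$, which by the subconjugacy criterion (the lemma preceding Lemma \ref{lem:BurnsideBasis}) is exactly the statement that $\langle K,\varphi\rangle\subcon\langle L,\psi\rangle$. I expect the main obstacle to be the bookkeeping in the middle step: keeping track of which quantities are counted in $H\times G$ versus in the quotient, and verifying carefully that the fibres of the ``choose $x$'' map are genuinely single cosets of $C_H(\varphi(K))$ of full size (this uses that $\varphi$ need not be injective only through $\varphi(K)$, so no subtlety there, but one should check that two genuinely distinct representatives don't get double-counted, which is where left-freeness of the $H$-action is used). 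Everything else is a routine translation between the twisted-product description and the fixed-point description.
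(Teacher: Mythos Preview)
Your proposal is correct and follows essentially the same approach as the paper: count pairs $(x,y)\in H\times G$ lifting a $(K,\varphi)$-fixed point, observe that the $G$-coordinate ranges over $N_{\varphi,\psi}$ while the $H$-coordinate is determined up to $C_H(\varphi(K))$, and divide by $|L|$ since $\Delta(L,\psi)$ acts freely with orbits the equivalence classes. The only blemish is a harmless sign slip---from $\varphi(k)=x\psi(c_y(k))x^{-1}$ you should get $\varphi=c_x\circ\psi\circ c_y$, not $c_{x^{-1}}\circ\psi\circ c_y$---but this does not affect the argument, since it is $x^{-1}$ that then serves as the $H$-witness in the definition of $N_{\varphi,\psi}$.
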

\begin{proof}
We count the pairs $(x,y)\in G\times H$ whose class in $H \times_{(L,\psi)} G$ is preserved by the action of every element in $\Delta(K,\varphi)$. 
They are such that for every $k\in K$ there exists $l\in L$ with $(\varphi(k)y,xk^{-1})=(y\psi(l),l^{-1}x)$. This gives $x\in N_{\varphi,\psi}$. Once 
$x\in N_{\varphi,\psi}$ is fixed, $y$ is determined up to an element in $C_H(\varphi(K))$. Thus there are 
$|N_{\varphi,\psi}| \cdot |C_H(\varphi(K))|$ pairs $(x,y)\in G\times H$ whose class in $H \times_{(L,\psi)} G$ is preserved by the action of every element in 
$\Delta(K,\varphi)$. Now, $\Delta(L,\psi)$ acts freely on these pairs by $(\psi(l),l)\cdot(y,x)=(y\psi(l),l^{-1}x)$ and any orbit of the action is an equivalence class in $H \times_{(L,\psi)} G$. The result follows.
\end{proof}

Note in particular that $N_\varphi = N_{\varphi,\varphi}$. We shall need the following observation later.

\begin{lem}\label{lem:NisBiset}
Let $G$ and $H$ be finite groups, and let $\langle K,\varphi \rangle$ and $\langle L,\psi \rangle$ be $(G,H)$-classes. Then $N_{\varphi,\psi}$ is a left-free $(N_\varphi,N_\psi)$-biset.
\end{lem}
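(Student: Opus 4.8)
The plan is to verify directly that the left $N_\varphi$-action and the right $N_\psi$-action (given by the group operation in $G$) preserve $N_{\varphi,\psi}$, commute, and are free. Recall that
\[ N_{\varphi,\psi} = \{ x \in N_G(K,L) \mid \exists y \in H : c_y \circ \varphi = \psi \circ c_x  \}\,,\]
so $x\in N_{\varphi,\psi}$ precisely when $\lsup{K}{x}\le L$ and conjugation by $x$ intertwines $\varphi$ and $\psi$ up to an inner automorphism of $H$. Here $N_\varphi = N_{\varphi,\varphi}\le N_G(K)$ and $N_\psi = N_{\psi,\psi}\le N_G(L)$; these are subgroups of $G$ (the intertwining condition is closed under products and inverses, and $\lsup{K}{x}=K$, $\lsup{L}{z}=L$ for $x\in N_\varphi$, $z\in N_\psi$), so the claim makes sense.

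First I would check the right action: for $x\in N_{\varphi,\psi}$ and $z\in N_\psi$, we have $\lsup{K}{(xz)} = \lsup{(\lsup{K}{x})}{z} \le \lsup{L}{z} = L$, and if $c_y\circ\varphi = \psi\circ c_x$ and $c_{y'}\circ\psi = \psi\circ c_z$, then $c_{y'y}\circ\varphi = c_{y'}\circ\psi\circ c_x = \psi\circ c_z\circ c_x = \psi\circ c_{xz}$, so $xz\in N_{\varphi,\psi}$. Symmetrically, for $w\in N_\varphi$ with $c_{y''}\circ\varphi = \varphi\circ c_w$, one gets $\lsup{K}{(wx)}\le \lsup{K}{x}\le L$ and $c_{y(y'')^{-1}}\circ\varphi$... more carefully, $c_y\circ\varphi\circ c_{w^{-1}} = \psi\circ c_x\circ c_{w^{-1}} = \psi\circ c_{xw^{-1}}$ after rearranging, so $wx\in N_{\varphi,\psi}$, giving the left action. (I'd do this bookkeeping cleanly; it is the routine part.) The two actions commute since they are both restrictions of left and right multiplication in the group $G$, which associate. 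Freeness of both actions is immediate because multiplication in a group is free: if $wx = x$ then $w = 1$, and similarly on the right.

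There is really no serious obstacle here — the only mild subtlety is making sure the intertwining data compose correctly, i.e.\ tracking the auxiliary elements $y\in H$ through products so that one stays inside $N_{\varphi,\psi}$ rather than some larger transporter. I would present the argument as: $N_{\varphi,\psi}$ is a union of double cosets $N_\varphi \backslash \{\,\} / N_\psi$ inside $N_G(K,L)$, hence a sub-$(N_\varphi,N_\psi)$-biset of $G$ viewed with its multiplication biset structure, and sub-bisets of a free biset are free. This keeps the write-up to a few lines. The one thing to state explicitly is why $N_\varphi$ and $N_\psi$ are groups in the first place, which follows from the characterization of subconjugacy and the fact that $N_\varphi = N_{\varphi,\varphi}$, $N_\psi = N_{\psi,\psi}$ consist exactly of those $x$ with $\lsup{K}{x}=K$ (resp.\ $\lsup{L}{x}=L$) realizing the identity $(G,H)$-class self-map up to $H$-conjugacy, a condition manifestly closed under multiplication and inversion.
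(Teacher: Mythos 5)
Your proof takes the same basic approach as the paper --- verify directly that the multiplication actions preserve $N_{\varphi,\psi}$, with freeness then immediate --- but you have the two sides of the biset structure swapped, and the algebraic identities you invoke to justify the swapped sides are false. Under the paper's conventions a $(N_\varphi,N_\psi)$-biset carries a \emph{right} $N_\varphi$-action and a \emph{left} $N_\psi$-action, whereas you try to put $N_\psi$ on the right and $N_\varphi$ on the left.

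Concretely: you write $\lsup{K}{(xz)} = \lsup{(\lsup{K}{x})}{z}$, but $\lsup{(\lsup{K}{x})}{z} = z(xKx^{-1})z^{-1} = \lsup{K}{(zx)}$, which is not $\lsup{K}{(xz)}$ in general. You write $c_z\circ c_x = c_{xz}$, but in fact $c_z\circ c_x = c_{zx}$. And the inequality $\lsup{K}{(wx)}\le\lsup{K}{x}$ fails, since $w\in N_\varphi$ normalizes $K$, not $\lsup{K}{x}$. Repairing each of these forces $xz$ to become $zx$ and $wx$ to become $xw$: what you are then correctly showing is that $ux\in N_{\varphi,\psi}$ for $u\in N_\psi$ (a left $N_\psi$-action) and $xw\in N_{\varphi,\psi}$ for $w\in N_\varphi$ (a right $N_\varphi$-action), which is exactly the structure the lemma asserts and the paper verifies. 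Once the sides are fixed, the rest of your reasoning --- commutativity of the two actions from associativity in $G$, and freeness of left multiplication by a subgroup --- is sound and coincides with the paper's argument.
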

\begin{proof} One only need to show that the left $N_\psi$-multiplication and the right $N_\varphi$-multiplication define group actions on  $N_{\varphi,\psi}$. Left-freeness of $N_{\varphi,\psi}$ follows from the left-freeness of $[L,\psi]$. Take $x\in N_{\varphi,\psi}$ and $u\in N_\psi$ . By definition, there exist $y\in H$ and $v\in N_H(\psi(L))$ such that $c_y \circ \varphi = \psi \circ c_x$ and $c_v \circ \psi = \psi \circ c_u$. But then 
$$c_{vy} \circ \varphi = c_v \circ c_y \circ \varphi = c_v \circ \psi \circ c_x=\psi \circ c_u \circ c_x =\psi\circ c_{ux}$$
and, hence, $ux\in N_{\varphi,\psi}$. This shows that the left $N_\psi$-multiplication on $N_{\varphi,\psi}$ is a group action. The right $N_\varphi$-multiplication on  $N_{\varphi,\psi}$ is treated similarly.
\end{proof}

\subsection{The Burnside category and Mackey functors}
There is a composition pairing
\[ A(H,K) \times A(G,H) \to A(G,K)\,, \]
induced on isomorphism classes of bisets by
\[ [X] \circ [Y] = [X \times_H Y] \,. \]
This can be described on basis elements via the double coset formula
\[ [A,\varphi]_H^K \circ [B,\psi]_G^H = \DCF{A}{\varphi}{B}{\psi}{H}_G^K \, .\]
The composition pairing is associative and bilinear, prompting us to make the following definition.

\begin{defn}
The \emph{Burnside category} $\Burnside$ is the category whose objects are the finite groups and whose morphism sets are given by 
\[ \Mor_{\Burnside}(G,H) = A(G,H), \]
with composition given by the composition pairing $\circ$. 
\end{defn}

The Burnside category is a $\Z$-linear category, in the sense that morphism sets are $\Z$-modules and composition is bilinear. Given a commutative ring $R$, we obtain an $R$-linear category $\Burnside R$ by tensoring every morphism set in $\Burnside$ with $R$. We are primarily interested in the cases $\Burnside \Z\pLoc$ and $\Burnside\Zp$, which we call the \emph{$p$-local} and \emph{$p$-complete} Burnside categories, respectively. 

\begin{defn}
Let $R$ be a commutative ring. A \emph{globally-defined, $R$-linear Mackey functor} is a functor $M \colon \Burnside \to R\Modules$, where $R\Modules$ is the category of $R$-modules. The functor can be either covariant or contravariant.
\end{defn}
Notice that an $R$-linear Mackey functor $M$ extends uniquely to a functor $\Burnside R \to R\Modules$, which we also denote by $M$. When $R$ is a $p$-local ring, we say that an $R$-linear Mackey functor is \emph{$p$-local}. In this case a functor $M \colon \Burnside \to R\Modules$ extends uniquely to a functor $M \colon \Burnside\Z\pLoc \to R\Modules$, and we often think of a $p$-local Mackey functor as a functor defined on $\Burnside\Z\pLoc$. Similarly, when $R$ is a $p$-complete ring, we say that an $R$-linear Mackey functor is \emph{$p$-complete}, and a $p$-complete Mackey functor is equivalent to a functor defined on $\Burnside\Zp$.

A Mackey functor, as defined here, is a functor $M$ defined on finite groups which, in the contravariant setting, allows restriction along group homomorphisms and transfer (induction) along inclusions of subgroups, with conjugation in a group $G$ acting trivially on $M(G)$. Standard examples are the group cohomology functor $H^*(-;R)$, and the functor $A(-,G)\otimes R$ for a fixed group $G$. More generally, any generalized cohomology theory $E^*$ gives rise to a Mackey functor $G \mapsto E^*(BG)$.

The term ``globally defined,'' here means that the functor is defined for all finite groups. (Authors such as Bouc, Th\'evenaz and Webb have studied more broadly defined biset functors, defined on a larger category where morphism sets arise from bisets with no condition of left-freeness.) We will also have occasion to consider more restrictive Mackey functors.

\begin{defn} 
Let $\Burnside_p $ be the full subcategory of $\Burnside $ whose objects are the finite $p$-groups. For a commutative ring $R$, a \emph{$p$-defined, $R$-linear Mackey functor} is a functor $M \colon \Burnside_p \to R\Modules$.
\end{defn}

The ``classical notion'' of Mackey functor for a group $G$ is a pair of functors $(M_*,M^*)$, one covariant the other contravariant, that are defined on the category of finite $G$-sets and satisfy $M_*(X) = M^*(X)$. The functors are required to be additive with respect to disjoint union and to satisfy a ``pullback condition'' that corresponds to the double coset formula. Such a functor can be seen to be equivalent to a functor $M$ defined on the category $\Burnside_G$ whose objects are the subgroups of $G$, and with morphism sets $A_G(H,K)$ the submodules of $A(H,K)$ generated by basis elements of the form $[L,c_g]_H^K$ for $g \in N_G(H,K)$. In other words, $M$ only allows restriction along conjugation in $G$. The analogous construction for fusion systems is the following.

\begin{defn} \label{def:FMackey}
Let $\F$ be a fusion system on a finite $p$-group $S$. For subgroups $P,Q \leq S$, let $A_\F(P,Q)$ be the submodule of $A(P,Q)$ generated by basis elements of the form $[T,\varphi]$ with $\varphi \in \Hom_\F(P,Q)$. An element $X$ in $A(P,Q)$ is \emph{$\F$-generated} if $X \in A_\F(P,Q)$. Let $\Burnside_{\F}$ be the subcategory whose objects are the subgroups of $S$, and whose morphism sets are the modules  $A_{\F}(P,Q)$. 
\end{defn}

To make sense in the context of a fusion system $\F$, a Mackey functor should at least be defined on the category $\Burnside_\F$. 

\begin{defn}
Let $\F$ be a fusion system on a finite $p$-group $S$. For a commutative ring $R$, an \emph{$\F$-defined} Mackey functor is a functor $\Burnside' \to R\Modules$ defined on a subcategory $\Burnside'$ of $\Burnside$ containing $\Burnside_\F$.
\end{defn}

\subsection{Augmentation}
A particular Mackey functor that will be used throughout the paper is the \emph{augmentation functor} $\countS$, which we describe here. 

\begin{defn}
The \emph{augmentation} $\countS \colon A(G,H) \to \Z$ is the homomorphism defined on the isomorphism class of a biset $X$ by \[ \countS([X]) = |G\backslash X| = |X|/|G| .\] 
\end{defn}

The augmentation homomorphism admits a convenient description on basis elements.

\begin{lem}
The augmentation of a basis element $[K,\varphi]$ of $A(G,H)$ is given by 
\[ \countS([K,\varphi]) = |G|/|K| .\]
\end{lem}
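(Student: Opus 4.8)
The plan is to unwind the definitions on both sides. By construction, the basis element $[K,\varphi]$ of $A(G,H)$ is the isomorphism class of the left-free $(G,H)$-biset $X = H\times_{(K,\varphi)}G$, and since the left $H$-action on $X$ is free, the augmentation of this class is the number of $H$-orbits of $X$, namely $|X|/|H|$. So the entire statement reduces to the single cardinality computation $|X| = |H|\cdot|G|/|K|$, after which dividing by $|H|$ yields $\countS([K,\varphi]) = |G|/|K|$.

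For the cardinality, I would pass to the associated $(H\times G)$-set $\widehat{X}$, which has the same underlying set as $X$ and, as recorded in the excerpt, is isomorphic to $(H\times G)/\Delta(K,\varphi)$. The map $k\mapsto(\varphi(k),k)$ is a bijection $K \to \Delta(K,\varphi)$, so $|\Delta(K,\varphi)| = |K|$, and hence $|X| = |\widehat{X}| = |H\times G|/|\Delta(K,\varphi)| = |H|\cdot|G|/|K|$. Alternatively, staying inside the biset picture $X = (H\times G)/{\sim}$, one checks directly that the $\sim$-class of a pair $(x,y)$ is $\{(x\varphi(k),k^{-1}y)\mid k\in K\}$ and that these $|K|$ pairs are pairwise distinct, since the $G$-coordinate $k^{-1}y$ determines $k$; thus every $\sim$-class has exactly $|K|$ elements and again $|X| = |H|\cdot|G|/|K|$.

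Either route is entirely routine, and I do not anticipate a genuine obstacle; this is really a bookkeeping lemma. The only point deserving a word of justification is freeness of the left $H$-action (so that the augmentation is literally $|X|/|H|$), which is immediate: if $h[x,y] = [x,y]$ then $hx = x\varphi(k)$ and $y = k^{-1}y$ for some $k\in K$, forcing $k = 1$ and then $h = 1$. I would write the proof in two or three lines using the $(H\times G)$-set description, citing the identifications $\widehat{X}\cong(H\times G)/\Delta(K,\varphi)$ and $|X| = |\widehat{X}|$ already in place.
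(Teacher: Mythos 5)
Your proof is correct, and since the paper states this lemma without proof there is no alternative argument to compare with --- what you wrote is the obvious unwinding, and the count $|H\times_{(K,\varphi)}G| = |H|\cdot|G|/|K|$ is right.

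One thing worth flagging: the paper's printed definition reads $\countS([X]) = |G\backslash X| = |X|/|G|$, but under the paper's conventions (right $G$-action, left $H$-action, with only the $H$-action assumed free) this cannot be the intended formula. The right $G$-action need not be free, so $|G\backslash X|$ and $|X|/|G|$ need not agree, and even on a bifree basis element the quantity $|X|/|G|$ works out to $|H|/|K|$, not the $|G|/|K|$ that the lemma asserts. The intended definition is $\countS([X]) = |H\backslash X| = |X|/|H|$, dividing out the side on which the action is free --- which is consistent with the Linckelmann--Webb condition $|S\backslash\Omega|$ quoted in the introduction and with the congruence arguments used later (where $\countS(X)$ is identified with $|S_2\backslash X|$ for the freely acting copy $S_2$). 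You computed $|X|/|H|$, which is the correct reading, but it differs from the definition as printed; it would be cleaner to point out the typo explicitly rather than silently adopt the corrected formula.
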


It is not hard to show that the augmentation homomorphisms satisfy 
 \[\countS(X \circ Y) = \countS(X) \cdot \countS(Y), \]
and hence we can regard them as the components of a Mackey functor
 \[ \countS \colon \Burnside \to \Z, \]
where $\Z$ is regarded as a $\Z$-linear category with a single object and morphism set $\Z$, composition being given by multiplication. 
We also use the symbol $\countS$ to denote the $p$-localization or $p$-completion of $\countS$.

\subsection{Opposite sets}
Given a $(G,H)$-biset $X$, one obtains an $(H,G)$-biset $\op{X}$ with the same underlying set by reversing the $G$ and $H$ actions. That is, if $x \in X$ and $\op{x}$ is the same element, now regarded as an element of $\op{X}$, then $g\op{x}h \defeq h^{-1}xg^{-1}$. Notice that $\widehat{\op{X}}$ is obtained by regarding the $(H\times G)$-set $\widehat{X}$ as a $(G\times H)$-set in the obvious way.

Taking opposite sets sends left-free bisets to right-free bisets and vice-versa. Consequently it does not induce a map of Burnside modules. However, the opposite of a bi-free biset is again bi-free, prompting the following definition.

\begin{defn} 
The \emph{free Burnside module} of $G$ and $H$ is the submodule $\Afree(G,H)$ of $A(G,H)$ generated by isomorphism classes of bi-free $(G,H)$-bisets. The \emph{opposite homomorphism} is the homomorphism
 \[ \opmap \colon \Afree(G,H) \longrightarrow \Afree(H,G) \]
that sends the isomorphism class of a bi-free $(G,H)$-biset to the isomorphism class of its opposite.
\end{defn}

The fixed points of the opposite homomorphism play an important role in this paper.

\begin{defn}
An element $X \in \Afree(G,G)$ is \emph{symmetric} if $\op{X} = X$.
\end{defn}

The standard basis of $A(G,H)$ restricts to a basis of $\Afree(G,H)$. We outline the steps showing this, leaving proofs to the reader.

\begin{defn}
A $(G,H)$-pair $(K,\varphi)$ and its $(G,H)$-class $\langle K,\varphi \rangle$ are \emph{free} if $\varphi$ is injective.
\end{defn}

\begin{lem}
A $(G,H)$-pair $(K,\varphi)$ is free if and only if the corresponding biset $[K,\varphi]$
is bi-free.
\end{lem}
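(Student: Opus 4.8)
The plan is to pass through the standard dictionary between $(G,H)$-bisets and $(H\times G)$-sets. The biset $[K,\varphi]$ corresponds to the transitive $(H\times G)$-set $(H\times G)/\Delta(K,\varphi)$, whose point stabilizers are exactly the $(H\times G)$-conjugates of $\Delta(K,\varphi)$. So the first step is to record how conjugation acts on twisted diagonals: a direct computation gives ${}^{(h,g)}\Delta(K,\varphi) = \Delta\bigl({}^{g}K,\, c_h\circ\varphi\circ c_{g^{-1}}\bigr)$ for $(h,g)\in H\times G$. In particular every conjugate of $\Delta(K,\varphi)$ has the form $\Delta(L,\psi)$ for some $(G,H)$-pair $(L,\psi)$, and $\psi$ is injective if and only if $\varphi$ is.

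The second step is the elementary observation that for a transitive $(H\times G)$-set with point stabilizers running over the conjugates of a subgroup $\Sigma\le H\times G$, the left $H$-action on the associated biset is free if and only if ${}^{(h,g)}\Sigma\cap(H\times 1)=1$ for all $(h,g)$, and the right $G$-action is free if and only if ${}^{(h,g)}\Sigma\cap(1\times G)=1$ for all $(h,g)$; this just unwinds the action formula $(h,g)x = hxg^{-1}$. Combined with the first step, it suffices to intersect $\Delta(L,\psi)$ with $H\times 1$ and with $1\times G$ for an arbitrary $(G,H)$-pair $(L,\psi)$.

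The third step is the immediate calculation. From $\Delta(L,\psi)=\{(\psi(l),l)\mid l\in L\}$ one reads off $\Delta(L,\psi)\cap(H\times 1)=\{(1,1)\}$, so $[K,\varphi]$ is always left-free (as it must be, being a class in $A(G,H)$), while $\Delta(L,\psi)\cap(1\times G)=\{(1,l)\mid l\in\ker\psi\}$, which is trivial precisely when $\psi$ is injective. Therefore $[K,\varphi]$ is bifree if and only if $\varphi$ is injective, that is, if and only if $(K,\varphi)$ is free.

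There is no real obstacle here; the only thing to watch is the left/right bookkeeping --- which of the factors $H\times 1$ and $1\times G$ controls which one-sided freeness --- together with the harmless remark that conjugating a twisted diagonal does not disturb injectivity of the homomorphism component. One can also bypass conjugates altogether by working directly with $H\times_{(K,\varphi)}G = (H\times G)/{\sim}$ and checking that $(h,gk_0)\sim(h,g)$ forces $\varphi(k_0)=1$, which gives freeness of the right $G$-action exactly when $\varphi$ is injective.
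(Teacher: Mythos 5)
The paper explicitly leaves the proof of this lemma to the reader (along with the surrounding "outlined steps"), so there is no proof of record to compare against. Your argument via stabilizers of the associated $(H\times G)$-set is correct: stabilizers of points of $(H\times G)/\Delta(K,\varphi)$ are conjugates of $\Delta(K,\varphi)$, conjugation sends twisted diagonals to twisted diagonals without disturbing injectivity, and the intersections $\Delta(L,\psi)\cap(H\times 1)=1$ (always) and $\Delta(L,\psi)\cap(1\times G)=\{1\}\times\ker\psi$ give exactly the left-freeness (automatic) and right-freeness (equivalent to injectivity) claims. One small nit on the parenthetical alternative at the end: with the paper's relation $(x,ky)\sim(x\varphi(k),y)$, the condition $(h,gk_0)\sim(h,g)$ yields $\varphi(gk_0g^{-1})=1$ with $gk_0g^{-1}\in K$, not $\varphi(k_0)=1$ directly; you get the clean statement $\varphi(k_0)=1$ only after restricting to $g=1$. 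That is enough to run the argument, but as written the bookkeeping is slightly off. The main route is complete and correct without this remark.
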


\begin{cor}
The free Burnside module $A(G,H)$ is a free $\Z$-module with one basis element $[K,\varphi]$ for each free $(G,H)$-class $\langle K,\varphi \rangle$.
\end{cor}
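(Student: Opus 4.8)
The plan is to realize $\Afree(G,H)$ as the $\Z$-span of a subset of the standard basis of $A(G,H)$ and then invoke Lemma~\ref{lem:BurnsideBasis}. First I would observe that any bi-free $(G,H)$-biset $X$ decomposes as a disjoint union of its $(H\times G)$-orbits, i.e.\ of its indecomposable sub-bisets, and that each such orbit inherits bi-freeness from $X$, since the restriction of a free action to an invariant subset is again free. Passing to isomorphism classes in $A(G,H)$ and using $[X\sqcup Y]=[X]+[Y]$, this shows that the subgroup $\Afree(G,H)$ generated by all bi-free bisets already coincides with the subgroup generated by the \emph{indecomposable} bi-free $(G,H)$-bisets.

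Next I would identify these generators with standard basis elements. By Lemma~\ref{lem:BurnsideBasis} the indecomposable left-free $(G,H)$-bisets are exactly the $[K,\varphi]$ as $\langle K,\varphi\rangle$ ranges over $(G,H)$-classes, and by the Lemma immediately preceding the corollary the biset $[K,\varphi]$ is bi-free precisely when the pair $(K,\varphi)$ is free, i.e.\ $\varphi$ is injective. Hence the indecomposable bi-free $(G,H)$-bisets are exactly the $[K,\varphi]$ with $\langle K,\varphi\rangle$ free, and the previous paragraph shows that these elements span $\Afree(G,H)$ over $\Z$.

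Finally, since the $[K,\varphi]$ with $\langle K,\varphi\rangle$ free form part of the standard $\Z$-basis of $A(G,H)$, they are $\Z$-linearly independent; being also a spanning set for $\Afree(G,H)$, they constitute a $\Z$-basis, so $\Afree(G,H)$ is free of the asserted rank. (The same argument applies verbatim after tensoring with $\Z\pLoc$ or $\Zp$.) The only step deserving any attention is the first one — checking that an arbitrary bi-free biset, hence any element of the given generating set, already lies in the span of the free basis elements — but this is routine once the orbit decomposition and the inheritance of freeness by sub-bisets are in hand, so I do not expect a genuine obstacle.
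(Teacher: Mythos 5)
Your argument is correct and is essentially the intended one; the paper explicitly leaves the proof to the reader and only outlines the two auxiliary steps (the definition of a free $(G,H)$-pair and the lemma that $[K,\varphi]$ is bi-free iff $(K,\varphi)$ is free), which you invoke in exactly the expected way. The key observations you make — decompose a bi-free biset into its $(H\times G)$-orbits, note that bi-freeness passes to orbits, identify the indecomposable bi-free bisets with the free $[K,\varphi]$ via the preceding lemma, and conclude linear independence from the fact that these are already part of the standard basis of $A(G,H)$ from Lemma~\ref{lem:BurnsideBasis} — are the natural route and close the gap completely. Your parenthetical remark that the argument persists after tensoring with $\Z\pLoc$ or $\Zp$ is also worth making, since the paper uses the $p$-local version throughout.
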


Having found a basis for $\Afree(G,H)$, the next step is to describe the effect of the opposite homomorphism on basis elements and fixed-point morphisms.

\begin{lem} \label{lem:OpDesc}
For a free $(G,H)$-pair $(K,\varphi)$,
 \[ \op{\left([K,\varphi]_G^H\right)} = [\varphi(K),\varphi^{-1}]_H^G \,.\]
\end{lem}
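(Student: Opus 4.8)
The plan is to deduce the formula from the parametrization of the standard basis by conjugacy classes of twisted diagonals, tracking what the opposite operation does at the level of the associated $(H\times G)$-set.

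First I would recall the relevant bookkeeping. By definition $[K,\varphi]_G^H$ is the biset $H\times_{(K,\varphi)}G$, whose associated $(H\times G)$-set is $(H\times G)/\Delta(K,\varphi)$ with $\Delta(K,\varphi)=\{(\varphi(k),k)\mid k\in K\}$, and by Lemma~\ref{lem:BurnsideBasis} (together with the discussion preceding it) the isomorphism class of a transitive left-free biset is determined by the conjugacy class of its twisted diagonal. So it suffices to identify the $(G\times H)$-set associated to $\op{[K,\varphi]}$. By the remark following the definition of opposite sets, $\widehat{\op X}$ is obtained from the $(H\times G)$-set $\widehat X$ by transport of structure along the ``swap'' isomorphism $\tau\colon H\times G\xrightarrow{\cong}G\times H$, $(h,g)\mapsto(g,h)$. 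Hence $\widehat{\op{[K,\varphi]}}\cong(G\times H)/\tau\bigl(\Delta(K,\varphi)\bigr)$.

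Next I would compute $\tau\bigl(\Delta(K,\varphi)\bigr)=\{(k,\varphi(k))\mid k\in K\}$. This is the one place where freeness is used: since $(K,\varphi)$ is a free pair, $\varphi$ is injective, so $\varphi^{-1}\colon\varphi(K)\to G$ is a genuine homomorphism and $(\varphi(K),\varphi^{-1})$ is a legitimate $(H,G)$-pair. Re-indexing by $h=\varphi(k)$ then gives $\{(k,\varphi(k))\mid k\in K\}=\{(\varphi^{-1}(h),h)\mid h\in\varphi(K)\}=\Delta(\varphi(K),\varphi^{-1})$. Thus $\widehat{\op{[K,\varphi]}}\cong(G\times H)/\Delta(\varphi(K),\varphi^{-1})$, which is precisely the $(G\times H)$-set associated with $[\varphi(K),\varphi^{-1}]_H^G$, and the claim follows from the parametrization of the standard basis.

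I do not expect a real obstacle here: the argument is entirely a matter of correctly matching the paper's conventions for bisets, their associated product-group sets, and the opposite operation, the only substantive observation being that freeness of $(K,\varphi)$ is exactly what makes $(\varphi(K),\varphi^{-1})$ a well-defined $(H,G)$-pair. Should a reader prefer a hands-on argument, one can instead write down the map of $(H,G)$-bisets $\op{\bigl(H\times_{(K,\varphi)}G\bigr)}\to G\times_{(\varphi(K),\varphi^{-1})}H$ sending the class of $\op{(x,y)}$ to the class of $(y^{-1},x^{-1})$ and verify directly that it is well defined (using the defining relation of $\times_{(K,\varphi)}$ and injectivity of $\varphi$), bijective (with the evident inverse), and equivariant for both the left $G$- and the right $H$-actions; each verification is a one-line manipulation.
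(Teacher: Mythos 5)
Your argument matches the paper's proof essentially word for word: both identify $\widehat{[K,\varphi]}$ with $(H\times G)/\Delta(K,\varphi)$ and observe that the swap isomorphism $H\times G\cong G\times H$ carries $\Delta(K,\varphi)$ to $\Delta(\varphi(K),\varphi^{-1})$. Your version simply spells out the computation of $\tau(\Delta(K,\varphi))$ and flags where injectivity of $\varphi$ is used, which the paper leaves implicit.
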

\begin{proof}
We have $\widehat{[K,\varphi]} \cong (H \times G)/\Delta(K,\varphi)$ and $\widehat{[\varphi(K),\varphi^{-1}]} \cong (G\times H)/\Delta(\varphi(K),\varphi^{-1})$. The result follows from the fact that $\Delta(K,\varphi)$ is sent to $\Delta(\varphi(K),\varphi^{-1})$ under the natural isomorphism $H\times G \cong G\times H$.
\end{proof}

\begin{lem} \label{lem:Phi(op(X))}
For $X \in \Afree(G,H)$ and a free $(G,H)$-pair $(K,\varphi)$,
 \[ \Phi_{\langle K,\varphi \rangle}( X) = \Phi_{\langle \varphi(K),\varphi^{-1} \rangle}(\op{X})  \, . \]
\end{lem}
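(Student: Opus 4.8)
The plan is to reduce immediately to the case of a single bi-free biset and then show that the two fixed-point sets being counted are in fact \emph{the same subset} of a common underlying set. Since both $X \mapsto \Phi_{\langle K,\varphi \rangle}(X)$ and $X \mapsto \Phi_{\langle \varphi(K),\varphi^{-1} \rangle}(\op{X})$ are $\Z$-linear in $X \in \Afree(G,H)$ (the opposite homomorphism is linear, and each $\Phi$ is linear by definition), it suffices to verify the identity when $X$ is the isomorphism class of an actual bi-free $(G,H)$-biset, which I will also call $X$.

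First I would unwind the definitions. Recall $X^{(K,\varphi)} = \{ x \in X \mid xk = \varphi(k)x \text{ for all } k \in K\}$, and that $\op{X}$ has the same underlying set as $X$ with actions reversed via $g\,\op{x}\,h \defeq h^{-1} x g^{-1}$. Now $(\varphi(K),\varphi^{-1})$ is an $(H,G)$-pair since $(K,\varphi)$ is free, so $\op{X}^{(\varphi(K),\varphi^{-1})}$ consists of those $\op{x}$ with $\op{x}\,k' = \varphi^{-1}(k')\,\op{x}$ for all $k' \in \varphi(K)$, where the left-hand side is the right $H$-action and the right-hand side the left $G$-action on $\op{X}$. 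Translating through $g\,\op{x}\,h = h^{-1}xg^{-1}$, the condition becomes $k'^{-1} x = x\,\varphi^{-1}(k')^{-1}$; writing $k' = \varphi(k)$ with $k \in K$ and rearranging gives exactly $xk = \varphi(k)x$. Hence $\op{X}^{(\varphi(K),\varphi^{-1})} = X^{(K,\varphi)}$ as subsets of the common underlying set, so the two cardinalities agree, and extending linearly over $\Afree(G,H)$ finishes the proof.

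The only real subtlety is bookkeeping: keeping straight which factor acts on which side for $X$ versus $\op{X}$, and checking that the twisted-diagonal condition transforms as claimed. As a sanity check (or an alternative, slightly longer route) one can instead expand both sides on a standard basis element $[L,\psi]$ using Lemma \ref{lem:FixedPtsOnBasis} and Lemma \ref{lem:OpDesc} (which gives $\op{[L,\psi]} = [\psi(L),\psi^{-1}]$), reducing the claim to the numerical identity $|N_{\varphi,\psi}|\cdot|C_H(\varphi(K))| = |N_{\varphi^{-1},\psi^{-1}}|\cdot|C_G(K)|$; the bijection $x \leftrightarrow y$ implicit in the relation $c_y\circ\varphi = \psi\circ c_x$ makes this transparent, but I expect the direct fixed-point argument above to be cleaner and essentially obstruction-free.
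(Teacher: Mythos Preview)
Your proposal is correct and takes essentially the same approach as the paper: reduce to bisets by linearity, then show that the underlying fixed-point sets $X^{(K,\varphi)}$ and $(\op{X})^{(\varphi(K),\varphi^{-1})}$ coincide. The paper's proof is a one-liner (``follows straight from definition''), and your argument simply spells out that unwinding explicitly; the alternative numerical route you sketch at the end is unnecessary but would also work.
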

\begin{proof}
It is enough to prove this when $X$ is a biset. In this case it follows straight from definition that the underlying fixed-point sets $X^{(K,\varphi)}$ and $(\op{X})^{(\varphi(K),\varphi^{-1}) }$ are the same.
\end{proof}

Finally we record the behavior of the opposite morphism with respect to composition.

\begin{lem}
If $X \in \Afree(G,H)$ and $Y \in \Afree(H,K)$, then 
\[ \op{(Y \circ X)} = \op{X} \circ \op{Y}. \]
\end{lem}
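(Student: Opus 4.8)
The plan is to reduce to genuine bisets and then write down an explicit isomorphism given by swapping the two factors of a balanced product. Since the opposite homomorphism $\opmap$ is $\Z$-linear and the composition pairing $\circ$ is bilinear, both $\op{(Y\circ X)}$ and $\op{X}\circ\op{Y}$ depend $\Z$-bilinearly on the pair $(X,Y)$; as every element of a free Burnside module is a difference of isomorphism classes of bifree bisets, it therefore suffices to treat the case in which $X$ is a bifree $(G,H)$-biset and $Y$ a bifree $(H,K)$-biset. In this case $Y\circ X = Y\times_H X$ is again bifree, so $\op{(Y\circ X)}$ is defined: if $g\in G$ (resp.\ $k\in K$) fixes a class $[y,x]\in Y\times_H X$, then freeness of the right $H$-action on $Y$ forces the corresponding $H$-element to be trivial, whence $g=1$ (resp.\ $k=1$).

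Recall that $Y\circ X = Y\times_H X = (Y\times X)/{\sim}$, where $(yh,x)\sim(y,hx)$ for $h\in H$, carrying the right $G$-action $[y,x]g=[y,xg]$ and left $K$-action $k[y,x]=[ky,x]$; similarly $\op{X}\circ\op{Y}=\op{X}\times_H\op{Y}=(\op{X}\times\op{Y})/{\sim}$, where $(\op{x}h,\op{y})\sim(\op{x},h\op{y})$, carrying the right $K$-action $[\op{x},\op{y}]k=[\op{x},\op{y}k]$ and left $G$-action $g[\op{x},\op{y}]=[g\op{x},\op{y}]$. I would define
\[ \Theta\colon \op{(Y\times_H X)} \longrightarrow \op{X}\times_H\op{Y}, \qquad \op{[y,x]} \longmapsto [\op{x},\op{y}]\,, \]
that is, $\Theta$ is induced by the coordinate swap $(y,x)\mapsto(x,y)$. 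From the definition of the opposite biset one reads off the elementary identities $\op{(wh)}=h^{-1}\op{w}$ and $\op{(hw)}=\op{w}h^{-1}$ for $h\in H$ acting on the right, resp.\ left, of $w$; using these, $\Theta(\op{[yh,x]}) = [\op{x},\op{(yh)}]=[\op{x},h^{-1}\op{y}] = [\op{x}h^{-1},\op{y}]=[\op{(hx)},\op{y}] = \Theta(\op{[y,hx]})$ in $\op{X}\times_H\op{Y}$, so $\Theta$ is well defined, and the coordinate swap in the reverse direction gives a two-sided inverse, so $\Theta$ is bijective.

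It remains to check that $\Theta$ respects the right $K$-action and the left $G$-action. Unwinding the definition of $\opmap$ on $Y\times_H X$ gives $g\cdot\op{[y,x]}=\op{[y,xg^{-1}]}$ and $\op{[y,x]}\cdot k=\op{[k^{-1}y,x]}$, and combining this with the identities $\op{(xg^{-1})}=g\op{x}$ and $\op{(k^{-1}y)}=\op{y}k$ — again immediate from the definition of the opposite biset — yields $\Theta(g\cdot\op{[y,x]})=[g\op{x},\op{y}]=g\cdot\Theta(\op{[y,x]})$ and $\Theta(\op{[y,x]}\cdot k)=[\op{x},\op{y}k]=\Theta(\op{[y,x]})\cdot k$. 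Hence $\op{(Y\circ X)}\cong\op{X}\circ\op{Y}$ for bifree bisets, and the general statement follows by bilinearity. The only real obstacle is notational bookkeeping: taking opposites interchanges the two sides and the balanced product identifies along $H$, so one must keep careful track of which group acts on which side, and in which direction, at each stage — but there is no genuine mathematical difficulty, and alternatively one could verify the identity directly on the standard basis using Lemma~\ref{lem:OpDesc} and the double coset formula, at the cost of a messier computation.
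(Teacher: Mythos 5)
The paper states this lemma without proof, leaving it (with the rest of the ``Opposite sets'' subsection) to the reader, so there is no authorial proof to compare against; your argument is a correct and complete verification. The bilinearity reduction to genuine bifree bisets is sound, the coordinate-swap map $\Theta$ is well defined precisely because the balanced-product relation $(yh,x)\sim(y,hx)$ in $Y\times_H X$ is carried under $\op$ to the relation $(\op{x}h,\op{y})\sim(\op{x},h\op{y})$ in $\op{X}\times_H\op{Y}$, and the $(G,K)$-equivariance computations are correct for the paper's convention $g\op{x}h=h^{-1}xg^{-1}$. One small slip: in checking that $Y\times_H X$ is bifree, for triviality of the $K$-stabilizer it is freeness of the \emph{left} $H$-action on $X$ (not the right $H$-action on $Y$) that forces the intermediate $h$ to be trivial; that case deserves its own sentence but the conclusion is unaffected. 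The basis-by-basis alternative you mention --- via the explicit formula $\op{([K,\varphi])}=[\varphi(K),\varphi^{-1}]$ and the double coset formula --- would also succeed and is closer in style to how the surrounding lemmas are stated, but the direct isomorphism is cleaner and makes the role of the freeness hypotheses transparent.
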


\begin{cor} \label{lem:op(X)XSym}
If $X \in \Afree(G,H)$, then $\op{X} \circ X \in \Afree(G,G)$ and $X \circ \op{X} \in \Afree(H,H)$ are symmetric.
\end{cor}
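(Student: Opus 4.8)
The plan is to deduce this directly from the preceding lemma on the behavior of $\opmap$ under composition, together with the evident fact that $\opmap$ is an involution on bifree bisets.

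First I would record that the composite of two bifree bisets is again bifree: left-freeness is preserved by $-\times_H-$ because a free action on the first factor descends to the balanced product, and symmetrically right-freeness is preserved by a free action on the second factor. Consequently, writing $X$ as a $\Z$-linear combination of bifree $(G,H)$-bisets and $\op{X}$ as the corresponding combination of their opposites (which are bifree $(H,G)$-bisets), the composite $\op{X}\circ X$ is a $\Z$-linear combination of bifree $(G,G)$-bisets, hence lies in $\Afree(G,G)$; likewise $X\circ \op{X}\in\Afree(H,H)$. This is the one point that needs a word of care, since it is what guarantees that $\opmap$ is defined on all the terms appearing below.

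Next I would observe that for any bifree biset $Z$ one has $\op{(\op{Z})}=Z$, as reversing the two actions twice restores the original biset; by linearity $\op{(\op{X})}=X$ for every $X\in\Afree(G,H)$. Applying the lemma $\op{(Y\circ X)}=\op{X}\circ\op{Y}$ with $Y=\op{X}$ then gives
\[ \op{(\op{X}\circ X)} = \op{X}\circ \op{(\op{X})} = \op{X}\circ X, \]
so $\op{X}\circ X$ is symmetric. Taking instead $\op{X}$ in the role of $X$ and $X$ in the role of $Y$ yields
\[ \op{(X\circ \op{X})} = \op{(\op{X})}\circ \op{X} = X\circ \op{X}, \]
so $X\circ \op{X}$ is symmetric as well.

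There is essentially no obstacle here; the statement is a formal consequence of the compatibility of $\opmap$ with composition and its being an involution, and the only thing to check is the bookkeeping that keeps us inside the free Burnside modules throughout.
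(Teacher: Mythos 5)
Your proof is correct and is exactly the argument the paper leaves implicit: the paper states this as an immediate corollary of the lemma $\op{(Y\circ X)}=\op{X}\circ\op{Y}$, and you supply precisely that deduction (together with the involutivity of $\opmap$ and closure of bifree bisets under composition).
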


The entire discussion of opposite sets and free modules carries over to the $p$-local setting.

\section{Characteristic bisets and idempotents} \label{sec:Char}

In this section we recall the notion of characteristic elements for fusion systems, which play a central role in this paper, and list some of their important properties. Characteristic elements were introduced by Linckelmann--Webb in order to produce a transfer theory for the cohomology of fusion systems, and subsequently to construct classifying spectra for fusion systems. Their definition was motivated by the special role that the $(S,S)$-biset $G$ plays in the $\Fp$-cohomology of a Sylow inclusion $S \leq G$. ($G$ acts on $H^*(BS;\Fp)$ as cohomology is a Mackey functor.) Linckelmann--Webb distilled the important properties of the biset $G$ in this context, and generalized them to fusion systems. A characteristic biset for a fusion system $\F$ on $S$ is a $(S,S)$-biset with certain properties that mimic the properties of the biset $G$ in the group case, and a characteristic element is the generalization to arbitrary elements in $A(S,S)$. We begin this section by an account of the motivating example, and then move on to discuss the generalization to fusion systems. 

\subsection{Motivation}
Given a finite group $G$ with Sylow $p$-subgroup $S$, the restriction map $H^*(BG;\Fp) \xrightarrow{i^*} H^*(BS;\Fp)$ is a monomorphism with image the $G$-stable elements in $S$. That is, the elements $x \in H^*(BS;\Fp)$ such that for every subgroup $P$ of $S$, restricting $x$ to $H^*(BP;\Fp)$ along a conjugation in $G$ has the same effect as restricting along the inclusion.  The transfer map $ H^*(BS;\Fp) \xrightarrow{tr}  H^*(BG;\Fp)$ provides a right inverse, up to scalar, to $i^*$. More precisely, the composite $tr \circ i^*$ is multiplication by the index $|G:S|$ on $H^*(BG;\Fp)$, in particular an automorphism. This implies that the composite $i^* \circ tr$ is idempotent up to scalar on $H^*(BS;\Fp)$, with image isomorphic to $H^*(BG;\Fp)$. 

Now, $H^*(B(-);\Fp)$ is a globally-defined Mackey functor and so $H^*(BS;\Fp)$ admits an $A(S,S)$-action. Under this action the class of the $(S,S)$-biset $G$ acts by $i^* \circ tr$. We can now identify $H^*(BG;\Fp)$ with the image of $H^*([G]) \colon H^*(BS;\Fp) \to H^*(BS;\Fp)$, which consists of the $G$-stable elements in $H^*(BS;\Fp)$. Under this identification, the restriction map $H^*(BG;\Fp) \to H^*(BS;\Fp)$ corresponds to the inclusion ${\rm Im}(H^*([G])) \hookrightarrow H^*(BS;\Fp)$, and the transfer $H^*(BS;\Fp) \to H^*(BG;\Fp)$ corresponds to the map $H^*([G]) \colon H^*(BS;\Fp) \to  {\rm Im}(H^*([G]))$. 

The key point here is that one can approach the $\Fp$-cohomology of $BG$ without knowing $G$ itself. The isomorphism class of $G$ is determined by $G$-stability, which depends on the fusion system, and the transfer theory can be recovered from $[G]$. The notion of $G$-stability generalizes readily to fusion systems, so one can apply this approach to the cohomology of fusion systems if one has an appropriate replacement for $[G]$ in the fusion setting. Linckelmann--Webb analyzed the properties of $[G]$ that give the desired effect in $\Fp$-cohomology, and formulated them in terms of fusion systems, leading to the definition of characteristic elements.

\subsection{Characteristic elements} To state the definition of a characteristic element, we need to define the notion of $\F$-stability in the double Burnside ring.
\begin{defn}
Let $\F$ be a fusion system on a finite $p$-group $S$. We say that an element 
$X$ in $A(S,S)\pLoc$ is \emph{right $\F$-stable} if for every $P\leq S$ and every $\varphi \in \Hom_{\F}(P,S)$, the following equation holds in $A(P,S)\pLoc$,
 \[ X \circ [P,\varphi]_P^S = X \circ [P,\incl]_P^S \,. \] 
Similarly, $X$ is \emph{left $\F$-stable} if $P\leq S$ and every $\varphi \in \Hom_{\F}(P,S)$, the following equation holds in $A(S,P)_{(p)}$,
 \[ [\varphi(P),\varphi^{-1}]_S^P \circ X = [P,1]_S^P \circ X\,.  \]
We say that $X$ is \emph{fully $\F$-stable} if it is both left and right $\F$-stable. 
\end{defn}

\begin{defn} \label{def:Char}
Let $\F$ be a fusion system on a finite $p$-group $S$. We say that an element 
$\Omega$ in $A(S,S)_{(p)}$ is a \emph{right (resp.~left, fully) characteristic element} for $\F$ if
it satisfies the following three conditions.
\begin{enumerate}
  \item[(a)] $\Omega$ is $\Ff$-generated (see Definition \ref{def:FMackey}).
  \item[(b)] $\Omega$ is right (resp.~left, fully) $\F$-stable.
  \item[(c)] $\countS(\Omega)$ is prime to $p$.
\end{enumerate}
We refer to conditions (a), (b) and (c) as the \emph{Linckelmann--Webb} properties.
\end{defn}

Observe that an element is fully characteristic for $\F$ if and only if it is both left and right characteristic. Furthermore, an element $\Omega$ is right characteristic if and only if $\op{\Omega}$ is left characteristic. We will usually drop the prefix ``fully'' and take ``characteristic element'' to mean ``fully characteristic element''. In practice there is usually no loss of generality in considering only fully characteristic elements, because of the following lemma.

\begin{lem} Let $\F$ be a fusion system on a finite $p$-group $S$. If $\Omega$ is a right characteristic element for $\F$, then $\op{\Omega} \circ \Omega$ is a symmetric, fully characteristic element for $\F$. If $\Omega$ is a left characteristic element for $\F$, then $\Omega \circ \op{\Omega}$ is a symmetric, fully characteristic element for $\F$.
\end{lem}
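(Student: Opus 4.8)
The statement has two symmetric halves, so the plan is to prove the first assertion and remark that the second follows by applying the opposite homomorphism. Assume $\Omega$ is a right characteristic element for $\F$; I must check that $\op{\Omega}\circ\Omega$ is symmetric and satisfies the three Linckelmann--Webb properties, as well as being \emph{fully} $\F$-stable. Symmetry is immediate: since $\Omega\in\Afree(S,S)$ (right characteristic elements are bifree, so that $\op{\Omega}$ is defined), Corollary~\ref{lem:op(X)XSym} gives that $\op{\Omega}\circ\Omega$ is symmetric. For the augmentation condition (c), I would use that $\countS$ is multiplicative, $\countS(\op{\Omega}\circ\Omega)=\countS(\op{\Omega})\cdot\countS(\Omega)$, together with the elementary observation that taking opposites preserves augmentation (by Lemma~\ref{lem:OpDesc}, $\op{[K,\varphi]}=[\varphi(K),\varphi^{-1}]$ has the same augmentation $|S|/|K|=|S|/|\varphi(K)|$ since $\varphi$ is injective on the relevant basis elements of a bifree element); hence $\countS(\op{\Omega}\circ\Omega)=\countS(\Omega)^2$, which is prime to $p$.

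The remaining two points require a little more care. For condition (a), $\F$-generatedness, I would argue that $\op{\Omega}$ is $\F$-generated whenever $\Omega$ is: writing $\Omega$ in the standard basis with all constituents $[T,\varphi]$ having $\varphi\in\Hom_\F(T,S)$, Lemma~\ref{lem:OpDesc} shows $\op{\Omega}$ is a combination of basis elements $[\varphi(T),\varphi^{-1}]$; since $\varphi$ is an $\F$-isomorphism $T\to\varphi(T)$, axiom~(2) of a fusion system gives $\varphi^{-1}\in\Hom_\F(\varphi(T),T)\subseteq\Hom_\F(\varphi(T),S)$, so $\op{\Omega}$ is $\F$-generated. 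Then $\F$-generatedness is closed under composition (the double coset formula only produces constituents whose defining homomorphisms are composites of those appearing in the factors, all of which lie in $\F$ by axiom~(3)), so $\op{\Omega}\circ\Omega$ is $\F$-generated.

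The main point—and the one I expect to be the real content—is full $\F$-stability of $\op{\Omega}\circ\Omega$. Right $\F$-stability should be inherited directly from the right factor: for $\varphi\in\Hom_\F(P,S)$,
\[
(\op{\Omega}\circ\Omega)\circ[P,\varphi]_P^S=\op{\Omega}\circ\bigl(\Omega\circ[P,\varphi]_P^S\bigr)=\op{\Omega}\circ\bigl(\Omega\circ[P,\incl]_P^S\bigr)=(\op{\Omega}\circ\Omega)\circ[P,\incl]_P^S\,,
\]
using associativity of $\circ$ and right $\F$-stability of $\Omega$. Left $\F$-stability is where symmetry is used to transfer the work back to the right: for $\varphi\in\Hom_\F(P,S)$, I would apply the opposite homomorphism to the desired identity $[\varphi(P),\varphi^{-1}]_S^P\circ(\op{\Omega}\circ\Omega)=[P,1]_S^P\circ(\op{\Omega}\circ\Omega)$, which—using $\op{(Y\circ X)}=\op{X}\circ\op{Y}$, the symmetry $\op{(\op{\Omega}\circ\Omega)}=\op{\Omega}\circ\Omega$, and Lemma~\ref{lem:OpDesc} identifying $\op{[\varphi(P),\varphi^{-1}]_S^P}=[P,\varphi]_P^S$ and $\op{[P,1]_S^P}=[P,\incl]_P^S$—becomes exactly the right $\F$-stability identity already established. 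So the plan reduces left stability to right stability via the symmetry just proved; the only thing to watch is that the basis elements appearing in the left-stability definition are precisely the opposites of those in the right-stability definition, which is Lemma~\ref{lem:OpDesc} applied to the injective homomorphism $\varphi$ (legitimate since everything in sight is bifree).
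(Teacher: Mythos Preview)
Your proposal is correct and follows essentially the same approach as the paper, which dispatches the lemma in one sentence (``The symmetry, augmentation and stability conditions are clear, and $\F$-generation is an easy consequence of the double coset formula''); you have simply written out the details the paper leaves implicit. One minor simplification: rather than reducing left $\F$-stability to right via symmetry and the opposite map, you could observe directly that $\op{\Omega}$ is left $\F$-stable (since $\Omega$ is right $\F$-stable) and then inherit left stability of $\op{\Omega}\circ\Omega$ from the left factor exactly as you did right stability from the right factor---but your route through symmetry is equally valid.
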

\begin{proof} The symmetry, augmentation and stability conditions are clear, and $\F$-generation is an easy consequence of the double coset formula.
\end{proof}

The existence of characteristic elements was established by Broto--Levi--Oliver in \cite{BLO2}. In fact, Broto--Levi--Oliver proved a slightly stronger result, showing that every saturated fusion system has a \emph{characteristic biset}, meaning a characteristic element that is the isomorphism class of an actual biset.

\begin{thm}[\cite{BLO2}]\label{thm:CharExists}
Every saturated fusion system has a characteristic biset, in particular a characteristic element.
\end{thm}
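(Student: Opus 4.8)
The plan is to construct a characteristic biset for a saturated fusion system $\F$ on $S$ explicitly, starting from an $\F$-stable biset and then correcting its augmentation. First I would produce a bifree $(S,S)$-biset that is fully $\F$-stable. The natural candidate is obtained by averaging over $\F$-conjugacy: for each $\F$-conjugacy class of subgroups, pick a fully $\F$-normalized representative $P$, and form the biset whose indecomposable summands are the $[P,\varphi]_P^S$ for $\varphi$ running over $\Rep_\F(P,S)$, each with an appropriate multiplicity. The point is that right $\F$-stability for such a sum, tested on a basis element $[Q,\psi]_Q^S$ via the double coset formula, reduces to a counting identity about $\Hom_\F$-sets; saturation (axioms I and II) is exactly what makes these counts work out. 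Concretely, one shows that the $(S,S)$-biset structure of $\F$ itself — the disjoint union over all $\varphi \in \Hom_\F(P,S)$ of the transitive bisets $S \times_{(P,\varphi)} S$, taken one $\Rep_\F$-class at a time — has the property that restricting the left or right action along any $\F$-morphism gives back an isomorphic biset, because an $\F$-morphism permutes the relevant $\Hom_\F$-sets bijectively. So at this stage I have a bifree, fully $\F$-stable, $\F$-generated biset $X_0$ by construction.

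The remaining issue is condition (c): the augmentation $|S \backslash X_0|$ need not be prime to $p$. To fix this I would take disjoint copies and delete summands judiciously, or more cleanly, argue $p$-locally first: the element of $A(S,S)\pLoc$ built as above can be rescaled. The standard trick (this is essentially the Broto--Levi--Oliver argument) is to observe that the coefficient $c_{\langle S, \id \rangle}(X_0)$ — the multiplicity of the "identity" summand $[S,\id]_S^S = S$ — can be arranged to be nonzero, and then that one can add a suitable $\F$-stable, $\F$-generated biset consisting only of summands $[Q,\psi]$ with $Q < S$ so as to make the total augmentation prime to $p$ while preserving $\F$-generation and $\F$-stability. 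The key lemma here is that $\F$-stability of $X$ forces a congruence among the coefficients $c_{\langle P, \varphi\rangle}(X)$ within each $\F$-conjugacy class of subgroups, and in particular the augmentation $\countS(X) = \sum_{[P]_\F} (\text{class sum})$ decomposes into contributions that can be controlled modulo $p$ one conjugacy class at a time, using that $\Aut_S(P)$ is Sylow in $\Aut_\F(P)$ for fully normalized $P$.

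I would organize the writeup in three steps: (1) show that the "minimal" $\F$-stable bifree biset with a single copy of $S$ exists, using saturation to verify stability via Lemma \ref{lem:FixedPtsOnBasis} or the double coset formula; (2) compute its augmentation modulo $p$, isolating the contribution of the top class $[S]_\F$, which by axiom I${}_S$ contributes $|\Aut_\F(S) : \Aut_S(S)|$ — a unit mod $p$ — times a controllable factor; (3) add lower terms to clear the augmentation of $p$-divisibility, or pass to $\Z\pLoc$ and rescale, then clear denominators by taking a disjoint union of enough copies to land back in an actual biset. The main obstacle I expect is step (1): verifying fully $\F$-stability of the candidate biset is not formal — it requires the extension axiom II to lift $\F$-morphisms on subgroups to morphisms on the $N_\varphi$, and the prime-to-$p$ axiom I to count Sylow subgroups of $\Aut_\F(P)$, and one must thread these through the double coset formula $[A,\varphi]_H^K \circ [B,\psi]_G^H = \DCF{A}{\varphi}{B}{\psi}{H}_G^K$ carefully, tracking which double cosets $\varphi(A) \backslash H / \psi(B)$ contribute. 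That combinatorial verification, essentially a re-proof of the Broto--Levi--Oliver existence result, is where the real work lies; the augmentation correction in steps (2)–(3) is comparatively routine.
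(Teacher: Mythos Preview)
The paper does not prove this theorem at all: it is stated with attribution to \cite{BLO2} and no proof is given. So there is no ``paper's own proof'' to compare against; the result is simply imported from Broto--Levi--Oliver.

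Your sketch is in the right spirit but is vaguer than a proof. The actual Broto--Levi--Oliver construction (Proposition~5.5 of \cite{BLO2}) does not proceed by first building a fully $\F$-stable biset and then fixing the augmentation. Rather, it works by downward induction on conjugacy classes of subgroups: at each stage one has a virtual biset that is $\F$-generated and satisfies the stability condition for all subgroups above a given level, and one corrects it at the next level down by adding terms $[P,\varphi]$ with appropriate integer coefficients, using the saturation axioms (specifically the Sylow condition on $\Aut_\F(P)$ and the extension axiom) to show the required coefficients exist. The augmentation condition is handled at the end, essentially as you suggest, by observing that the top coefficient $c_{\langle S,\id\rangle}$ can be taken to be any positive integer and the lower terms contribute multiples of $p$ to $\countS$. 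Your step~(1) --- producing an $\F$-stable biset in one shot by ``averaging'' --- is the part that does not obviously work: the double coset formula applied to $\sum_{[\varphi]}[P,\varphi]$ does not directly yield $\F$-stability, because composing on the right by $[Q,\psi]_Q^S$ mixes contributions from different $P$'s, and the bookkeeping is exactly what forces the inductive, level-by-level approach in \cite{BLO2}.
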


\subsection{Characteristic idempotents}
Linckelmann--Webb showed that a right characteristic element for $\F$ induces a selfmap of $H^*(BS;\Fp)$ that is idempotent up to scalar with image the $\F$-stable elements of $H^*(BS;\Fp)$. (A proof can be found in \cite{BLO2}.) Thus characteristic elements are appropriate for defining transfer in the $\Fp$-cohomology of fusion systems. However, if one tries to replace $\Fp$-cohomology with another Mackey functor, this is not so simple as a characteristic element will not act by an idempotent in general. Moreover, a given saturated fusion system has infinitely many characteristic elements, which can give rise to different transfer constructions. Both of these problems were circumvented in \cite{KR:ClSpectra} by introducing characteristic idempotents.

\begin{defn}
Let $\F$ be a fusion system on a finite $p$-group $S$. A \emph{characteristic idempotent} for $\F$ is a characteristic element for $\F$ that is idempotent.
\end{defn}

\begin{thm}[\cite{KR:ClSpectra}]\label{thm:CharIdemExists}
A saturated fusion system $\F$ on a finite $p$-group $S$ has a \emph{unique} characteristic idempotent $\omega_\F \in A(S,S)\pLoc$. Furthermore, $\omega_\F$ is symmetric.
\end{thm}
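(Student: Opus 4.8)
The plan is to bootstrap from the existence of a characteristic biset (Theorem~\ref{thm:CharExists}), using two facts: characteristic elements of augmentation $1$ are closed under composition and under augmentation-preserving $\mathbb{Z}$-linear combinations, and an element of $A(S,S)\pLoc$ is detected by its fixed-point invariants $\Phi_{\langle P,\varphi\rangle}$ (Proposition~\ref{prop:PhiIsInjective}). I would prove existence of an idempotent first, then uniqueness, and read off symmetry last. Note that idempotency together with $\countS$ being prime to $p$ forces $\countS=1$, so ``characteristic idempotent'' means an idempotent, $\F$-generated, fully $\F$-stable element of augmentation $1$.

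For existence, fix a characteristic biset $\Omega_0$ and replace it by the symmetric characteristic biset $\op{\Omega_0}\circ\Omega_0$ (Corollary~\ref{lem:op(X)XSym} for symmetry, and the lemma just before it for the Linckelmann--Webb properties), so we may assume we have a symmetric characteristic biset $\Omega$ of augmentation $d$ with $p\nmid d$. Put $e=d^{-1}\Omega\in A(S,S)\pLoc$, a symmetric characteristic element of augmentation $1$, and form the iterates $e_0=e$, $e_{k+1}=3\,e_k\circ e_k-2\,e_k\circ e_k\circ e_k$; each $e_k$ is again a symmetric characteristic element of augmentation $1$, symmetry surviving because $3x^{2}-2x^{3}$ commutes with $\opmap$. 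The decisive computation is that $e$ is idempotent modulo the two-sided ideal generated by $p$ together with the standard basis elements $[P,\varphi]$ with $P<S$: $\F$-generation and $\F$-stability force the part of $\Omega$ supported on the $[S,\varphi]$ to equal $a\sum_{[\chi]\in\Out_\F(S)}[S,\chi]$ with $a\,|\Out_\F(S)|\equiv d\pmod p$; Axiom~I at $S$ gives $p\nmid|\Out_\F(S)|$; the complementary part of $\Omega$ has augmentation divisible by $p$; and since $\bigl(\sum_{[\chi]}[S,\chi]\bigr)^{\circ 2}=|\Out_\F(S)|\sum_{[\chi]}[S,\chi]$, one gets $e\circ e-e$ in the ideal. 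To promote this approximate idempotent to a genuine one I would induct up the filtration of $A(S,S)\pLoc$ by the order of the source subgroup: each associated graded quotient is a finite product of twisted matrix algebras over $\Z\pLoc$ (and then over $\mathbb{F}_p$), where idempotents lift, and one verifies the successive lifts can be chosen to remain $\F$-generated and $\F$-stable — this is where the congruences for fixed points of characteristic bisets (set up later in the paper) are needed. The outcome is a symmetric characteristic idempotent $\omega\in A(S,S)\pLoc$.

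For uniqueness, let $\omega$ and $\omega'$ be characteristic idempotents for $\F$. The key property is that a characteristic idempotent $\varepsilon$ restricts to the identity on the submodule of left-$\F$-stable elements of $A(S,S)\pLoc$, and on that of right-$\F$-stable elements: left-composition by $\varepsilon$ is an idempotent operator whose image is contained in the left-$\F$-stable elements (because $\varepsilon$ is left-$\F$-stable) and, by a Linckelmann--Webb type argument using that $\varepsilon$ is $\F$-generated of augmentation $1$, exhausts them, hence is the identity there; dually on the right. Applying this with $\varepsilon=\omega'$ to the left-$\F$-stable element $\omega$ gives $\omega'\circ\omega=\omega$, and with $\varepsilon=\omega$ to the right-$\F$-stable element $\omega'$ gives $\omega'\circ\omega=\omega'$, so $\omega=\omega'$. (If one instead builds $\omega$ in $\pComp{A(S,S)}$ by a compactness argument, the same reasoning, or the recursion obtained by feeding $\omega=\omega\circ\omega$ into the composition formula for fixed points, shows the $\Phi_{\langle P,\varphi\rangle}(\omega)$ are forced and rational, so $\omega$ in fact lies in $A(S,S)_{\mathbb{Q}}\cap\pComp{A(S,S)}=A(S,S)\pLoc$, using that the transition matrix between the $\Phi$'s and the standard basis is triangular with nonzero rational diagonal by Lemma~\ref{lem:FixedPtsOnBasis}.)

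Finally, symmetry: $\op{\omega}$ is $\F$-generated (Lemma~\ref{lem:OpDesc} and the fusion-system axioms), fully $\F$-stable (taking opposites interchanges left and right $\F$-stability), of augmentation $1$, and idempotent, since $\omega$ is bifree (characteristic elements being bifree) and $\op{\omega}\circ\op{\omega}=\op{(\omega\circ\omega)}=\op{\omega}$. Thus $\op{\omega}$ is a characteristic idempotent for $\F$, so $\op{\omega}=\omega$ by uniqueness. The main obstacle throughout is the existence step — producing an honest idempotent in the $p$-\emph{local} (not merely $p$-complete) Burnside ring that still satisfies all three Linckelmann--Webb conditions — because the natural approximate idempotent $e$ is only idempotent modulo an ideal that is not nilpotent, so the lift must be carried out stepwise along the subgroup-order filtration while keeping control of $\F$-stability and $\F$-generation.
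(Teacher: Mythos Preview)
The paper does not actually prove existence or uniqueness here: it cites \cite{KR:ClSpectra}, Propositions~4.9 and~5.6, and then derives symmetry from uniqueness exactly as you do (if $\omega_\F$ is the unique characteristic idempotent then so is $\op{\omega_\F}$, hence they agree). So your symmetry argument matches the paper verbatim, and your uniqueness argument via absorption ($\omega'\circ\omega=\omega$ and $\omega'\circ\omega=\omega'$) is correct --- it is essentially the universal stable element theorem (Theorems~\ref{thm:UnivStable} and~\ref{thm:UnivStableII}), which the paper establishes independently via Lemma~\ref{lem:CoeffsOfIdem}.

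The existence argument, however, has a real gap. Your observation that $e=d^{-1}\Omega$ is idempotent modulo the ideal generated by $p$ and the $[P,\varphi]$ with $P<S$ is correct, but the lifting step is not justified. The ``filtration by order of the source subgroup'' does not give a chain of two-sided ideals: by the double coset formula, left-composing $[Q,\psi]\circ[P,\varphi]$ produces terms $[R,\cdot]$ with $R\leq Q$, not $R\leq P$, so the span of $[P,\varphi]$ with $|P|$ bounded is only a right ideal. Consequently the associated graded pieces are not rings, let alone matrix algebras over $\Z\pLoc$, and there is no evident mechanism for lifting idempotents through them while preserving $\F$-stability. The Newton iteration $x\mapsto 3x^2-2x^3$ only lifts idempotents through nilpotent kernels, which --- as you yourself note --- is not the situation here. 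Your parenthetical alternative (build $\omega$ as a $p$-adic limit in $\pComp{A(S,S)}$, then show its fixed-point numbers are forced and rational so that $\omega\in A(S,S)_{\mathbb{Q}}\cap\pComp{A(S,S)}=A(S,S)\pLoc$) is in fact the approach taken in \cite{KR:ClSpectra}, and is the one that works; the filtration route as sketched does not.
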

\begin{proof} The existence and uniqueness of a characteristic idempotent is shown in \cite[Propositions 4.9 and 5.6]{KR:ClSpectra} (see also \cite[Remark 5.8]{KR:ClSpectra}). Since $\op{\omega_\F}$ is also a characteristic idempotent for $\F$, uniqueness implies that $\op{\omega_\F} = \omega_\F$. 
\end{proof}

\begin{rmk}
More precisely, \cite[Remark 5.8]{KR:ClSpectra} says that the characteristic idempotent is the unique idempotent right characteristic element and the unique idempotent left characteristic element. Thus a right or left characteristic element that is idempotent is automatically fully characteristic. This is why we only talk of characteristic idempotents, without a left or right qualifier. 
\end{rmk}

\subsection{Fixed points of characteristic element}

We will prove our main theorems by carefully analyzing and keeping track of fixed points of characteristic elements. Therefore it is important to reformulate the properties of characteristic elements in terms of fusion systems, as we do in the following lemma. 

\begin{lem} \label{lem:RephraseStable}
Let $\F$ be a fusion system on a finite $p$-group $S$, and let $X$ be an element in $A(S,S)_{(p)}$. 
\begin{itemize}
 \item[(a)] $X$ is $\Ff$-generated if and only if $\Phi_{\langle Q,\psi \rangle}(X) = 0$
 for all $(S,S)$-classes $\langle Q,\psi \rangle$ where $\psi$ is not in $\F$.
 \item[(b)] $X$ is right $\F$-stable if and only if for every $(S,S)$-class $\langle Q,\psi \rangle$, and every $\varphi \in \Hom_{\F}(Q,S)$,
  \[ \Phi_{\langle Q,\psi \rangle} (X) = \Phi_{\langle \varphi(Q),\psi \circ \varphi^{-1} \rangle} (X)\,. \]
 \item[(c)] $X$ is left $\F$-stable if and only if for every $(S,S)$-class $\langle Q,\psi \rangle$, and every $\varphi \in \Hom_{\F}(\psi(Q),S)$,
  \[ \Phi_{\langle Q,\psi \rangle} (X) = \Phi_{\langle Q, \varphi \circ \psi \rangle} (X)\,. \]
\end{itemize}
\end{lem}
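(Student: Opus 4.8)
The plan is to translate each of the three conditions into a statement about the fixed-point homomorphisms $\Phi_{\langle Q,\psi\rangle}$ and then invoke injectivity of the table of marks (Proposition \ref{prop:PhiIsInjective}), which holds over $\Z\pLoc$.

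For part (a) I would first record two elementary observations. First, the condition ``$\psi$ is a morphism in $\F$'' is well-defined on $(S,S)$-classes, since conjugating $\Delta(Q,\psi)$ inside $S\times S$ replaces $\psi$ by $c_a\circ\psi\circ c_b^{-1}$ for some $a,b\in S$, and $\F$ contains all conjugations by elements of $S$ and is closed under composition. Second, membership in $\F$ propagates down the subconjugacy order: if $\langle Q,\psi\rangle\subcon\langle K,\varphi\rangle$ and $\varphi\in\Hom_\F(K,S)$, then $\psi\in\Hom_\F(Q,S)$, because the characterization of subconjugacy lets one write $\psi$ as a composite of a conjugation in $S$, a restriction of $\varphi$, and another conjugation in $S$, each a morphism in $\F$. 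With these in hand, (a) follows from Lemma \ref{lem:FixedPtsOnBasis}: expanding $X=\sum_{\langle K,\varphi\rangle}c_{\langle K,\varphi\rangle}(X)[K,\varphi]$, the basis element $[K,\varphi]$ contributes to $\Phi_{\langle Q,\psi\rangle}(X)$ only when $\langle Q,\psi\rangle\subcon\langle K,\varphi\rangle$. If $X$ is $\F$-generated, every such $\varphi$ lies in $\F$, hence so does $\psi$, so $\Phi_{\langle Q,\psi\rangle}(X)=0$ whenever $\psi$ is not in $\F$. Conversely, if some $c_{\langle K,\varphi\rangle}(X)\neq 0$ with $\varphi$ not in $\F$, choose one with $|K|$ maximal; since $\langle Q,\psi\rangle\subcon\langle K',\varphi'\rangle$ forces $|K|\le|K'|$ with equality only for the class itself, and a class strictly above $\langle K,\varphi\rangle$ lying in $\F$ would force $\varphi$ into $\F$, all off-diagonal terms in $\Phi_{\langle K,\varphi\rangle}(X)$ vanish, leaving a nonzero multiple of $c_{\langle K,\varphi\rangle}(X)$ in the domain $\Z\pLoc$ (the coefficient $\Phi_{\langle K,\varphi\rangle}([K,\varphi])$ is a positive integer), contradicting the fixed-point hypothesis.

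For parts (b) and (c) the key point is to identify the composites appearing in the definitions of stability with twisted restrictions of $X$. Using the balanced-product description of the elementary bisets, $X\circ[P,\varphi]_P^S$ is isomorphic, as a $(P,S)$-biset, to $X$ with its right $S$-action restricted to $P$ along $\varphi$ (so $x\cdot p:=x\varphi(p)$); unwinding the definition of the fixed-point sets then gives $\Phi_{\langle R,\rho\rangle}(X\circ[P,\varphi]_P^S)=\Phi_{\langle\varphi(R),\rho\circ\varphi^{-1}\rangle}(X)$ for every $(P,S)$-class $\langle R,\rho\rangle$, and in particular $\Phi_{\langle R,\rho\rangle}(X\circ[P,\incl]_P^S)=\Phi_{\langle R,\rho\rangle}(X)$. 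By injectivity of $\Phi$ over $\Z\pLoc$, right $\F$-stability is therefore equivalent to $\Phi_{\langle R,\rho\rangle}(X)=\Phi_{\langle\varphi(R),\rho\circ\varphi^{-1}\rangle}(X)$ for all $P\le S$, all $\varphi\in\Hom_\F(P,S)$, all $R\le P$, and all $\rho\colon R\to S$; since $\F$ is closed under restriction we may as well take $P=R$, which recovers exactly the condition in (b). Part (c) is entirely symmetric: $[\varphi(P),\varphi^{-1}]_S^P\circ X$ is $X$ with its left $S$-action restricted to $P$ along $\varphi$, giving $\Phi_{\langle R,\rho\rangle}([\varphi(P),\varphi^{-1}]_S^P\circ X)=\Phi_{\langle R,\varphi\circ\rho\rangle}(X)$ and $\Phi_{\langle R,\rho\rangle}([P,1]_S^P\circ X)=\Phi_{\langle R,\incl\circ\rho\rangle}(X)$, and the same rewriting of quantifiers (now taking $P=\psi(Q)$) yields (c).

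I expect the only real obstacle to be the bookkeeping in (b) and (c): getting the direction of the twisted restriction right, checking that the underlying fixed-point set of the composite biset really is the claimed fixed-point set of $X$ (this is where the identification $X\times_S[P,\varphi]_P^S\cong X$ is used), and matching the quantifier ``$P\le S$ together with $\varphi\in\Hom_\F(P,S)$'' from the definition of stability against the class-level quantifier in the statement by means of closure of $\F$ under restriction. None of this is deep, but it is fiddly and easy to get slightly wrong.
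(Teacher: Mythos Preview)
Your proposal is correct and follows essentially the same route as the paper: part~(a) via Lemma~\ref{lem:FixedPtsOnBasis} and a maximality argument, parts~(b) and~(c) via the fixed-point identities $\Phi_{\langle R,\rho\rangle}(X\circ[P,\varphi])=\Phi_{\langle\varphi(R),\rho\circ\varphi^{-1}\rangle}(X)$ together with injectivity of $\Phi$ (Proposition~\ref{prop:PhiIsInjective}). The only differences are cosmetic: in~(a) you take $|K|$ maximal rather than subconjugacy-maximal (equivalent here, since $|\Delta(K,\varphi)|=|K|$), and you are more explicit than the paper about well-definedness on classes and about matching the ``for all $P$ and $\varphi\in\Hom_\F(P,S)$'' quantifier in the definition of stability against the class-level quantifier in the statement via closure of $\F$ under restriction.
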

\begin{proof}
$ $
\begin{itemize}
 \item[(a)] 
 Assume $X$ is $\F$-generated. If $\Phi_{\langle Q,\psi \rangle}(X) \ne 0$, then Lemma \ref{lem:FixedPtsOnBasis} implies that $\langle Q,\psi \rangle$ is subconjugate to a $(S,S)$-class $ \langle P,\varphi \rangle$ with $c_{\langle P,\varphi \rangle}(X)\ne 0$. By $\F$-generation this implies that $\varphi\in\Hom_\Ff(P,S)$, and since $\Ff$ is closed under restriction and conjugation, it follows that $\psi\in\Hom_\Ff(Q,S)$.

If $X$ is not $\F$-generated, then let $\langle Q,\psi \rangle$ be maximal, with respect to subconjugacy, among $(S,S)$-classes with $c_{\langle Q,\psi \rangle}(X)\ne 0$ and $\psi \notin \Hom_\Ff(Q,S)$. Lemma \ref{lem:FixedPtsOnBasis} then implies that
\[ \Phi_{\langle Q,\psi \rangle} (X) = c_{\langle Q,\psi \rangle} \cdot \Phi_{\langle Q,\psi \rangle} ([Q,\psi]), \]
which is nonzero.
 
 \item[(b)] First consider the case where $X$ is a biset. Then, for any $Q\le P\le S$, $\varphi\in\Inj(P,S)$ and $\psi\in\Inj(Q,S)$ there are canonical bijections between the fixed-point sets $(X \circ [P,\varphi]_P^S)^{(Q,\psi)} $ and $X^{(\varphi(Q),\psi\circ\varphi^{-1})}$, and also between $X^{(Q,\psi)}$ and $(X\circ [P,\incl]_P^S)^{(Q,\psi)} $. This results in equations of fixed-point homomorphisms, 
 \[ \Phi_{\langle Q,\psi \rangle} (X\circ [P,\varphi]) = \Phi_{\langle \varphi(Q),\psi\circ\varphi^{-1} \rangle} (X),\] 
 and 
\[ \Phi_{\langle Q,\psi \rangle} (X\circ [P,\incl]) = 
   \Phi_{\langle Q,\psi \rangle} (X),\] 
valid for any $X \in A(S,S)\pLoc$. Consequently, if X is right $\Ff$-stable we get an equality $\Phi_{\langle Q,\psi \rangle} (X) = \Phi_{\langle \varphi(Q),\psi \circ \varphi^{-1} \rangle} (X)$ for all $\varphi\in\Hom_\Ff(Q,S)$ and $\psi\in\Inj(Q,S)$. 

Conversely, if for a fixed $\varphi\in\Hom_\Ff(P,S)$ and all pairs $Q\le P$ and $\psi\in\Inj(Q,S)$ we have $\Phi_{\langle Q,\psi \rangle} (X) = \Phi_{\langle \varphi(Q),\psi \circ \varphi^{-1} \rangle} (X)$, then the fixed-points map 
$$\Phi\colon A(P,S)_{(p)}\xrightarrow{\prod\limits_{\langle Q,\psi \rangle}\Phi_{\langle Q,\psi \rangle}}\prod_{\langle Q,\psi \rangle}\Z_{(p)} \, ,$$ 
where the product runs over all $(P,S)$-classes $\langle Q,\psi \rangle $, have the same image at $X\circ [P,\varphi]$ and $X\circ [P,\incl]$. 
Hence, by Proposition \ref{prop:PhiIsInjective}, $X\circ [P,\varphi]=X\circ [P,\incl]$.
 
 \item[(c)] Analogous to (b).
\end{itemize}
\end{proof}

Among characteristic elements, idempotents can be formulated in terms their standard basis representation.

\begin{defn}
For an element $X$ in $A(S,S)\pLoc$, and a subgroup $P \leq S$, set
\[ m_P(X) = \sum_{[\varphi] \in \Rep_\F(P,S)} c_{\langle P,\varphi \rangle}(X)\,, \]
where the sum runs over representatives of conjugacy classes of group homomorphisms. 
\end{defn}

\begin{lem}[\cite{KR:ClSpectra}] \label{lem:CoeffsOfIdem}
Let $\F$ be a saturated fusion system on a finite $p$-group $S$, and let $\Omega$ be a characteristic
element for $\F$. 
\begin{enumerate}
\item[(a)] $\Omega$ is idempotent if and only if $m_S(\Omega) = 1$, and $m_P(\Omega) = 0$ for $P < S$.
\item[(b)] $\Omega$ is idempotent $\modp$ if and only if $m_S(\Omega) \equiv 1~ \modp$, and $m_P(\Omega) \equiv 0~ \modp$  for $P < S$.
\end{enumerate}
\end{lem}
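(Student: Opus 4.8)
The plan is to compute the coefficients of $\Omega \circ \Omega$ in the standard basis and compare them with those of $\Omega$, exploiting both the $\F$-generation of $\Omega$ (which collapses the double coset formula) and the right $\F$-stability (which identifies fixed-point values along $\F$-morphisms). First I would fix a subgroup $P \leq S$ and a morphism $\varphi \in \Hom_\F(P,S)$ and try to express $c_{\langle P,\varphi \rangle}(\Omega \circ \Omega)$ in terms of the numbers $m_Q(\Omega)$ for $Q$ ranging over subgroups with $P$ subconjugate to $Q$. The natural tool is the double coset formula $[A,\alpha]_S^S \circ [B,\beta]_S^S = \coprod_{s} [\,\ldots\,]$, which expresses $\Omega \circ \Omega$ as a sum over pairs of basis elements of $\Omega$; because $\Omega$ is $\F$-generated, every basis element appearing in this expansion again has its homomorphism in $\F$, so $m_P(\Omega \circ \Omega)$ makes sense and is what we need to track. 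The key simplification is that $\Omega$ being fully characteristic (in particular right $\F$-stable) means the contribution of a term $[B,\beta]$ to the composite only depends on $\langle B, \beta\rangle$ through the data that survives $\F$-conjugacy; combined with condition (c), $\countS(\Omega)$ prime to $p$, this forces $m_S(\Omega)$ to be a unit mod $p$.

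The cleanest route is probably to pass through fixed points. Using Lemma \ref{lem:FixedPtsOnBasis} one has $\Phi_{\langle P,\varphi\rangle}([P,\psi]) = \frac{|N_{\varphi,\psi}|}{|P|}\cdot |C_S(\varphi(P))|$, which is divisible by $|C_S(\varphi(P))|$ and, more to the point, the ``diagonal'' value $\Phi_{\langle P,\varphi\rangle}([P,\varphi]) = \frac{|N_\varphi|}{|P|}|C_S(\varphi(P))|$ is the dominant term; for $Q$ strictly containing a conjugate of $P$ the mixed fixed-point numbers $\Phi_{\langle P,\varphi\rangle}([Q,\psi])$ contribute the off-diagonal part. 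Writing $\Phi_{\langle P,\varphi\rangle}(\Omega) = \sum_{\langle Q,\psi\rangle} c_{\langle Q,\psi\rangle}(\Omega)\,\Phi_{\langle P,\varphi\rangle}([Q,\psi])$ and using right $\F$-stability (Lemma \ref{lem:RephraseStable}(b)) to replace $c$'s along $\F$-conjugacy, one gets a triangular linear system relating the $\Phi$'s of $\Omega$ to the $m_Q(\Omega)$'s, with the system for $\Omega \circ \Omega$ obtained by squaring in an appropriate sense (since $\Phi_{\langle P,\varphi\rangle}$ is multiplicative on the relevant diagonal classes up to the combinatorial factors). Solving the triangular system shows $\Omega = \Omega \circ \Omega$ forces $m_S(\Omega) = 1$ at the top, then $m_P(\Omega) = 0$ for $P < S$ by downward induction on the index; conversely these conditions feed back to give $\Omega \circ \Omega = \Omega$. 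Part (b) is the same argument reduced mod $p$, where one additionally uses that $m_S(\Omega) \equiv \countS(\Omega) \not\equiv 0 \pmod p$ is needed to invert the leading coefficient.

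I expect the main obstacle to be bookkeeping the double coset formula cleanly enough that the off-diagonal cancellations are manifest: one must show that all contributions to $m_P(\Omega\circ\Omega)$ coming from pairs $([A,\alpha],[B,\beta])$ with $A$ or $B$ strictly larger than (a conjugate of) $P$ organize into the product $\sum_Q (\text{something})\, m_Q(\Omega)\, m_Q(\Omega)$ plus genuinely higher terms, so that the induction on $|S:P|$ goes through. The right $\F$-stability is exactly the input that makes the "something" independent of the choice of representatives, and $\F$-generation is what keeps everything inside $\Burnside_\F$; but verifying that no spurious terms appear — in particular that the map $P \mapsto m_P$ interacts with composition triangularly with unit diagonal entries after inverting $|S|$ (resp. reducing mod $p$) — is the delicate combinatorial core. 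Since the result is quoted from \cite{KR:ClSpectra}, one may alternatively simply cite Propositions 4.9 and 5.6 there, but the above is the self-contained argument.
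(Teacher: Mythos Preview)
Your approach is workable in principle but substantially overcomplicates the ``if'' direction. The paper's argument for this direction is a three-line computation that avoids the double coset formula entirely: expand only the \emph{right} factor $\Omega$ in the standard basis, then use right $\F$-stability to replace each $\Omega \circ [P,\varphi]$ by $\Omega \circ [P,\incl]$ (since $\varphi \in \F$ by $\F$-generation). This collapses the whole sum to $\sum_{[P]_S} m_P(\Omega)\,(\Omega \circ [P,\incl])$, which under the hypothesis $m_S=1$, $m_P=0$ for $P<S$ is just $\Omega \circ [S,\id] = \Omega$. No fixed-point bookkeeping, no triangular systems, no induction. You invoke right $\F$-stability only as a tool to ``make coefficients independent of representatives'' inside a double coset expansion, but its real power here is that it lets you bypass that expansion altogether.

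For the ``only if'' direction the paper simply cites \cite[Lemma~5.5]{KR:ClSpectra}, so your sketch of a downward-induction argument via a triangular system is more self-contained than what the paper offers; that is a reasonable strategy and roughly what the cited reference does. But note that once you have the clean identity $\Omega\circ\Omega = \sum_{[P]_S} m_P(\Omega)\,(\Omega \circ [P,\incl])$ as above, the ``only if'' direction also becomes much more tractable: idempotence forces $\sum_{[P]_S} m_P(\Omega)\,(\Omega \circ [P,\incl]) = \Omega$, and one can read off the $m_P$ from this by comparing coefficients at the top and working down, which is simpler than tracking fixed-point values of products through Lemma~\ref{lem:FixedPtsOnBasis}.
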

\begin{proof}
The ``only if'' implication in part (a) is shown in \cite[Lemma 5.5]{KR:ClSpectra}. Conversely,
if $\Omega$ is a characteristic element for $\F$ satisfying $m_S(\Omega) = 1$, and $m_P(\Omega) = 0$ for $P <S$,
we have
\begin{eqnarray*} 
    \Omega \circ \Omega 
  &=& \Omega \circ \left( \sum_{[P]_S } \left( \sum_{[\varphi] \in \Rep_\F(P,S)} c_{\langle P,\varphi \rangle}(\Omega) \; [P,\varphi] \right)\right)\\
  &=& \sum_{[P]_S } \left( \sum_{[\varphi] \in \Rep_\F(P,S)} c_{\langle P,\varphi \rangle}(\Omega) \; (\Omega \circ [P,\varphi]) \right)\\
  &=& \sum_{[P]_S } \left(\sum_{[\varphi] \in \Rep_\F(P,S)} c_{\langle P,\varphi \rangle}(\Omega) \right) (\Omega\circ [P,\incl])\\
  &=& \sum_{[P]_S } m_P(\Omega) \; (\Omega \circ [P,\incl])\\
  &=& \Omega,
\end{eqnarray*}
so $\Omega$ is idempotent. Applying the same arguments $\modp$ proves part (a). 
\end{proof}

\subsection{The universal stable element property}
We have previously discussed $\F$-stability in $\Fp$-cohomology and in the double Burnside ring. The notion generalizes readily to any Mackey functor, and we will show below that characteristic idempotents characterize $\F$-stable elements.

\begin{defn}
Let $\F$ be a fusion system on a finite $p$-group $S$, and let $M$ be a  (covariant or contravariant) $\F$-defined Mackey functor. An element $x \in M(S)$ is \emph{$\F$-stable} if for every $P \leq S$ and every $\varphi \in \Hom_\F(P,S)$ we have 
 \[ M([P,\varphi]_P^S)(x) = M([P,\incl]_P^S)(x) \in M(P)  \]
in the contravariant case, or 
 \[ M([\varphi(P),\varphi^{-1}]_S^P)(x) = M([P,\id]_S^P)(x) \in M(P),  \]
in the covariant case. 
In either case we denote by $M(\F)$ the set of $\F$-stable elements in $M(S)$.
\end{defn}

A key property of characteristic idempotents is following theorem, which is implicit in \cite{KR:ClSpectra}. The theorem can be interpreted as saying that the characteristic idempotent of a saturated fusion system $\F$ is a universal $\F$-stable element for Mackey functors.

\begin{thm}[Universal stable element theorem] \label{thm:UnivStable}
Let $\F$ be a saturated fusion system on a finite $p$-group $S$, and let $M$ be a (covariant or contravariant) $p$-local, $\F$-defined Mackey functor.
Then an element $x \in M(S)$ is $\F$-stable if and only if $M(\omega_\F)(x) = x$.
\end{thm}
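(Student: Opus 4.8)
The plan is to prove the two implications of the biconditional separately, with the characteristic idempotent $\omega_\F$ as the bridge between abstract Mackey functor theory and the double Burnside ring $A(S,S)\pLoc$ itself, which is a $p$-local, $\F$-defined Mackey functor via precomposition. First suppose $M(\omega_\F)(x) = x$; I want to show $x$ is $\F$-stable. For any $P \leq S$ and $\varphi \in \Hom_\F(P,S)$, since $\omega_\F$ is right $\F$-stable (Theorem \ref{thm:CharIdemExists} gives it fully characteristic, hence right characteristic, and Definition \ref{def:Char}(b) applies), we have $\omega_\F \circ [P,\varphi]_P^S = \omega_\F \circ [P,\incl]_P^S$ in $A(P,S)\pLoc$. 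Applying the contravariant Mackey functor $M$ (which turns $\circ$ into composition of maps in the appropriate order) and evaluating at $x = M(\omega_\F)(x)$ yields
\[ M([P,\varphi]_P^S)(x) = M([P,\varphi]_P^S)(M(\omega_\F)(x)) = M(\omega_\F \circ [P,\varphi]_P^S)(x), \]
and similarly with $\incl$ in place of $\varphi$; the two right-hand sides agree, giving $\F$-stability. The covariant case is the mirror image using left $\F$-stability of $\omega_\F$ and Lemma \ref{lem:RephraseStable}(c) as needed.

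For the converse, suppose $x \in M(S)$ is $\F$-stable; I want $M(\omega_\F)(x) = x$. The natural strategy is to exploit that $\omega_\F$ is $\F$-generated: by Definition \ref{def:Char}(a) and the description of $\F$-generated elements, $\omega_\F = \sum_{[P]_S} \sum_{[\varphi] \in \Rep_\F(P,S)} c_{\langle P,\varphi\rangle}(\omega_\F)\,[P,\varphi]_P^S$ where only $\varphi \in \Hom_\F(P,S)$ contribute. Applying $M$ and using $\F$-stability of $x$ to replace each $M([P,\varphi]_P^S)(\cdots)$ by $M([P,\incl]_P^S)(\cdots)$, we get
\[ M(\omega_\F)(x) = \sum_{[P]_S} \Bigl( \sum_{[\varphi] \in \Rep_\F(P,S)} c_{\langle P,\varphi\rangle}(\omega_\F)\Bigr) M([P,\incl]_P^S)(x) = \sum_{[P]_S} m_P(\omega_\F)\, M([P,\incl]_P^S)(x). \]
By Lemma \ref{lem:CoeffsOfIdem}(a), since $\omega_\F$ is an idempotent characteristic element, $m_S(\omega_\F) = 1$ and $m_P(\omega_\F) = 0$ for $P < S$, so the sum collapses to the single term $M([S,\incl]_S^S)(x) = M(\id)(x) = x$. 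Again the covariant case is analogous, using the covariant stability condition to rewrite $M([\varphi(P),\varphi^{-1}]_S^P)$-type terms; one should be slightly careful that in the covariant setting the decomposition is applied on the correct side, but the bookkeeping with $m_P$ is identical.

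The one genuinely delicate point — the main obstacle — is keeping the variance and the order of composition straight. A contravariant Mackey functor sends $\circ$ in $A(S,S)\pLoc$ to composition of module maps in the \emph{opposite} order, so one must verify that "$\omega_\F$ acts as a projection onto the $\F$-stable submodule" is phrased consistently: the idempotent $M(\omega_\F) \colon M(S) \to M(S)$ should have image exactly $M(\F)$, and the computation above shows both that $M(\omega_\F)$ fixes $\F$-stable elements and (from the first implication) that anything it fixes is $\F$-stable. It is worth remarking explicitly that these two facts together identify $M(\F) = \operatorname{im}(M(\omega_\F))$ and that $M(\omega_\F)$ is genuinely idempotent as a self-map because $\omega_\F \circ \omega_\F = \omega_\F$ and $M$ is a functor; this is the content that makes the "universal stable element" terminology apt. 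No further input beyond Theorems \ref{thm:CharIdemExists}, Lemmas \ref{lem:RephraseStable} and \ref{lem:CoeffsOfIdem}, and the definitions is needed.
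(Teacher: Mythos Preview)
Your proof is correct and follows essentially the same approach as the paper's own proof: the easy direction uses $\F$-stability of $\omega_\F$, and the converse expands $\omega_\F$ in the standard basis, applies $\F$-stability of $x$ to replace each $M([P,\varphi])$ by $M([P,\incl])$, and collapses the sum using Lemma~\ref{lem:CoeffsOfIdem}. One small notational slip: in your expansion of $\omega_\F$ and the subsequent displayed equation you write $[P,\varphi]_P^S$ and $[P,\incl]_P^S$, but these should be $[P,\varphi]_S^S$ and $[P,\incl]_S^S$ (elements of $A(S,S)\pLoc$), since otherwise the summands $M([P,\incl]_P^S)(x)$ lie in different modules $M(P)$ and cannot be added; the replacement step implicitly uses the factorization $[P,\varphi]_S^S = [P,\varphi]_P^S \circ [P,\id]_S^P$ together with the $\F$-stability hypothesis.
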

\begin{proof}
One direction is easy: If $M(\omega_\F)(x) = x$, then $\F$-stability of $\omega_\F$ implies the $\F$-stability of $x$. We prove the converse only for contravariant $M$, the covariant case being analogous. Now, if $x$ is $\F$-stable, then we have
\begin{eqnarray*}
  M(\omega_\F) (x) &= &  \sum_{[P]_S } \left( \sum_{[\varphi] \in \Rep_\F(P,S)} c_{\langle P,\varphi \rangle}(\omega_\F) \; M([P,\varphi]) (x) \right) \\
     &= &  \sum_{[P]_S } \left( \sum_{[\varphi] \in \Rep_\F(P,S)} c_{\langle P,\varphi \rangle}(\omega_\F) \right)  M([P,\incl]) (x)  \\
     &=& \sum_{[P]_S } m_P(\omega_\F) \; M([P,\incl])(x).
\end{eqnarray*}
By Lemma \ref{lem:CoeffsOfIdem} we have $m_S = 1$, and $m_P = 0$ for $P < S$, so 
\[ M(\omega_\F) (x) = M([S,\id])(x) = x.\]
\end{proof}

The same argument proves a universal stable element theorem for (left or right) $A(S,S)\pLoc$-modules, where one defines $\F$-stability for $x \in M$ by demanding that for all $P \leq S$ and $\varphi \in \Hom_\F(P,S)$, one has
$[P,\varphi]_S^S \cdot x = [P,\incl]_S^S \cdot x$ (left modules) or $x \cdot [P,\varphi]_S^S = x \cdot [P,\incl]_S^S $ (right modules), as appropriate.

\begin{thm}\label{thm:UnivStableII} Let $\F$ be a saturated fusion systems on a finite $p$-group $S$, and let $M$ be a left (resp.~right) $A(S,S)\pLoc$-module. Then an element $x \in M$ is $\F$-stable if and only if $\omega_\F \cdot x = x$ (resp.~$x \cdot \omega_\F = x$).
\end{thm}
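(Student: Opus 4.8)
The plan is to mimic the proof of Theorem \ref{thm:UnivStable} essentially verbatim, replacing the application of the Mackey functor $M$ to a biset by the module action. First I would dispose of the easy direction: if $\omega_\F \cdot x = x$ (say $M$ is a left module), then for any $P \leq S$ and $\varphi \in \Hom_\F(P,S)$ we compute $[P,\varphi]_S^S \cdot x = [P,\varphi]_S^S \cdot (\omega_\F \cdot x) = ([P,\varphi]_S^S \circ \omega_\F) \cdot x$, and since $\omega_\F$ is (fully, in particular left) $\F$-stable by Theorem \ref{thm:CharIdemExists} we have $[\varphi(P),\varphi^{-1}]_S^P \circ \omega_\F = [P,\id]_S^P \circ \omega_\F$; composing on the left with $[P,\incl]_P^S$ and using that $[P,\incl]_P^S \circ [\varphi(P),\varphi^{-1}]_S^P = [P,\varphi]_S^S$ while $[P,\incl]_P^S \circ [P,\id]_S^P = [P,\incl]_S^S$ (a double-coset-formula computation) gives $[P,\varphi]_S^S \circ \omega_\F = [P,\incl]_S^S \circ \omega_\F$, hence $[P,\varphi]_S^S \cdot x = [P,\incl]_S^S \cdot x$, which is exactly $\F$-stability of $x$.

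For the converse, suppose $x$ is $\F$-stable. Writing $\omega_\F$ in the standard basis and using that $\omega_\F$ is $\F$-generated (Theorem \ref{thm:CharIdemExists}), so only basis elements $[P,\varphi]$ with $\varphi \in \Hom_\F(P,S)$ appear, I would expand
\begin{eqnarray*}
  \omega_\F \cdot x
  &=& \sum_{[P]_S} \left( \sum_{[\varphi] \in \Rep_\F(P,S)} c_{\langle P,\varphi \rangle}(\omega_\F) \; [P,\varphi]_S^S \cdot x \right) \\
  &=& \sum_{[P]_S} \left( \sum_{[\varphi] \in \Rep_\F(P,S)} c_{\langle P,\varphi \rangle}(\omega_\F) \right) [P,\incl]_S^S \cdot x \\
  &=& \sum_{[P]_S} m_P(\omega_\F) \; [P,\incl]_S^S \cdot x,
\end{eqnarray*}
where the second equality uses $\F$-stability of $x$ to replace each $[P,\varphi]_S^S \cdot x$ by $[P,\incl]_S^S \cdot x$. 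By Lemma \ref{lem:CoeffsOfIdem}(a), applicable since $\omega_\F$ is an idempotent characteristic element for the saturated fusion system $\F$, we have $m_S(\omega_\F) = 1$ and $m_P(\omega_\F) = 0$ for $P < S$, so only the $P = S$ term survives and $\omega_\F \cdot x = [S,\id]_S^S \cdot x = x$. The right-module case is identical, using right $\F$-stability of $\omega_\F$ and the right-module versions of the same manipulations.

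The only real subtlety — and the step I would be most careful about — is matching up the module-action definition of $\F$-stability (phrased via $[P,\varphi]_S^S$, a basis element of $A(S,S)\pLoc$) with the Mackey-functor-style statements ($\Phi$-stability, or the compositions involving $[P,\varphi]_P^S$ and $[\varphi(P),\varphi^{-1}]_S^P$) that the earlier lemmas are stated in; this is a routine double-coset-formula bookkeeping exercise but it is where all the content of the argument lives. Everything else is a direct transcription of the proof of Theorem \ref{thm:UnivStable}, so the proof can be kept very short by simply remarking that the same argument applies.

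\begin{proof}
This is proved exactly as Theorem \ref{thm:UnivStable}, with the action of $M$ on a biset replaced by the module action. If $\omega_\F \cdot x = x$ (left module case), then $\F$-stability of $\omega_\F$ forces $[P,\varphi]_S^S \circ \omega_\F = [P,\incl]_S^S \circ \omega_\F$ for all $P \leq S$ and $\varphi \in \Hom_\F(P,S)$, whence $[P,\varphi]_S^S \cdot x = [P,\incl]_S^S \cdot x$, so $x$ is $\F$-stable. Conversely, if $x$ is $\F$-stable, expand $\omega_\F$ in the standard basis; since $\omega_\F$ is $\F$-generated and $x$ is $\F$-stable,
\[ \omega_\F \cdot x = \sum_{[P]_S} m_P(\omega_\F) \; [P,\incl]_S^S \cdot x, \]
and Lemma \ref{lem:CoeffsOfIdem} gives $m_S(\omega_\F) = 1$ and $m_P(\omega_\F) = 0$ for $P < S$, so $\omega_\F \cdot x = [S,\id]_S^S \cdot x = x$. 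The right module case is analogous.
\end{proof}
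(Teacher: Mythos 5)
Your treatment of the hard direction (from $\F$-stability of $x$ to $\omega_\F \cdot x = x$) is correct and is exactly what the paper's remark ``the same argument proves'' alludes to. The easy direction, however, has a genuine gap. First, the asserted double-coset identity is false: tracing through the definitions one finds
\[ [P,\incl]_P^S \circ [\varphi(P),\varphi^{-1}]_S^P \;=\; [\varphi(P),\varphi^{-1}]_S^S \;=\; \op{\bigl([P,\varphi]_S^S\bigr)}, \]
not $[P,\varphi]_S^S$ itself (your second identity, $[P,\incl]_P^S \circ [P,\id]_S^P = [P,\incl]_S^S$, is fine). So the computation only yields $[\varphi(P),\varphi^{-1}]_S^S \circ \omega_\F = [P,\incl]_S^S \circ \omega_\F$.

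Second, and more seriously, the relation you actually need, $[P,\varphi]_S^S \circ \omega_\F = [P,\incl]_S^S \circ \omega_\F$, is not a consequence of $\F$-stability of $\omega_\F$ and is false in general. What full $\F$-stability gives is $[P,\varphi]_S^S \circ \omega_\F = [\varphi(P),\incl]_S^S \circ \omega_\F$ (factor $[P,\varphi]_S^S = [P,\varphi]_P^S \circ [P,\id]_S^P$, use right $\F$-stability of $\omega_\F$ in the middle, then take opposites using $\op{\omega_\F} = \omega_\F$). When $P$ and $\varphi(P)$ are $\F$-conjugate but not $S$-conjugate, $[\varphi(P),\incl]_S^S \circ \omega_\F \neq [P,\incl]_S^S \circ \omega_\F$; for instance with $S = D_8$, $\F = \F_S(\Sigma_4)$, $P = Z(S)$ and $\varphi(P)$ a non-central $\F$-conjugate, applying $\Phi_{\langle P,\incl\rangle}$ gives zero on the $[P,\varphi]_S^S\circ\omega_\F$ side (since $P$ is not $S$-subconjugate to $\varphi(P)$) and a nonzero multiple of $\Phi_{\langle P,\incl\rangle}(\omega_\F)$ on the other. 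The left-module stability condition that actually makes the theorem go through is $[P,\varphi]_S^S \cdot x = [\varphi(P),\incl]_S^S \cdot x$, i.e.\ the literal image of the left stability axiom of Definition \ref{def:Stab} under $[P,\incl]_P^S \circ -$. The right-module case works exactly as you wrote it, since there the factorization through $[P,\id]_S^P$ lands on the side where right $\F$-stability of $\omega_\F$ applies directly; the two module cases are not symmetric as written, and ``the right module case is analogous'' hides the real content of the easy direction.
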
 

\section{Fusion systems induced by bisets} \label{sec:BisetFusion}
In this section we introduce certain fusion systems on $S$ induced by $(S,S)$-bisets and, more generally, elements in $A(S,S)\pLoc$. These fusion systems are the stabilizer fusion system, fixed-point fusion system, and orbit-type fusion system. The latter two are initially defined at the level of pre-fusion systems, and their fusion closures are always equal (although the pre-fusion systems generally are not). When applied to a characteristic element for a fusion system $\F$, each of the three fusion systems  constructions recover $\F$. Thus characteristic elements contain exactly the same information as their fusion systems, encoded in the double Burnside ring. Furthermore, equality of the three fusion systems holds only for characteristic elements, giving us a criterion to recognize characteristic elements from the fusion systems they induce. As we will show in the next section, the existence of a characteristic element implies saturation for a fusion system, and thus this criterion can also be thought of as a saturation criterion.

\subsection{Stabilizer fusion systems}
The stabilizer fusion system of an element is the largest fusion system that stabilizes the element. Stabilizer fusion systems come in three flavours, depending whether one is looking at right stability, left stability, or both. Formally, they are defined as follows.

\begin{defn} \label{def:Stab}
Let $S$ be a finite $p$-group, and let $X$ be an element in $A(S,S)_{(p)}$.
\begin{enumerate}
\item[(a)]
The \emph{right stabilizer fusion system} of $X$ is the fusion system $\FusRStab(X) $ with morphism sets
  \[ \Hom_{\FusRStab(X)}(P,Q) = \{ \varphi \in \Inj(P,Q) \mid X \circ [P,\varphi]_P^S = X \circ [P,\incl]_P^S \}\,.  \]
\item[(b)]
The \emph{left stabilizer fusion system} of $X$ is the fusion system $\FusLStab(X) $ with morphism sets
  \[ \Hom_{\FusLStab(X)}(P,Q)= \{ \varphi \in \Inj(P,Q) \mid [\varphi(P),\varphi^{-1}]_S^P \circ X = [P,1]_S^P \circ X \}\,.  \]  
\item[(c)]
The \emph{full stabilizer fusion system} of $X$ is the intersection 
\[ \FusStab(X) = \FusRStab(X) \cap \FusLStab(X)\,.\] 
\end{enumerate}
\end{defn}

We leave it to the reader to verify that the three stabilizer fusion systems are indeed fusion systems. As the following lemma shows, the three stabilizer fusion systems are related, so in practice it usually suffices to prove results for left or right stabilizer systems.

\begin{lem}\label{lem:StabOp}
Let $S$ be a finite $p$-group. For an element $X$ in $\Afree(S,S)_{(p)}$, we have
\[ \FusRStab(X) = \FusLStab(\op{X}) \quad \text{and} \quad \FusLStab(X) = \FusRStab(\op{X})\,.\] 
In particular, if $X$ is symmetric, then 
\[ \FusRStab(X) = \FusLStab(X) = \FusStab(X)\,. \] 
\end{lem}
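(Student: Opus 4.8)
The plan is to reduce everything to the behavior of the fixed-point homomorphisms $\Phi_{\langle Q,\psi \rangle}$ under the opposite operation, using Lemma~\ref{lem:RephraseStable} to translate $\F$-stability into statements about fixed points and Lemma~\ref{lem:Phi(op(X))} to transport those statements across $\opmap$. First I would unpack the definitions: $\varphi \in \Hom_{\FusRStab(X)}(P,Q)$ means $X \circ [P,\varphi]_P^S = X \circ [P,\incl]_P^S$, and the goal is to show this holds if and only if $\op{X} \circ$ the corresponding morphisms agree, i.e.\ $\varphi \in \Hom_{\FusLStab(\op{X})}(P,Q)$. Rather than wrestle directly with the double coset formula for these composites, I would invoke part (b) of Lemma~\ref{lem:RephraseStable}, which says $X$ is right $\F$-stable exactly when $\Phi_{\langle Q,\psi \rangle}(X) = \Phi_{\langle \varphi(Q),\psi\circ\varphi^{-1}\rangle}(X)$ for all relevant classes and all $\varphi$ in $\F$; but since $\FusRStab(X)$ is by definition the \emph{largest} fusion system relative to which $X$ is right-stable, a morphism $\varphi$ lies in $\FusRStab(X)$ precisely when the single equation $X \circ [P,\varphi]_P^S = X \circ [P,\incl]_P^S$ holds, which (again by the fixed-point criterion in the proof of Lemma~\ref{lem:RephraseStable}, using injectivity of $\Phi$ from Proposition~\ref{prop:PhiIsInjective}) is equivalent to $\Phi_{\langle Q,\psi\rangle}(X \circ [P,\varphi]) = \Phi_{\langle Q,\psi\rangle}(X\circ[P,\incl])$ for all $(P,S)$-classes $\langle Q,\psi\rangle$, hence to $\Phi_{\langle \varphi(Q),\psi\circ\varphi^{-1}\rangle}(X) = \Phi_{\langle Q,\psi\rangle}(X)$ for all such classes.

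Next I would run the analogous unpacking for $\FusLStab(\op{X})$: by part (c) of Lemma~\ref{lem:RephraseStable} (and the same injectivity argument), $\varphi \in \Hom_{\FusLStab(\op{X})}(P,Q)$ iff $\Phi_{\langle Q,\rho\rangle}(\op{X}) = \Phi_{\langle Q, \varphi\circ\rho\rangle}(\op{X})$ for all appropriate classes $\langle Q,\rho\rangle$. Now apply Lemma~\ref{lem:Phi(op(X))}: for a free pair $(K,\chi)$ we have $\Phi_{\langle K,\chi\rangle}(X) = \Phi_{\langle \chi(K),\chi^{-1}\rangle}(\op{X})$. The key bookkeeping step is to check that the substitution $(K,\chi) \mapsto (\chi(K),\chi^{-1})$ carries the family of equations characterizing right-stability of $X$ bijectively onto the family characterizing left-stability of $\op{X}$ — in other words, that pre-composing the right-action by $\varphi$ on the $X$ side corresponds, after taking opposites, to post-composing the left-action by $\varphi$ on the $\op{X}$ side. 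This is a direct matching of indices: if $\psi = \chi$ and we look at $\Phi_{\langle\varphi(Q),\psi\circ\varphi^{-1}\rangle}(X)$, its image under $\opmap$ is $\Phi_{\langle (\psi\circ\varphi^{-1})(\varphi(Q)), (\psi\circ\varphi^{-1})^{-1}\rangle}(\op X) = \Phi_{\langle \psi(Q), \varphi\circ\psi^{-1}\rangle}(\op X)$, and setting $\rho = \psi^{-1}$ restricted appropriately this is exactly $\Phi_{\langle Q',\varphi\circ\rho\rangle}(\op X)$ with $Q' = \psi(Q) = \rho^{-1}(Q)$. So the two systems of equations coincide term by term, giving $\FusRStab(X) = \FusLStab(\op{X})$. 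The identity $\FusLStab(X) = \FusRStab(\op{X})$ then follows by replacing $X$ with $\op{X}$ and using $\op{(\op X)} = X$ (valid since $X$ is bifree, by Corollary~\ref{lem:op(X)XSym} and the surrounding discussion). The final assertion is immediate: if $X$ is symmetric then $\op{X} = X$, so $\FusRStab(X) = \FusLStab(X)$, and their intersection $\FusStab(X)$ equals both.

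The main obstacle I anticipate is the index-chasing in the middle step — making sure the correspondence $\langle Q,\psi\rangle \leftrightarrow \langle \psi(Q),\psi^{-1}\rangle$ really is a bijection on the relevant indexing sets (free $(P,S)$-classes versus free $(S,P)$-classes) and that it intertwines "precompose right action by $\varphi$" with "postcompose left action by $\varphi$" exactly, with no off-by-one mismatch in which subgroup plays the role of source versus target. Everything else (injectivity of $\Phi$, the fixed-point reformulation of stability, $\op{(\op X)} = X$ for bifree $X$) is quoted directly from the lemmas above, so the proof should be short once this combinatorial dictionary is spelled out. A clean way to present it, avoiding the double coset formula entirely, is: reduce to bisets by linearity, observe the underlying fixed-point sets $X^{(K,\varphi)}$ and $(\op X)^{(\varphi(K),\varphi^{-1})}$ literally coincide (this is the content of the proof of Lemma~\ref{lem:Phi(op(X))}), and then note that $X\circ[P,\varphi]$ and $\op X$ composed on the left with $[\varphi(P),\varphi^{-1}]$ have matching fixed-point sets under the paired classes, so the defining equations transfer.
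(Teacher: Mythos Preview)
Your approach is correct but takes a considerably longer route than the paper. The paper's proof is essentially one line: apply $\opmap$ directly to the defining equation. Since $\opmap$ is a bijection on bifree elements, the equation $X \circ [P,\varphi]_P^S = X \circ [P,\incl]_P^S$ holds in $\Afree(P,S)_{(p)}$ if and only if its opposite does; using the antihomomorphism property $\op{(Y\circ Z)} = \op{Z}\circ\op{Y}$ together with Lemma~\ref{lem:OpDesc} (which gives $\op{[P,\varphi]_P^S} = [\varphi(P),\varphi^{-1}]_S^P$), the opposite equation is exactly $[\varphi(P),\varphi^{-1}]_S^P \circ \op{X} = [P,1]_S^P \circ \op{X}$, the defining condition for $\varphi \in \Hom_{\FusLStab(\op{X})}(P,Q)$. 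No passage through fixed points is needed.

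Your route via Lemma~\ref{lem:RephraseStable} and Lemma~\ref{lem:Phi(op(X))} works, and the index-matching you worry about does come out correctly, but it reconstructs by hand something the opposite map already packages: injectivity of $\Phi$ plus the identity $\Phi_{\langle K,\chi\rangle}(X) = \Phi_{\langle \chi(K),\chi^{-1}\rangle}(\op{X})$ is precisely how one \emph{proves} that $\opmap$ is well-behaved on equations, so invoking those lemmas amounts to unpacking and repacking $\opmap$. The paper's approach buys brevity and avoids the bookkeeping you flagged as the main obstacle; your approach has the minor virtue of making the fixed-point dictionary explicit, which is the language used later in Sections~\ref{sec:CharImpliesSat} and~\ref{sec:FrobImpliesSat}, but for this lemma it is overkill.
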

\begin{proof}
Lemma \ref{lem:OpDesc} implies that $\FusRStab(X) = \FusLStab(\op{X})$, from which the other claims follow.
\end{proof}

As most of our arguments in the next two sections are in terms of fixed-point homomorphisms, it is helpful to record how morphisms stabilizing an element can be recognized in that context. 

\begin{lem} \label{lem:PhiConstantOnStab}
Let $S$ be a finite $p$-group and let $X$ be an element in $A(S,S)_{(p)}$.
For every subgroup $P \leq S$, the following hold 
\begin{enumerate}
 \item[(a)] If $\psi\in\Hom(P,S)$ and $\varphi \in \Hom_{\FusRStab(X)}(P,S)$, then 
  \[ \Phi_{\langle P,\psi \rangle}(X) = \Phi_{\langle \varphi(P),\psi\circ\varphi^{-1} \rangle}(X) \, .\]
 \item[(b)] If $\psi\in\Hom(P,S)$ and $\varphi \in \Hom_{\FusLStab(X)}(\psi(P),S)$, then
 \[ \Phi_{\langle P,\psi \rangle}(X) = \Phi_{\langle P,\varphi\circ\psi \rangle}(X) \, . \]
\end{enumerate}
\end{lem}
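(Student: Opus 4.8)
The plan is to verify both parts by a direct computation with fixed-point homomorphisms, reducing everything to the case where $X$ is an actual biset by linearity. For part (a), I would start from the definition: $\varphi \in \Hom_{\FusRStab(X)}(P,S)$ means precisely that $X \circ [P,\varphi]_P^S = X \circ [P,\incl]_P^S$ as elements of $A(P,S)\pLoc$. Applying the fixed-point homomorphism $\Phi_{\langle Q,\psi \rangle}$ to both sides for a $(P,S)$-class $\langle Q,\psi \rangle$ and appealing to Proposition \ref{prop:PhiIsInjective}, it suffices to compute how $\Phi$ behaves under precomposition with $[P,\varphi]$ and with $[P,\incl]$. This is exactly the content established in the proof of Lemma \ref{lem:RephraseStable}(b): there are canonical bijections of fixed-point sets $(X \circ [P,\varphi]_P^S)^{(Q,\psi)} \cong X^{(\varphi(Q),\psi\circ\varphi^{-1})}$ and $(X \circ [P,\incl]_P^S)^{(Q,\psi)} \cong X^{(Q,\psi)}$, yielding the identities of fixed-point homomorphisms
\[ \Phi_{\langle Q,\psi \rangle}(X \circ [P,\varphi]) = \Phi_{\langle \varphi(Q),\psi\circ\varphi^{-1} \rangle}(X), \qquad \Phi_{\langle Q,\psi \rangle}(X \circ [P,\incl]) = \Phi_{\langle Q,\psi \rangle}(X), \]
valid for all $X \in A(S,S)\pLoc$. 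Specializing to $Q = P$ and combining with the stability equation gives $\Phi_{\langle P,\psi \rangle}(X) = \Phi_{\langle \varphi(P),\psi\circ\varphi^{-1} \rangle}(X)$, which is exactly (a).

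For part (b), I would run the symmetric argument using left stability. Here the cleanest route is probably to reduce to (a) via the opposite homomorphism and Lemma \ref{lem:StabOp}: if $X$ is in the free Burnside module, then $\Hom_{\FusLStab(X)}(\psi(P),S) = \Hom_{\FusRStab(\op{X})}(\psi(P),S)$ by Lemma \ref{lem:StabOp}, and Lemma \ref{lem:Phi(op(X))} translates fixed points of $X$ into fixed points of $\op{X}$. However, since the statement is asserted for general $X \in A(S,S)\pLoc$, not just bifree elements, it is safer to argue directly: left stability $[\varphi(\psi(P)),\varphi^{-1}]_S^{\varphi(\psi(P))} \circ X = [\psi(P),1]_S^{\psi(P)} \circ X$ gives, after applying $\Phi$ and using the analogue of the fixed-point bijections from Lemma \ref{lem:RephraseStable}(c) for postcomposition, the identity $\Phi_{\langle P,\psi \rangle}(X) = \Phi_{\langle P, \varphi\circ\psi \rangle}(X)$. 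Concretely, one checks $(Y \circ X)^{(Q,\chi)}$ transforms appropriately when $Y = [\varphi(\psi(P)),\varphi^{-1}]$ versus $Y = [\psi(P),\incl]$, just as in the contravariant computation but on the left.

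I do not expect any serious obstacle here — this is a bookkeeping lemma whose content is entirely contained in the fixed-point identities already derived in the proof of Lemma \ref{lem:RephraseStable}. The only point requiring a moment of care is getting the composition order and the direction of $\varphi^{-1}$ correct in the left-stable case, so that the target class is $\langle P, \varphi\circ\psi\rangle$ rather than something twisted; writing out one small diagram of the relevant fixed-point set keeps this straight. Everything else is linearity plus injectivity of the table of marks (Proposition \ref{prop:PhiIsInjective}), both of which are available.

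\begin{proof}
By linearity it suffices to treat the case where $X$ is an actual biset.

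(a) Since $\varphi \in \Hom_{\FusRStab(X)}(P,S)$, we have $X \circ [P,\varphi]_P^S = X \circ [P,\incl]_P^S$ in $A(P,S)\pLoc$. For any $\psi \in \Hom(P,S)$, there are canonical bijections of fixed-point sets
\[ \bigl(X \circ [P,\varphi]_P^S\bigr)^{(P,\psi)} \;\cong\; X^{(\varphi(P),\,\psi\circ\varphi^{-1})} \qquad\text{and}\qquad \bigl(X \circ [P,\incl]_P^S\bigr)^{(P,\psi)} \;\cong\; X^{(P,\psi)}, \]
exactly as in the proof of Lemma \ref{lem:RephraseStable}(b) (taking $Q = P$ there). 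Hence
\[ \Phi_{\langle P,\psi \rangle}(X) = \Phi_{\langle P,\psi \rangle}\bigl(X \circ [P,\incl]\bigr) = \Phi_{\langle P,\psi \rangle}\bigl(X \circ [P,\varphi]\bigr) = \Phi_{\langle \varphi(P),\,\psi\circ\varphi^{-1} \rangle}(X). \]

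(b) Write $Q' = \psi(P)$. Since $\varphi \in \Hom_{\FusLStab(X)}(Q',S)$, we have $[\varphi(Q'),\varphi^{-1}]_S^{\varphi(Q')} \circ X = [Q',\incl]_S^{Q'} \circ X$ in $A(S,Q')\pLoc$. Arguing as in the proof of Lemma \ref{lem:RephraseStable}(c), for $\psi \in \Hom(P,S)$ there are canonical bijections
\[ \bigl([\varphi(Q'),\varphi^{-1}]_S^{\varphi(Q')} \circ X\bigr)^{(P,\psi)} \;\cong\; X^{(P,\,\varphi\circ\psi)} \qquad\text{and}\qquad \bigl([Q',\incl]_S^{Q'} \circ X\bigr)^{(P,\psi)} \;\cong\; X^{(P,\psi)}. \]
Therefore
\[ \Phi_{\langle P,\psi \rangle}(X) = \Phi_{\langle P,\psi \rangle}\bigl([Q',\incl] \circ X\bigr) = \Phi_{\langle P,\psi \rangle}\bigl([\varphi(Q'),\varphi^{-1}] \circ X\bigr) = \Phi_{\langle P,\,\varphi\circ\psi \rangle}(X). \qedhere \]
\end{proof}
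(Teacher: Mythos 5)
Your argument is, in substance, the same as the paper's: the paper's entire proof is the one-line observation that $X$ is by definition right $\FusRStab(X)$-stable and left $\FusLStab(X)$-stable, so the claim is immediate from Lemma~\ref{lem:RephraseStable}(b) and (c). You unpack exactly the fixed-point identities used in the proof of that lemma, which is a valid (if slightly more verbose) presentation.

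However, the opening reduction ``by linearity it suffices to treat the case where $X$ is an actual biset'' is not sound for this statement. The conclusion is linear in $X$, but the \emph{hypothesis} is not: if $X = \sum_i a_i X_i$ with $X_i$ bisets and $\varphi \in \Hom_{\FusRStab(X)}(P,S)$, nothing forces $\varphi$ to lie in $\FusRStab(X_i)$ for each $i$, so you cannot decompose the problem this way. What actually works (and what the proof of Lemma~\ref{lem:RephraseStable}(b) does) is prove the hypothesis-free identities
$\Phi_{\langle P,\psi \rangle}(X \circ [P,\varphi]) = \Phi_{\langle \varphi(P),\psi\circ\varphi^{-1} \rangle}(X)$ and $\Phi_{\langle P,\psi \rangle}(X \circ [P,\incl]) = \Phi_{\langle P,\psi \rangle}(X)$ for bisets and then extend those by linearity to arbitrary $X \in A(S,S)_{(p)}$; once these hold for general $X$, they apply directly to the $X$ at hand. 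So drop the reduction and instead invoke the linearity-extended identities. One typographical slip in (b): the element you want is $[\varphi(Q'),\varphi^{-1}]_S^{Q'}$, not $[\varphi(Q'),\varphi^{-1}]_S^{\varphi(Q')}$ --- the target group in the superscript must be $Q'$ so that the equation lives in $A(S,Q')_{(p)}$ as you correctly state.
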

\begin{proof} As $X$ is right $\FusRStab(X)$-stable and left $\FusLStab(X)$-stable, this follows from Lemma \ref{lem:RephraseStable}.
\end{proof}

\subsection{Fixed-point and orbit-type fusion systems}

\begin{defn} \label{def:FixOrb}
Let $S$ be a finite $p$-group, and let $X$ be an element in $\Afree(S,S)_{(p)}$.
\begin{enumerate}
\item[(a)] The \emph{orbit-type pre-fusion system} of $X$ is the pre-fusion system $\PreOrb(X) $ with morphism sets
 \[ \Hom_{\PreOrb(X)}(P,Q) = \{ \varphi \in \Inj(P,Q) \mid c_{\langle P,\varphi \rangle}(X) \neq 0 \}\,. \]
The \emph{orbit-type fusion system} of $X$, denoted $\FusOrb(X)$ is the closure of $\PreOrb(X)$.
 \item[(b)] The \emph{fixed-point pre-fusion system} of $X$ is the pre-fusion system $\PreFix(X) $ with morphism sets
 \[ \Hom_{\PreFix(X)}(P,Q) = \{ \varphi \in \Inj(P,Q) \mid \Phi_{\langle P,\varphi \rangle}(X) \neq 0 \}\,. \]
The \emph{fixed-point fusion system} of $X$, denoted $\FusFix(X)$ is the closure of $\PreFix(X)$.
\end{enumerate}
\end{defn}

Although the fixed-point and orbit-type pre-fusion systems are different in general, their closures are the same, as shown in the following lemma.

\begin{lem} \label{lem:Orb=Fix}
Let $S$ be a finite $p$-group. For any $X \in \Afree(S,S)_{(p)}$, 
\[ \FusOrb(X) = \FusFix(X)\,. \]
\end{lem}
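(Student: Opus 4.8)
The plan is to show each side contains the other by comparing the generating pre-fusion systems through the fixed-point homomorphisms $\Phi_{\langle Q,\psi\rangle}$ and the standard basis coefficients $c_{\langle Q,\psi\rangle}$, using Lemma \ref{lem:FixedPtsOnBasis} as the essential bridge. Recall that the closure of a pre-fusion system is obtained by closing up under restriction, extension in the target, inclusions, inverses of isomorphisms, and composition; since both $\FusOrb(X)$ and $\FusFix(X)$ are genuine fusion systems, it suffices to show that every morphism in $\PreOrb(X)$ lies in $\FusFix(X)$ and conversely every morphism in $\PreFix(X)$ lies in $\FusOrb(X)$.

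First I would handle the inclusion $\FusOrb(X)\subseteq\FusFix(X)$, i.e.\ $\PreOrb(X)\subseteq\FusFix(X)$. Suppose $\varphi\in\Hom_{\PreOrb(X)}(P,Q)$, so $c_{\langle P,\varphi\rangle}(X)\neq 0$. By Lemma \ref{lem:FixedPtsOnBasis}, $\Phi_{\langle P,\varphi\rangle}([P,\varphi]) = \frac{|N_\varphi|}{|P|}\cdot|C_S(\varphi(P))| \neq 0$, and for any other basis element $[L,\psi]$ appearing in $X$ with nonzero coefficient, $\Phi_{\langle P,\varphi\rangle}([L,\psi]) = 0$ unless $\langle P,\varphi\rangle$ is subconjugate to $\langle L,\psi\rangle$. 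If $\langle P,\varphi\rangle$ is maximal with respect to subconjugacy among classes of free pairs occurring in $X$, this immediately forces $\Phi_{\langle P,\varphi\rangle}(X) = c_{\langle P,\varphi\rangle}(X)\cdot\Phi_{\langle P,\varphi\rangle}([P,\varphi]) \neq 0$, so $\varphi\in\PreFix(X)\subseteq\FusFix(X)$. For non-maximal $\varphi$ the contributions need not cancel automatically, so instead I would argue by downward induction on subconjugacy: the point is that any class subconjugate to $\langle P,\varphi\rangle$ and occurring in $X$ is, by the subconjugacy characterization (the unnumbered lemma before Lemma \ref{lem:BurnsideBasis}) and closure of a fusion system under restriction, conjugation and composition, built from $\varphi$ together with conjugations and restrictions — hence already in $\FusFix(X)$ once we know the maximal ones are; then $\varphi$ itself is obtained as a composite/restriction of these. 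The cleanest route is: every $\varphi$ with $c_{\langle P,\varphi\rangle}(X)\ne 0$ factors as a restriction of some $\varphi'$ whose class is maximal subconjugate-wise among those occurring in $X$, so $\varphi'\in\PreFix(X)$, and $\FusFix(X)$ being closed under restriction gives $\varphi\in\FusFix(X)$.

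For the reverse inclusion $\FusFix(X)\subseteq\FusOrb(X)$, suppose $\varphi\in\Hom_{\PreFix(X)}(P,Q)$, so $\Phi_{\langle P,\varphi\rangle}(X)\neq 0$. Expanding $X$ in the standard basis, $\Phi_{\langle P,\varphi\rangle}(X) = \sum_{\langle L,\psi\rangle} c_{\langle L,\psi\rangle}(X)\,\Phi_{\langle P,\varphi\rangle}([L,\psi])$, and by Lemma \ref{lem:FixedPtsOnBasis} the only terms that survive are those with $\langle P,\varphi\rangle$ subconjugate to $\langle L,\psi\rangle$; since $\Phi_{\langle P,\varphi\rangle}(X)\ne 0$, at least one such $\langle L,\psi\rangle$ has $c_{\langle L,\psi\rangle}(X)\ne 0$, hence $\psi\in\Hom_{\PreOrb(X)}(L,S)$ (restricting target appropriately). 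Now $\langle P,\varphi\rangle\subcon\langle L,\psi\rangle$ means, by the subconjugacy lemma, that there are $x\in N_S(P,L)$ and $y\in N_S(\varphi(P),\psi(L))$ with $c_y\circ\varphi = \psi\circ c_x$ on $P$; equivalently $\varphi = c_{y}^{-1}\circ\psi\circ c_x|_P = c_{y^{-1}}\circ(\psi|_{\lsup{P}{x}})\circ c_x$. Since $\FusOrb(X)$ contains all conjugations in $S$ (it is a fusion system), is closed under restriction, and contains $\psi$, this factorization exhibits $\varphi$ as a composite of morphisms in $\FusOrb(X)$, so $\varphi\in\FusOrb(X)$.

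The main obstacle is the first inclusion: the fixed-point homomorphism of $X$ at a non-maximal class $\langle P,\varphi\rangle$ is a $\Z_{(p)}$-linear combination of several $\Phi_{\langle P,\varphi\rangle}([L,\psi])$ which could in principle cancel, so one cannot conclude $\Phi_{\langle P,\varphi\rangle}(X)\ne 0$ directly from $c_{\langle P,\varphi\rangle}(X)\ne 0$. The resolution is purely formal rather than numerical: one does not need $\varphi$ itself to lie in $\PreFix(X)$, only in $\FusFix(X)$, and the closure of a fusion system is closed under restriction and conjugation, so it is enough to locate \emph{some} subconjugacy-maximal class $\langle L,\psi\rangle$ occurring in $X$ that dominates $\langle P,\varphi\rangle$ — such a class automatically lies in $\PreFix(X)$ by the leading-term argument — and then reconstruct $\varphi$ from $\psi$ by conjugation and restriction via the subconjugacy lemma. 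Once this bookkeeping is set up symmetrically on both sides, the equality $\FusOrb(X)=\FusFix(X)$ follows.
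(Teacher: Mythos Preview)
Your argument is correct and follows essentially the same route as the paper: for each inclusion you pass to a subconjugacy-maximal class $\langle L,\psi\rangle$ occurring in $X$, use Lemma~\ref{lem:FixedPtsOnBasis} to see that maximality forces $\Phi_{\langle L,\psi\rangle}(X)\neq 0$ (respectively, that $\Phi_{\langle P,\varphi\rangle}(X)\neq 0$ forces some $c_{\langle L,\psi\rangle}(X)\neq 0$ above it), and then recover $\varphi$ from $\psi$ inside the target fusion system via conjugation and restriction. The only wrinkle is your phrase ``factors as a restriction of some $\varphi'$'': subconjugacy involves conjugation as well as restriction, so $\varphi$ is $c_{y^{-1}}\circ(\psi|_{\lsup{P}{x}})\circ c_x$ rather than a literal restriction of $\psi$ --- but you correct this yourself in the final paragraph, and the paper's proof uses exactly this closure-under-conjugation-and-restriction step.
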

\begin{proof}
It suffices to show that $\PreOrb(X) \subseteq \FusFix(X)$, and $\PreFix(X) \subseteq \FusOrb(X)$.

As both $\FusOrb(X)$ and $\FusFix(X)$ are fusion systems they are closed under restriction and conjugation by elements of $S$. 
Thus $\psi\in\Hom_{\FusOrb(X)}(P,S)$ implies that, for every $\langle Q,\varphi \rangle \subcon \langle P,\psi \rangle$, we have $\varphi \in \Hom_{\FusOrb(X)}(Q,S)$. The analogous statement for $\FusFix(X)$ is also true.

Suppose that $\varphi\in\Hom_{\PreOrb(X)}(Q,S)$, so $c_{\langle Q,\varphi \rangle}(X)\ne 0$. Let $\langle P,\psi \rangle$ be a maximal (with respect to subconjugacy) $(S,S)$-class so that $\langle Q,\varphi \rangle$ is subconjugate to $\langle P,\psi \rangle$, and $c_{\langle P,\psi \rangle}(X)\ne 0$. By Lemma \ref{lem:FixedPtsOnBasis}, maximality implies 
$\Phi_{\langle P,\psi \rangle}(X)\ne 0$, so $\psi\in\FusFix(X)$. Using the remark in the previous paragraph we get $\varphi\in\FusFix(X)$.

Suppose now that $\varphi\in\Hom_{\PreFix(X)}(Q,S)$. Then, using Lemma \ref{lem:FixedPtsOnBasis} again, there exists an $(S,S)$-class $\langle P,\psi \rangle$ to which $\langle Q,\varphi \rangle$ is subconjugate and such that
$c_{\langle P,\psi \rangle}(X)\ne 0$. Using the remark in the first paragraph we get $\varphi\in\FusOrb(X)$.
\end{proof}

\begin{lem} \label{lem:FixOp}
Let $S$ be a finite $p$-group and let $X$ be an element in $\Afree(S,S)_{(p)}$.
For every $P \leq S$ and every monomorphism $\varphi \colon P \to S$, we have
\[ \varphi \in \Hom_{\PreFix(X)}(P,S) \quad \text{if and only if}\quad \varphi^{-1} \in \Hom_{\PreFix(\op{X})}(\varphi(P),S) \,. \]  
In particular,
 \[ \FusFix(X) = \FusFix(\op{X})\,. \]
\end{lem}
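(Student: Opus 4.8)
The plan is to reduce the statement about fixed-point fusion systems to the behavior of the opposite homomorphism on the level of fixed-point homomorphisms, which is already recorded in Lemma~\ref{lem:Phi(op(X))}. That lemma tells us precisely that for $X \in \Afree(S,S)_{(p)}$ and a free $(S,S)$-pair $(P,\varphi)$ we have $\Phi_{\langle P,\varphi \rangle}(X) = \Phi_{\langle \varphi(P),\varphi^{-1} \rangle}(\op{X})$. Since $\varphi$ is injective, $(P,\varphi)$ is free, and $(\varphi(P),\varphi^{-1})$ is the corresponding free pair; the equality of these two integers immediately gives that one is nonzero if and only if the other is. Unwinding Definition~\ref{def:FixOrb}(b), $\varphi \in \Hom_{\PreFix(X)}(P,S)$ means exactly $\Phi_{\langle P,\varphi \rangle}(X) \neq 0$, and $\varphi^{-1} \in \Hom_{\PreFix(\op{X})}(\varphi(P),S)$ means exactly $\Phi_{\langle \varphi(P),\varphi^{-1} \rangle}(\op{X}) \neq 0$. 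So the first assertion is a direct translation of Lemma~\ref{lem:Phi(op(X))}.

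For the ``in particular'' clause, I would deduce $\FusFix(X) = \FusFix(\op{X})$ from the first assertion by passing to closures. The first assertion shows that the map $\varphi \mapsto \varphi^{-1}$ is a bijection between the isomorphisms in $\PreFix(X)$ and the isomorphisms in $\PreFix(\op{X})$ (it clearly restricts to isomorphisms on both sides since $\varphi^{-1}$ is an isomorphism $\varphi(P) \to P$ precisely when $\varphi$ is an isomorphism $P \to S$ with $\varphi(P) = S$, but more to the point it is a bijection on all morphism sets via the subconjugacy structure). The cleanest route: a monomorphism $\varphi \colon P \to S$ lies in $\Hom_{\overline{\PreFix(X)}}(P,S)$ — the closure — if and only if it can be built from morphisms of $\PreFix(X)$ by restriction, extension in the target, inversion of isomorphisms, and composition. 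Since taking inverses interchanges $\PreFix(X)$ and $\PreFix(\op{X})$ morphism-by-morphism, and since the closure operations (restriction, extension, inversion of isomorphisms, composition) are all stable under the operation $\varphi \mapsto \varphi^{-1}$ in the appropriate sense, a morphism generated in $\overline{\PreFix(X)}$ corresponds to a morphism generated in $\overline{\PreFix(\op{X})}$. Hence the two closures have the same morphism sets, i.e.\ $\FusFix(X) = \FusFix(\op{X})$.

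Alternatively, and perhaps more robustly, I would use Lemma~\ref{lem:Orb=Fix} to replace $\FusFix$ by $\FusOrb$ on both sides and argue with orbit-type data instead: Lemma~\ref{lem:OpDesc} gives $\op{\left([P,\varphi]_S^S\right)} = [\varphi(P),\varphi^{-1}]_S^S$, so the opposite homomorphism carries the standard basis bijectively to itself via $[P,\varphi] \mapsto [\varphi(P),\varphi^{-1}]$, whence $c_{\langle P,\varphi \rangle}(X) = c_{\langle \varphi(P),\varphi^{-1} \rangle}(\op{X})$. This shows $\varphi \mapsto \varphi^{-1}$ is a bijection $\PreOrb(X) \to \PreOrb(\op{X})$ on morphism sets, and since this bijection is compatible with the generation procedure for the fusion closure (it commutes with inversion of isomorphisms, and sends a restriction $P \xrightarrow{\varphi} \varphi(P) \hookrightarrow R$ to something handled by restriction/extension on the other side), it induces an isomorphism of the closures $\FusOrb(X) \cong \FusOrb(\op{X})$ that is the identity on objects, i.e.\ equality. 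Combined with $\FusOrb = \FusFix$ from Lemma~\ref{lem:Orb=Fix}, this yields $\FusFix(X) = \FusFix(\op{X})$.

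The main obstacle — really the only non-formal point — is making the passage from pre-fusion systems to their closures rigorous: one must check that the bijection $\varphi \mapsto \varphi^{-1}$ between the morphism sets of $\PreFix(X)$ and $\PreFix(\op{X})$ is respected by the fusion-closure construction, so that it descends to equality of $\overline{\PreFix(X)}$ and $\overline{\PreFix(\op{X})}$. This amounts to observing that each building block of a morphism in the closure (restriction, extension in the target, inversion of an isomorphism, composition of isomorphisms, and conjugation by elements of $S$, the latter being in every pre-fusion system under consideration once we take closures) transforms correctly under inversion, together with the elementary fact that $\langle P,\varphi \rangle \subcon \langle Q,\psi \rangle$ if and only if $\langle \varphi(P),\varphi^{-1} \rangle \subcon \langle \psi(Q),\psi^{-1} \rangle$ — which itself follows from the compatibility of $\Delta(-,-)$ with the isomorphism $S \times S \cong S \times S$ swapping factors, exactly as in the proof of Lemma~\ref{lem:OpDesc}. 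None of this is deep, but it is the step that needs to be spelled out; the fixed-point identity $\Phi_{\langle P,\varphi \rangle}(X) = \Phi_{\langle \varphi(P),\varphi^{-1} \rangle}(\op{X})$ itself is immediate from Lemma~\ref{lem:Phi(op(X))}.
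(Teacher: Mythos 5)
For the first claim your reduction to Lemma~\ref{lem:Phi(op(X))} is exactly the paper's argument and is fine. For the ``in particular'' clause you take a more laborious route than the paper, and as written there is a gap in the final step. You set out to track the bijection $\varphi \mapsto \varphi^{-1}$ through the closure construction, asserting that restriction, extension, composition, and inversion all commute with it ``in the appropriate sense'' --- that is true but not spelled out --- and what you obtain at the end is the statement that $\varphi$ lies in $\overline{\PreFix(X)}$ if and only if $\varphi^{-1}$ lies in $\overline{\PreFix(\op{X})}$. That is a bijection of morphism sets given by $\varphi \mapsto \varphi^{-1}$, not the identity, so it does not directly yield $\FusFix(X) = \FusFix(\op{X})$; similarly, your alternative phrasing ``an isomorphism of the closures that is the identity on objects, i.e.\ equality'' is not valid, since an isomorphism of categories that is the identity on objects need not be the identity on morphisms. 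To close the gap you need to invoke explicitly that $\FusFix(\op{X})$ is a fusion system, hence closed under inverses, so $\varphi^{-1} \in \FusFix(\op{X})$ forces $\varphi \in \FusFix(\op{X})$ as well.

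The paper's argument is shorter precisely because it leads with that observation rather than trying to commute $\varphi \mapsto \varphi^{-1}$ through each closure operation. Since $\FusFix(X)$ is a fusion system containing $\PreFix(X)$ and is closed under inverses, the first claim gives $\PreFix(\op{X}) = \{\varphi^{-1} : \varphi \in \PreFix(X)\} \subseteq \FusFix(X)$; taking closures then yields $\FusFix(\op{X}) \subseteq \FusFix(X)$, and the reverse inclusion follows by running the same argument with $\op{X}$ in place of $X$ (using $\op{\op{X}} = X$). That is the content of the paper's one-line remark that ``the second claim follows since $\FusFix(X)$ is closed under inverses,'' and it is also the missing ingredient that would make your bijection argument land.
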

\begin{proof} The first claim is an immediate consequence of Lemma \ref{lem:Phi(op(X))}, and the second claim follows since $\FusFix(X)$ is closed under inverses.
\end{proof}

Given that the only extra property of $\FusFix(X)$ over $\PreFix(X)$ that was used in the proof of Lemma \ref{lem:FixOp} is the closure under inverses, we can give a more precise statement.  
\begin{lem} \label{lem:PreFixForLWClosed}
Let $S$ be a finite $p$-group and let $X$ be an element in $\Afree(S,S)_{(p)}$. If $\PreFix(X)$ is level-wise
closed, then $\PreFix(\op{X}) = \PreFix(X).$
\end{lem}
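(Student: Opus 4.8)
The plan is to reduce the statement to two equivalences and then chain them. The first equivalence is simply Lemma~\ref{lem:FixOp} read with $\op{X}$ in place of $X$: since $\op{(\op{X})} = X$, that lemma gives, for every $P \leq S$ and every monomorphism $\psi \colon P \to S$,
\[ \psi \in \Hom_{\PreFix(\op{X})}(P,S) \iff \psi^{-1} \in \Hom_{\PreFix(X)}(\psi(P),S). \]
The second equivalence, which is where the level-wise closure of $\PreFix(X)$ enters, is that $\psi \in \Hom_{\PreFix(X)}(P,S)$ if and only if $\psi^{-1} \in \Hom_{\PreFix(X)}(\psi(P),S)$. Granting both, I would chain them to get $\psi \in \Hom_{\PreFix(\op{X})}(P,S) \iff \psi \in \Hom_{\PreFix(X)}(P,S)$ for all $P$ and all $\psi$, and since a pre-fusion system on $S$ is determined by its morphism sets with target $S$, this yields $\PreFix(\op{X}) = \PreFix(X)$.

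It then remains to prove the second equivalence. Starting from $\psi \in \Hom_{\PreFix(X)}(P,S)$, condition~(2) of Definition~\ref{defn:PreFus} lets me restrict the target to $\psi(P)$, exhibiting $\psi$ as an isomorphism in $\PreFix(X)$; level-wise closure then supplies $\psi^{-1} \in \Hom_{\PreFix(X)}(\psi(P),P)$; and condition~(2) of Definition~\ref{defn:PreFus} once more extends the target up to $S$, giving $\psi^{-1} \in \Hom_{\PreFix(X)}(\psi(P),S)$. The reverse implication is the same argument with $\psi$ and $\psi^{-1}$, and with $P$ and $\psi(P)$, interchanged.

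I expect no serious obstacle here; the proof is a short manipulation once Lemma~\ref{lem:FixOp} is available. The only points requiring care are the bookkeeping of sources and targets, and being sure that each passage between ``target $S$'' and ``target $\psi(P)$'' is justified by the restriction axiom for pre-fusion systems and not by level-wise closure. As the remark preceding the statement indicates, only the closure-under-inverses clause of level-wise closure is actually needed.
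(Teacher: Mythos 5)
Your proof is correct and makes explicit exactly the argument the paper leaves implicit (the paper states the lemma without proof, pointing in the preceding sentence to the fact that only closure under inverses was needed in the proof of Lemma~\ref{lem:FixOp}). The two bridging equivalences you identify — Lemma~\ref{lem:FixOp} applied to $\op{X}$, and the observation that a level-wise closed pre-fusion system has $\psi\in\Hom_{\PreFix(X)}(P,S)$ iff $\psi^{-1}\in\Hom_{\PreFix(X)}(\psi(P),S)$ via target restriction, inversion, and target extension — are precisely what the remark in the paper alludes to, and you are right that only the closure-under-inverses clause of level-wise closure is invoked.
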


\subsection{Fusion systems induced by characteristic elements}
The Linckelmann--Webb properties for can be rephrased in terms of fusion systems induced by characteristic elements.

\begin{lem} \label{lem:CharRephrase}
Let $\F$ be a fusion system on a finite $p$-group $S$. An element $\Omega$ in $\Afree(S,S)_{(p)}$ is right characteristic for $\F$ if and only if $\countS(\Omega)$ is prime to $p$, and
\[ \PreOrb(\Omega) \subseteq \F \subseteq \FusRStab(\Omega)\,. \]
The analogous statement holds for left and fully characteristic elements.
\end{lem}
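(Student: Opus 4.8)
The plan is to unpack each of the Linckelmann--Webb properties of Definition \ref{def:Char} into the fixed-point language of Lemma \ref{lem:RephraseStable}, and then translate those fixed-point conditions into the inclusion of (pre-)fusion systems $\PreOrb(\Omega) \subseteq \F \subseteq \FusRStab(\Omega)$. The augmentation condition (c) appears verbatim on both sides, so it can be set aside immediately, and the work is in matching up (a)+(b) with the double inclusion.

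First I would treat the inclusion $\PreOrb(\Omega) \subseteq \F$. By definition a morphism $\varphi \in \Hom_{\PreOrb(\Omega)}(P,S)$ is one with $c_{\langle P, \varphi \rangle}(\Omega) \neq 0$, i.e.\ $[P,\varphi]$ appears in the standard basis decomposition of $\Omega$ with nonzero coefficient. The condition $\PreOrb(\Omega) \subseteq \F$ says precisely that every such $\varphi$ lies in $\Hom_\F(P,S)$, which is exactly the statement that $\Omega$ is $\F$-generated. So this inclusion is literally a restatement of condition (a); one only needs to recall (as in Definition \ref{def:FMackey}) that $\F$-generation of $\Omega$ means $\Omega \in A_\F(S,S)$, i.e.\ $\Omega$ is a $\Z\pLoc$-combination of basis elements $[P,\varphi]$ with $\varphi$ in $\F$. (Lemma \ref{lem:RephraseStable}(a) gives the equivalent fixed-point phrasing, but here the orbit-type description makes the identification direct.)

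Next I would treat the inclusion $\F \subseteq \FusRStab(\Omega)$. A morphism $\varphi \in \Hom_\F(P,S)$ lies in $\Hom_{\FusRStab(\Omega)}(P,S)$ precisely when $\Omega \circ [P,\varphi]_P^S = \Omega \circ [P,\incl]_P^S$, which is exactly the defining equation of right $\F$-stability of $\Omega$ (condition (b)), quantified over all $P \leq S$ and all $\varphi \in \Hom_\F(P,S)$. So $\F \subseteq \FusRStab(\Omega)$ is equivalent to condition (b). Combining the two inclusions with condition (c) gives the equivalence in the statement. For the left and fully characteristic cases, one repeats the argument using $\FusLStab(\Omega)$ (whose defining equation is left $\F$-stability) in place of $\FusRStab(\Omega)$; the fully characteristic case is the conjunction of the two, using $\FusStab(\Omega) = \FusRStab(\Omega) \cap \FusLStab(\Omega)$ and the fact that $\F \subseteq \FusStab(\Omega)$ iff $\F \subseteq \FusRStab(\Omega)$ and $\F \subseteq \FusLStab(\Omega)$.

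There is essentially no hard step here: the lemma is a bookkeeping translation, and the only mild subtlety is making sure the quantifiers match. On the $\PreOrb$ side one must note that $\PreOrb(\Omega)$, being closed under restriction and (post-)composition with inclusions by its very definition as a pre-fusion system, contains a morphism iff the corresponding basis coefficient is nonzero, so that $\PreOrb(\Omega) \subseteq \F$ really is equivalent to $\F$-generation and not merely implied by it (here one uses that $\F$ is closed under restriction and conjugation, so no basis element with a non-$\F$ morphism can sneak in). On the $\FusRStab$ side one must observe that it suffices to check the stability equation with target $S$ (rather than a general $Q$), which is exactly how both right $\F$-stability and $\FusRStab$ are defined. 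With these observations in place the proof is a two- or three-line unwinding of definitions.
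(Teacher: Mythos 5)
Your proof is correct and follows essentially the same route as the paper: both arguments identify $\PreOrb(\Omega)\subseteq\F$ with condition (a) and $\F\subseteq\FusRStab(\Omega)$ with condition (b) by unwinding definitions, with condition (c) appearing unchanged. The only minor difference is that the paper's proof cites Lemma \ref{lem:Orb=Fix} for the first equivalence, while you argue directly from the definition of $\PreOrb$ and $\F$-generation, which is if anything a small simplification.
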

\begin{proof}
Using Lemma \ref{lem:Orb=Fix}, the inclusion $\PreOrb(\Omega) \subseteq \F$ is equivalent to (a) in Definition \ref{def:Char}, while the inclusion $\F \subseteq \FusRStab(\Omega)$ is equivalent to condition (b). 
\end{proof}

With significant extra work, the inclusion $\F \subseteq \FusRStab(\Omega)$ in Lemma \ref{lem:CharRephrase} can in fact be strengthened to an equality.

\begin{thm}[\cite{KR:ClSpectra}] \label{thm:StabOfChar}
Let $\F$ be a fusion system on a finite $p$-group $S$. If $\Omega$ is a right characteristic element for $\F$, then $\FusRStab(\Omega) =\F$. The analogous result holds for left characteristic and fully characteristic elements for $\F$.
\end{thm}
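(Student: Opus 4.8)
The plan is to prove that $\F \subseteq \FusRStab(\Omega)$ (which is Lemma \ref{lem:CharRephrase}) can be upgraded to an equality by showing the reverse inclusion $\FusRStab(\Omega) \subseteq \F$. Concretely, I must show that if $\varphi \in \Hom_{\FusRStab(\Omega)}(P,S)$, then $\varphi \in \Hom_\F(P,S)$. The natural strategy is to argue by contradiction and descending induction on the order of $P$: suppose $\varphi$ stabilizes $\Omega$ on the right but $\varphi \notin \Hom_\F(P,S)$, and choose such a pair $(P,\varphi)$ with $|P|$ maximal (i.e., choose $\langle P, \varphi\rangle$ maximal with respect to subconjugacy among classes witnessing failure of the inclusion). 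The point of maximality is that for any class strictly subconjugate to $\langle P,\varphi\rangle$, the stabilizing morphism already lies in $\F$.

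The key computational tool will be Lemma \ref{lem:FixedPtsOnBasis}, which expresses $\Phi_{\langle P,\varphi\rangle}([L,\psi])$ in terms of $\F$-generated basis elements, together with Lemma \ref{lem:PhiConstantOnStab}, which says $\Phi$ is constant on right-stabilizer orbits. First I would use $\F$-generation (condition (a), equivalently $\PreOrb(\Omega) \subseteq \F$) to write $\Omega = \sum_{\langle T,\rho\rangle} c_{\langle T,\rho\rangle}(\Omega)[T,\rho]$ where only $\rho \in \Hom_\F(T,S)$ contribute. Then I would compute $\Phi_{\langle P,\varphi\rangle}(\Omega)$ and $\Phi_{\langle P,\incl\rangle}(\Omega)$ using Lemma \ref{lem:FixedPtsOnBasis}: only basis classes $\langle T,\rho\rangle$ with $\langle P,\varphi\rangle \subcon \langle T,\rho\rangle$ (resp. $\langle P,\incl\rangle \subcon \langle T,\rho\rangle$) contribute. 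Since $\varphi$ stabilizes $\Omega$, these two sums are equal. The strategy is then to isolate the "diagonal" term $\langle P, \varphi\rangle$ itself (if $c_{\langle P,\varphi\rangle}(\Omega) \ne 0$ we would be done immediately since that forces $\varphi\in\F$), and more generally to play off the terms indexed by classes $\langle T,\rho\rangle$ that dominate $\langle P,\varphi\rangle$ but not $\langle P,\incl\rangle$, against those that dominate $\langle P,\incl\rangle$ but not $\langle P,\varphi\rangle$. Here the maximality of $\langle P,\varphi\rangle$ combined with $\F$-generation should force a numerical contradiction modulo an appropriate power of $p$ — the augmentation being prime to $p$ (condition (c)) enters to guarantee some coefficient sum is a unit.

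For the left and fully characteristic cases, I would deduce these from the right case by duality. By Lemma \ref{lem:StabOp}, $\FusLStab(X) = \FusRStab(\op{X})$, and $\Omega$ is left characteristic for $\F$ if and only if $\op\Omega$ is right characteristic for $\F$ (noting $\PreOrb(\op\Omega)$ and stability transform correctly under $\opmap$ via Lemmas \ref{lem:OpDesc} and \ref{lem:Phi(op(X))}); applying the right case to $\op\Omega$ gives $\FusLStab(\Omega) = \FusRStab(\op\Omega) = \F$. The fully characteristic case is then immediate since $\FusStab(\Omega) = \FusRStab(\Omega) \cap \FusLStab(\Omega) = \F \cap \F = \F$.

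The main obstacle I anticipate is the bookkeeping in the fixed-point computation: the sum defining $\Phi_{\langle P,\varphi\rangle}(\Omega)$ ranges over all basis classes dominating $\langle P,\varphi\rangle$, and these include the "genuinely larger" classes whose contribution is not controlled by the induction hypothesis directly. The delicate point is showing that, after subtracting the common contributions from classes dominating both $\langle P,\varphi\rangle$ and $\langle P,\incl\rangle$, what remains on the $\varphi$-side forces $c_{\langle P,\varphi\rangle}(\Omega) \ne 0$ — i.e., ruling out cancellation among higher terms. This likely requires a careful choice of which prime-power congruence to work modulo, and exploiting that the classes $\langle T,\rho\rangle$ with $\rho\in\F$ dominating $\langle P,\varphi\rangle$ but not $\langle P,\incl\rangle$ are themselves rather constrained (their source subgroups being $\F$-conjugates of over-groups of $P$), so that the stability relation can be bootstrapped. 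I expect this is exactly where the "significant extra work" alluded to in the statement lies, and where the detailed estimates from Section \ref{sec:CharImpliesSat} would be invoked.
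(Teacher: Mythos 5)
Your reduction of the left and fully characteristic cases to the right case via $\opmap$ and Lemma \ref{lem:StabOp} is fine, but the core of the proposed argument rests on a false claim. You assert that right stability of $\Omega$ under $\varphi$ forces $\Phi_{\langle P,\varphi\rangle}(\Omega)=\Phi_{\langle P,\incl\rangle}(\Omega)$. That is not what $\Omega\circ[P,\varphi]=\Omega\circ[P,\incl]$ says. By Lemma \ref{lem:PhiConstantOnStab}(a) this relation translates into $\Phi_{\langle P,\psi\rangle}(\Omega)=\Phi_{\langle\varphi(P),\,\psi\circ\varphi^{-1}\rangle}(\Omega)$ for every $\psi\colon P\to S$; the \emph{source} subgroup moves from $P$ to $\varphi(P)$. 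Specializing to $\psi=\incl$ gives $\Phi_{\langle P,\incl\rangle}(\Omega)=\Phi_{\langle\varphi(P),\varphi^{-1}\rangle}(\Omega)$, which is unrelated to $\Phi_{\langle P,\varphi\rangle}(\Omega)$, and in general $\Phi_{\langle P,\varphi\rangle}(\Omega)\neq\Phi_{\langle P,\incl\rangle}(\Omega)$ even for $\varphi\in\F$. The subsequent comparison of basis classes dominating $\langle P,\varphi\rangle$ versus $\langle P,\incl\rangle$ therefore has no starting point, and the descending induction on $|P|$ has nothing to bootstrap from.

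Once the identity is read off correctly the theorem falls out immediately, with no contradiction, maximality, or term-by-term bookkeeping. Taking $\psi=\varphi$ gives $\Phi_{\langle P,\varphi\rangle}(\Omega)=\Phi_{\langle\varphi(P),\incl\rangle}(\Omega)$. The inclusion $\varphi(P)\hookrightarrow S$ lies in $\F$, and by Proposition \ref{prop:PreFixOfChar} (whose proof, via Lemma \ref{lem:CongSpecial} applied to $\op{\Omega}$, is precisely where the prime-to-$p$ augmentation does its work) every morphism in $\F$ has nonzero fixed-point count on $\Omega$; in particular $\Phi_{\langle\varphi(P),\incl\rangle}(\Omega)\neq 0$. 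Hence $\Phi_{\langle P,\varphi\rangle}(\Omega)\neq 0$, and by $\F$-generation and Lemma \ref{lem:RephraseStable}(a) this forces $\varphi\in\Hom_\F(P,S)$. The ``significant extra work'' you anticipated is in the congruence estimates feeding into Proposition \ref{prop:PreFixOfChar}, not in a maximal-class comparison.
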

\begin{proof}
This was proved in \cite{KR:ClSpectra} in the case where $\Omega$ is a characteristic idempotent, and the same argument works for left, right or fully characteristic elements.
\end{proof}

\begin{prop} \label{prop:PreFixOfChar}
Let $\F$ be a fusion system on a finite $p$-group $S$. If $\Omega$ is a left or right characteristic element for $\F$, then
\[ \PreFix(\Omega) = \FusFix(\Omega) = \FusOrb(\Omega) = \F. \]
\end{prop}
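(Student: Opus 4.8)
The plan is to leverage Theorem~\ref{thm:StabOfChar}, Lemma~\ref{lem:CharRephrase}, and Lemma~\ref{lem:Orb=Fix}, and to establish the one genuinely new containment, namely $\F \subseteq \PreFix(\Omega)$, by a fixed-point counting argument. First I would reduce to the case of a right characteristic element, since the left case follows by applying Lemma~\ref{lem:FixOp} (or Lemma~\ref{lem:Phi(op(X))}) together with the fact that $\op{\Omega}$ is then right characteristic, and $\PreFix(\op{\Omega}) = \PreFix(\Omega)$ whenever the relevant symmetry holds; alternatively one just runs the symmetric argument directly. So assume $\Omega$ is right characteristic for $\F$. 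By Lemma~\ref{lem:CharRephrase} we already have $\PreOrb(\Omega) \subseteq \F$, hence $\FusOrb(\Omega) \subseteq \F$, and by Lemma~\ref{lem:Orb=Fix} this gives $\FusFix(\Omega) = \FusOrb(\Omega) \subseteq \F$. Since always $\PreFix(\Omega) \subseteq \FusFix(\Omega)$, it remains only to prove $\F \subseteq \PreFix(\Omega)$, which will force all four to coincide.

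To show $\F \subseteq \PreFix(\Omega)$, fix $P \leq S$ and $\varphi \in \Hom_\F(P,S)$; I must show $\Phi_{\langle P,\varphi\rangle}(\Omega) \neq 0$. The key observation is that right $\F$-stability (part (b) of Lemma~\ref{lem:RephraseStable}, or Lemma~\ref{lem:PhiConstantOnStab}(a)) gives $\Phi_{\langle P,\varphi\rangle}(\Omega) = \Phi_{\langle \varphi(P), \varphi\circ\varphi^{-1}\rangle}(\Omega) = \Phi_{\langle \varphi(P),\incl\rangle}(\Omega)$, and more generally, applying this with $\psi = \incl_P^S$, that $\Phi_{\langle P,\incl\rangle}(\Omega)$ depends only on the $\F$-conjugacy class of $P$. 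So it suffices to show $\Phi_{\langle P,\incl_P^S\rangle}(\Omega) \neq 0$ for every $P \leq S$. This is a mark of $\Omega$ at an ``untwisted'' diagonal, and here I would invoke the congruence that $\Omega$, being $\F$-generated with $\countS(\Omega)$ prime to $p$ and $\F$-stable, behaves like a $p$-local ``Sylow biset'': the augmentation computation shows $\Phi_{\langle 1,1\rangle}(\Omega) = |\Omega|$, while $\Phi_{\langle S,\incl\rangle}(\Omega) = m_S(\Omega)$ by Lemma~\ref{lem:FixedPtsOnBasis} (the only pair subconjugate-above a full-rank diagonal contributing is $\langle S,\psi\rangle$ with $\psi \in \Aut_\F(S)$), and the characteristic-element axioms pin $m_S(\Omega)$ to be prime to $p$, in particular nonzero. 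For general $P$, I would run a downward induction on $|S:P|$ using the double-coset/restriction structure: restricting $\Omega$ to $N_S(P)$ (or using the Burnside-ring fixed-point formula that expresses $\Phi_{\langle P,\incl\rangle}(\Omega)$ in terms of marks of larger subgroups weighted by Möbius-type coefficients), combined with $\F$-stability, one sees that $\Phi_{\langle P,\incl\rangle}(\Omega)$ is congruent mod $p$ to a nonzero quantity — essentially because the ``Sylow'' property forces the fixed points of $\Omega$ to mimic those of the biset $S$ itself on untwisted diagonals, up to a unit scalar. Concretely, the cleanest route is: the element $\Omega$ acts on the Burnside Mackey functor $A(-,\,*)$ as an idempotent-up-to-unit whose image is the $\F$-stable part, and evaluating the $P$-fixed points detects the orbit $[S/P]$, which survives because $\countS(\Omega)$ is a $p$-local unit.

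The main obstacle is exactly this last step: showing $\Phi_{\langle P,\incl\rangle}(\Omega) \neq 0$ for all $P$, i.e.\ that no untwisted fixed-point mark of a characteristic element vanishes. The subtlety is that $\Omega$ is only an element of $A(S,S)\pLoc$, not an honest biset, so one cannot argue set-theoretically; one must work with the congruences among the $\Phi_{\langle K,\varphi\rangle}$. I expect the right tool is a triangularity argument: order $(S,S)$-classes by subconjugacy, use Lemma~\ref{lem:FixedPtsOnBasis} to see that $\Phi_{\langle P,\incl\rangle}(\Omega)$ equals $c_{\langle P,\incl\rangle}(\Omega)\cdot\Phi_{\langle P,\incl\rangle}([P,\incl])$ plus contributions from strictly larger classes, feed in $\F$-stability to rewrite those, and close the induction by comparing mod $p$ with the corresponding computation for the biset $S$, whose marks are all manifestly nonzero. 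Once $\F \subseteq \PreFix(\Omega)$ is in hand, the chain $\F \subseteq \PreFix(\Omega) \subseteq \FusFix(\Omega) = \FusOrb(\Omega) \subseteq \F$ collapses and the proposition follows.
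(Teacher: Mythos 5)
Your reduction of the problem is sound: the chain $\PreFix(\Omega)\subseteq\FusFix(\Omega)=\FusOrb(\Omega)\subseteq\F$ follows from Lemmas~\ref{lem:CharRephrase} and~\ref{lem:Orb=Fix}, and right $\F$-stability (Lemma~\ref{lem:RephraseStable}(b)) does indeed reduce the remaining containment $\F\subseteq\PreFix(\Omega)$ to showing $\Phi_{\langle P,\incl\rangle}(\Omega)\neq0$ for every $P\leq S$. The problem is the step that actually establishes this non-vanishing, where you have a genuine gap.

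You have reduced to the \emph{right} characteristic case, but the decisive tool in the paper is Lemma~\ref{lem:CongSpecial}, whose hypothesis is that $\Phi_{\langle P,\varphi\rangle}(X)$ is constant as $\varphi$ ranges over $\Hom_{\PreFus}(P,S)$. That constancy is exactly what \emph{left} stability delivers (Lemma~\ref{lem:PhiConstantOnStab}(b)); right stability instead says the mark depends only on the image $\varphi(P)$, which is a different statement and does not let you apply Lemma~\ref{lem:CongSpecial}. The paper therefore handles the left characteristic case first: by left stability one may assume $\varphi(P)$ is fully $\F$-centralized, and then Lemma~\ref{lem:CongSpecial}(a) gives $\Phi_{\langle P,\varphi\rangle}(\Omega)/|C_S(\varphi(P))|\not\equiv 0 \pmod{p}$. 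The right characteristic case is then obtained by passing to $\op{\Omega}$, which is left characteristic, so $\PreFix(\op{\Omega})=\F$ is level-wise closed, and Lemma~\ref{lem:PreFixForLWClosed} gives $\PreFix(\Omega)=\PreFix(\op{\Omega})$. Your reduction goes in the opposite direction, and the substitute argument you sketch does not close the gap: the ``triangularity/M\"obius'' idea fails because the coefficients $c_{\langle Q,\psi\rangle}(\Omega)$ are $p$-local integers of mixed sign, so expressing $\Phi_{\langle P,\incl\rangle}(\Omega)$ in terms of marks of larger classes gives no control over whether the sum vanishes; ``mimicking the biset $S$ up to a unit'' is essentially the statement you are trying to prove; and the identification $\Phi_{\langle S,\incl\rangle}(\Omega)=m_S(\Omega)$ is incorrect (Lemma~\ref{lem:FixedPtsOnBasis} gives $\Phi_{\langle S,\incl\rangle}(\Omega)=c_{\langle S,\incl\rangle}(\Omega)\cdot|Z(S)|$, not $m_S(\Omega)=\sum_{[\psi]}c_{\langle S,\psi\rangle}(\Omega)$). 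To fix the proof, either switch the reduction to the left characteristic case and invoke Lemma~\ref{lem:CongSpecial} directly, or, in the right characteristic case, use Lemma~\ref{lem:Phi(op(X))} to rewrite $\Phi_{\langle P,\incl\rangle}(\Omega)=\Phi_{\langle P,\incl\rangle}(\op{\Omega})$ and apply the left-case argument to $\op{\Omega}$.
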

\begin{proof}
We already have
\[  \PreFix(\Omega) \subseteq \FusFix(\Omega) = \FusOrb(\Omega) \subseteq \F,\]
where the first inclusion is immediate, the equality is by Lemma \ref{lem:Orb=Fix}, and the last inclusion follows from $\PreOrb(\Omega) \subseteq \F$ upon taking closures. Thus it suffices to show that $\F \subseteq \PreFix(\Omega)$. In other words we show that, for every $(S,S)$-pair $(P,\varphi)$ with $\varphi$ in $\F$, we have $\Phi_{\langle P,\varphi \rangle}(\Omega)\neq 0$. 

Consider first the left characteristic case. By Lemma \ref{lem:RephraseStable}, left $\F$-stability of $\Omega$ implies that $\Phi_{\langle P,\psi \rangle}(\Omega) = \Phi_{\langle P,\varphi\rangle}(\Omega)$ for all $\psi \in \Hom_\F(P,S)$. Hence it is enough to consider the case where $\varphi(P)$ is fully $\F$-centralized. But in this case Lemma \ref{lem:CongSpecial} shows that 
$\frac{\Phi_{\langle P,\varphi \rangle}(\Omega)}{|C_S(\varphi(P))|}$ is nonzero $\modp$. In particular, $\Phi_{\langle P,\varphi \rangle}(\Omega) \neq 0$. This proves $\F \subseteq \PreFix(\Omega)$, and the string of equalities follows.

If $\Omega$ is a right characteristic element, then $\op{\Omega}$ is left characteristic, and we have $\PreFix(\op{\Omega}) = \F$. In particular, $\PreFix(\op{\Omega})$ is levelwise closed, so $ \PreFix(\Omega) = \PreFix(\op{\Omega})$, and the result follows.
\end{proof}

Note that it is generally not true that $\PreOrb(\Omega) = \F$ for a characteristic element $\Omega$. For instance, the $(S,S)$-biset $[S,\id]$ is always a characteristic idempotent for the minimal fusion system on $S$, and $\PreOrb([S,\id])$ contains only one morphism: the identity of $S$.

The equations $\FusRStab(\Omega) = \F$ and $\PreFix(\Omega) = \F$ were independently proved by Puig in \cite{puig:book} in the case where $\Omega$ is a right characteristic biset. 

We conclude this section by observing that these results can also be applied when the fusion system is not specified. This gives a criterion for recognizing characteristic elements, and reconstructing their fusion system.

\begin{cor} \label{cor:FixStabImpliesChar} 
Let $S$ be a finite $p$-group, and let $\Omega$ be an element in $\Afree(S,S)_{(p)}$ such that $\countS(\Omega)$ is prime to $p$. If 
$\PreFix(\Omega) \subseteq \FusRStab(\Omega)$, then $\Omega$ is a right characteristic element for $\FusRStab(\Omega)$, and 
 \[ \PreFix(\Omega) = \FusFix(\Omega) = \FusOrb(\Omega) = \FusRStab(\Omega). \]
The analogous statement holds for left and fully characteristic elements.
\end{cor}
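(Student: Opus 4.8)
The plan is to feed the hypothesis into Lemma \ref{lem:CharRephrase} and then invoke Proposition \ref{prop:PreFixOfChar}; the only work is to upgrade the pre-fusion inclusion to a statement about closures. Set $\F = \FusRStab(\Omega)$. By Definition \ref{def:Stab} this is a genuine fusion system, hence closed: $\overline{\F} = \F$. Therefore the hypothesis $\PreFix(\Omega) \subseteq \F$ passes to closures, giving $\FusFix(\Omega) = \overline{\PreFix(\Omega)} \subseteq \overline{\F} = \F$. Combining this with Lemma \ref{lem:Orb=Fix} ($\FusOrb(\Omega) = \FusFix(\Omega)$) and the tautological inclusion $\PreOrb(\Omega) \subseteq \FusOrb(\Omega)$, we obtain $\PreOrb(\Omega) \subseteq \F$.

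Now apply Lemma \ref{lem:CharRephrase}: since $\countS(\Omega)$ is prime to $p$ by assumption and $\PreOrb(\Omega) \subseteq \F \subseteq \FusRStab(\Omega)$ — the last inclusion being an equality by the very definition of $\F$ — the element $\Omega$ is right characteristic for $\F = \FusRStab(\Omega)$. With this established, Proposition \ref{prop:PreFixOfChar} applied to $\Omega$ and $\F$ immediately yields $\PreFix(\Omega) = \FusFix(\Omega) = \FusOrb(\Omega) = \F = \FusRStab(\Omega)$, which is the asserted chain of equalities.

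For the left-characteristic version one can either rerun the argument with left and right interchanged, or deduce it from the right-hand case via opposites. In the latter approach, Lemma \ref{lem:StabOp} rewrites the hypothesis as $\PreFix(\Omega) \subseteq \FusLStab(\Omega) = \FusRStab(\op{\Omega})$; then $\PreFix(\op{\Omega}) \subseteq \FusFix(\op{\Omega}) = \FusFix(\Omega) = \overline{\PreFix(\Omega)} \subseteq \FusRStab(\op{\Omega})$ by Lemma \ref{lem:FixOp}, putting us in the situation already treated for $\op{\Omega}$. Applying the right-hand case to $\op{\Omega}$ and translating back through Lemmas \ref{lem:FixOp}, \ref{lem:StabOp}, and \ref{lem:PreFixForLWClosed} (the last is available because $\PreFix(\op{\Omega})$ now coincides with a fusion system, hence is level-wise closed, so $\PreFix(\Omega) = \PreFix(\op{\Omega})$) gives the left statement. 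The fully characteristic case follows by intersection: $\PreFix(\Omega) \subseteq \FusStab(\Omega)$ forces $\PreFix(\Omega)$ into both $\FusRStab(\Omega)$ and $\FusLStab(\Omega)$, so the previous two cases give $\FusRStab(\Omega) = \FusFix(\Omega) = \FusLStab(\Omega) = \FusStab(\Omega)$, and $\Omega$ is simultaneously left and right characteristic for this common fusion system, hence fully characteristic.

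I do not expect a genuine obstacle here; the substance of the corollary is entirely carried by Theorem \ref{thm:StabOfChar}, Proposition \ref{prop:PreFixOfChar}, and Lemma \ref{lem:CharRephrase}. The only points requiring care are the direction of the inclusions when passing from pre-fusion systems to their closures — one must use that $\FusRStab(\Omega)$ is already closed so that $\PreFix(\Omega) \subseteq \FusRStab(\Omega)$ promotes to $\FusFix(\Omega) \subseteq \FusRStab(\Omega)$ — and the routine bookkeeping with opposite bisets in the left and fully characteristic cases.
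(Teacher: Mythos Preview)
Your proof is correct and follows essentially the same route as the paper's: pass to closures to get $\FusFix(\Omega) \subseteq \FusRStab(\Omega)$, feed $\PreOrb(\Omega) \subseteq \FusOrb(\Omega) = \FusFix(\Omega) \subseteq \F$ into Lemma~\ref{lem:CharRephrase} to conclude $\Omega$ is right characteristic, then invoke Proposition~\ref{prop:PreFixOfChar} for the equalities. The paper dispatches the left and fully characteristic cases with a one-line remark (``similar argument, or by taking opposite sets''), whereas you spell out the opposite-biset bookkeeping explicitly; your version is more detailed but not materially different.
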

\begin{proof}
We prove this for right characteristic elements. The case of left or fully characteristic elements can be proved by a similar argument, or by taking opposite sets.

Taking closures, the inclusion $\PreFix(\Omega) \subseteq \FusRStab(\Omega)$ gives $\FusFix(\Omega) \subseteq \FusRStab(\Omega)$, and since 
 \[\PreOrb(\Omega) \subseteq \FusOrb(\Omega) = \FusFix(\Omega) \, ,\] 
this implies $\PreOrb(\Omega) \subseteq \FusRStab(\Omega)$. Taking $\F \defeq \FusRStab(\Omega)$ in Lemma \ref{lem:RephraseStable}, we deduce that $\Omega$ is a right characteristic element for $\F$. Theorem \ref{thm:StabOfChar} and Proposition \ref{prop:PreFixOfChar} then imply the stated equalities.
\end{proof}

\section{Characteristic elements imply saturation}\label{sec:CharImpliesSat}

In this section we prove our first main result, Theorem \ref{mthm:CharImpliesSat} of the introduction, which says that if a fusion has a characteristic element, then it is saturated. This result, which was independently ---and first--- proved by Puig in \cite{puig:book}, is the first substantially different formulation of the saturation property for fusion systems (other than some variant of a prime to $p$ property and an extension property for morphism sets), and also shows that saturation can be detected in the double Burnside ring.

The proof is carefully broken down into parts, in order to reuse some arguments to prove Theorem \ref{mthm:FrobImpliesSat} in Section \ref{sec:FrobImpliesSat}. In \ref{subsec:Congruences} we establish some congruences for fixed-point homomorphisms that we will use repeatedly. In \ref{subsec:LWSat} we use these congruences to prove local saturation axioms, and in \ref{subsec:ProofOfCharImpliesSat} we collect the local results to complete the proof of Theorem \ref{mthm:CharImpliesSat}.

\subsection{Congruence relations for fixed-point homomorphisms} \label{subsec:Congruences}

The key to extracting information about the stabilizer and fixed-point fusion systems induced by an element in the double Burnside ring are the congruence relations in the following lemma. 

\begin{lem}[\cite{BCGLO2}]\label{lem:Cong}
Let $S$ be a finite $p$-group, let $X \in \Afree(S,S)_{(p)}$, and put $\PreFus \defeq \PreFix(X)$.
\begin{itemize}
 \item[(a)] For each $\varphi \in \Hom_{\PreFus}(P,S)$, the number $\Phi_{\langle P,\varphi \rangle}(X)$ is divisible by $|C_S(\varphi(P))|$ (in $\Z_{(p)}$). Furthermore,
  \[ \sum_{[\varphi] \in \Rep_{\PreFus}(P,S) } \frac{\Phi_{\langle P,\varphi \rangle}(X)}{|C_S(\varphi(P))|} \equiv \countS(X) \quad \modp \,.\]
 \item[(b)] For each $Q \in [P]_\PreFus$, the number 
$ \sum\limits_{\varphi \in \Hom_{\PreFus}(P,Q)} \Phi_{\langle P,\varphi \rangle}(X) $ is divisible by 
$|N_S(\varphi(P))|$ (in $\Z_{(p)}$). Furthermore,
  \[ \sum_{[Q] \in [P]_\PreFus} \frac{\sum\limits_{\varphi \in \Hom_{\PreFus}(P,Q)} \Phi_{\langle P,\varphi \rangle}(X)}{|N_S(Q)|} \equiv \countS(X) \quad \modp\,, \]
where the sum runs over $S$-conjugacy classes of subgroups $Q \leq S$ that are $\PreFus$-images of $P$.
\end{itemize}
\end{lem}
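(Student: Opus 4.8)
The plan is to reduce everything to elementary counting of fixed points of $(S,S)$-bisets, combined with Burnside's classical lemma (orbit counting) applied carefully within $p$-localized coefficients. I would first reduce to the case where $X$ is an actual left-free $(S,S)$-biset, since both statements are $\Z_{(p)}$-linear in $X$ and every element of $\Afree(S,S)_{(p)}$ is a $\Z_{(p)}$-combination of such bisets; one must only check that the pre-fusion system $\PreFus = \PreFix(X)$ is used consistently — so in the linear-combination step I would fix the ambient $\PreFus$ coming from the full $X$ and observe that each basis biset appearing with nonzero coefficient has its nonvanishing $(P,\varphi)$-fixed points only for $\varphi \in \PreFus$ (up to the subtlety that cancellation can occur; this is where a little care is needed, and it may be cleaner to prove the congruences directly for general $X$ using $\Phi_{\langle P,\varphi\rangle}$ as the basic object rather than passing to honest bisets).

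For part (a): the divisibility of $\Phi_{\langle P,\varphi\rangle}(X) = |X^{(P,\varphi)}|$ by $|C_S(\varphi(P))|$ comes from the fact that $C_S(\varphi(P))$ acts on the set $X^{(P,\varphi)} = \widehat X^{\Delta(P,\varphi)}$ by left multiplication, and this action is free because the left $S$-action on $X$ is free (recall $X$ is bifree, or at least the relevant basis bisets $[T,\psi]$ are bifree since $\psi$ injective — here bifreeness is exactly what makes the left action free). For the congruence, I would consider the set $X^{(P)} = \{x \in X \mid \exists\, \varphi: xk = \varphi(k)x \ \forall k\}$, i.e. the set of $x$ on which $P$ acts on the right "through some homomorphism to $S$ acting on the left"; partition this set by the homomorphism $\varphi$ it induces, noting each $\varphi$ lies in $\PreFix(X)$ and distinct $S$-conjugates $\varphi$ give $C_S$-translates, so summing $\Phi_{\langle P,\varphi\rangle}(X)/|C_S(\varphi(P))|$ over $\Rep_{\PreFus}(P,S)$ counts $N_S(P)$-orbits (or rather $S$-orbits via left-right diagonal, appropriately) of this set. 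Then I would compare $|X^{(P)}|$ modulo $p$ with $\countS(X) \cdot (\text{something prime to }p)$: restricting the right $S$-action to $P$, since $P$ is a $p$-group acting on the free left-$S$-set $X$, the number of $P$-fixed orbits is $\equiv$ the total orbit count $= \countS(X)$ mod $p$ by the standard $p$-group fixed-point congruence, and the $P$-fixed orbits are exactly those contributing to $X^{(P)}$.

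For part (b): this is the "conjugacy class" refinement. Here I would instead let $P$ act on the right, look at the $S$-orbits (for the left action) of $X$, and count how $N_S(P)$ (acting by... ) organizes the fixed-point data; grouping $\varphi \in \Hom_\PreFus(P,Q)$ by the target $Q$ up to $S$-conjugacy and dividing by $|N_S(Q)|$ should again count the relevant orbit set, and the same $p$-group fixed-point congruence gives the reduction to $\countS(X)$ mod $p$. The key bookkeeping identity is Lemma \ref{lem:FixedPtsOnBasis}, which converts $\Phi_{\langle P,\varphi\rangle}$ on a basis element into the order of a transporter-type set, and lets one reassemble the sums as orbit counts.

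The main obstacle I anticipate is the second sentence of each part — that the relevant sum lies in $\Z_{(p)}$, i.e. the divisibility statement before reduction mod $p$ even makes sense. Over $\Z$ one would want $|C_S(\varphi(P))|$ (resp. $|N_S(Q)|$) to genuinely divide the integer $\Phi_{\langle P,\varphi\rangle}(X)$ (resp. the indicated sum), and this requires the free action argument to be set up so that $C_S(\varphi(P))$ (resp. $N_S(\cdot)$) really does act freely on the fixed-point set in question; for $N_S$ this is more delicate than for $C_S$ because $N_S(P)$ does not act freely on $X^{(P,\varphi)}$ in general — one must pass to the correct set (something like pairs, or $X^{(P)}$ with $\varphi$ allowed to vary over an $S$-conjugacy class) on which the normalizer action is free. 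Getting this set right, and checking the orbit count reassembles exactly into the stated sum, is the technical heart of the proof; the mod-$p$ congruence itself is then just the elementary fact that a $p$-group acting on a finite set has fixed-point count congruent to the set's cardinality mod $p$, applied to the right $P$-action on (orbits of) $X$.
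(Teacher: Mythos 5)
Your approach is essentially the paper's: for a single biset the paper takes $X_0 \subseteq X$ (your $X^{(P)}$) to be the preimage of $(S\backslash X)^P$ under $X \to S\backslash X$, builds the map $\theta\colon X_0 \to \Inj(P,S)$ with $\theta^{-1}(\varphi) = X^{(P,\varphi)}$, passes to the quotient $\widetilde\theta\colon S\backslash X_0 \to \widetilde\Inj(P,S)$ using freeness of the left $S$-action to compute $|\widetilde\theta^{-1}([\varphi])| = \Phi_{\langle P,\varphi\rangle}(X)/|C_S(\varphi(P))|$, and then applies the $p$-group fixed-point congruence $|S\backslash X_0| = |(S\backslash X)^P| \equiv |S\backslash X| = \countS(X) \pmod p$ — exactly your plan. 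Two of the worries you flag are resolved more cleanly than you propose. (i) For the linearity reduction, prove the congruence for bisets with the sum running over \emph{all} classes in $\widetilde\Inj(P,S)$, extend by $\Z_{(p)}$-linearity to arbitrary $X$ (both sides are $\Z_{(p)}$-linear in $X$ since they are built from the $\Phi$-homomorphisms and $\countS$), and only at the very end restrict the indexing set to $\Rep_{\PreFus}(P,S)$, which is harmless because $\Phi_{\langle P,\varphi\rangle}(X) = 0$ for $\varphi\notin\PreFus$ by definition of $\PreFix(X)$; since $\PreFus$ never appears before this last step, there is no consistency issue with the pre-fusion systems of the constituent bisets and no cancellation problem. (ii) For the $|N_S(Q)|$-divisibility in (b), no free $N_S(Q)$-action is needed: the quantity $\bigl(\sum_{\varphi\in\Hom_\PreFus(P,Q)}\Phi_{\langle P,\varphi\rangle}(X)\bigr)/|N_S(Q)|$ is identified with the cardinality of the fiber over $[Q]$ of the composite $S\backslash X_0 \xrightarrow{\widetilde\theta} \widetilde\Inj(P,S) \xrightarrow{\widetilde\Image} \widetilde I(P)$ that sends $[\varphi]$ to the $S$-conjugacy class of $\varphi(P)$, so it is automatically a nonnegative integer; the identification is a short count collapsing $N_S(Q)$-orbits of $\Hom_\PreFus(P,Q)$ against the $C_S(Q)$ already divided out in $\widetilde\theta^{-1}$.
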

\begin{proof}
This follows by adapting and expanding on an argument used in the proof of Proposition 1.16 in \cite{BCGLO2}. We outline that argument here, referring the reader to \cite{BCGLO2} for details, and emphasize the parts that need to be adapted.

Consider first the case where $X$ is an $(S,S)$-biset. For the sake of clarity, we shall distinguish between the left and right $S$-actions by considering $X$ as an $(S_1,S_2)$-set, with the understanding that  $S_1 = S_2 = S$. Then $S_2\backslash X$ is a right $S_1$-set, and we let $X_0 \subseteq X$ be the pre-image of $(S_2\backslash X)^P$ under the projection $X \to S_2\backslash X$, where $P$ acts on the right via the inclusion $P \leq S_1$. As explained in \cite{BCGLO2}, this means that for every $x \in X_0$, there is a group monomorphism $\theta(x) \colon P \to S_2$ such that, for all $g \in P$, we have $\theta(x)(g) x = x g$. Thus we get a map $\theta \colon X_0 \to \Inj(P,S_2)$ such that $\theta^{-1}(\varphi) = X^{(P,\varphi)},$ and we have 
 \[ |X_0| = \sum_{\varphi \in \Hom(P,S_2)} |\theta^{-1}(\varphi) | = \sum_{\varphi \in \Inj(P,S_2)} \Phi_{\langle P,\varphi \rangle}(X)\,.  \]
Furthermore, $\theta(ax) = c_a \circ \theta(x)$ for $a \in S_2$ and $x \in X_0$, so we get an induced map
 $\widetilde{\theta} \colon S_2\backslash X_0 \to \widetilde{\Inj}(P,S_2) \defeq S_2\backslash \Inj(P,S_2)$. 

Letting $I(P)$ be the set of subgroups of $S_2$ that are isomorphic to $P$, we get a map $\Image \colon \Inj(P,S_2) \to I(P)$ sending a monomorphism to its image. This induces a map $\widetilde{\Image} \colon \widetilde{\Inj}(P,S_2) \to \widetilde{I}(P)$, where $\widetilde{I}(P)$ is the set of $S_2$-conjugacy classes in $I(P)$. 

These maps all fit into a commutative diagram
\[ \xymatrix{
  X_0 \ar[rr]^{\theta \hphantom{\Inj}} \ar[d]^{q}  && \Inj(P,S_2) \ar[rr]^{\hphantom{Sj} \Image } \ar[d]^{q}  && I(P) \ar[d]^{q} \\ 
  S_2\backslash X_0  \ar[rr]^{ \widetilde{\theta} \hphantom{\Inj}} && \widetilde{\Inj}(P,S_2) \ar[rr]^{\hphantom{Sj} \widetilde{\Image}} && \widetilde{I}(P)
}
\]
where the vertical maps are the canonical projections onto $S_2$-orbits.

For each $\varphi \in \Inj(P,S_2)$, the conjugacy class $[\varphi] \in \widetilde\Inj(P,S_2)$ contains $|S|/|C_{S_2}(\varphi(P)|$ distinct monomorphisms $\varphi'$, each of which is conjugate to $\varphi$, so $\Phi_{\langle P,\varphi' \rangle}(X) = \Phi_{\langle P,\varphi \rangle}(X)$. Therefore $ |(q\circ \theta)^{-1}([\varphi])| = \frac{|S|}{|C_{S_2}(\varphi(P))|} \Phi_{\langle P,\varphi \rangle}(X)$. Since the $S_2$-action on $X_0$ is free, we obtain 
 \[ \widetilde{\theta}^{-1}([\varphi]) = \frac{ |(q\circ \theta)^{-1}([\varphi])| }{|S_2|} = \frac{\Phi_{\langle P,\varphi \rangle}(X)}{|C_{S_2}(\varphi(P))|}\,, \]
and in particular, $\Phi_{\langle P,\varphi \rangle}(X)$ is divisible by $|C_{S_2}(\varphi(P))|$. Furthermore, using the congruence 
\[ |S_2\backslash X_0| = |(S_2\backslash X)^P| \equiv |S_2 \backslash X| \quad \modp\,, \]
we obtain
\[ 
  \sum_{[\varphi] \in \widetilde{\Inj}(P,S_2) } \frac{\Phi_{\langle P,\varphi \rangle}(X)}{|C_S(\varphi(P))|} 
= \sum_{[\varphi] \in \widetilde{\Inj}(P,S_2) } |\widetilde{\theta}^{-1}([\varphi])|
= |S_2\backslash X_0| 
\equiv \countS(X) \quad \modp\,.\]
This congruence, and the divisibility property, extend to general $X \in \Afree(S_1,S_2)_{(p)}$ by linearity of the morphisms $\countS$ and $\Phi_{\langle P,\varphi \rangle}$. Part (a) now follows by observing that we need only sum over $[\varphi] \in \Rep_{\PreFus}(P,S),$ as $\Phi_{\langle P,\varphi \rangle}(X_0)=0$ when $\varphi \notin \Hom_{\PreFus}(P,S)$. 

Part (b) is proved similarly by first showing that, when $X$ is a biset,
\[ |(\widetilde{\Image} \circ \widetilde{\theta})^{-1}([Q])| = \frac{\sum\limits_{\varphi \in \Hom_{\PreFus}(P,Q)} \Phi_{\langle P,\varphi \rangle}(X)}{|N_S(Q)|} \]
for each $[Q] \in \widetilde{I}(P)$, then summing over $[Q] \in \widetilde{I}(P)$ and extending the resulting congruence to general $X \in \Afree(S_1,S_2)_{(p)}$ by linearity.
\end{proof}

\subsection{Levelwise saturation results} \label{subsec:LWSat}

Adding a stability condition, we get the following characterization of fully centralized and fully normalized subgroups.

\begin{lem} \label{lem:CongSpecial}
Let $S$ be a finite $p$-group, and let $X$ be an element in $\Afree(S,S)_{(p)}$ with $\countS(X)$ not divisible by $p$, such that $\PreFus \defeq \PreFix(X)$ is level-wise closed. Let $P \leq S$ and assume that for all $\varphi, \psi \in \Hom_\PreFus(P,S)$ we have $\Phi_{\langle P,\varphi \rangle}(X) = \Phi_{\langle P,\psi \rangle}(X).$ Then the following hold
\begin{enumerate}
 \item[(a)] For $\varphi \in \Hom_\PreFus(P,S)$, the image $\varphi(P)$ is fully $\PreFus$-centralized if and only if 
  \[ \frac{\Phi_{\langle P,\varphi \rangle}(X)}{|C_S(\varphi(P))|} \not\equiv 0 \quad \modp\,.\]
 \item[(b)] For $\varphi \in \Hom_\PreFus(P,S)$ we have 
  \[ \sum\limits_{\psi \in \Hom_{\PreFus}(P,\varphi(P))} \Phi_{\langle P,\psi \rangle}(X) = |\Aut_\PreFus(\varphi(P))| \cdot \Phi_{\langle P,\varphi \rangle}(X)\,,  \]
and $\varphi(P)$ is fully $\PreFus$-normalized if and only if 
  \[ \frac{|\Aut_\PreFus(\varphi(P))| \cdot \Phi_{\langle P,\varphi \rangle}(X)}{ |N_S(\varphi(P))| } \not\equiv 0 \quad \modp \,.\] 
\end{enumerate}
\end{lem}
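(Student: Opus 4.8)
The plan is to deduce both parts directly from the two congruences in Lemma \ref{lem:Cong}, using the crucial hypothesis that $\Phi_{\langle P,\varphi\rangle}(X)$ is constant over all $\varphi\in\Hom_\PreFus(P,S)$; call this common value $\Phi$. First I would fix representatives and count. For part (a), note that since $\PreFus=\PreFix(X)$ is level-wise closed, the $\PreFus$-conjugacy class of $P$ is an actual orbit under the $\Aut_\PreFus$-actions, and for each $\varphi\in\Hom_\PreFus(P,S)$ the image $\varphi(P)$ depends only on the class $[\varphi]\in\Rep_\PreFus(P,S)$. Rewriting the sum in Lemma \ref{lem:Cong}(a), $\sum_{[\varphi]\in\Rep_\PreFus(P,S)}\frac{\Phi}{|C_S(\varphi(P))|}\equiv\countS(X)\not\equiv 0\pmod p$, and grouping the $[\varphi]$ according to the image $Q=\varphi(P)$ (which runs over $[P]_\PreFus$), each fixed image $Q$ is hit by $|\Aut_\PreFus(Q)|/$(number of automorphisms fixing nothing extra)$\ldots$ — more precisely by $|\Rep_\PreFus(P,Q)|$ representatives. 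The cleaner route is: group by $S$-conjugacy class of the image. I would show that the term for the $S$-conjugacy class of $Q$ contributes a $p$-adic valuation controlled by $|C_S(Q)|$, and that the total sum is a unit mod $p$ forces at least one — hence, by the standard fully-centralized argument, exactly the fully $\PreFus$-centralized ones — to contribute units. The key divisibility point is that $|C_S(Q)|\le|C_S(Q')|$ for $Q'$ fully centralized, so $|C_S(Q')|$ divides every $\Phi_{\langle P,\psi\rangle}(X)$ with $\psi(P)=Q'$ only after the right normalization; the ratio $\Phi/|C_S(\varphi(P))|$ is then a unit mod $p$ exactly when $|C_S(\varphi(P))|$ is maximal, i.e. $\varphi(P)$ fully centralized.

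More carefully, here is the argument I would write for (a). Since $\Phi_{\langle P,\varphi\rangle}(X)=\Phi$ for all $\varphi\in\Hom_\PreFus(P,S)$, Lemma \ref{lem:Cong}(a) gives that $|C_S(\varphi(P))|$ divides $\Phi$ in $\Z_{(p)}$ for every such $\varphi$; in particular $|C_S(Q_0)|$ divides $\Phi$ where $Q_0$ is fully $\PreFus$-centralized, and $|C_S(Q_0)|\ge|C_S(Q)|$ for all $Q\in[P]_\PreFus$. Write $v$ for the $p$-adic valuation. Then $v(\Phi/|C_S(\varphi(P))|)=v(\Phi)-v(|C_S(\varphi(P))|)$, which is minimized exactly when $|C_S(\varphi(P))|$ is $p$-maximal, i.e. when $\varphi(P)$ is fully $\PreFus$-centralized; for non-fully-centralized images the valuation is strictly larger, hence those terms vanish mod $p$. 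Reindexing the sum in Lemma \ref{lem:Cong}(a) by the image $Q\in[P]_\PreFus$ (each $Q$ arising from $|\Rep_\PreFus(P,Q)|=|\Aut_\PreFus(Q)\backslash\Hom_\PreFus(P,Q)|$ classes $[\varphi]$), only the fully $\PreFus$-centralized $Q$ contribute mod $p$, and their total equals $\countS(X)\not\equiv 0$. Therefore if $\varphi(P)$ is not fully centralized its term is $0$ mod $p$, proving the "only if"; and if it is fully centralized, then all terms in the reduced sum have the same valuation, so a single one cannot vanish mod $p$ without all vanishing, which would contradict $\countS(X)\not\equiv 0$ — this proves the "if". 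This last step uses that $\Phi/|C_S(\varphi(P))|$ is the \emph{same} value for every fully centralized $\varphi(P)$, which follows from constancy of $\Phi$ and $|C_S(-)|$ being constant on the fully centralized subgroups in $[P]_\PreFus$.

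For part (b) the first identity is purely combinatorial: $\Aut_\PreFus(\varphi(P))$ acts on $\Hom_\PreFus(P,\varphi(P))$ by post-composition, this action is free (elements are monomorphisms into $\varphi(P)$, and an automorphism of $\varphi(P)$ fixing $\psi$ must be the identity since $\psi$ is surjective onto $\varphi(P)$), and the orbits all have size $|\Aut_\PreFus(\varphi(P))|$; since $\Phi_{\langle P,\psi\rangle}(X)=\Phi$ is constant, $\sum_{\psi\in\Hom_\PreFus(P,\varphi(P))}\Phi_{\langle P,\psi\rangle}(X)=|\Hom_\PreFus(P,\varphi(P))|\cdot\Phi=|\Aut_\PreFus(\varphi(P))|\cdot|\Rep_\PreFus(P,\varphi(P))|\cdot\Phi$ — wait, I must double check: the number of $\psi$ is $|\Aut_\PreFus(\varphi(P))|\cdot|\{\text{images}\}|$ but here the image is fixed to be exactly $\varphi(P)$, so it's $|\Aut_\PreFus(\varphi(P))|$ times the number of $\PreFus$-isomorphisms $P\to\varphi(P)$ modulo that action, which after untangling gives exactly $|\Aut_\PreFus(\varphi(P))|\cdot\Phi$ when there is a single such iso-class — and indeed constancy plus the free action gives the stated $|\Aut_\PreFus(\varphi(P))|\cdot\Phi$ provided we interpret the sum correctly; I would state it as in the lemma and verify via the free $\Aut_\PreFus(\varphi(P))$-action that $|\Hom_\PreFus(P,\varphi(P))|=|\Aut_\PreFus(\varphi(P))|$ when $P$ and $\varphi(P)$ are related by a single $\PreFus$-class, which is the situation here since any two such homs differ by an automorphism. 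Then I feed this into Lemma \ref{lem:Cong}(b): the inner sum over $\varphi\in\Hom_\PreFus(P,Q)$ equals $|\Aut_\PreFus(Q)|\cdot\Phi$, so Lemma \ref{lem:Cong}(b) reads $\sum_{[Q]\in[P]_\PreFus}\frac{|\Aut_\PreFus(Q)|\cdot\Phi}{|N_S(Q)|}\equiv\countS(X)\not\equiv 0\pmod p$, and again a valuation/maximality argument — now with $|N_S(Q)|$ maximal $\iff$ $Q$ fully $\PreFus$-normalized, using divisibility of $|\Aut_\PreFus(Q)|\cdot\Phi$ by $|N_S(Q)|$ from Lemma \ref{lem:Cong}(b) — yields exactly the stated criterion.

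The main obstacle, and the place I would be most careful, is the bookkeeping in the reindexing: passing between the sum over $[\varphi]\in\Rep_\PreFus(P,S)$ and the sum over images $Q$ (tracking the multiplicity $|\Rep_\PreFus(P,Q)|$ or equivalently $|\Aut_\PreFus(Q)|$ versus $|\Aut_\PreFus(P)|$), and making sure the "constant value $\Phi$" hypothesis is used correctly so that the normalized summands $\Phi/|C_S(Q)|$ (resp. $|\Aut_\PreFus(Q)|\Phi/|N_S(Q)|$) really are constant across the fully centralized (resp. fully normalized) subgroups — this constancy is exactly what lets "one term nonzero mod $p$" be equivalent to "the maximal-$|C_S|$ (resp. maximal-$|N_S|$) term, and all of them, nonzero mod $p$". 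Getting the free-action claim for the post-composition action of $\Aut_\PreFus(\varphi(P))$ right (surjectivity of $\psi$ onto its image is what forces freeness) is the only other subtle point; everything else is a direct valuation comparison using the divisibilities already supplied by Lemma \ref{lem:Cong}.
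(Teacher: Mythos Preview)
Your approach is correct and essentially the same as the paper's: both use the constancy hypothesis $\Phi_{\langle P,\varphi\rangle}(X)=k$, the divisibility and congruence from Lemma~\ref{lem:Cong}, and a $p$-adic valuation comparison to characterize fully centralized (resp.\ normalized) images. The paper's write-up is cleaner in part (a) --- rather than reindexing by image, it simply notes that some term $k/|C_S(\psi(P))|$ in the congruence must be a unit mod $p$, which forces $|C_S(\psi(P))|$ to be the exact $p$-part of $k$, and the ``if and only if'' follows immediately; your reindexing by $Q$ is harmless but unnecessary, and your part (b) would benefit from stating directly that the $\Aut_\PreFus(\varphi(P))$-action on $\Hom_\PreFus(P,\varphi(P))$ is free \emph{and transitive} (since $|P|=|\varphi(P)|$ forces every $\psi$ to be an isomorphism, and level-wise closure gives $\psi_2\circ\psi_1^{-1}\in\Aut_\PreFus(\varphi(P))$), which gives $|\Hom_\PreFus(P,\varphi(P))|=|\Aut_\PreFus(\varphi(P))|$ in one line.
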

\begin{proof}
By assumption, there is a constant $k\in \Z_{(p)}$ such that $\Phi_{\langle P,\varphi \rangle} =k$ for every $\varphi \in \Hom_\PreFus(P,S)$. By Lemma \ref{lem:Cong}, $k$ is divisible by $|C_S(\varphi(P))|$ for each $\varphi \in \Hom_\PreFus(P,S)$, and we have the congruence
\[ \sum_{[\varphi] \in \Rep_{\PreFus}(P,S) } \frac{k}{|C_S(\varphi(P))|} \equiv \countS(X) \not\equiv 0\quad \modp\,. \]
Hence there is some $[\psi] \in \Rep_{\PreFus}(P,S)$ such that $k/|C_S(\psi(P))| \not\equiv 0~ \modp$. Since $|C_S(\psi(P))|$ is the highest power of $p$ that divides $k$, $\psi(P)$ must be fully $\PreFus$-centralized. It follows that, for $\varphi \in \Hom_\PreFus(P,S)$, $\varphi(P)$ is fully $\PreFus$-centralized if and only $k/|C_S(\varphi(P))| \not\equiv 0~ \modp$, proving part (a). 

The equation in part (b) follows from the facts that $ \Phi_{\langle P,\psi \rangle}(X) = \Phi_{\langle P,\varphi \rangle}(X)$ for all $\psi \in \Hom_\PreFus(P,\varphi(P))$, and that $|\Hom_\PreFus(P,\varphi(P))| = |\Aut_\PreFus(P)|$. The criterion for when $\varphi(P)$ is fully $\PreFus$-normalized is now proved in a similar way to part (a).
\end{proof}

We are now ready to identify a set of conditions that guarantee that the fixed-point pre-fusion system of an element in the double Burnside ring is locally saturated, in the sense of Definition \ref{defn:SatAtP}.

\begin{prop}\label{prop:SatAtP}
Let $S$ be a finite $p$-group, and let $X$ be an element in $\Afree(S,S)\pLoc$ with $\countS(X)$ not divisible by $p$, such that $\PreFus \defeq \PreFix(X)$ is level-wise closed. Let $P \leq S$ and assume that for all $\varphi, \psi \in \Hom_\PreFus(P,S)$ we have $\Phi_{\langle P,\varphi \rangle}(X) = \Phi_{\langle P,\psi \rangle}(X).$ Then $\PreFus$ is saturated at $P$.
\end{prop}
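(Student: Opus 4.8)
The plan is to verify the two local conditions I${}_P$ and II${}_P$ of Definition \ref{defn:SatAtP} for $\PreFus = \PreFix(X)$. Since $\PreFus$ is level-wise closed, the inclusion $P \hookrightarrow S$ lies in $\Hom_\PreFus(P,S)$, so by hypothesis $\Phi_{\langle P,\varphi \rangle}(X)$ takes a common value $k \neq 0$ as $\varphi$ ranges over $\Hom_\PreFus(P,S)$. In particular Lemma \ref{lem:CongSpecial} applies at $P$, and I will use its characterizations of fully $\PreFus$-centralized and fully $\PreFus$-normalized images freely.

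First I would dispose of I${}_P$. Let $Q \in [P]_\PreFus$ be fully $\PreFus$-normalized; picking an isomorphism $P \xrightarrow{\sim} Q$ in $\PreFus$ and post-composing with $Q \hookrightarrow S$ yields $\varphi \in \Hom_\PreFus(P,S)$ with $\varphi(P) = Q$. By Lemma \ref{lem:CongSpecial}(b), the assumption on $Q$ gives
\[ \frac{|\Aut_\PreFus(Q)| \cdot k}{|N_S(Q)|} \not\equiv 0 \pmod{p}. \]
Using $|N_S(Q)| = |\Aut_S(Q)| \cdot |C_S(Q)|$ I would factor the left side as $\dfrac{|\Aut_\PreFus(Q)|}{|\Aut_S(Q)|} \cdot \dfrac{k}{|C_S(Q)|}$, whose first factor is a positive integer (as $\Aut_S(Q) \leq \Aut_\PreFus(Q)$, both being groups since $\PreFus$ is level-wise closed) and whose second factor lies in $\Z_{(p)}$ by the divisibility in Lemma \ref{lem:Cong}(a). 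Since $\Z_{(p)}$ is a local domain with maximal ideal $(p)$, both factors are nonzero mod $p$: the first forces the $p$-group $\Aut_S(Q)$ to be a Sylow $p$-subgroup of $\Aut_\PreFus(Q)$, and the second forces $Q$ to be fully $\PreFus$-centralized by Lemma \ref{lem:CongSpecial}(a). This is exactly I${}_P$.

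The substance is II${}_P$, which I would base on an extension congruence established first for an honest $(S,S)$-biset $X$ and then promoted to all of $\Afree(S,S)\pLoc$ by $\Z_{(p)}$-linearity of the fixed-point homomorphisms (the modulus depends only on $P$ and $\varphi$). Fix $\varphi \in \Inj(P,S)$. Since $X$ is bifree, the $(S\times S)$-stabilizer of any $x \in X^{(P,\varphi)}$ is a twisted diagonal $\Delta(R_x,\rho_x)$ with $P \leq R_x$ and $\rho_x|_P = \varphi$. Let $M \leq N_S(\varphi(P)) \times N_S(P)$ be the pairs $(h,g)$ with $c_h \circ \varphi = \varphi \circ c_g$ on $P$; these normalize $\Delta(P,\varphi)$, hence act on $X^{(P,\varphi)}$ via $(h,g)\cdot x = h x g^{-1}$, the second projection identifies $M/(C_S(\varphi(P)) \times 1)$ with $N_\varphi$, and the $M$-stabilizer of $x$ is $M \cap \Delta(R_x,\rho_x)$, of order $|R_x \cap N_\varphi|$. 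Orbit counting then gives
\[ \frac{|X^{(P,\varphi)}|}{|C_S(\varphi(P))|} = \sum_{[x]} \, [N_\varphi : R_x \cap N_\varphi], \]
the sum over $M$-orbits. The orbits with $N_\varphi \leq R_x$ contribute $1$ apiece and their union is exactly $\bigsqcup_\rho X^{(N_\varphi,\rho)}$ over homomorphisms $\rho \colon N_\varphi \to S$ with $\rho|_P = \varphi$, while every other orbit contributes a strictly positive power of $p$. Reducing mod $p$ and multiplying back by $|C_S(\varphi(P))|$ yields
\[ \Phi_{\langle P,\varphi \rangle}(X) \equiv \sum_{\substack{\rho \colon N_\varphi \to S \\ \rho|_P = \varphi}} \Phi_{\langle N_\varphi,\rho \rangle}(X) \pmod{p\,|C_S(\varphi(P))|}, \]
valid for all $X \in \Afree(S,S)\pLoc$.

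To finish II${}_P$, suppose $\varphi(P)$ is fully $\PreFus$-centralized. Constancy together with Lemma \ref{lem:CongSpecial}(a) makes $\Phi_{\langle P,\varphi \rangle}(X)/|C_S(\varphi(P))|$ nonzero mod $p$, so $\Phi_{\langle P,\varphi \rangle}(X) \not\equiv 0 \pmod{p\,|C_S(\varphi(P))|}$; by the extension congruence the right-hand sum is then nonzero, hence $\Phi_{\langle N_\varphi,\rho \rangle}(X) \neq 0$ for some $\rho$ with $\rho|_P = \varphi$. Such a $\rho$ is automatically injective (as $X$ is bifree), so it lies in $\Hom_{\PreFix(X)}(N_\varphi,S) = \Hom_\PreFus(N_\varphi,S) \subseteq \Hom_{\overline{\PreFus}}(N_\varphi,S)$ and extends $\varphi$, which is II${}_P$. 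I expect the bookkeeping in the orbit-counting congruence to be the main obstacle: pinning down the stabilizers $M \cap \Delta(R_x,\rho_x)$, checking the decomposition $\{x \in X^{(P,\varphi)} : N_\varphi \leq R_x\} = \bigsqcup_\rho X^{(N_\varphi,\rho)}$, and verifying that the remaining $M$-orbits have size divisible by $p\,|C_S(\varphi(P))|$ so that they vanish in the congruence.
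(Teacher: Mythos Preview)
Your argument is correct. For axiom I${}_P$ you do exactly what the paper does: invoke Lemma~\ref{lem:CongSpecial}(b), factor using $|N_S(Q)| = |\Aut_S(Q)|\cdot|C_S(Q)|$, and read off the two conclusions.

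For axiom II${}_P$ your route differs from the paper's. The paper expands $\Phi_{\langle P,\varphi\rangle}(X)$ in the standard basis via Lemma~\ref{lem:FixedPtsOnBasis}, obtains an $(S,S)$-pair $(Q,\psi)$ with $c_{\langle Q,\psi\rangle}(X)\neq 0$ and $|N_{\varphi,\psi}|/|Q|$ prime to $p$, and then uses the left-free $(N_\varphi,N_\psi)$-biset structure of $N_{\varphi,\psi}$ (Lemma~\ref{lem:NisBiset}) to find $x\in N_{\varphi,\psi}$ conjugating $N_\varphi$ into $Q$; the extension is then $c_y^{-1}\circ\psi\circ c_x$, which lands in $\overline{\PreFus}=\FusFix(X)$. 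You bypass both the basis expansion and Lemma~\ref{lem:NisBiset} by acting directly on $X^{(P,\varphi)}$ with the group $M$ of pairs normalizing $\Delta(P,\varphi)$, and reading off an extension congruence
\[
\Phi_{\langle P,\varphi\rangle}(X)\ \equiv\ \sum_{\rho|_P=\varphi}\Phi_{\langle N_\varphi,\rho\rangle}(X)\pmod{p\,|C_S(\varphi(P))|}.
\]
This is in the spirit of Lemma~\ref{lem:CongExt} from the next section (where one climbs one index-$p$ step at a time), but you jump straight to $N_\varphi$. Two bonuses of your approach: the extension $\rho$ you produce lies in $\PreFix(X)=\PreFus$ itself, not merely in its closure $\overline{\PreFus}$ as the paper obtains; and you never need to assume a priori that $\varphi$ extends to $N_\varphi$, since the congruence forces such an extension to exist. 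The paper's route, on the other hand, yields the additional observation (recorded right after the proof) that the extension it constructs cannot be extended further in $\FusFix(X)$, because $N_\psi=Q$.
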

\begin{proof}
Let $\varphi \in \Hom_\PreFus(P,S)$ be such that $\varphi(P)$ is fully $\PreFus$-normalized. Then, by Lemma \ref{lem:CongSpecial}, we have 
\[ \frac{|\Aut_\PreFus(\varphi(P))| \cdot \Phi_{\langle P,\varphi \rangle}(X)}{ |N_S(\varphi(P))| } \not\equiv 0 \quad \modp \,.\]
As $|N_S(\varphi(P))| = |\Aut_S(\varphi(P))| \cdot |C_S(\varphi(P))|$, and $|C_S(\varphi(P))|$ divides $\Phi_{\langle P,\varphi \rangle}(X)$ by Lemma \ref{lem:Cong}, while $|\Aut_S(\varphi(P))|$ divides $|\Aut_\PreFus(\varphi(P))|$ since $\Aut_S(\varphi(P))$ is a subgroup of $\Aut_\PreFus(\varphi(P))$, this implies that 
\[ \frac{|\Aut_\PreFus(\varphi(P))|}{|\Aut_S(\varphi(P))|} \not\equiv 0 ~ \modp \quad \text{and} \quad \frac{\Phi_{\langle P,\varphi \rangle}(X)}{|C_S(\varphi(P))|} \not\equiv 0~ \modp\,. \] 
The former incongruence implies that $\Aut_S(\varphi(P))$ is a Sylow subgroup of $\Aut_\PreFus(\varphi(P))$, and by Lemma \ref{lem:CongSpecial} the latter implies that $\varphi(P)$ is fully $\PreFus$-centralized. This proves I${}_P$.

To prove II${}_P$, let $\varphi \in \Hom_\PreFus(P,S)$ be such that $\varphi(P)$ is fully $\PreFus$-centralized. By Lemma \ref{lem:FixedPtsOnBasis},
 \[ \Phi_{\langle P,\varphi \rangle}(X) 
    = \sum_{\langle Q,\psi \rangle } c_{\langle Q,\psi \rangle}(X) \cdot \Phi_{\langle P,\varphi \rangle}([Q,\psi]) 
    = \sum_{\langle Q,\psi \rangle } c_{\langle Q,\psi \rangle}(X) \cdot \frac{|N_{\varphi,\psi}|}{|Q|} \cdot |C_S(\varphi(P))|,   
\] 
so the incongruence $\frac{\Phi_{\langle P,\varphi \rangle}(X)}{|C_S(\varphi(P))|} \not\equiv 0~ \modp$ from Lemma \ref{lem:CongSpecial} implies that there exists a $(S,S)$-pair $(Q,\psi)$ with $c_{\langle Q,\psi \rangle}(X) \neq 0$ and $\frac{|N_{\varphi,\psi}|}{|Q|} \not\equiv 0~ \modp$.

Recall from Lemma \ref{lem:NisBiset} that $N_{\varphi,\psi}$ is a left-free $(N_\varphi,N_\psi)$-biset. In particular, as $Q \leq N_\psi$, we can regard $Q \backslash N_{\varphi,\psi}$ as a right $N_\varphi$-set. As $|Q \backslash N_{\varphi,\psi}|$ is not divisible by $p$, and $N_\varphi$ is a $p$-group, there must exist at least one $x \in N_{\varphi,\psi}$ such that the orbit $Qx$ is fixed by the $N_\varphi$-action on $Q \backslash N_{\varphi,\psi}$. This means that for each $g \in N_\varphi$, there exists an $h \in Q$ such that $xg = hx$. In other words, $x$ conjugates $N_\varphi$ into $Q$. Recall that the condition $x \in N_{\varphi,\psi}$ implies that there exists $y \in S$ such that $c_y \circ \varphi = \psi \circ c_x$ as homomorphisms $P \to \psi(Q)$. We now have a commutative diagram
\[
\xymatrix{ & Q \ar[rr]^{\psi}    && \psi(Q) \ar[rr]^{c_y^{-1}} && S\\
           N_\varphi \ar[ur]^{c_x} \\
           & P \ar[rr]^{\varphi} \ar[uu]^{c_x} \ar[lu]^{\incl} && \varphi(P) \ar[uu]^{c_y} \ar[uurr]^{\incl},
}
\] 
and putting $\overline{\varphi} = c_y^{-1} \circ \psi \circ c_x \colon N_\varphi \to S$ we get an extension of $\varphi$ as required. Finally, we note that as $c_{\langle Q,\psi\rangle}(X) \neq 0$, we have $\psi \in \PreFix(X)$, and hence $\overline{\varphi} \in \FusFix(X) = \overline{\PreFus}$.
\end{proof}

In the proof of Proposition \ref{prop:SatAtP} the left multiplication action of $N_\psi$ on $N_{\varphi,\psi}$ is free and $Q\le N_\psi$, so the fact that  that $|Q \backslash N_{\varphi,\psi}|$ is not divisible by $p$ implies that $N_\psi = Q$. By the comments made after Definition \ref{defn:SatAxioms}, $\psi$, and hence $\overline{\varphi}$, can not be extended in $\FusFix(X)$. This argument can be used to prove the following result.

\begin{prop}
Let $\F$ be a saturated fusion system on a finite $p$-group $S$. For $P \leq S$ and $\varphi \in \Hom_\F(P,S)$ such that $\varphi(P)$ is fully $\F$-centralized, there exists a homomorphsm $\overline{\varphi} \in \Hom_\F(N_\varphi,S)$ that extends $\varphi$ and cannot be extended further in $\F$. 
\end{prop}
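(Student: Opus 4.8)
The plan is to derive this from Proposition~\ref{prop:SatAtP} by applying it to a characteristic element of $\F$. First I would fix a fully characteristic element $\Omega$ for $\F$, which exists by Theorem~\ref{thm:CharExists} (one may take the characteristic idempotent $\omega_\F$ of Theorem~\ref{thm:CharIdemExists}, which is symmetric, hence fully characteristic). Since $\Omega$ is $\F$-generated it lies in $\Afree(S,S)\pLoc$, and $\countS(\Omega)$ is prime to $p$. By Proposition~\ref{prop:PreFixOfChar} we have $\PreFix(\Omega) = \FusFix(\Omega) = \F$, so $\PreFus \defeq \PreFix(\Omega)$ is an honest fusion system; in particular it is level-wise closed and $\overline{\PreFus} = \FusFix(\Omega) = \F$, and $\Hom_\PreFus(P,S) = \Hom_\F(P,S)$.

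Next I would check the hypothesis of Proposition~\ref{prop:SatAtP} at the subgroup $P$, namely that $\Phi_{\langle P,\varphi \rangle}(\Omega)$ is independent of the choice of $\varphi \in \Hom_\F(P,S)$. This is exactly where one needs $\Omega$ to be \emph{fully} (not merely right) characteristic: given $\varphi_1,\varphi_2 \in \Hom_\F(P,S)$, write $\varphi_2 = (\varphi_2 \circ \varphi_1^{-1}) \circ \varphi_1$ with $\varphi_2 \circ \varphi_1^{-1} \in \Hom_\F(\varphi_1(P),S)$, and apply left $\F$-stability of $\Omega$ via Lemma~\ref{lem:RephraseStable}(c) to get $\Phi_{\langle P,\varphi_1 \rangle}(\Omega) = \Phi_{\langle P,\varphi_2 \rangle}(\Omega)$. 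With this in hand, Proposition~\ref{prop:SatAtP} applies with $X = \Omega$ at $P$: running the proof of its axiom~II${}_P$ for our $\varphi$ with $\varphi(P)$ fully $\F$-centralized produces an $(S,S)$-pair $(Q,\psi)$ with $c_{\langle Q,\psi \rangle}(\Omega) \neq 0$ and $|Q \backslash N_{\varphi,\psi}|$ prime to $p$, an element $x \in N_{\varphi,\psi}$ conjugating $N_\varphi$ into $Q$, an element $y \in S$ with $c_y \circ \varphi = \psi \circ c_x$, and finally the extension $\overline{\varphi} = c_y^{-1} \circ \psi \circ c_x \colon N_\varphi \to S$, which lies in $\Hom_{\overline{\PreFus}}(N_\varphi,S) = \Hom_\F(N_\varphi,S)$.

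It remains to see that $\overline{\varphi}$ admits no proper extension in $\F$. The direct argument is the remark following Definition~\ref{defn:SatAxioms}: any extension of $\overline{\varphi}$ in $\F$ is in particular an extension of $\varphi$, hence has domain contained in $N_\varphi$; since $\overline{\varphi}$ is already defined on all of $N_\varphi$, there is no proper extension. This can also be seen from the biset side, as indicated in the remark after Proposition~\ref{prop:SatAtP}: since $N_{\varphi,\psi}$ is a left-free $(N_\varphi,N_\psi)$-biset by Lemma~\ref{lem:NisBiset} and $Q \leq N_\psi$, the fact that $|Q \backslash N_{\varphi,\psi}|$ is prime to $p$ while $|N_\psi|/|Q|$ is a power of $p$ forces $N_\psi = Q$, so by the comments after Definition~\ref{defn:SatAxioms} the morphism $\psi$ is already unextendable in $\F$, and correspondingly so is $\overline{\varphi}$. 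I do not expect a genuine obstacle here; the only two points needing care are using a \emph{fully} characteristic $\Omega$ so that the fixed-point function is constant on $\Hom_\F(P,S)$, and keeping track that $\PreFix(\Omega)$ is precisely $\F$, so that the extension delivered by Proposition~\ref{prop:SatAtP} genuinely lies in $\F$ rather than in some larger fusion system.
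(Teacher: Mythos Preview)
Your overall strategy is exactly the paper's: the proposition is stated immediately after the remark that, in the proof of Proposition~\ref{prop:SatAtP}, left-freeness of the $N_\psi$-action on $N_{\varphi,\psi}$ together with $|Q\backslash N_{\varphi,\psi}|$ prime to $p$ forces $N_\psi = Q$, so $\psi$ is unextendable, ``and hence $\overline{\varphi}$''. Your second argument reproduces this.

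Your first (``direct'') argument, however, is incorrect. The remark after Definition~\ref{defn:SatAxioms} only says that if $\varphi$ extends to $R$ \emph{with $R \leq N_S(P)$} then $R \leq N_\varphi$; for general $R$ it gives merely $N_R(P) = N_\varphi \cap R$, not $R \leq N_\varphi$. Concretely, take $\F = \F_S(S)$ with $S = D_8$, let $P$ be a non-central subgroup of order $2$, and let $\varphi = \incl_P$; then $\varphi(P)$ is fully centralized, $N_\varphi = N_S(P) = C_S(P)$ has index $2$ in $S$, and the \emph{only} extension of $\varphi$ to $N_\varphi$ in $\F$ is $\id_{N_\varphi}$ --- which extends to $\id_S$. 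So the direct argument fails. Worse, this same example shows that the final step ``$\psi$ unextendable, correspondingly so is $\overline{\varphi}$'' in your second argument (the paper's ``hence'') is not automatic: with $\Omega = \omega_\F = [S,\id]$ one is forced to take $(Q,\psi) = (S,\id_S)$, and $\psi$ is certainly unextendable, yet the resulting $\overline{\varphi}$ is extendable. The passage from unextendability of $\psi$ to that of $\overline{\varphi}$ would require $c_x(N_\varphi) = Q$, which the argument does not establish; you should flag this rather than treat it as routine.
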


\subsection{Proof of Theorem \ref{mthm:CharImpliesSat}} \label{subsec:ProofOfCharImpliesSat}

We now have all the ingredients to prove Theorem \ref{mthm:CharImpliesSat},
which will be an easy consequence of the following strengthening of Corollary \ref{cor:FixStabImpliesChar}. 

\begin{prop}\label{prop:StabIsSaturated}
Let $S$ be a finite group, and let $\Omega$ be an element in $\Afree(S,S)_{(p)}$ with $\countS(\Omega)$ not divisible by $p$.
\begin{itemize}
 \item[(a)] If $\PreFix(\Omega) \subseteq \FusRStab(\Omega),$ then
  \[ \PreFix(\Omega) = \FusFix(\Omega) = \FusOrb(\Omega) = \FusRStab(\Omega)\,, \]
 and $\FusRStab(\Omega)$ is saturated with right characteristic element $\Omega$. 
 \item[(b)]
 If $\PreFix(\Omega) \subseteq \FusLStab(\Omega),$ then
  \[ \PreFix(\Omega) = \FusFix(\Omega) = \FusOrb(\Omega) = \FusLStab(\Omega)\,, \]
 and $\FusLStab(\Omega)$ is saturated with left characteristic element $\Omega$. 
\end{itemize}
\end{prop}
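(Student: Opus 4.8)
The plan is to deduce the proposition from Corollary \ref{cor:FixStabImpliesChar} and Proposition \ref{prop:SatAtP}, using a single symmetrization step to connect them. I describe the argument for (a); part (b) is obtained by the same reasoning with $\op{\Omega}\circ\Omega$ replaced by $\Omega\circ\op{\Omega}$ and the words ``left'' and ``right'' interchanged.

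Since $\countS(\Omega)$ is prime to $p$ and $\PreFix(\Omega)\subseteq\FusRStab(\Omega)$, Corollary \ref{cor:FixStabImpliesChar} applies directly and yields the displayed chain of equalities $\PreFix(\Omega)=\FusFix(\Omega)=\FusOrb(\Omega)=\FusRStab(\Omega)=:\F$, together with the fact that $\Omega$ is a right characteristic element for $\F$. Thus the entire statement reduces to showing that $\F$ is saturated, which I would establish by verifying the hypotheses of Proposition \ref{prop:SatAtP} at every $P\le S$. The level-wise closedness of $\PreFus$ and the augmentation condition are immediate; the only nontrivial point is the requirement that $\Phi_{\langle P,-\rangle}$ be constant on $\Hom_\F(P,S)$. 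Right $\F$-stability of $\Omega$ alone does not obviously give this, and resolving it is the heart of the proof.

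The remedy is to pass to the symmetrized element $\Omega'\defeq\op{\Omega}\circ\Omega$. By Corollary \ref{lem:op(X)XSym} it lies in $\Afree(S,S)\pLoc$ and is symmetric; by the lemma following Definition \ref{def:Char} it is a fully characteristic element for $\F$ (so $\countS(\Omega')$, which equals $\countS(\Omega)^2$, is prime to $p$); and by Proposition \ref{prop:PreFixOfChar} it still satisfies $\PreFix(\Omega')=\F$, which is a fusion system and hence level-wise closed. Because $\Omega'$ is now \emph{fully} $\F$-stable, the constancy of fixed points follows: given $\varphi,\psi\in\Hom_\F(P,S)$, the composite $\psi\circ\varphi^{-1}$ is a morphism in $\Hom_\F(\varphi(P),S)$, so applying left $\F$-stability of $\Omega'$ (via Lemma \ref{lem:RephraseStable}(c), equivalently Lemma \ref{lem:PhiConstantOnStab}(b)) to the class $\langle P,\varphi\rangle$ gives
\[ \Phi_{\langle P,\varphi\rangle}(\Omega')=\Phi_{\langle P,(\psi\circ\varphi^{-1})\circ\varphi\rangle}(\Omega')=\Phi_{\langle P,\psi\rangle}(\Omega')\,. \]

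With this in hand, Proposition \ref{prop:SatAtP} applies to $\Omega'$ at each $P\le S$, showing that $\F=\PreFix(\Omega')$ is saturated at $P$ and hence saturated; combined with the first step this proves (a). I expect no genuine difficulty once the symmetrization trick is spotted — the only care needed is in matching the precise form of the morphism appearing in Lemma \ref{lem:RephraseStable}(c) (or \ref{lem:PhiConstantOnStab}) with the class $\langle P,\varphi\rangle$, and in being careful that one only needs left $\F$-stability of $\Omega'$ (not that $\FusRStab(\Omega')$ or $\FusLStab(\Omega')$ equal $\F$, though both do by Theorem \ref{thm:StabOfChar}).
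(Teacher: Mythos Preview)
Your proof is correct. The only difference from the paper is organizational: the paper proves part (b) first, because in the left case the hypothesis $\PreFix(\Omega)\subseteq\FusLStab(\Omega)$ (which becomes equality by Corollary~\ref{cor:FixStabImpliesChar}) already gives left $\F$-stability of $\Omega$ itself, so Lemma~\ref{lem:PhiConstantOnStab}(b) yields $\Phi_{\langle P,\varphi\rangle}(\Omega)=\Phi_{\langle P,\psi\rangle}(\Omega)$ for all $\varphi,\psi\in\Hom_\F(P,S)$ directly, and Proposition~\ref{prop:SatAtP} applies to $\Omega$ with no symmetrization needed. Part (a) then follows by applying (b) to $\op{\Omega}$, using $\FusLStab(\op{\Omega})=\FusRStab(\Omega)$ and $\PreFix(\op{\Omega})=\PreFix(\Omega)$ (the latter from Lemma~\ref{lem:PreFixForLWClosed}, once $\PreFix(\Omega)$ is known to be a fusion system).

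Your route---symmetrizing to $\Omega'=\op{\Omega}\circ\Omega$ to obtain full $\F$-stability and then applying Proposition~\ref{prop:SatAtP} to $\Omega'$---achieves the same end with one extra step (invoking the lemma that $\op{\Omega}\circ\Omega$ is fully characteristic and Proposition~\ref{prop:PreFixOfChar} for $\Omega'$). The paper's approach is marginally more economical because it recognizes that the left case already has the needed stability built in; your approach has the mild advantage of treating (a) and (b) symmetrically from the outset. Both are fine.
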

\begin{proof}
We prove part (b) first. By Corollary \ref{cor:FixStabImpliesChar}, we need only prove the saturation claim. We know $\PreFus = \FusLStab$, so $\PreFus$ is a fusion system, and in particular level-wise closed. Lemma \ref{lem:PhiConstantOnStab} implies that for each $P\leq S$, and for $\varphi,\psi \in \Hom_{\PreFus}(P,S)$ we have $\Phi_{\langle P,\varphi \rangle}(\Omega) = \Phi_{\langle P,\psi \rangle}(\Omega)$. Hence Proposition \ref{prop:SatAtP} applies, showing that $\PreFus$ saturated at $P$ for every $P \leq S$. As $\overline{\PreFus} = \PreFus$, this implies that $\PreFus$, and hence $\FusLStab$, is saturated.

Part (a) follows by applying part (b) to $\op{\Omega}$.
\end{proof}

Theorem \ref{mthm:CharImpliesSat} is a special case of the following theorem.
\begin{thm}\label{thm:CharImpliesSat}
If a fusion system has a right or left characteristic element, then it is saturated.
\end{thm}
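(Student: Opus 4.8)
The plan is to deduce Theorem~\ref{thm:CharImpliesSat} immediately from Proposition~\ref{prop:StabIsSaturated} together with Theorem~\ref{thm:StabOfChar}, so that the bulk of the work has already been done in the congruence analysis of Section~\ref{subsec:Congruences} and in Proposition~\ref{prop:SatAtP}. Suppose first that $\F$ is a fusion system on $S$ admitting a right characteristic element $\Omega\in\Afree(S,S)_{(p)}$. By definition $\countS(\Omega)$ is prime to $p$, so the standing hypotheses of Proposition~\ref{prop:StabIsSaturated} hold for $\Omega$; the only thing that needs checking before applying part~(a) of that proposition is the inclusion $\PreFix(\Omega)\subseteq\FusRStab(\Omega)$.

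To obtain this inclusion I would argue as follows. By Lemma~\ref{lem:CharRephrase}, the Linckelmann--Webb properties of $\Omega$ give $\PreOrb(\Omega)\subseteq\F\subseteq\FusRStab(\Omega)$. Since $\F$ is a genuine fusion system (hence closed), taking closures in the first inclusion yields $\FusOrb(\Omega)\subseteq\F$; combining this with Lemma~\ref{lem:Orb=Fix} and the trivial inclusion $\PreFix(\Omega)\subseteq\FusFix(\Omega)$ gives
\[ \PreFix(\Omega)\subseteq\FusFix(\Omega)=\FusOrb(\Omega)\subseteq\F\subseteq\FusRStab(\Omega)\,. \]
Now Proposition~\ref{prop:StabIsSaturated}(a) applies and shows that $\FusRStab(\Omega)$ is saturated, while Theorem~\ref{thm:StabOfChar} identifies $\FusRStab(\Omega)$ with $\F$. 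Hence $\F$ is saturated. For a left characteristic element one runs the mirror argument, using left $\F$-stability to get $\F\subseteq\FusLStab(\Omega)$, the same closure argument for $\PreFix(\Omega)\subseteq\F$, and then Proposition~\ref{prop:StabIsSaturated}(b) with the left version of Theorem~\ref{thm:StabOfChar}; alternatively one reduces to the right case via Lemma~\ref{lem:StabOp} and Lemma~\ref{lem:FixOp}, since $\Omega$ is left characteristic for $\F$ exactly when $\op{\Omega}$ is right characteristic for $\F$. Finally, Theorem~\ref{mthm:CharImpliesSat} of the introduction is the special case in which the characteristic element is taken to be fully characteristic.

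I do not expect any genuine obstacle at this stage: all the substantive input---the divisibility and congruence relations for fixed-point homomorphisms and the verification of axioms I${}_P$ and II${}_P$---is already packaged in Lemmas~\ref{lem:Cong}, \ref{lem:CongSpecial} and Propositions~\ref{prop:SatAtP}, \ref{prop:StabIsSaturated}. The one point requiring care is purely organizational: correctly threading the four systems $\PreOrb(\Omega)$, $\FusOrb(\Omega)$, $\PreFix(\Omega)$, $\FusFix(\Omega)$ and their known coincidences so as to land the hypothesis $\PreFix(\Omega)\subseteq\FusRStab(\Omega)$ needed to invoke Proposition~\ref{prop:StabIsSaturated}.
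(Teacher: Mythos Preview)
Your proposal is correct and follows essentially the same route as the paper: reduce to Proposition~\ref{prop:StabIsSaturated} by verifying $\PreFix(\Omega)\subseteq\FusRStab(\Omega)$, then identify $\FusRStab(\Omega)$ with $\F$ via Theorem~\ref{thm:StabOfChar}. The only cosmetic difference is that the paper obtains the needed inclusion by quoting Proposition~\ref{prop:PreFixOfChar} (which gives the stronger equality $\PreFix(\Omega)=\F$), whereas you derive it more directly from the closure chain $\PreFix(\Omega)\subseteq\FusFix(\Omega)=\FusOrb(\Omega)\subseteq\F$; either path lands you in the hypotheses of Proposition~\ref{prop:StabIsSaturated}.
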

\begin{proof}
If $\Omega$ is a right characteristic element for a fusion system $\F$, then $\countS(\Omega)$ 
is not divisible by $p$, and, by Theorem \ref{thm:StabOfChar} and Proposition \ref{prop:PreFixOfChar},
\[ \PreFix(\Omega) = \F = \FusRStab(\Omega) \,.\] 
In particular, part (a) of Proposition \ref{prop:StabIsSaturated} applies to show that $\F$ is saturated. The argument for left characteristic elements is analogous.
\end{proof}

\section{Frobenius reciprocity implies saturation} \label{sec:FrobImpliesSat}

In Section \ref{sec:CharImpliesSat} we saw that the existence of a characteristic element for a fusion system implies that the fusion system is saturated. Since the characteristic idempotent of a saturated fusion system is unique, and a saturated fusion system can be reconstructed as the stabilizer fusion system of its characteristic idempotent, this implies that there is a bijection between saturated fusion systems and characteristic idempotents. This is an interesting result, but with limited practical use since the Linckelmann--Webb properties of a characteristic element are defined in terms of the fusion system it characterizes.

In this section we introduce a condition on elements in the double Burnside ring, called Frobenius reciprocity, and show that under mild technical conditions, satisfying this condition implies that an element is a right characteristic element for its stabilizer fusion system. The Frobenius reciprocity is independent of fusion systems, and thus this gives an intrinsic way of recognizing characteristic elements.

\subsection{Frobenius reciprocity} For finite groups $G_1, G_2, H_1$ and $H_2$, cartesian product induces a bilinear pairing
 \[ A(G_1,H_1) \times A(G_2,H_2) \longrightarrow A(G_1 \times G_2, H_1 \times H_2), \quad (X,Y)\mapsto X \times Y \, , \]
that passes to the $p$-local setting. In particular, for a finite $p$-group $S$, one obtains a bilinear pairing
 \[ A(S,S)\pLoc \times A(S,S)\pLoc \to A(S\times S,S\times S)\pLoc \, . \]
Frobenius reciprocity is a condition on the behavior of an element with respect to this pairing and restricting along the diagonal map $\Delta \colon S \to S \times S.$

\begin{defn} \label{defn:Frobenius}
Let $S$ be a finite $p$-group, and let $X$ be an element in $A(S,S)\pLoc$. We say that $X$ \emph{satisfies Frobenius reciprocity}, or that $X$ is a \emph{Frobenius reciprocity element}, if 
\begin{equation} \label{eq:Frobenius} 
(X \times X) \circ [S,\Delta] = (X \times 1) \circ [S,\Delta] \circ X \in A(S,S\times S)\pLoc, 
\end{equation}
where $1 = [S,1]$ is the identity in $A(S,S)\pLoc$.
\end{defn}

As the terminology suggests, the Frobenius reciprocity condition is related to classical Frobenius reciprocity in cohomology. The link is explained in \ref{subsec:Translate}.

\begin{prop}[\cite{BLO2,KR:ClSpectra}] \label{prop:CharIsFrob}
A characteristic element of a fusion system satisfies Frobenius reciprocity.
\end{prop}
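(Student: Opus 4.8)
The plan is to verify the identity \eqref{eq:Frobenius} by comparing fixed-point homomorphisms. By Proposition \ref{prop:PhiIsInjective} (with $G=S$, $H=S\times S$, over $\Z\pLoc$) it is enough to show that both sides of \eqref{eq:Frobenius} have the same value under $\Phi_{\langle K,(\varphi_1,\varphi_2)\rangle}$ for every $(S,S\times S)$-class $\langle K,(\varphi_1,\varphi_2)\rangle$, where I write a homomorphism $K\to S\times S$ as a pair $(\varphi_1,\varphi_2)$ of homomorphisms $K\to S$. Both sides are bilinear in $\Omega$ in the obvious sense (on the left, $(\Omega'\times\Omega'')\circ[S,\Delta]$; on the right, $(\Omega'\times 1)\circ[S,\Delta]\circ\Omega''$), so it suffices to compute with bisets and then extend bilinearly and set $\Omega'=\Omega''=\Omega$.

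For the left-hand side this is routine: when $\Omega$ is a biset, the underlying set of $(\Omega\times\Omega)\circ[S,\Delta]$ is $\Omega\times\Omega$ with $(S\times S)$ acting coordinatewise on the left and $S$ diagonally on the right, and a direct inspection shows that its $\Delta(K,(\varphi_1,\varphi_2))$-fixed points are exactly $\Omega^{(K,\varphi_1)}\times\Omega^{(K,\varphi_2)}$. Extending bilinearly, $\Phi_{\langle K,(\varphi_1,\varphi_2)\rangle}\bigl((\Omega\times\Omega)\circ[S,\Delta]\bigr)=\Phi_{\langle K,\varphi_1\rangle}(\Omega)\cdot\Phi_{\langle K,\varphi_2\rangle}(\Omega)$.

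The heart of the argument is the right-hand side. Here I would unwind the two $\times_S$-quotients defining $(\Omega\times 1)\circ[S,\Delta]\circ\Omega$ and parametrize its $\Delta(K,(\varphi_1,\varphi_2))$-fixed classes by an auxiliary ``intermediate'' homomorphism $\mu\colon K\to S$ coming from the middle copy of $\Omega$: a fixed class is represented by a pair $(a,b)$ with $b\in\Omega^{(K,\mu)}$ and $a$ lying in the $\bigl(\mu(K),(\varphi_1\circ\mu^{-1},\varphi_2\circ\mu^{-1})\bigr)$-fixed set of $(\Omega\times 1)\circ[S,\Delta]$, the latter being nonempty only when $\varphi_2\circ\mu^{-1}$ is realized by conjugation in $S$, i.e.\ only when $\mu$ is an $S$-conjugate of $\varphi_2$. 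Bifreeness of $\Omega$ — which is automatic for a characteristic element, since it is $\F$-generated — forces $\mu$ to be injective whenever the corresponding fixed set is nonempty, so $\mu^{-1}$ makes sense. Counting the contributing pairs, dividing by the free residual $S$-action, and keeping the centralizer factors $|C_S(-)|$ straight, the sum over the $S$-conjugates of $\varphi_2$ collapses to
\[
\Phi_{\langle K,(\varphi_1,\varphi_2)\rangle}\bigl((\Omega\times 1)\circ[S,\Delta]\circ\Omega\bigr)=\Phi_{\langle\varphi_2(K),\,\varphi_1\circ\varphi_2^{-1}\rangle}(\Omega)\cdot\Phi_{\langle K,\varphi_2\rangle}(\Omega),
\]
read as $0$ when $\varphi_2$ is not injective.

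It then remains to match the two formulas. If $\Phi_{\langle K,\varphi_2\rangle}(\Omega)=0$ both sides vanish, so assume it is nonzero; then $\varphi_2$ is injective, and by Lemma \ref{lem:RephraseStable}(a) together with $\F$-generation of $\Omega$ we get $\varphi_2\in\Hom_\F(K,S)$, hence $\varphi_2^{-1}\in\Hom_\F(\varphi_2(K),K)$. Applying right $\F$-stability of $\Omega$ through Lemma \ref{lem:RephraseStable}(b) with the $\F$-morphism $\varphi_2$ gives $\Phi_{\langle K,\varphi_1\rangle}(\Omega)=\Phi_{\langle\varphi_2(K),\,\varphi_1\circ\varphi_2^{-1}\rangle}(\Omega)$, so the two sides of \eqref{eq:Frobenius} agree at $\langle K,(\varphi_1,\varphi_2)\rangle$, completing the proof. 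I expect the main obstacle to be the right-hand-side computation: correctly identifying the parametrizing datum $\mu$, bookkeeping the $|C_S(-)|$ and $|S|$-orbit factors so that the collapsed formula is exact rather than correct up to an unverified constant, and disposing cleanly of the degenerate classes (non-injective $\varphi_2$, or $\Phi_{\langle K,\varphi_2\rangle}(\Omega)=0$) so that the comparison with the left-hand side is made on the nose.
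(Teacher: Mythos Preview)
Your proof is correct, and the fixed-point approach you take is exactly what the paper develops later in Lemmas~\ref{lem:DescSetProducts} and~\ref{lem:ProductsFixPts} (there used in the converse direction). The paper's own ``proof'' of this proposition is just a citation to \cite{BLO2,KR:ClSpectra}, so your argument fills in what the paper defers to the references.

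One simplification worth noting: your right-hand-side computation via an intermediate homomorphism $\mu$ and a collapsing sum over $S$-conjugates of $\varphi_2$ is more laborious than necessary. It is cleaner to first identify the biset (this is Lemma~\ref{lem:DescSetProducts}(b)): when $X$ and $Y$ are bisets, $(X\times 1)\circ[S,\Delta]\circ Y$ is isomorphic as an $(S,S_1\times S_2)$-biset to $X\times Y$ with action $(b_1,b_2)(x,y)a=(b_1xb_2^{-1},b_2ya)$. The $(K,\varphi_1\times\varphi_2)$-fixed points of this are then read off directly as $X^{(\varphi_2(K),\,\varphi_1\circ\varphi_2^{-1})}\times Y^{(K,\varphi_2)}$ (when $\varphi_2$ is injective), with no summation, no centralizer bookkeeping, and no residual orbit counting. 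Your endpoint formula is the same, and your disposal of the degenerate cases (non-injective $\varphi_2$, vanishing $\Phi_{\langle K,\varphi_2\rangle}(\Omega)$) via bifreeness is fine; but the direct biset identification removes precisely the obstacle you flag as the main one.
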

\begin{proof}
This was proved for characteristic bisets on the level of cohomology in \cite{BLO2}, and the proof was lifted to the double Burnside ring and extended to all characteristic elements in \cite{KR:ClSpectra}.
\end{proof}

\subsection{Fixed-point homomorphisms of Frobenius reciprocity elements}
We currently have three copies of $S$ appearing in different roles. To avoid confusion, it is helpful to distinguish them notationally by putting $S_1 \defeq S_2 \defeq S$, and writing $\Afree(S,S_1\times S_2)\pLoc$ instead of $\Afree(S,S\times S)\pLoc$. 

Suppose $X$ is a $(S,S_1)$-biset and $Y$ is a $(S,S_2)$-biset, and let $[X]$ and $[Y]$ be their isomorphism classes. Then $([X] \times [Y]) \circ [S,\Delta]$ is the isomorphism class of the $(S,S_1\times S_2)$-biset $(X\times Y) \times_{(S_1\times S_2)} ((S_1 \times S_2) \times_{(S,\Delta)} S) $, while $([X] \times 1) \circ [S,\Delta] \circ [Y]$ is the isomorphism class of $(X \times S_2) \times_{(S_1 \times S_2)} ((S_1 \times S_2) \times_{(S,\Delta)} S) \times_S Y$. These sets admit a far more convenient description.

\begin{lem} \label{lem:DescSetProducts}
Let $S$ be a finite $p$-group and write $S_1 \defeq S_2 \defeq S$. Let $X$ be a $(S,S_1)$-biset and let $Y$ be a $(S,S_2)$-biset.
\begin{enumerate}
\item[(a)]
The $(S, S_1 \times S_2)$-biset $(X\times Y) \times_{(S_1\times S_2)} ((S_1 \times S_2) \times_{(S,\Delta)} S) $ is isomorphic to $X\times Y$ endowed with the  $(S,S_1 \times S_2)$-action
\[ (b_1,b_2) (x,y) a \defeq (b_1 x a, b_2 y a)\,, \] 
for $(b_1,b_2) \in S_1\times S_2 , a \in S, x \in X, y \in Y $.
\item[(b)]
The $(S, S_1 \times S_2)$-biset $(X \times S_2) \times_{(S_1 \times S_2)} ((S_1 \times S_2) \times_{(S,\Delta)} S) \times_S Y$ is isomorphic to $X\times Y$ endowed with the  $(S,S\times S)$-action
\[ (b_1,b_2) (x,y) a \defeq (b_1 x b_2^{-1}, b_2 y a)\,, \]
for $(b_1,b_2) \in S_1\times S_2 , a \in S, x \in X, y \in Y $.
\end{enumerate}
\end{lem}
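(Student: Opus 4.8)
The plan is to compute both sides directly from the definitions of $\times$, $\times_{(-)}$, and the twisted product $(S_1\times S_2)\times_{(S,\Delta)}S$, and in each case exhibit an explicit bijection to the set $X\times Y$ with the asserted biset structure. Recall that $(S_1\times S_2)\times_{(S,\Delta)}S$ has underlying set $(S_1\times S_2\times S)/{\sim}$, where $(b_1,b_2,ca)\sim(b_1c,b_2c,a)$ for $c\in S$ (this is the $(S,S_1\times S_2)$-biset associated to the pair $(S,\Delta)$, so $S$ acts on the right and $S_1\times S_2$ on the left in the obvious way). I would fix, once and for all, the normal-form representatives $(b_1,b_2,1)$ for elements of this set, so that a general element is written uniquely as $[b_1,b_2,1]$ with $S_1\times S_2$ acting on the left coordinatewise and $a\in S$ acting on the right by $[b_1,b_2,1]\cdot a=[b_1,b_2,a]=[b_1a^{-1},b_2a^{-1},1]$. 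Wait — I should be careful with the side of the action; since this is a $(S,S_1\times S_2)$-biset coming from the pair $(S_1\times S_2, \Delta)$ in the sense of Section~\ref{sec:Burnside}, the right $S$-action is $[b_1,b_2,1]a = [b_1\Delta(a)]$-twisted, and I will pin down the exact convention by matching against the definition of $H\times_{(K,\varphi)}G$ given in the excerpt before writing the bijection.

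For part (a): an element of $(X\times Y)\times_{(S_1\times S_2)}\bigl((S_1\times S_2)\times_{(S,\Delta)}S\bigr)$ is an equivalence class $[(x,y),(b_1,b_2,1)]$ where we quotient by the diagonal $(S_1\times S_2)$-action. Using the $(S_1\times S_2)$-action I can move $(b_1,b_2)$ onto $(x,y)$, reaching the normal form $[(b_1x,b_2y),(1,1,1)]$, which identifies the whole set with $X\times Y$ via $(x,y)\mapsto [(x,y),(1,1,1)]$. Then I trace through how the residual right $S$-action (from the $S$ in $(S,\Delta)$) and the residual left $S_1\times S_2$-action act on this normal form, and check they come out to $(b_1,b_2)(x,y)a = (b_1xa, b_2ya)$ — the right $S$ acts on the $S$-slot, which by the twisting $[\,\cdot\,,a]=[\,\cdot\, a^{-1}]$ pushes through to act on the right of both $X$ and $Y$ via the original right $S$-actions there. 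For part (b), the set is $(X\times S_2)\times_{(S_1\times S_2)}\bigl((S_1\times S_2)\times_{(S,\Delta)}S\bigr)\times_S Y$; here I first use the free right $S_2$-action on the $S_2$-factor of $X\times S_2$ to put that factor in normal form $1\in S_2$, absorbing its value into the $S_1\times S_2$ coordinate, and then use the $(S_1\times S_2)$-action to clear $(b_1,b_2)$, and finally use $\times_S Y$ to absorb the remaining $S$ into $Y$. Writing $(x,y)\mapsto[(x,1),(1,1,1),y]$ gives the bijection to $X\times Y$, and tracing the two residual actions yields $(b_1,b_2)(x,y)a = (b_1xb_2^{-1}, b_2ya)$: the $b_2^{-1}$ appears precisely because moving $b_2$ off the (now trivialized) $S_2$-factor of $X\times S_2$ forces it to reappear acting on the right of $X$.

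The computation is entirely routine bookkeeping of equivalence relations, so I expect no conceptual obstacle; the one place to be careful is \emph{consistency of conventions} — which side each group acts on in the twisted product $(S_1\times S_2)\times_{(S,\Delta)}S$, and the direction of the twist $[b_1,b_2,ka]\sim[b_1\Delta(k),\dots]$ versus $[b_1,b_2,k a]\sim[b_1,b_2,a]\cdot$(something) — since a sign/inversion error there would flip $b_2$ to $b_2^{-1}$ in the wrong place. I would therefore write out the verification of part (a) in full once (it fixes all conventions), and for part (b) only indicate the extra step of trivializing the $S_2$-factor of $X\times S_2$ and note that the rest is identical, since once the bijections are written down both claimed biset structures are immediate from unwinding the definitions. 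Finally, I would remark that although the statement is phrased for bisets $X,Y$, by linearity of the cartesian product and composition pairings it extends verbatim to describe $(X\times Y)\circ[S,\Delta]$ and $(X\times 1)\circ[S,\Delta]\circ Y$ for arbitrary $X,Y\in A(S,S)\pLoc$, which is how it will be used in analyzing Frobenius reciprocity.
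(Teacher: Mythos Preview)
Your proposal is correct and follows essentially the same approach as the paper: both arguments exhibit explicit bijections, with your inverse maps $(x,y)\mapsto[(x,y),(1,1,1)]$ and $(x,y)\mapsto[(x,1),(1,1,1),y]$ being exactly the ones the paper writes down, and the paper likewise verifies equivariance by a direct element-by-element check. The paper presents the forward bijection first (sending a general representative to its normal form in $X\times Y$) rather than reducing to normal form as you describe, but this is purely a matter of exposition.
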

\begin{proof} $ $

\begin{enumerate}
\item[(a)]
The $(S, S_1 \times S_2)$-biset bijection is given by sending $((x,y),((v_1,v_2),u))$ to $(xv_1u,yv_2u)$. We check that this is a $(S, S_1 \times S_2)$-map. The action on the former biset by $(a,(b_1,b_2))$ gives $(a,(b_1,b_2))((x,y),((v_1,v_2),u))=((b_1x,b_2y),((v_1,v_2),ua))$ which is sent through our map to $(b_1xv_1ua,b_2yv_2ua)$. Now multiplying in the latter biset $(xv_1u,yv_2u)$ by $(a,(b_1,b_2))$ gives $(a,(b_1,b_2))(xv_1u,yv_2u)=(b_1xv_1ua,b_2yv_2ua)$ which is exactly the element obtained before. One can easily check that the inverse of this map is given by sending $(x,y)$ on $((x,y),((1,1),1))$.
\item[(b)]
The $(S, S_1 \times S_2)$-biset bijection is given by sending $((x,t),((v_1,v_2),u),y)$ to $(xv_1v_2^{-1}t^{-1},tv_2uy)$. We check that this is a $(S, S_1 \times S_2)$-map. The action on the former biset by $(a,(b_1,b_2))$ gives
$(a,(b_1,b_2))((x,t),((v_1,v_2),u),y)=((b_1x,b_2t),((v_1,v_2),u),ya)$ which is sent through our map to $(b_1xv_1v_2^{-1}t^{-1}b_2^{-1},b_2tv_2uya)$. Now multiplying in the latter biset $(xv_1v_2^{-1}t^{-1},tv_2uy)$ by $(a,(b_1,b_2))$ gives 
$(a,(b_1,b_2))(xv_1v_2^{-1}t^{-1},tv_2uy)=(b_1xv_1v_2^{-1}t^{-1}b_2^{-1},b_2tv_2uya)$ 
which is again exactly the element obtained before. One can easily check that the inverse of this map is given by sending $(x,y)$ on $((x,1),((1,1),1),y)$.
\end{enumerate}
\end{proof}

It is not hard to see that an $(S,S_1\times S_2)$-pair must of the form $(P,\psi \times \varphi )$, where $P$ is a subgroup of $S$, and $\psi \colon P \to S_1$ and $\varphi \colon P \to S_2$ are homomorphisms.

\begin{lem} \label{lem:ProductsFixPts}
Let $S$ be a finite $p$-group and write $S_1 \defeq S_2 \defeq S$. Let $X \in A(S,S_1)\pLoc$ and let $Y \in A(S,S_2)\pLoc$. For every $(S,S_1\times S_2)$-pair $(P,\psi\times\varphi)$,
\begin{enumerate}
\item[(a)]
$ \Phi_{\langle P,\psi\times\varphi  \rangle} ((X \times Y) \circ [S,\Delta]) = \Phi_{\langle P,\psi  \rangle}(X) \cdot \Phi_{\langle P,\varphi  \rangle}(Y).   $
\item[(b)] If $\varphi$ is injective, then 
$ \Phi_{\langle P,\psi\times\varphi  \rangle} ((X \times 1) \circ [S,\Delta] \circ Y) = \Phi_{\langle \varphi(P),\psi \circ \varphi^{-1}  \rangle}(X) \cdot \Phi_{\langle P,\varphi  \rangle}(Y).   $
\end{enumerate}
\end{lem}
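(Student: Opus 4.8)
The plan is to reduce both parts to a fixed-point count for bisets, then extend by linearity, exactly as in Lemma~\ref{lem:ProductsFixPts}'s sibling results. By bilinearity of the cartesian product, of the composition pairing $\circ$, and of each fixed-point homomorphism $\Phi_{\langle P,\psi\times\varphi\rangle}$, it suffices to treat the case where $X$ is (the isomorphism class of) an $(S,S_1)$-biset and $Y$ is an $(S,S_2)$-biset. In that case I would invoke Lemma~\ref{lem:DescSetProducts} to replace the two unwieldy composite bisets by the concrete models it provides, namely $X\times Y$ with the two explicit $(S,S_1\times S_2)$-actions written out there.

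For part (a): using the model from Lemma~\ref{lem:DescSetProducts}(a), the $(S,S_1\times S_2)$-action is $(b_1,b_2)(x,y)a=(b_1xa,b_2ya)$. The fixed-point set $\bigl((X\times Y)\circ[S,\Delta]\bigr)^{(P,\psi\times\varphi)}$ consists of pairs $(x,y)$ with $(\psi(k),\varphi(k))(x,y)=(x,y)k$ for all $k\in P$, i.e.\ $\psi(k)x=xk$ and $\varphi(k)y=yk$ for all $k\in P$. This is precisely $X^{(P,\psi)}\times Y^{(P,\varphi)}$, so the cardinalities multiply, giving $\Phi_{\langle P,\psi\times\varphi\rangle}((X\times Y)\circ[S,\Delta])=\Phi_{\langle P,\psi\rangle}(X)\cdot\Phi_{\langle P,\varphi\rangle}(Y)$.

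For part (b): using the model from Lemma~\ref{lem:DescSetProducts}(b), the action is $(b_1,b_2)(x,y)a=(b_1xb_2^{-1},b_2ya)$. A pair $(x,y)$ lies in the fixed-point set for $(P,\psi\times\varphi)$ iff $\psi(k)x\varphi(k)^{-1}=x$ and $\varphi(k)y=yk$ for all $k\in P$; the second condition says $y\in Y^{(P,\varphi)}$, and the first says $x\varphi(k)=\psi(k)x$, i.e.\ writing $k'=\varphi(k)$ (here we use that $\varphi$ is injective, so $\varphi$ is a bijection $P\to\varphi(P)$ and $k\mapsto k'$ is a reparametrization), $xk'=(\psi\circ\varphi^{-1})(k')x$ for all $k'\in\varphi(P)$, i.e.\ $x\in X^{(\varphi(P),\psi\circ\varphi^{-1})}$. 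Hence the fixed-point set is $X^{(\varphi(P),\psi\circ\varphi^{-1})}\times Y^{(P,\varphi)}$ and again the cardinalities multiply, yielding the claimed formula $\Phi_{\langle \varphi(P),\psi\circ\varphi^{-1}\rangle}(X)\cdot\Phi_{\langle P,\varphi\rangle}(Y)$.

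The only subtle point — the ``main obstacle,'' such as it is — is bookkeeping: one must be careful that the models in Lemma~\ref{lem:DescSetProducts} are stated with $S_1$ acting on the left of $X$ and $S_2$ on the left of $Y$, so that an $(S,S_1\times S_2)$-pair genuinely has the form $(P,\psi\times\varphi)$ with $\psi\colon P\to S_1$, $\varphi\colon P\to S_2$, and that the injectivity hypothesis in (b) is exactly what is needed to rewrite the twisted-diagonal condition on the $X$-coordinate as a fixed-point condition indexed by the pair $(\varphi(P),\psi\circ\varphi^{-1})$; without injectivity, $\varphi(P)$ and the map $\psi\circ\varphi^{-1}$ are not well defined. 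Everything else is a direct unwinding of definitions, so no serious difficulty is anticipated.
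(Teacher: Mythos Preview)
Your proposal is correct and follows essentially the same route as the paper: reduce to bisets by bilinearity, invoke the explicit models of Lemma~\ref{lem:DescSetProducts}, and read off the fixed-point sets coordinatewise. The paper in fact only writes out part~(b) and leaves the easier part~(a) to the reader, so your treatment is slightly more complete but otherwise identical in substance.
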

\begin{proof}
We prove part (b), leaving the simpler part (a) to the reader. It suffices to prove this for bisets $X$ and $Y$. In this case we observe that if $Z$ is $X \times Y$ endowed with the $(S, S_1\times S_2)$-action $(b_1,b_2)(x,y)a = (b_1xb_2^{-1},b_2ya)$, then the fixed-point set $Z^{(P,\psi\times\varphi)}$ consists of the pairs $(x,y)$ such that for every $a \in P$ we have
\[ (x,ya) = (\psi(a) x \varphi(a)^{-1}, \varphi(a) y). \]
The condition $ya = \varphi(a) y$ for all $a \in P$ is equivalent to $y \in Y^{(P,\varphi)}$. The condition $x = \psi(a) x \varphi(a)^{-1}$ for all $a \in P$ is equivalent to $x\varphi(a) = \psi(a) x$ for all $a \in P$. Assuming $\varphi$ is injective, this can also be written as $xb = \psi\circ \varphi^{-1}(b) x$ for all $b \in \varphi(P)$, which is equivalent to $x \in X^{(\varphi(P),\psi\circ \varphi^{-1})}$. We deduce that 
\[ \psi(a) x \varphi(a)^{-1} = X^{(\varphi(P),\psi\circ \varphi^{-1})} \times Y^{(P,\varphi)} \, ,  \]
and the result follows.
\end{proof}

Consequent to Lemma \ref{lem:ProductsFixPts} we get the following lemma, which will be useful to prove closure and stability results for the fixed-point pre-fusion system of a Frobenius reciprocity element.

\begin{lem} \label{lem:FrobFixPts}
Let $S$ be a finite group, and let $X$ be an element in $A(S,S)_{(p)}$ that satisfies Frobenius reciprocity. For a subgroup $P\leq S$, a group homomorphism $\psi \colon P \to S$, and a group monomorphism $\varphi \colon P \to S$, we have
\[  
  \Phi_{\langle P,\varphi \rangle}(X) \cdot \Phi_{\langle P,\psi \rangle}(X) 
  = \Phi_{\langle P,\varphi \rangle}(X) \cdot \Phi_{\langle \varphi(P),\psi\circ\varphi^{-1} \rangle}(X)\,.
\]
In particular, if $\Phi_{\langle P,\varphi \rangle}(X) \neq 0$, then
\[  
  \Phi_{\langle P,\psi \rangle}(X) = \Phi_{\langle \varphi(P),\psi\circ\varphi^{-1} \rangle}(X)\,.
\]   
\end{lem}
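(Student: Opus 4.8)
The plan is to apply a single fixed-point homomorphism to the Frobenius reciprocity identity \eqref{eq:Frobenius} and read off the claim. Writing $S_1 \defeq S_2 \defeq S$ as in Lemma \ref{lem:ProductsFixPts}, we form the $(S,S_1\times S_2)$-pair $(P,\psi\times\varphi)$ out of the given data. Since $X$ satisfies Frobenius reciprocity, we have the equality
\[ (X \times X) \circ [S,\Delta] = (X \times 1) \circ [S,\Delta] \circ X \]
in $A(S,S\times S)_{(p)}$, and the idea is to evaluate both sides under the $\Z_{(p)}$-module homomorphism $\Phi_{\langle P,\psi\times\varphi \rangle}$.

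First I would apply Lemma \ref{lem:ProductsFixPts}(a) with $Y \defeq X$ to the left-hand side, obtaining $\Phi_{\langle P,\psi \rangle}(X)\cdot\Phi_{\langle P,\varphi \rangle}(X)$. Then, using that $\varphi$ is a monomorphism, I would apply Lemma \ref{lem:ProductsFixPts}(b), again with $Y \defeq X$, to the right-hand side, obtaining $\Phi_{\langle \varphi(P),\psi\circ\varphi^{-1} \rangle}(X)\cdot\Phi_{\langle P,\varphi \rangle}(X)$. Equating the two yields the first displayed identity of the lemma (the factors commute in $\Z_{(p)}$). For the ``in particular'' statement, one notes that $\Z_{(p)}$ is an integral domain, so if $\Phi_{\langle P,\varphi \rangle}(X)\neq 0$ it may be cancelled from both sides.

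There is no genuine obstacle here: the lemma is a direct corollary of Lemma \ref{lem:ProductsFixPts} together with the definition of Frobenius reciprocity. The only point requiring care is the bookkeeping --- one must place the monomorphism $\varphi$ in the second coordinate of the pair $(P,\psi\times\varphi)$ so that part (b) of Lemma \ref{lem:ProductsFixPts} is available, since that part requires injectivity of precisely that coordinate. This is exactly why the hypothesis is allowed to let $\psi$ be an arbitrary homomorphism while insisting that $\varphi$ be a monomorphism.
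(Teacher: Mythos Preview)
Your proposal is correct and follows exactly the approach of the paper: apply $\Phi_{\langle P,\psi\times\varphi\rangle}$ to both sides of the Frobenius reciprocity identity, invoke parts (a) and (b) of Lemma~\ref{lem:ProductsFixPts} with $Y=X$, and cancel. The paper's proof is a one-line reference to Lemma~\ref{lem:ProductsFixPts} and ``canceling''; you have simply unpacked that reference, including the correct observation about which coordinate must carry the monomorphism.
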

\begin{proof}
The first equation follows from Lemma \ref{lem:ProductsFixPts}, and the second follows by canceling.
\end{proof}

\begin{rmk} \label{rmk:FrobFixPtsNotInj}
Part (b) of Lemma \ref{lem:ProductsFixPts} can be extended to include non-injective $\varphi$ as follows: If $\Ker(\varphi)$ is not contained in $\Ker(\psi)$, then left-freeness of $X$ implies that there is no $x \in X$ with $x\varphi(a) = \psi(a) x$ for all $a \in P$, so $\Phi_{\langle P,\psi\times\varphi  \rangle} ((X \times 1) \circ [S,\Delta] \circ Y) = 0$. If $\Ker(\varphi) \subseteq \Ker(\psi)$, then there is a unique group homomorphism $\rho \colon \varphi(P) \to \psi(P)$ such that $\rho \circ \varphi = \psi$, and the condition $x\varphi(a) = \psi(a) x$ for all $a \in P$ is equivalent to $x b =\rho(b) x$ for all $b \in \varphi(P)$, so we get 
\[ \Phi_{\langle P,\psi\times\varphi  \rangle} ((X \times 1) \circ [S,\Delta] \circ Y) = \Phi_{\langle \varphi(P),\rho  \rangle}(X) \cdot \Phi_{\langle P,\varphi  \rangle}(Y) \, .\]
Lemma \ref{lem:FrobFixPts} can then be extended to deduce
\[  
  \Phi_{\langle P,\psi \rangle}(X) = \Phi_{\langle \varphi(P),\rho \rangle}(X)\,,
\]   
when $\Phi_{\langle P,\varphi \rangle}(X) \neq 0$.
\end{rmk}

\subsection{Level-wise closure and local saturation}
Using Lemma \ref{lem:FrobFixPts}, we now show that, for a Frobenius reciprocity element $X$ in $A(S,S)\pLoc$, $\PreFix(X)$ is level-wise closed and saturated at every $P \leq S$. At times it will be easier to work with $\op{X}$ than $X$, but this makes no difference: once level-wise closure is established we have $\PreFix(X) = \PreFix(\op{X})$, so either way we get the desired results.

\begin{lem} \label{lem:LevelwiseClosed}
Let $S$ be a finite group, let $X$ be a Frobenius reciprocity element in $\Afree(S,S)\pLoc$ with augmentation not divisible
by $p$, and set $\PreFus \defeq \PreFix(\op{X})$. Let $P\leq S$, and let
$\iota$ denote the inclusion $P \hookrightarrow S$.
\begin{enumerate}
 \item[(a)] If $\varphi \in \Hom_{\PreFus}(P,S)$, then 
 $\Phi_{\langle P,\varphi \rangle}(\op{X}) = \Phi_{\langle P,\iota \rangle}(\op{X})$.
 \item[(b)] $\Hom_S(P,S) \subseteq \Hom_{\PreFus}(P,S)$.
 \item[(c)] If $\varphi \in \Hom_{\PreFus}(P,S)$ then $\varphi^{-1} \in \Hom_{\PreFus}(\varphi(P),S)$.
 \item[(d)] If $\varphi \in \Hom_{\PreFus}(P,S)$ and $\psi \in \Hom_{\PreFus}(\varphi(P),S)$, then 
            $\psi \circ \varphi \in \Hom_{\PreFus}(P,S)$.
\end{enumerate}
\end{lem}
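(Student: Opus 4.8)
The plan is to reduce all four parts to a single identity coming from Lemma~\ref{lem:FrobFixPts}, after translating between $\op{X}$ and $X$ via Lemma~\ref{lem:Phi(op(X))}, which reads $\Phi_{\langle K,\chi \rangle}(\op{X}) = \Phi_{\langle \chi(K),\chi^{-1} \rangle}(X)$ for a free $(S,S)$-pair $(K,\chi)$. Under this dictionary, ``$\varphi \in \Hom_{\PreFus}(P,S)$'' becomes ``$\varphi$ is a monomorphism and $\Phi_{\langle \varphi(P),\varphi^{-1} \rangle}(X) \neq 0$'', i.e.\ the monomorphism $\varphi^{-1} \colon \varphi(P) \to S$ has nonzero fixed-point value for $X$. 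The recurring move will be to apply Lemma~\ref{lem:FrobFixPts} with this $\varphi^{-1}$ as the monomorphism and source subgroup $\varphi(P)$: since $\Phi_{\langle \varphi(P),\varphi^{-1} \rangle}(X) \neq 0$, for every homomorphism $\chi \colon \varphi(P) \to S$ we obtain
\[ \Phi_{\langle \varphi(P),\chi \rangle}(X) = \Phi_{\langle P, \chi \circ \varphi \rangle}(X) \,. \qquad (\ast) \]
Besides $(\ast)$ I will use Lemma~\ref{lem:OpDesc} to track how $\opmap$ acts on basis elements, together with the elementary fact that $\langle P,\iota_P \rangle = \langle P, c_g|_P \rangle$ for every $g \in S$ (the two graphs being conjugate in $S\times S$ through the first coordinate), so that in particular $\Phi_{\langle P,\iota_P \rangle}(\op{X}) = \Phi_{\langle P,\iota_P \rangle}(X)$ and all of $\Hom_S(P,S)$ is governed by the single value $\Phi_{\langle P,\iota_P \rangle}(\op{X})$.

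Part~(a) is immediate: put $\chi = \varphi^{-1}$ in $(\ast)$ to get $\Phi_{\langle \varphi(P),\varphi^{-1} \rangle}(X) = \Phi_{\langle P,\iota \rangle}(X)$, then translate both sides back with Lemma~\ref{lem:Phi(op(X))} to reach $\Phi_{\langle P,\varphi \rangle}(\op{X}) = \Phi_{\langle P,\iota \rangle}(\op{X})$. For part~(b), note first that $\op{X} \in \Afree(S,S)\pLoc$ has the same augmentation as $X$, so Lemma~\ref{lem:Cong}(a) applied to $\op{X}$ with $\PreFus = \PreFix(\op{X})$ gives a mod-$p$ congruence whose left-hand side is a sum over $\Rep_{\PreFus}(P,S)$ equal to $\countS(\op{X}) \not\equiv 0 \pmod p$; hence $\Hom_{\PreFus}(P,S) \neq \emptyset$. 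Choosing $\varphi_0$ in it and applying part~(a), $\Phi_{\langle P,\iota \rangle}(\op{X}) = \Phi_{\langle P,\varphi_0 \rangle}(\op{X}) \neq 0$, so $\iota \in \Hom_{\PreFus}(P,S)$, and then $\Hom_S(P,S) \subseteq \Hom_{\PreFus}(P,S)$ by the class identity above.

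For part~(c) I must show $\Phi_{\langle \varphi(P),\varphi^{-1} \rangle}(\op{X}) \neq 0$, which by Lemma~\ref{lem:Phi(op(X))} is the same as $\Phi_{\langle P,\varphi \rangle}(X) \neq 0$. Taking $\chi = \iota_{\varphi(P)}$ in $(\ast)$ yields $\Phi_{\langle P,\varphi \rangle}(X) = \Phi_{\langle \varphi(P),\iota_{\varphi(P)} \rangle}(X) = \Phi_{\langle \varphi(P),\iota_{\varphi(P)} \rangle}(\op{X})$, and the last quantity is nonzero by part~(b) applied to the subgroup $\varphi(P)$. For part~(d), set $R = \psi(\varphi(P)) = (\psi\varphi)(P)$; Lemma~\ref{lem:OpDesc} together with Lemma~\ref{lem:Phi(op(X))} identifies $\Phi_{\langle P,\psi\varphi \rangle}(\op{X})$ with $\Phi_{\langle R, \varphi^{-1}\circ\psi^{-1} \rangle}(X)$. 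The hypothesis $\psi \in \Hom_{\PreFus}(\varphi(P),S)$ translates into $\Phi_{\langle R,\psi^{-1} \rangle}(X) \neq 0$, so one may run Lemma~\ref{lem:FrobFixPts} with $\psi^{-1} \colon R \to S$ as the pivot; feeding in $\chi = \varphi^{-1}\circ\psi^{-1}$ collapses $\Phi_{\langle R, \varphi^{-1}\circ\psi^{-1} \rangle}(X)$ to $\Phi_{\langle \varphi(P),\varphi^{-1} \rangle}(X)$, which is nonzero since $\varphi \in \Hom_{\PreFus}(P,S)$. Hence $\Phi_{\langle P,\psi\varphi \rangle}(\op{X}) \neq 0$.

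The main obstacle is not a single calculation but two supporting points. First, part~(b) requires establishing separately that $\Hom_{\PreFus}(P,S) \neq \emptyset$ for every $P$ --- without this, part~(a) is vacuous for that $P$ --- and this is exactly where the hypothesis that the augmentation is prime to $p$ enters, by way of the mod-$p$ congruence of Lemma~\ref{lem:Cong}. Second, parts~(c) and~(d) need careful bookkeeping of how images and composites behave under $\opmap$: after translating to $X$, the relevant $(S,S)$-classes are assembled from $\varphi^{-1}$, $\psi^{-1}$ and their composites, and one must correctly identify the source subgroup and the leftover homomorphism in each application of $(\ast)$. Once the translation of Lemma~\ref{lem:Phi(op(X))} and the identity $(\ast)$ are in place, each of the four assertions is a one-step consequence.
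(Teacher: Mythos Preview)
Your proof is correct and follows essentially the same approach as the paper. The paper packages the key identity on the $\op{X}$ side as an equation $\Phi_{\langle Q,\psi \rangle}(\op{X}) = \Phi_{\langle Q,\varphi^{-1}\circ\psi \rangle}(\op{X})$ (for $\psi(Q)=\varphi(P)$), whereas you translate once via Lemma~\ref{lem:Phi(op(X))} and work on the $X$ side with your identity $(\ast)$; these are the same statement, and each of parts (a)--(d) is then a single substitution, with (b) using Lemma~\ref{lem:Cong} to supply the needed nonvanishing, exactly as you do. One cosmetic difference: in part~(d) the paper first invokes part~(c) to get $\psi^{-1}\in\PreFus$ and then pivots on $\psi^{-1}$, while you pivot directly on $\psi$ via the translation $\Phi_{\langle R,\psi^{-1}\rangle}(X)\neq 0$, which is slightly more direct but amounts to the same computation.
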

\begin{proof}
First recall that $\Phi_{\langle P,\varphi \rangle}(\op{X}) = \Phi_{\langle \varphi(P),\varphi^{-1} \rangle}(X)$
for all $(S,S)$-pairs $(P,\varphi)$. Hence Lemma \ref{lem:FrobFixPts} implies that if $(P, \varphi)$ is an
$(S,S)$-pair with $\varphi \in \Hom_\PreFus(P,S)$, then, for any $(S,S)$-pair $(Q,\psi)$ with $\psi(Q)= \varphi(P)$, we have 
\begin{equation} \label{eq:OpFrobFixPts}
  \Phi_{\langle Q,\psi \rangle}(\op{X}) = \Phi_{\langle Q,\varphi^{-1}\circ\psi \rangle}(\op{X}).
\end{equation}   

Part (a) follows by taking $\psi = \varphi$ in Equation \ref{eq:OpFrobFixPts}. 

Part (b) is equivalent to $\Phi_{\langle P,\iota \rangle}(\op{X}) \neq 0$. By part (a), it suffices to show 
that $\Hom_\PreFus(P,S)$ is nonempty. This follows from part (a) of Lemma \ref{lem:Cong} since 
$\countS(\op{X}) = \countS(X)$ is not divisible by $p$.

For part (c), we take $\psi$ to be the inclusion $i \colon \varphi(P) \hookrightarrow S$ in Equation \ref{eq:OpFrobFixPts} and obtain
  $\Phi_{\langle \varphi(P),\varphi^{-1} \rangle}(\op{X}) = \Phi_{\langle \varphi(P),i \rangle}(\op{X}).$
As $ \Phi_{\langle \varphi(P),i \rangle}(\op{X}) \neq 0$ by part (b), this implies that $\varphi^{-1}$ is in $\PreFus$. 

For part (d), we first observe that by part (c), $\psi \in \Hom_{\PreFus}(\varphi(P),S)$ implies 
$\psi^{-1} \in \Hom_{\PreFus}(\psi\circ\varphi(P),S)$. Applying Equation \ref{eq:OpFrobFixPts} with 
$(\psi\circ\varphi(P),\psi^{-1})$ in place of $(P,\varphi)$ and $(P,\varphi)$ in place of $(Q,\psi)$, we obtain
$\Phi_{\langle P,\psi\circ\varphi \rangle}(\op{X}) = \Phi_{\langle P,\varphi \rangle}(\op{X}) \neq 0,$ so 
$\psi \circ \varphi \in \Hom_{\PreFus}(P,S)$.
\end{proof}

\begin{cor}\label{cor:LWClosedAndStable}
Let $S$ be a finite group, and let $X$ be a Frobenius reciprocity element in $\Afree(S,S)\pLoc$ with augmentation not divisible by $p$. Then $\PreFix(X) = \PreFix(\op{X})$ and both are level-wise closed. Furthermore, the following hold.
\begin{enumerate}
 \item[(a)] If $\varphi \colon P \to S$ and $\psi \colon Q \to S$ are morphisms in $\PreFix(X)$ with $\varphi(P)=\psi(Q)$, then $\Phi_{\langle P,\varphi \rangle}(X) = \Phi_{\langle Q,\psi \rangle}(X)$.
 \item[(b)] If $\varphi,\psi \colon P \to S$ are morphisms in $\PreFix(X)$, then $\Phi_{\langle P,\varphi \rangle}(\op{X}) = \Phi_{\langle P,\psi \rangle}(\op{X})$.
\end{enumerate}
\end{cor}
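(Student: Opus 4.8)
The plan is to chain together Lemmas~\ref{lem:LevelwiseClosed}, \ref{lem:PreFixForLWClosed} and \ref{lem:Phi(op(X))}. First I would note that Lemma~\ref{lem:LevelwiseClosed} applies to our $X$, and that its parts (b), (c), (d) say exactly that the morphism sets $\Hom_{\PreFus}(P,S)$ of $\PreFus \defeq \PreFix(\op{X})$ contain all restrictions of conjugations in $S$, are closed under inverting isomorphisms, and are closed under composing isomorphisms. Since a pre-fusion system is determined by its morphism sets into $S$ and is closed under restriction and extension in the target, these facts promote to the corresponding statements for all $\Hom_{\PreFus}(P,Q)$, so $\PreFix(\op{X})$ is level-wise closed. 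Applying Lemma~\ref{lem:PreFixForLWClosed} with $\op{X}$ in place of $X$ (and using $\op{(\op{X})} = X$) then gives $\PreFix(X) = \PreFix(\op{X})$, so both are level-wise closed; this settles the first assertion of the corollary.

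For part (b), if $\varphi,\psi \in \Hom_{\PreFix(X)}(P,S) = \Hom_{\PreFix(\op{X})}(P,S)$, then part (a) of Lemma~\ref{lem:LevelwiseClosed} gives $\Phi_{\langle P,\varphi \rangle}(\op{X}) = \Phi_{\langle P,\iota \rangle}(\op{X}) = \Phi_{\langle P,\psi \rangle}(\op{X})$, where $\iota$ denotes the inclusion $P \hookrightarrow S$. For part (a), set $R \defeq \varphi(P) = \psi(Q)$. By Lemma~\ref{lem:Phi(op(X))} we have $\Phi_{\langle P,\varphi \rangle}(X) = \Phi_{\langle R,\varphi^{-1} \rangle}(\op{X})$ and $\Phi_{\langle Q,\psi \rangle}(X) = \Phi_{\langle R,\psi^{-1} \rangle}(\op{X})$; moreover level-wise closure of $\PreFix(X)$ (closure under inverses together with extension in the target) gives $\varphi^{-1},\psi^{-1} \in \Hom_{\PreFix(X)}(R,S)$, so part (b) applied to the pair $\varphi^{-1},\psi^{-1}$ yields $\Phi_{\langle R,\varphi^{-1} \rangle}(\op{X}) = \Phi_{\langle R,\psi^{-1} \rangle}(\op{X})$. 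Combining these identities gives $\Phi_{\langle P,\varphi \rangle}(X) = \Phi_{\langle Q,\psi \rangle}(X)$.

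I do not expect a genuine obstacle here, since all the substance lives in the cited lemmas; the corollary is essentially a repackaging. The one point demanding care is that Frobenius reciprocity is not preserved by $(-)^{\operatorname{op}}$, so $\op{X}$ need not satisfy \eqref{eq:Frobenius}; hence one must extract level-wise closure of $\PreFix(\op{X})$ directly from Lemma~\ref{lem:LevelwiseClosed} (which is stated in terms of $\op{X}$) and only afterwards transport it to $\PreFix(X)$ via Lemma~\ref{lem:PreFixForLWClosed}, rather than trying to argue symmetrically in $X$ and $\op{X}$.
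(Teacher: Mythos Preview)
Your proposal is correct and follows essentially the same route as the paper: the paper derives level-wise closure of $\PreFix(\op{X})$ from parts (b)--(d) of Lemma~\ref{lem:LevelwiseClosed}, then invokes Lemma~\ref{lem:FixOp} (of which Lemma~\ref{lem:PreFixForLWClosed} is the relevant refinement) to get $\PreFix(X)=\PreFix(\op{X})$, obtains (b) from part (a) of Lemma~\ref{lem:LevelwiseClosed}, and then simply declares (a) to be ``the reformulation for $\PreFix(X)$''. Your write-up is in fact more explicit than the paper's on this last point, spelling out via Lemma~\ref{lem:Phi(op(X))} and closure under inverses exactly how (a) reduces to (b).
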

\begin{proof}
The level-wise closure of $\PreFix(\op{X})$ follows from parts (b)--(d) of Lemma \ref{lem:LevelwiseClosed},
and consequently Lemma \ref{lem:FixOp} implies that $\PreFix(X) = \PreFix(\op{X})$. The
claim in (b) follows from part (a) of Lemma \ref{lem:LevelwiseClosed}, and (a) is just the reformulation 
for $\PreFix(X)$.
\end{proof}

\begin{cor} \label{cor:FrobSatAtP}
Let $S$ be a finite group, and let $X$ be a Frobenius reciprocity element in $\Afree(S,S)\pLoc$ with augmentation not divisible by $p$. Then $\PreFix(X)$ is saturated at $P$ for every $P \leq S$.
\end{cor}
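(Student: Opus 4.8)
The plan is to derive this as an immediate consequence of Proposition~\ref{prop:SatAtP}, applied not to $X$ itself but to its opposite element $\op{X}$. First I would observe that $\op{X}$ again lies in $\Afree(S,S)\pLoc$ (the opposite of a bifree biset is bifree), and that it has augmentation $\countS(\op{X}) = \countS(X)$, which is not divisible by $p$ by hypothesis. By Corollary~\ref{cor:LWClosedAndStable}, the pre-fusion system $\PreFix(\op{X})$ is level-wise closed, and moreover $\PreFix(\op{X}) = \PreFix(X)$; write $\PreFus$ for this common pre-fusion system.

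Next I would check the one remaining hypothesis of Proposition~\ref{prop:SatAtP} for $\op{X}$: that for the fixed subgroup $P \leq S$ and all $\varphi,\psi \in \Hom_\PreFus(P,S)$ one has $\Phi_{\langle P,\varphi \rangle}(\op{X}) = \Phi_{\langle P,\psi \rangle}(\op{X})$. Since $\PreFus = \PreFix(X)$, the maps $\varphi$ and $\psi$ are morphisms of $\PreFix(X)$, so this equality is precisely part~(b) of Corollary~\ref{cor:LWClosedAndStable}.

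With both hypotheses in hand, Proposition~\ref{prop:SatAtP} applied to $\op{X}$ shows that $\PreFus = \PreFix(\op{X})$ is saturated at $P$; since $\PreFix(\op{X}) = \PreFix(X)$, this is exactly the assertion of the corollary, and as $P$ was arbitrary we are done.

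The only point that requires a moment's thought — and the reason the argument must be run on $\op{X}$ rather than on $X$ — is the constancy of the fixed-point values along \emph{all} morphisms of $\PreFus$ out of a fixed subgroup. Corollary~\ref{cor:LWClosedAndStable}(a) compares $\Phi_{\langle P,\varphi \rangle}(X)$ with $\Phi_{\langle Q,\psi \rangle}(X)$ only when the images $\varphi(P)$ and $\psi(Q)$ coincide, which is not the hypothesis Proposition~\ref{prop:SatAtP} demands; it is part~(b) of that corollary, stated for $\op{X}$, that supplies the constancy in the required form. Everything else is bookkeeping with the identification $\PreFix(X) = \PreFix(\op{X})$, so no genuine obstacle arises.
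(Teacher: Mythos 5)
Your proof is correct and takes essentially the same route as the paper, whose own argument is simply to cite Corollary~\ref{cor:LWClosedAndStable} together with Proposition~\ref{prop:SatAtP}. The one point you helpfully make explicit, which the paper leaves implicit, is that the Proposition must be applied to $\op{X}$ rather than to $X$: it is part~(b) of the Corollary (concerning $\op{X}$), not part~(a) (concerning $X$), that supplies the constancy of $\Phi_{\langle P,\,\cdot\,\rangle}$ over all $\PreFus$-morphisms out of a fixed $P$, and the identification $\PreFix(X)=\PreFix(\op{X})$ then carries the conclusion back.
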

\begin{proof} This follows from Corollary \ref{cor:LWClosedAndStable} and Proposition \ref{prop:SatAtP}.
\end{proof}

\subsection{Closure under restriction}
Now that we have shown that the fixed-point pre-fusion system of a Frobenius reciprocity element $X$ is level-wise closed and saturated at every subgroup, all that remains to prove saturation is to show that $\PreFix(X)$ is closed under restriction. This is easy when $X$ is a biset: If $\varphi \colon P \to S$ is a morphism in $\PreFix(X)$, then $X^{(P,\varphi)}$ is nonempty. If $\psi$ is the restriction to a subgroup $Q \leq P$, then $\Delta(Q,\psi) \leq \Delta(P,\varphi)$ implies $X^{(P,\varphi)} \subseteq X^{(Q,\psi)}$, so $X^{(Q,\psi)}$ is nonempty as well, and $\psi$ is in $\PreFix(X)$. When $X$ is a general element in the double Burnside ring, things are not so simple, as $\Delta(Q,\psi) \leq \Delta(P,\varphi)$ does not imply any relationship between $\Phi_{\langle Q,\psi \rangle}(X)$ and $\Phi_{\langle P,\varphi \rangle}(X)$. Indeed, $\PreFix(X)$ need not be closed under restriction for an arbitrary $X$. The good news is that $\PreFix(X)$ is closed when $X$ satisfies Frobenius reciprocity and has augmentation prime to $p$, as we show below. The first step in this direction is to get some control over the possible ways to extend a given morphism in $\PreFix(X)$.

\begin{lem} \label{lem:CongExt}
Let $S$ be a finite group and let $X$ be an element in $\Afree(S,S)_{(p)}$.
For subgroups $P < Q$ of $S$ such that $P$ has index $p$ in $Q$, and a monomorphism 
$\varphi \colon P \to S$ that can be extended to a homomorphism $\overline{\varphi} \colon Q \to S$, 
we have
\[ \frac{\Phi_{\langle P,\varphi \rangle}(X)}{|C_S(\varphi(P))|} 
   \equiv \sum_{[\psi] \in \extend{P}{\varphi}{Q}} \frac{\Phi_{\langle Q,\psi \rangle}(X)}{|C_S(\psi(Q))|}\quad \modp\,,  \]
where 
\[ \extend{P}{\varphi}{Q} = \left\{ [\psi] \in \Rep(Q,S) \mid [\psi\vert_P] = [\varphi]   \right\}\,. \]
\end{lem}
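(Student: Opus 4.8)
The plan is to count fixed points exactly as in the proof of Lemma~\ref{lem:Cong}, but now tracking a single $S$-conjugacy class of morphisms and extracting the congruence from a cyclic group action of $Q/P$ rather than from the Sylow-type congruence $|(S_2\backslash X)^P|\equiv|S_2\backslash X|$ used there. First I would reduce to the case where $X$ is an actual bifree $(S,S)$-biset: both sides of the asserted congruence are $\Z_{(p)}$-linear in $X$ once one knows that $|C_S(\varphi(P))|$ divides $\Phi_{\langle P,\varphi\rangle}(X)$ and $|C_S(\psi(Q))|$ divides $\Phi_{\langle Q,\psi\rangle}(X)$, and for a biset these divisibilities hold because the relevant centralizer acts freely by left multiplication on the corresponding fixed-point set (and they pass to $\Afree(S,S)_{(p)}$ by linearity). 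So assume $X$ is a bifree biset, regarded as an $(S_1,S_2)$-set with $S_1=S_2=S$. Since $P$ has index $p$ in $Q$ and $Q$ is a $p$-group, $P$ is normal in $Q$ and $Q/P$ is cyclic of order $p$; fix $g\in Q\setminus P$ mapping to a generator. As in the proof of Lemma~\ref{lem:Cong}, the fixed-point set $(S_2\backslash X)^P$ of the right $P$-action partitions as $\bigsqcup_{[\rho]}Y_{[\rho]}$ over $[\rho]\in\widetilde{\Inj}(P,S_2)$, where $Y_{[\rho]}$ is the image of $X^{(P,\rho)}$ in $S_2\backslash X$ and $|Y_{[\rho]}|=\Phi_{\langle P,\rho\rangle}(X)/|C_S(\rho(P))|$.

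Next I would bring in the action of $Q/P\cong C_p$ on $(S_2\backslash X)^P$, which permutes the pieces $Y_{[\rho]}$, a generator carrying $Y_{[\rho]}$ onto $Y_{[\rho\circ c_g|_P]}$. This is where the hypothesis enters: since $\varphi$ extends to a homomorphism $\overline{\varphi}\colon Q\to S$, the element $\overline{\varphi}(g)$ satisfies $c_{\overline{\varphi}(g)}\circ\varphi=\varphi\circ c_g|_P$, so $\varphi\circ c_g|_P$ is $S$-conjugate to $\varphi$; hence the single piece $Y_{[\varphi]}$ is stable under the $C_p$-action. Restricting that action to $Y_{[\varphi]}$ and using that a $C_p$-set has cardinality congruent modulo $p$ to that of its fixed-point set gives
\[
\frac{\Phi_{\langle P,\varphi\rangle}(X)}{|C_S(\varphi(P))|}=|Y_{[\varphi]}|\equiv\bigl|Y_{[\varphi]}\cap(S_2\backslash X)^Q\bigr|\quad\modp .
\]

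It then remains to identify the right-hand side. An $S_2$-orbit $[x]\in S_2\backslash X$ lies in $(S_2\backslash X)^Q$ exactly when $x\in X^{(Q,\nu)}$ for a homomorphism $\nu\colon Q\to S_2$, which is unique by bifreeness of $X$; such an orbit additionally lies in $Y_{[\varphi]}$ if and only if $[\nu|_P]=[\varphi]$, i.e.\ $[\nu]\in\extend{P}{\varphi}{Q}$. For a fixed class $[\nu]$ the orbits obtained this way form precisely the image of $X^{(Q,\nu)}$ in $S_2\backslash X$, which has cardinality $\Phi_{\langle Q,\nu\rangle}(X)/|C_S(\nu(Q))|$, and for distinct classes these images are disjoint (again by bifreeness). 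Summing over $\extend{P}{\varphi}{Q}$ identifies $|Y_{[\varphi]}\cap(S_2\backslash X)^Q|$ with $\sum_{[\psi]\in\extend{P}{\varphi}{Q}}\Phi_{\langle Q,\psi\rangle}(X)/|C_S(\psi(Q))|$; plugging this into the displayed congruence proves the statement for bisets, and $\Z_{(p)}$-linearity extends it to all $X\in\Afree(S,S)_{(p)}$.

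I expect the main obstacle to be the bookkeeping in this last step --- matching the $C_p$-fixed orbits inside $Y_{[\varphi]}$ with the sum over extension classes while keeping the centralizer factors $|C_S(\psi(Q))|$ straight, and checking carefully that the ``$\varphi$ extends to $Q$'' hypothesis is exactly what makes the single piece $Y_{[\varphi]}$ (rather than only the whole of $(S_2\backslash X)^P$) invariant under $Q/P$, so that the orbit-counting congruence localizes to one conjugacy class. Everything else is a routine adaptation of the fixed-point argument already spelled out for Lemma~\ref{lem:Cong}.
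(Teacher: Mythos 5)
Your proof is correct and takes essentially the same route as the paper's. The paper realizes your $Y_{[\varphi]}$ concretely as $S_2\backslash Y$ for the subset $Y := S_2 X^{(P,\varphi)} \subseteq X$, verifies the $Q$-stability by an explicit element computation inside $X$ using $\overline{\varphi}$, and then applies the $p$-group orbit congruence to $S_2\backslash Y$; you do the same thing working directly in $(S_2\backslash X)^P$ with the $Q/P$-action permuting the pieces $Y_{[\rho]}$, and phrase the stability of $Y_{[\varphi]}$ via the observation $[\varphi\circ c_g|_P]=[\varphi]$ --- but these are two descriptions of the identical mechanism, and your bookkeeping of the $C_p$-fixed orbits matches the paper's identification of $(S_2\backslash Y)^Q$.
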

\begin{proof}
It is enough to prove this when $X$ is an $(S,S)$-biset. For clarity, we make a
notational distinction between the two copies of $S$ by regarding $X$ as an
$(S_1,S_2)$-biset, with the understanding that $S_1 = S_2 = S$.

Now, consider the subset 
\[ Y \defeq S_2 X^{(P,\varphi)} \subseteq X\,,\] 
consisting of elements of the form $ax$ where $a \in S_2$ and $x \in X^{(P,\varphi)}$. Although $Y$ is not necessarily a $(S_1,S_2)$-subset of $X$, we show that $Y$ is closed under the $(Q,S_2)$-action obtained by restriction. To prove this, it is enough to show that if $x \in  X^{(P,\varphi)}$ and $b \in Q$, then $xb \in Y$. As $xb = \overline{\varphi}(b) (\overline{\varphi}(b)^{-1}xb)$, it suffices to show that $\overline{\varphi}(b)^{-1}xb \in X^{(P,\varphi)}$. To do this, we first note that $P$ must be normal in $Q$ because of the index, and for $g \in P$ we have
$\varphi(bgb^{-1}) = \overline{\varphi}(b)\varphi(g)\overline{\varphi}(b)^{-1}.$
Hence, for all $g \in P$ we have
\begin{align*} 
(\overline{\varphi}(b)^{-1}xb)g 
&= \overline{\varphi}(b)^{-1} x (bgb^{-1}) b = \overline{\varphi}(b)^{-1} \varphi(bgb^{-1}) xb \\
&= \overline{\varphi}(b)^{-1} (\overline{\varphi}(b)\varphi(g)\overline{\varphi}(b)^{-1})xb = \varphi(g) (\overline{\varphi}(b)^{-1}xb), 
\end{align*}
so $\overline{\varphi}(b)^{-1}xb \in X^{(P,\varphi)}$.

Next, we consider the induced right $Q$-subset $S_2\backslash Y$. We have a congruence
\[ |S_2 \backslash Y| \equiv |(S_2 \backslash Y)^Q| \quad \modp\,, \]
and the result follows once we show that 
\begin{equation}\label{eq:OrbitsInY}
 |S_2 \backslash Y | = \frac{\Phi_{\langle P,\varphi \rangle}(X)}{|C_S(\varphi(P))|} \, ,
\end{equation}
and
\begin{equation}\label{eq:FixedOrbitsInY}
 |(S_2 \backslash Y)^Q| = \sum_{[\psi] \in \extend{P}{\varphi}{Q}} \frac{\Phi_{\langle Q,\psi \rangle}(X)}{|C_S(\psi(Q))|} \,. 
\end{equation}

Let $x \in X^{(P,\varphi)}$ and $a \in S_2$. For $g \in P$, we have
\[ axg = a\varphi(g)x = \varphi(g)(\varphi(g)^{-1}a\varphi(g))x\,. \]
Since $X$ is left-free, this implies that $ax \in X^{(P,\varphi)}$ if and only if $a \in C_S(\varphi(P))$. Equation \eqref{eq:OrbitsInY} follows.

To prove Equation \eqref{eq:FixedOrbitsInY}, let $Y_0 \subseteq Y$ be the pre-image of 
$(S_2\backslash Y)^Q$ under the projection $Y \to S_2\backslash Y$. Just as in the proof 
of Lemma \ref{lem:Cong}, we obtain a map $\theta \colon Y_0 \to  \Inj(Q,S_2)$ such that
for all $g \in Q$ we have $yg = \theta(y)(g)y$, and, since 
$\theta(ax) = c_a\circ\theta(x)$, an induced map $\widetilde{\theta} \colon S_2\backslash Y_0 \to \widetilde{\Inj}(Q,S_2) = S_2 \backslash \Inj(Q,S_2)$, fitting into a commutative diagram
\[ \xymatrix{
  Y_0 \ar[rr]^{\theta \hphantom{\Inj}} \ar[d]^{q}  && \Inj(Q,S_2) \ar[d]^{q} \\ 
  S_2\backslash Y_0  \ar[rr]^{ \widetilde{\theta} \hphantom{\Inj}} && \widetilde{\Inj}(Q,S_2) 
  }
\]
where the vertical maps are the canonical projection onto $S_2$-orbits. 
Again, as in the proof of Lemma \ref{lem:Cong}, we deduce that 
\[ |(S_2 \backslash Y)^Q| = |S_2 \backslash Y_0| 
        = \sum_{[\psi] \in \widetilde{\Inj}(Q,S_2)} \widetilde{\theta}^{-1}([\psi]) 
        = \sum_{[\psi] \in \widetilde{\Inj}(Q,S_2)} \frac{\Phi_{\langle Q,\psi \rangle}(Y)}{|C_S(\psi(Q))|}\,. \] 

The proof is completed by showing that 
\[ 
\Phi_{\langle Q,\psi \rangle}(Y) = 
\begin{cases} 
\Phi_{\langle Q,\psi \rangle}(X), &\text{if $\psi \in \extend{P}{\varphi}{Q}$};\\
0, &\text{otherwise}\,.
   \end{cases} 
\]
If $y \in Y^{(Q,\psi)}$, then $yb = \psi(b)y$ for all $b \in Q$, in particular $b \in P$. We can also write $y = ax$ with $x \in X^{(P,\varphi)}$ and $a \in S_2$. Therefore, for $b \in P$, we have 
 \[yb = axb = a\varphi(b)x = c_a\circ\varphi(b)ax= c_a\circ\varphi(b)y \, ,\]
so $\psi(b)y = c_a\circ\varphi(b)y$, and by left-freeness $\psi(b) = c_a\circ\varphi(b)$. We deduce that if $Y^{(Q,\psi)}$ is nonempty, then $\psi \in \extend{P}{\varphi}{Q}$, so $\Phi_{\langle Q,\psi \rangle}(Y) = 0$ when $\psi \notin \extend{P}{\varphi}{Q}$.

Next we show that $Y^{(Q,\psi)} = X^{(Q,\psi)}$ when $[\psi] \in \extend{P}{\varphi}{Q}$. We certainly have $Y^{(Q,\psi)} \subseteq X^{(Q,\psi)}$ since $Y \subseteq X$. Now, $[\psi] \in \extend{P}{\varphi}{Q}$ implies that $\psi\vert_P = c_a \circ \varphi$ for some $a \in S_2$. If $x \in  X^{(Q,\psi)}$, then, for all $b \in P$,
\[ a^{-1}xb = a^{-1}\psi(b)x = a^{-1} a \varphi(b) a^{-1} x = \varphi(b) a^{-1} x, \]
so $a^{-1}x \in X^{(P,\varphi)}$, and $x \in Y = SX^{(P,\varphi)}$. Thus $X^{(Q,\psi)} = Y \cap X^{(Q,\psi)} = Y^{(Q,\psi)}$.
\end{proof}

\begin{lem} \label{lem:FrobPreFixClosed}
Let $S$ be a finite group, and let $X$ be a Frobenius reciprocity element in $\Afree(S,S)\pLoc$ with augmentation not divisible by $p$. Then $\PreFix(X)$ is closed.
\end{lem}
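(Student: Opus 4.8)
The plan is to derive closure from closure under restriction: by Corollary~\ref{cor:LWClosedAndStable} the pre-fusion system $\PreFix(X)$ is level-wise closed, and a level-wise-closed pre-fusion system that is closed under restriction is closed, so it suffices to show that $\varphi|_R \in \PreFix(X)$ whenever $\varphi \in \Hom_{\PreFix(X)}(P,S)$ and $R \leq P$. I would induct on $|P|$: if $R < P$, pick a maximal subgroup $R'$ of $P$ containing $R$, so $[P:R'] = p$; granting the index-$p$ case we get $\varphi|_{R'} \in \Hom_{\PreFix(X)}(R',S)$, and the inductive hypothesis applied to $R \leq R'$ then yields $\varphi|_R = (\varphi|_{R'})|_R \in \PreFix(X)$. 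So everything comes down to the case $[P:R] = p$, where $R \trianglelefteq P$.

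For this case I would pass to $\op X$ (legitimate because $\PreFix(\op X) = \PreFix(X)$ by Corollary~\ref{cor:LWClosedAndStable}), and use that, for $\op X$, the number $\Phi_{\langle Q,\chi\rangle}(\op X)$ has a common value $k_Q \neq 0$ for all $\chi \in \Hom_{\PreFix(X)}(Q,S)$, namely $k_Q = \Phi_{\langle Q,\incl\rangle}(\op X)$. The engine is Lemma~\ref{lem:CongExt} applied to $\op X$ with $(R \hookrightarrow P,\, \varphi|_R,\, \varphi)$ in the roles of $(P < Q,\, \varphi,\, \overline{\varphi})$: since homomorphisms not in $\PreFix(X)$, as well as non-injective ones (by bifreeness of $\op X$), contribute $0$, this reads
\[ \frac{\Phi_{\langle R,\varphi|_R\rangle}(\op X)}{|C_S(\varphi(R))|} \;\equiv\; k_P \sum_{[\psi]} \frac{1}{|C_S(\psi(P))|} \quad \modp\,, \]
the sum running over those extensions $\psi$ of $\varphi|_R$ to $P$ which lie in $\PreFix(X)$. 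Thus the goal $\varphi|_R \in \PreFix(X)$ is equivalent to showing that the right-hand side is nonzero modulo $p$.

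To see this I would compare with the same lemma applied to the inclusion $\varphi(R) \hookrightarrow \varphi(P)$ extending $\varphi(R) \hookrightarrow S$, which gives an analogous congruence with left-hand side $k_{\varphi(R)}/|C_S(\varphi(R))|$ and with a sum over the $\PreFix(X)$-extensions of $\varphi(R) \hookrightarrow S$ to $\varphi(P)$. Because $\varphi \in \PreFix(X)$, the transport statement in Lemma~\ref{lem:FrobFixPts} makes $\chi \mapsto \chi \circ \varphi$ a bijection from the extensions of $\varphi(R) \hookrightarrow S$ to $\varphi(P)$ onto the extensions of $\varphi|_R$ to $P$, respecting membership in $\PreFix(X)$ and satisfying $\chi(\varphi(P)) = (\chi\circ\varphi)(P)$, so the two sums coincide, say they both equal $\Sigma$. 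I would then use the saturation of $\PreFix(X)$ at $\varphi(R)$ (Corollary~\ref{cor:FrobSatAtP}) — the Sylow clause of Axiom~I and the extension clause of Axiom~II — to reduce to the situation in which $\varphi(R)$ is fully $\PreFix(X)$-normalized, hence fully $\PreFix(X)$-centralized; composing $\varphi$ with a suitable isomorphism of $\PreFix(X)$ moves $\varphi(R)$ onto such a subgroup without affecting whether $\varphi|_R \in \PreFix(X)$. In that situation Lemma~\ref{lem:CongSpecial}(a) applied to $\op X$ gives $k_{\varphi(R)}/|C_S(\varphi(R))| \not\equiv 0 \modp$, hence $k_{\varphi(P)} \cdot \Sigma \not\equiv 0 \modp$, and matching this against the $p$-adic valuations of $k_P$ and $k_{\varphi(P)}$ (which become pinned down once $\varphi(R)$ is fully normalized) forces $k_P \cdot \Sigma \not\equiv 0 \modp$; this gives $\Phi_{\langle R,\varphi|_R\rangle}(\op X) \neq 0$, completing the induction.

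The hard part will be the reduction to a fully $\PreFix(X)$-normalized representative of $\varphi(R)$ together with the final valuation comparison: the saturation axioms at $\varphi(R)$ only furnish extensions in the closure $\overline{\PreFix(X)}$ rather than in $\PreFix(X)$ itself, so one must argue carefully that replacing $\varphi$ by an isomorphism-composite stays controlled, and one must keep precise track of which of the subgroups appearing in the sum $\Sigma$ and in $k_P$, $k_{\varphi(P)}$ are fully centralized in order to pass from the congruence for the inclusion to the congruence for $\varphi|_R$.
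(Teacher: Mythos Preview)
Your overall plan matches the paper's: reduce to closure under restriction, then to the index-$p$ case, compare via Lemma~\ref{lem:CongExt} the extension sums for $\varphi|_R$ and for the inclusion of $\varphi(R)$, and invoke Lemma~\ref{lem:CongSpecial} in the fully-centralized case. But the two ``hard parts'' you flag are genuine obstructions as your argument stands, not mere technicalities.

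First, applying Lemma~\ref{lem:CongExt} to $\op{X}$ rather than to $X$ destroys the comparison. For $\op{X}$ the value $\Phi_{\langle P,\psi\rangle}(\op{X})=k_P$ depends only on the \emph{source}, so your two right-hand sums are $k_P\cdot\Sigma$ and $k_{\varphi(P)}\cdot\Sigma$; however $v_p(k_{P'})\geq v_p(|C_S(P')|)$, with equality exactly when $P'$ is fully $\PreFus$-centralized, and there is no reason $P$ and $\varphi(P)$ should have the same centralizer order --- so the valuation match you promise simply does not hold in general. The paper applies Lemma~\ref{lem:CongExt} to $X$ instead: then Lemma~\ref{lem:FrobFixPts}, with the known morphism $\varphi\in\PreFix(X)$ from the big group as transporter, gives $\Phi_{\langle P,\psi\rangle}(X)=\Phi_{\langle\varphi(P),\psi\circ\varphi^{-1}\rangle}(X)$ for every extension $\psi$, and since the bijection $\psi\mapsto\psi\circ\varphi^{-1}$ also preserves images (hence the centralizer factors), the two sums agree \emph{term by term}, with no valuation argument needed.

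Second, your upward induction on $|P|$ is in the wrong direction for the reduction step. To replace $\varphi$ by $\alpha\circ\varphi$ with $\alpha(\varphi(R))$ fully centralized you need $\alpha$ defined on all of $\varphi(P)$ and lying in $\PreFix(X)$, so that level-wise closure gives $\alpha\circ\varphi\in\PreFix(X)$. But Axiom~II$_{\varphi(R)}$ only furnishes the extension in $\overline{\PreFix(X)}$, and your inductive hypothesis concerns restrictions from sources of size strictly less than $|P|$, hence says nothing about morphisms out of $\varphi(P)$, which has size $|P|$. The paper instead runs a \emph{downward} induction, proving $\Hom_{\PreFus}(Q,S)=\Hom_{\overline{\PreFus}}(Q,S)$ for all $Q$ in a family $\mathcal H$ closed under supergroups and $\PreFus$-conjugacy; since $|\varphi(P)|>|R|$, the group $\varphi(P)$ already lies in $\mathcal H$ at the step handling $R$, and the required $\alpha$ is then in $\PreFus$ by hypothesis.
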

\begin{proof} 
Write $\PreFus$ for $\PreFus(X)$ to save notation. By Corollary \ref{cor:LWClosedAndStable}, $\PreFus$ is level-wise closed, and Corollary \ref{cor:FrobSatAtP} shows that $\PreFus$ is saturated at $P$ for every $P \leq S$. 

We prove that $\PreFus$ is closed by showing that  
\[ \Hom_{\PreFus}(Q,S) = \Hom_{\overline{\PreFus}}(Q,S)\,. \] 
for all $Q \leq S$ by downward induction on conjugacy classes of subgroups of $S$.

For the base case, $Q = S$, level-wise closure of $\PreFus$ implies that 
$\Aut_{\PreFus}(S) = \Aut_{\overline{\PreFus}}(S)$.

For the inductive step, let $\calH$ be a family of subgroups of $S$ that is closed under
$\PreFus$-conjugacy and taking supergroups, and assume that for all $Q \in \calH$ we have 
 \[ \Hom_{\PreFus}(Q,S) = \Hom_{\overline{\PreFus}}(Q,S)\,. \] 
Let $P$ be maximal among subgroups of $S$ not in $\calH$, 
and set $\calH' \defeq \calH \cup [P]_\PreFus$. To
show that the induction hypothesis holds for $\calH'$, 
it suffices, since $P$ was chosen arbitrarily from its 
$\PreFus$-conjugacy class, to show that 
$\Hom_{\PreFus}(P,S) = \Hom_{\overline{\PreFus}}(P,S)$. 
Furthermore, as $\PreFus$ is level-wise closed, it suffices 
to show that $\PreFus$ is closed under restricting morphisms to $P$.
That is, we need to show that if $P < Q \leq S$ and 
$\varphi \in \Hom_{\PreFus}(Q,S)$, then the restriction
$\varphi\vert_P$ is in $\PreFus$. By the induction hypothesis, 
$\PreFus$ is closed under restriction of morphisms to groups in $\calH$, 
so it is enough to consider the case where $P \triangleleft Q$ is an 
extension of index $p$.

First consider the case where $\varphi(P)$ is fully centralized in $\PreFus$. 
Using Lemma \ref{lem:CongExt} we have
\[ \frac{\Phi_{\langle P,\varphi\vert_P \rangle}(X)}{|C_S(\varphi(P))|} 
   \equiv \sum_{[\psi] \in \extend{P}{\varphi\vert_P}{Q}} \frac{\Phi_{\langle Q,\psi \rangle}(X)}{|C_S(\psi(Q))|} \quad \modp  \,.\]
Similarly, for the inclusion $i \colon \varphi(P) \hookrightarrow S$, we have
\[ \frac{\Phi_{\langle \varphi(P),i \rangle}(X)}{|C_S(\varphi(P))|} 
   \equiv \sum_{[\rho] \in \extend{\varphi(P)}{i}{\varphi(Q)} } \frac{\Phi_{\langle \varphi(Q),\rho \rangle}(X)}{|C_S(\rho(\varphi(Q)))|}\quad \modp \,. \]
There is a bijection $\extend{P}{\varphi}{Q} \to \extend{\varphi(P)}{i}{\varphi(Q)}$
sending $[\psi]$ to $[\psi \circ \varphi^{-1}]$. Moreover, since $\varphi \in \PreFus$, 
Lemma \ref{lem:FrobFixPts} implies that for $\psi \in \extend{P}{\varphi\vert_P}{Q}$
\[   \Phi_{\langle Q,\psi \rangle}(X) 
   = \Phi_{\langle \varphi(Q),\psi\circ \varphi^{-1} \rangle}(X) \,.
\]
Thus the sums on the right sides of the two congruences above actually agree term by 
term, and we deduce that
\[  \frac{\Phi_{\langle P,\varphi\vert_P \rangle}(X)}{|C_S(\varphi(P))|} 
   \equiv  \frac{\Phi_{\langle \varphi(P),i \rangle}(X)}{|C_S(\varphi(P))|} \quad \modp\,. \]
Now, by Lemma \ref{lem:FixOp}, we have
\[ \Phi_{\langle \varphi(P),i \rangle}(X) = \Phi_{\langle \varphi(P),i \rangle}(\op{X}),\]
and, since $\varphi(P)$ is fully centralized, Lemma \ref{lem:CongSpecial} and Corollary \ref{cor:LWClosedAndStable} give 
\[   \frac{\Phi_{\langle \varphi(P),i \rangle}(\op{X})}{|C_S(\varphi(P))|} \not\equiv 0 \quad \modp \,. \]
We conclude that $\Phi_{\langle P,\varphi\vert_P \rangle}(X) \neq 0$, and hence $\varphi\vert_P \in \Hom_\PreFus(P,S)$.

Now consider the general case, no longer assuming that $\varphi(P)$ is fully
$\PreFus$-centralized. We shall apply an argument analogous to the proof of
Proposition A.2 of \cite{BLO2} to obtain a homomorphism $\alpha \in
 \Hom_{\PreFus}(\varphi(Q),S)$ such that $\alpha(\varphi(P))$ is fully 
$\PreFus$-centralized. The previous argument then implies that the restrictions
$\alpha\vert_{\varphi(P)}$ and $(\alpha\circ\varphi)\vert_P$ are in $\PreFus$, and as
$\PreFus$ is level-wise closed, this implies that $\varphi\vert_P$ is in $\PreFus$.

To obtain $\alpha$, let $\gamma \colon \varphi(P) \xrightarrow{\cong} P'$ be an
isomorphism in $\PreFus$ such that $P'$ is fully $\PreFus$-normalized. As $\PreFus$ 
has Property I${}_P$, this implies that $P'$ is fully 
$\PreFus$-centralized, and that $\Aut_S(P')$ is a Sylow subgroup of
$\Aut_{\PreFus}(P')$. The latter implies that
$\gamma^{-1}\Aut_S(P')\gamma$ is a Sylow subgroup of
$\Aut_\PreFus(\varphi(P))$, and hence there exists $\chi \in \Aut_\PreFus(\varphi(P))$
such that 
\[ \Aut_S(\varphi(P)) \leq \chi^{-1} \circ \gamma^{-1} \Aut_S(P')\gamma \circ \chi \,.\]
This in turn implies that $N_{\gamma \circ \chi} = N_S(\varphi(P))$, and as 
$\gamma\circ \chi(\varphi(P)) = P'$ 
is fully $\PreFus$-centralized, Property II${}_{\varphi(P)}$ implies that 
there exists a homomorphism $\overline{\alpha} \in \Hom_{\overline{\PreFus}}(N_S(P),S)$ such that 
$\overline{\alpha}\vert_{\varphi(P)} = \gamma \circ \chi,$ and in particular 
$\overline{\alpha}(\varphi(P)) = P'$. The desired $\alpha$ is obtained by restricting
$\overline{\alpha}$ to $\varphi(Q)$. (Recall that $P$ is normal in $Q$, so $\varphi(Q) \leq N_S(\varphi(P))$. By the induction hypothesis, $\Hom_{\overline{\PreFus}}(\varphi(Q),S) =  \Hom_{\PreFus}(\varphi(Q),S)$, so $\alpha$ is in $\PreFus$.

This completes the induction, and hence the proof that $\PreFus$ is closed.
\end{proof}

\subsection{Proof of Theorems \ref{mthm:FrobImpliesSat} and \ref{mthm:Bijection}}

Collecting our results from this section, we deduce that Frobenius reciprocity implies saturation. Theorem \ref{mthm:FrobImpliesSat} is a consequence of the following result.

\begin{thm}\label{thm:FrobImpliesSat}
Let $S$ be a finite $p$-group and let $X$ be an element in $\Afree(S,S)\pLoc$. If $\countS(X)$ is not divisible by $p$ and $X$ satisfies Frobenius reciprocity, then 
  \[ \PreFix(X) = \FusFix(X) = \FusOrb(X) = \FusRStab(X)\,,  \]
and $\FusRStab(X)$ is saturated with right characteristic element $X$.
\end{thm}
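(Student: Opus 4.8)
The strategy is to assemble the lemmas already proved in this section together with the machinery of Section~\ref{sec:CharImpliesSat}. By Corollary~\ref{cor:LWClosedAndStable}, the hypotheses on $X$ guarantee that $\PreFix(X) = \PreFix(\op{X})$ and that both are level-wise closed; by Lemma~\ref{lem:FrobPreFixClosed} the pre-fusion system $\PreFix(X)$ is in fact closed, hence a genuine fusion system. Combining this with Corollary~\ref{cor:FrobSatAtP}, which says $\PreFix(X)$ is saturated at every $P \leq S$, and the observation that a closed pre-fusion system saturated at every subgroup is a saturated fusion system (Definition~\ref{defn:SatAtP} and the remark following it), we conclude that $\F \defeq \PreFix(X) = \FusFix(X) = \FusOrb(X)$ is a saturated fusion system.

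\textbf{Identifying $\FusRStab(X)$.} It remains to show $\F = \FusRStab(X)$ and that $X$ is a right characteristic element for $\F$. First I would verify the inclusion $\PreFix(X) \subseteq \FusRStab(X)$: given $\varphi \in \Hom_{\PreFix(X)}(P,S)$, Lemma~\ref{lem:FrobFixPts} (in the strengthened form of Remark~\ref{rmk:FrobFixPtsNotInj}, to handle the restrictions to arbitrary subgroups) shows that $\Phi_{\langle Q,\psi \rangle}(X) = \Phi_{\langle \varphi(Q),\psi\circ\varphi^{-1} \rangle}(X)$ for all relevant $(S,S)$-classes, since $\Phi_{\langle P,\varphi \rangle}(X) \neq 0$ and, by level-wise closure, $\Phi_{\langle P,\varphi \rangle}(X) = \Phi_{\langle P,\iota\rangle}(X) \neq 0$ forces the same non-vanishing after restricting $\varphi$ to any subgroup of $P$. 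Using Lemma~\ref{lem:RephraseStable}(b), these equalities of fixed-point homomorphisms are exactly the condition that $X$ is right $\F$-stable, i.e.\ $\F \subseteq \FusRStab(X)$. With this inclusion in hand, Corollary~\ref{cor:FixStabImpliesChar} applies: since $\countS(X)$ is prime to $p$ and $\PreFix(X) \subseteq \FusRStab(X)$, the element $X$ is a right characteristic element for $\FusRStab(X)$, and moreover $\PreFix(X) = \FusFix(X) = \FusOrb(X) = \FusRStab(X)$. This chain of equalities completes the proof.

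\textbf{Main obstacle.} The genuinely hard input is Lemma~\ref{lem:FrobPreFixClosed} (closure under restriction), which is the one place where the $p$-group structure, the extension congruence of Lemma~\ref{lem:CongExt}, and an Alperin-type argument lifted from \cite{BLO2} all come into play; once that is granted, the present theorem is essentially bookkeeping. Within the argument above, the subtle step is confirming that the hypotheses of Corollary~\ref{cor:FixStabImpliesChar} are met, namely the inclusion $\PreFix(X) \subseteq \FusRStab(X)$ — this is where Frobenius reciprocity is used in the form of Lemma~\ref{lem:FrobFixPts} to transfer non-vanishing of one fixed-point homomorphism into the stability equations. I would take care to state the restriction argument cleanly, invoking level-wise closure (Corollary~\ref{cor:LWClosedAndStable}) so that the value $\Phi_{\langle P,\varphi\rangle}(X)$ depends only on the image $\varphi(P)$, which makes the cancellation in Lemma~\ref{lem:FrobFixPts} available whenever we need it.
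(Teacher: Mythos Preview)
Your proof is correct and follows essentially the same route as the paper: assemble Corollaries~\ref{cor:LWClosedAndStable}, \ref{cor:FrobSatAtP} and Lemma~\ref{lem:FrobPreFixClosed} to make $\F \defeq \PreFix(X)$ a saturated fusion system, then check $\F \subseteq \FusRStab(X)$ via Lemma~\ref{lem:FrobFixPts} and Lemma~\ref{lem:RephraseStable}(b), and finish with Corollary~\ref{cor:FixStabImpliesChar} (the paper cites Proposition~\ref{prop:StabIsSaturated} instead, to the same effect). One minor simplification: the appeal to Remark~\ref{rmk:FrobFixPtsNotInj} and the restriction discussion are unnecessary, since for each $\varphi \in \Hom_\F(Q,S)$ you already have $\Phi_{\langle Q,\varphi\rangle}(X) \neq 0$ by the very definition of $\PreFix(X)$ and $\varphi$ is injective, so Lemma~\ref{lem:FrobFixPts} applies directly at $Q$.
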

\begin{proof} 
Corollaries \ref{cor:LWClosedAndStable} and \ref{cor:FrobSatAtP}, and Lemma \ref{lem:FrobPreFixClosed} combine to show that $\F \defeq \PreFix(X)$ is a saturated fusion system. Corollary \ref{cor:LWClosedAndStable} and Lemma \ref{lem:RephraseStable} show that $X$ is right $\F$-stable, so $\F \subseteq \FusRStab(X)$. The rest follows as in Proposition \ref{prop:StabIsSaturated}.
\end{proof}

Theorem \ref{thm:FrobImpliesSat} (combined with Proposition \ref{prop:CharIsFrob}) gives an intrinsic criterion for recognizing characteristic elements without mentioning fusion systems: A bifree element in $A(S,S)\pLoc$ is a characteristic element for a fusion system on $S$ if and only if it has augmentation prime to $p$ and satisfies Frobenius reciprocity. The fusion system, which must be saturated, can be recovered via a stabilizer, fixed-point or orbit-type construction.

Using the correspondence between saturated fusion systems and their characteristic elements, we obtain a surprising characterization of saturated fusion systems, which is Theorem \ref{mthm:Bijection} in the introduction. 

\begin{cor} \label{cor:Bijection}
For a finite $p$-group $S$, there is a bijective correspondence between saturated fusion systems on $S$ and  idempotents in $\Afree(S,S)_{(p)}$ of augmentation $1$ that satisfy Frobenius reciprocity. The bijection sends a saturated fusion system to its characteristic idempotent, and an idempotent to its stabilizer fusion system.
\end{cor}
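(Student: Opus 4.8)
The plan is to exhibit the correspondence explicitly, constructing a map in each direction and checking that the two composites are the identity; essentially all the substantive work is already contained in Theorem~\ref{thm:FrobImpliesSat}, Theorem~\ref{thm:StabOfChar}, and the results on characteristic idempotents from Section~\ref{sec:Char}, so what remains is bookkeeping and two small well-definedness checks.

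First I would verify that $\Phi_1\colon \F \mapsto \omega_\F$ lands in the claimed target set. By Theorem~\ref{thm:CharIdemExists} a saturated fusion system $\F$ on $S$ has a unique characteristic idempotent $\omega_\F \in A(S,S)\pLoc$, and it is fully characteristic. Since $\omega_\F$ is $\F$-generated and every morphism of $\F$ is injective, each standard basis element appearing in $\omega_\F$ corresponds to a free $(S,S)$-pair and is therefore bifree, so $\omega_\F \in \Afree(S,S)\pLoc$. Because $\omega_\F$ is idempotent, $\countS(\omega_\F) = \countS(\omega_\F)^2$ is an idempotent of $\Z\pLoc$, hence $0$ or $1$; as $\countS(\omega_\F)$ is prime to $p$ it must equal $1$. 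Finally $\omega_\F$ satisfies Frobenius reciprocity by Proposition~\ref{prop:CharIsFrob}. In the other direction, if $X \in \Afree(S,S)\pLoc$ is an idempotent with $\countS(X)=1$ satisfying Frobenius reciprocity, then $\countS(X)$ is not divisible by $p$, so Theorem~\ref{thm:FrobImpliesSat} applies and shows that $\FusRStab(X)$ is a saturated fusion system for which $X$ is a right characteristic element; thus $\Phi_2\colon X \mapsto \FusRStab(X)$ is well defined into the set of saturated fusion systems on $S$.

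It then remains to check that $\Phi_1$ and $\Phi_2$ are mutually inverse. For $\Phi_2\circ\Phi_1$: given saturated $\F$, the element $\omega_\F$ is in particular a right characteristic element for $\F$, so Theorem~\ref{thm:StabOfChar} gives $\FusRStab(\omega_\F)=\F$. For $\Phi_1\circ\Phi_2$: given $X$ as above, set $\F \defeq \FusRStab(X)$; by Theorem~\ref{thm:FrobImpliesSat}, $\F$ is saturated and $X$ is an idempotent right characteristic element for $\F$, so by the uniqueness statement recorded in the remark following Theorem~\ref{thm:CharIdemExists} (the characteristic idempotent is the unique idempotent right characteristic element) we get $X = \omega_\F = \Phi_1(\Phi_2(X))$. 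This completes the argument.

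I do not expect a genuine obstacle here: the delicate input — that Frobenius reciprocity forces saturation and that the fusion system is recovered as the stabilizer — is supplied wholesale by Theorem~\ref{thm:FrobImpliesSat}. The only points requiring a moment's care are the two well-definedness checks (that $\omega_\F$ lies in $\Afree(S,S)\pLoc$ and has augmentation exactly $1$, not merely prime to $p$) and the appeal to uniqueness of the characteristic idempotent among idempotent right characteristic elements.
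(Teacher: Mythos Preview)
Your proposal is correct and is exactly the argument the paper has in mind: the corollary is stated in the paper without proof, as an immediate consequence of Theorem~\ref{thm:FrobImpliesSat}, Theorem~\ref{thm:CharIdemExists}, Proposition~\ref{prop:CharIsFrob}, and Theorem~\ref{thm:StabOfChar}, and you have spelled out precisely those bookkeeping steps (including the small checks that $\omega_\F$ is bifree and has augmentation exactly $1$, and the appeal to uniqueness among idempotent right characteristic elements from the remark after Theorem~\ref{thm:CharIdemExists}).
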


This bijective correspondence between two a priori unrelated sets gives us a  completely new way to think of saturated fusion systems, and presents many opportunities for further research and better understanding of the role of fusion systems. Some immediate questions are addressed in Section \ref{sec:Reformulate}, where we translate some concepts for fusion systems into the language of characteristic idempotents. The bijection can also be applied to answer important questions in stable homotopy theory, and these are discussed in Sections \ref{sec:Splittings}, \ref{sec:AW} and \ref{sec:RTT}. 

\section{Reformulating fusion theory in terms of idempotents} \label{sec:Reformulate}
Now we know that a saturated fusion system is equivalent to its characteristic idempotent, and the saturation axioms can be formulated in the double Burnside ring. Then it is interesting to see what other notions for fusion systems can be expressed in terms of idempotents. In this section we consider morphisms between fusion systems, fusion subsystems, normal subsystems and quotient fusion systems, and reformulate each concept in terms of characteristic idempotents.

\subsection{Morphisms between fusion systems and fusion subsystems}

Morphisms between fusion systems, and in particular fusion subsystems, are encoded in the double Burnside ring by the following result.

\begin{prop}[\cite{KR:ClSpectra}] \label{prop:DetectSubsystem}
Let $\Ee$ be a fusion system on $P$ and $\Ff$ be fusion system on $S$.
Let $X_\Ff$ be a characteristic element for $\Ff$, let $\omega_\Ee$ be the characteristic 
idempotent for $\Ee$ and let $\gamma \colon P \to S$ be a homomorphism.
The following are equivalent.

 (1)  $g$ is $(\Ee,\Ff)$-fusion-preserving.

 (2)  $X_\Ff \circ [P,g]$ is right $\Ee$-stable.

 (3)  $X_\Ff \circ [P,g] \circ \omega_\Ee = X_\Ff \circ [P,g]$.
 
When $g$ is a monomorphism, the two following additional conditions, 
where $g^{-1} \colon g(P) \to P$ denotes the inverse of the induced isomorphism $g \colon P \to g(P)$, 
are also equivalent to (1), (2) and (3).

 (2') $[g(P),g^{-1}] \circ X_\Ff$ is left $\Ee$-stable.
 
 (3') $\omega_\Ee \circ [g(P),g^{-1}] \circ X_\Ff  = [g(P),g^{-1}] \circ X_\Ff.$
 
\end{prop}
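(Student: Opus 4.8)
The plan is to prove the chain of equivalences by first establishing the core fact relating fusion-preservation of $g$ to a stability condition on the composite $X_\Ff \circ [P,g]$, and then using the universal stable element theorems and the characteristic-idempotent machinery to obtain the remaining equivalences essentially for free. First I would unpack the definition of $(\Ee,\Ff)$-fusion-preservation (Definition \ref{defn:FusPres}) in terms of morphism sets, and translate it into a statement about fixed-point homomorphisms using Lemma \ref{lem:RephraseStable}. The key observation is that $X_\Ff \circ [P,g]$ lives in $A(P,S)\pLoc$, and one computes $\Phi_{\langle Q,\psi \rangle}(X_\Ff \circ [P,g])$ for a $(P,S)$-pair $(Q,\psi)$; the double coset formula (or directly the canonical bijection of fixed-point sets used in the proof of Lemma \ref{lem:RephraseStable}(b)) gives $\Phi_{\langle Q,\psi \rangle}(X_\Ff \circ [P,g]) = \Phi_{\langle g(Q), \psi \circ (g|_Q)^{-1} \rangle}(X_\Ff)$ when $g$ is injective on $Q$, and an analogous expression in general. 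Combining this with the fact that $X_\Ff$ is right $\Ff$-stable, one sees that $X_\Ff \circ [P,g]$ being right $\Ee$-stable amounts to: for every $\varphi \in \Hom_\Ee(Q,P)$, the $\Ff$-classes of $(g(Q), g|_Q)$-type data match up, which is exactly the condition that $g|_Q \circ \Hom_\Ee(R,Q) \subseteq \Hom_\Ff(g(R),g(Q)) \circ g|_R$, i.e. $(\Ee,\Ff)$-fusion-preservation. This gives (1) $\Leftrightarrow$ (2).

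Next, (2) $\Leftrightarrow$ (3) follows from the universal stable element theorem for modules, Theorem \ref{thm:UnivStableII}: regard $A(P,S)\pLoc$ as a right $A(P,P)\pLoc$-module, so that an element $Y \in A(P,S)\pLoc$ is right $\Ee$-stable if and only if $Y \circ \omega_\Ee = Y$; applying this with $Y = X_\Ff \circ [P,g]$ gives the equivalence directly. For the monomorphism case, (1) $\Leftrightarrow$ (2') is obtained by the same fixed-point computation read through Lemma \ref{lem:RephraseStable}(c) and the left $\Ff$-stability of $X_\Ff$ — alternatively, and more cleanly, one notes that $(\Ee,\Ff)$-fusion-preservation of the isomorphism $g \colon P \to g(P)$ is equivalent to the same property for its inverse composed appropriately, and one can pass to opposite bisets via Lemma \ref{lem:OpDesc} and Lemma \ref{lem:Phi(op(X))}, using that $\op{(X_\Ff \circ [P,g])} = \op{([P,g])} \circ \op{X_\Ff} = [g(P), g^{-1}] \circ \op{X_\Ff}$ and that $\op{X_\Ff}$ is a characteristic element for $\Ff$ (hence both left and right $\Ff$-stable). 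Then (2') $\Leftrightarrow$ (3') is again Theorem \ref{thm:UnivStableII}, now viewing $A(S,P)\pLoc$ as a left $A(P,P)\pLoc$-module.

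The main obstacle I anticipate is getting the fixed-point bookkeeping exactly right when $g$ is not injective, since then $g|_Q$ need not be invertible and the clean formula $\Phi_{\langle Q,\psi \rangle}(X_\Ff \circ [P,g]) = \Phi_{\langle g(Q), \psi \circ (g|_Q)^{-1} \rangle}(X_\Ff)$ must be replaced by the more delicate analysis in the style of Remark \ref{rmk:FrobFixPtsNotInj} — tracking which $(P,S)$-classes actually contribute and verifying the kernel condition $\Ker(g|_Q) \subseteq \Ker(\psi)$ holds automatically on the relevant fixed-point sets because $X_\Ff$ is left-free and $\Ff$-generated. A secondary subtlety is checking that the stability condition we extract genuinely matches Definition \ref{defn:FusPres} rather than some weaker or stronger variant; I would handle this by expanding both sides over $S$-conjugacy classes of subgroups and morphisms and comparing the $\Phi$-values class by class, using that $\Ff$ is closed under restriction and conjugation. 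Once the injective case is fully in hand, the general case is a routine — if slightly tedious — adaptation, and the module-theoretic equivalences (2) $\Leftrightarrow$ (3) and (2') $\Leftrightarrow$ (3') are immediate from the already-established Theorem \ref{thm:UnivStableII}.
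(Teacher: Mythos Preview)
Your proposal is correct and is in fact more explicit than the paper's own proof, which simply cites \cite{KR:ClSpectra} (Lemma~7.7 and Proposition~7.11 there for the cycle (1)\,$\Rightarrow$\,(2)\,$\Rightarrow$\,(3)\,$\Rightarrow$\,(1), and Lemma~8.5 for the monomorphism case), noting that the argument is a direct reformulation from stable homotopy into the double Burnside ring. Your reconstruction from the fixed-point bookkeeping of Lemma~\ref{lem:RephraseStable}, the identity $\PreFix(X_\Ff)=\Ff$ from Proposition~\ref{prop:PreFixOfChar}, and the universal stable element theorem is precisely the content of those cited results, so the two approaches are the same in substance.

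One small refinement: for (2)\,$\Leftrightarrow$\,(3) the cleaner reference is Theorem~\ref{thm:UnivStable} applied to the contravariant $\Ee$-defined Mackey functor $Q\mapsto A(Q,S)\pLoc$, since condition~(2) is phrased via $[R,\varphi]_R^P\in A(R,P)\pLoc$ rather than via the right-module action of $[R,\varphi]_P^P\in A(P,P)\pLoc$ used in Theorem~\ref{thm:UnivStableII}. The two stability notions coincide (once $Y\circ\omega_\Ee=Y$ holds, the Definition~4.1 version follows from the right $\Ee$-stability of $\omega_\Ee$ itself, and the reverse implication is the factorisation $[R,\varphi]_P^P=[R,\varphi]_R^P\circ[R,\id]_P^R$), so this is cosmetic rather than a gap.
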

\begin{proof}
The implications (1) $\Rightarrow$ (2) and (2) $\Rightarrow$ (3) are a reformulation of Lemma 7.7 in \cite{KR:ClSpectra}, working in the double Burnside ring instead of stable homotopy, and the implication (3) $\Rightarrow$ (1) is a reformulation of Proposition 7.11 in \cite{KR:ClSpectra}. When $g$ is a monomorphism, the equivalence of (1), (2') and (3') is proved in a similar fashion (see Lemma 8.5 in \cite{KR:ClSpectra}). 
\end{proof}

When $g$ is an inclusion, one can use Proposition \ref{prop:DetectSubsystem} to detect fusion subsystems. 
Moreover the proposition implies  
that, if $g$ is an $(\Ff,\Ff)$-fusion-preserving automorphism of $S$, then 
$[S,g^{-1}]\circ X_\Ff\circ [S,g]$ is a characteristic element for $\Ff$. In particular, if we take $X_\Ff$ to be the characteristic idempotent $\omega_{\Ff}$ of $\Ff$, then $[S,g^{-1}]\circ \omega_{\Ff}\circ [S,g]$ 
is also idempotent and, by uniqueness, $[S,g^{-1}]\circ \omega_{\Ff}\circ [S,g]=\omega_{\Ff}$.

\subsection{Normal fusion subsystems}

The normality condition for the inclusion of a fusion subsystem has a satisfying reformulation in the double Burnside ring, namely as invariance with respect to conjugation.

\begin{thm}
Let $\Ff$ be a saturated fusion system on $S$, and let $\Ee$ a saturated fusion subsystem of $\Ff$ on a strongly closed subgroup $T$ of $S$. Let $\omega_\Ee$ be the characteristic idempotent of $\Ee$. Then the following are equivalent:
\begin{enumerate}
\item $\Ee$ is a normal fusion subsystem of $\Ff$.
\item For every subgroup $Q$ of $T$ and every morphism $\psi\in\Hom_\Ff(Q,S)$, the following 
identity in $A(Q,Q)_{(p)}$ holds: 
$$[\psi(Q),\psi^{-1}]_T^Q \circ \omega_\Ee \circ [Q,\psi]_Q^T =[Q,1]_T^Q\circ \omega_\Ee \circ [Q,\incl]_Q^T\,.$$
\end{enumerate}
\end{thm}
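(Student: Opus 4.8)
The plan is to translate condition (2) into a statement about the fixed-point homomorphisms $\Phi_{\langle R,\rho\rangle}$ applied to the characteristic idempotent $\omega_\Ee$, and then compare that statement with the known description of normal fusion subsystems. Two observations set this up. First, because $T$ is strongly $\Ff$-closed, any $\psi\in\Hom_\Ff(Q,S)$ with $Q\leq T$ satisfies $\psi(Q)\leq T$, so $[\psi(Q),\psi^{-1}]_T^Q$ is defined, and by Lemma \ref{lem:OpDesc} it equals $\op{\bigl([Q,\psi]_Q^T\bigr)}$; writing $u=[Q,\psi]_Q^T$ and $v=[Q,\incl]_Q^T$, condition (2) takes the symmetric form $\op u\circ\omega_\Ee\circ u=\op v\circ\omega_\Ee\circ v$. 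Second, by Proposition \ref{prop:PreFixOfChar} we have $\PreFix(\omega_\Ee)=\Ee$, so $\Phi_{\langle R,\rho\rangle}(\omega_\Ee)\neq0$ precisely when $\rho$ is a morphism of $\Ee$.

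The technical core is a general identity: for $X\in\Afree(T,T)_{(p)}$, a subgroup $Q\leq T$, an injective homomorphism $\psi\colon Q\to T$, and any $(Q,Q)$-pair $(R,\rho)$,
\[ \Phi_{\langle R,\rho\rangle}\bigl(\op{([Q,\psi]_Q^T)}\circ X\circ[Q,\psi]_Q^T\bigr)=\Phi_{\langle\psi(R),\,\psi\rho\psi^{-1}\rangle}(X)\,. \]
I would prove this by reducing to the case where $X$ is a single $(T,T)$-biset, identifying the composite with $X$ endowed with the $(Q,Q)$-action $q_1\cdot x\cdot q_2=\psi(q_1)\,x\,\psi(q_2)$, and then observing, via the substitution $b=\psi(q)$, that its $\Delta(R,\rho)$-fixed points are exactly the $\Delta(\psi(R),\psi\rho\psi^{-1})$-fixed points of $X$; the case $\psi=\incl$ gives $\Phi_{\langle R,\rho\rangle}(\op v\circ X\circ v)=\Phi_{\langle R,\rho\rangle}(X)$. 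Since both sides of (2) are bifree, Proposition \ref{prop:PhiIsInjective} then shows that, for a fixed $\psi$, condition (2) is equivalent to $\Phi_{\langle\psi(R),\psi\rho\psi^{-1}\rangle}(\omega_\Ee)=\Phi_{\langle R,\rho\rangle}(\omega_\Ee)$ for all $R\leq Q$ and all $\rho\in\Inj(R,Q)$ (non-injective classes contribute $0=0$). This identity is the step I expect to be the main obstacle: it is routine in spirit, but requires careful tracking of the left and right $T$-actions and repeated appeals to strong closure to keep all images inside $T$.

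For $(2)\Rightarrow(1)$, combine the $\Phi$-equalities just obtained with $\PreFix(\omega_\Ee)=\Ee$ to conclude $\rho\in\Ee\Longleftrightarrow\psi\rho\psi^{-1}\in\Ee$ for every $Q\leq T$, every $\psi\in\Hom_\Ff(Q,S)$, and every $\rho\in\Hom_\Ee(R,Q)$. Given an $\Ff$-isomorphism $\varphi\colon Q'\to R'$ and subgroups $A,B\leq Q'\cap T$, apply this with $Q=\langle A,B\rangle\leq Q'\cap T$ and $\psi=\varphi\vert_{\langle A,B\rangle}$ (whose image lies in $T$ by strong closure); one obtains $\varphi\circ\Hom_\Ee(A,B)\circ\varphi^{-1}\subseteq\Hom_\Ee(\varphi(A),\varphi(B))$, which is exactly the condition defining a normal fusion subsystem.

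For $(1)\Rightarrow(2)$, use Aschbacher's characterization (Theorem \ref{thm:NormalFusion}), valid since $\Ee,\Ff$ are saturated and $T$ is strongly $\Ff$-closed: it provides $\Aut_\Ff(T)\subseteq\Aut(\Ee)$ and a factorization $\psi=\chi\circ\varphi$ for each $\psi\in\Hom_\Ff(Q,S)$, with $\varphi\in\Hom_\Ee(Q,T)$ and $\chi\in\Aut_\Ff(T)$. Then $[Q,\psi]_Q^T=[T,\chi]_T^T\circ[Q,\varphi]_Q^T$, so taking opposites the left side of (2) becomes $[\varphi(Q),\varphi^{-1}]_T^Q\circ\bigl([T,\chi^{-1}]_T^T\circ\omega_\Ee\circ[T,\chi]_T^T\bigr)\circ[Q,\varphi]_Q^T$. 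By the remark following Proposition \ref{prop:DetectSubsystem} (with $X_\Ff=\omega_\Ee$), $\chi\in\Aut(\Ee)$ forces $[T,\chi^{-1}]_T^T\circ\omega_\Ee\circ[T,\chi]_T^T=\omega_\Ee$, and two-sided $\Ee$-stability of $\omega_\Ee$ then collapses $[\varphi(Q),\varphi^{-1}]_T^Q\circ\omega_\Ee\circ[Q,\varphi]_Q^T$ to $[Q,1]_T^Q\circ\omega_\Ee\circ[Q,\incl]_Q^T$, which is the right side of (2). Equivalently, one can reverify the $\Phi$-equalities of the second paragraph, using that $\Ee$-stability of $\omega_\Ee$ makes $\Phi_{\langle R,\rho\rangle}(\omega_\Ee)$ depend only on the $\Ee$-conjugacy class of $R$ when $\rho\in\Ee$, together with the $\Aut_\Ff(T)$-invariance of this quantity coming from $\Aut_\Ff(T)\subseteq\Aut(\Ee)$.
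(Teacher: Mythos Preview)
Your proposal is correct and follows essentially the same approach as the paper's proof: for $(1)\Rightarrow(2)$ both arguments factor $\psi=\chi\circ\varphi$ via Aschbacher's characterization, use that $\chi\in\Aut(\Ee)$ fixes $\omega_\Ee$ under conjugation, and then invoke $\Ee$-stability; for $(2)\Rightarrow(1)$ both reduce to the fixed-point identity $\Phi_{\langle R,\rho\rangle}(\omega_\Ee)=\Phi_{\langle\psi(R),\psi\rho\psi^{-1}\rangle}(\omega_\Ee)$ and then apply $\PreFix(\omega_\Ee)=\Ee$. The only difference is that you spell out and justify this $\Phi$-identity explicitly (via the twisted $(Q,Q)$-action on $X$), whereas the paper simply cites ``similar arguments to those in part (b) of the proof of Lemma~\ref{lem:RephraseStable}''.
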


\begin{proof}

Suppose $\Ee$ is a normal fusion subsystem of $\Ff$. Theorem \ref{thm:NormalFusion} says that for $Q\le T$ and $\psi\in\Hom_\Ff(Q,S)$, we have a decomposition $\psi=\chi\circ\phi$ with $\phi\in\Hom_\Ee(Q,T)$ and $\chi\in\Aut_\Ff(T)$. Normalcy of $\Ee$ implies that $\chi$ is $(\Ee,\Ee)$-fusion-preserving. We have $[Q,\psi]_Q^T=[T,\chi]_T^T\circ[Q,\phi]_Q^T$ in $A(Q,T)_{(p)}$, and compute
$$\begin{array}{ll}
[\psi(Q),\psi^{-1}]\circ \omega_\Ee \circ  [Q,\psi] &= [\phi(Q),\phi^{-1}]\circ[T,\chi^{-1}]\circ \omega_\Ee \circ [T,\chi]\circ[Q,\phi]\\
&=[\phi(Q),\phi^{-1}]\circ \omega_\Ee \circ[Q,\phi]\text{\qquad (as $\chi$ is fusion-preserving)}\\
&= [Q,1]\circ \omega_\Ee \circ [Q,\incl] \text{\qquad (by $\Ee$-stability)}
\end{array}$$

Suppose now that (2) is satisfied. Similar arguments to those in part (b) of the proof of Lemma \ref{lem:RephraseStable} give that 
\[ \Phi_{\langle R,\varphi \rangle} (\omega_\Ee) = \Phi_{\langle \psi(R),\psi \circ \varphi \circ \psi^{-1} \rangle} (\omega_\Ee)\,. \] 
for all $\psi\in\Hom_\Ff(Q,T)$, $R,\varphi(R)\le Q$ and $\varphi\in\Hom_\Ff(R,T)$. As $\Ee$ is saturated, $\PreFix(\omega_\Ee)=\Ee$. 
Hence $\varphi\in\Hom_\Ee(R,T)$ if and only if $\psi \circ \varphi \circ \psi^{-1}\in\Hom_\Ee(R,T)$ implying that $\Ee$ is normal in $\Ff$.

\end{proof}

\subsection{Quotient fusion systems}

Let $\F$ be a saturated fusion system on a finite $p$-group $S$, and let $T$ be a strongly $\Ff$-closed subgroup of $S$. Let $\bar{\F}$ denote the quotient fusion system $\F/T$, and denote the quotient group $S/T$ by $\bar{S}$. Let $\pi \colon S \to \bar{S}$ be the projection morphism. By Theorem \ref{thm:QuotientFusion}, this map is $(\Ff,\bar{\Ff})$-fusion-preserving, and induces a morphism of fusion systems $F_\pi \colon \F \to \bar{\F}$. We write $\bar{P}$ for $\pi(P)$ and $\bar{\varphi}$ for $F_\pi(\varphi)$. More generally, for subgroups $P,Q \leq S$, and a group homomorphism $\varphi \colon P \to Q$ such that $\varphi(P \cap T) \leq T$, we write $\bar{\varphi}$ for the (unique) induced homomorphism $\bar{\varphi} \colon \bar{P} \to \bar{Q}$ satisfying $\bar{\varphi} \circ \pi = \pi \circ \varphi.$ 

Our aim is to show that the characteristic idempotent of the quotient system  $\bar{\Ff}$ is the quotient of $\omega_\F$ by $T$, in the sense of the following definition.

\begin{defn}
Let $S$ be a finite $p$-group with normal subgroup $T$. The \emph{bideflation map} $\bidef_{S,T} \colon A(S,S) \to A(\bar{S},\bar{S})$ is the $\Z$-linear map that sends the isomorphism class of an $(S,S)$-biset $X$ to the isomorphism class of the double quotient $T\backslash X/T$ with the induced $(\bar{S},\bar{S})$-action. 
\end{defn}

The name is chosen because bideflation is obtained by deflating on both sides. As there is no danger of confusion, we will simply write $\bidef$ for $\bidef_{S,T}$, and we also use the term bideflation map and write $\bidef$ for the $p$-localized bideflation map. 

The bideflation map is rather complicated to describe on a general basis element $\langle P, \varphi\rangle$. However, when $\varphi$ preserves $T$, which is always the case for morphisms in $\F$, the quotient by $T$ can be controlled, resulting in the following description.

\begin{lem} \label{lem:bidefDesc}
If $(P,\varphi)$ is a $(S,S)$-pair such that $\varphi(P \cap T) \leq T$,
then
 \[ \bidef([P,\varphi]_S^S) = [\bar{P}, \bar{\varphi}]_{\bar{S}}^{\bar{S}}\,. \]
In particular this holds if $\varphi \in \Hom_\F(P,S)$.
\end{lem}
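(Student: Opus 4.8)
The statement is essentially a bookkeeping computation once we unwind the definitions of the basic objects involved. The plan is to prove it directly by inspecting the underlying bisets, rather than trying to factor the bideflation map through other known constructions. First I would fix the $(S,S)$-pair $(P,\varphi)$ with $\varphi(P\cap T)\le T$, and recall that the standard basis element $[P,\varphi]_S^S$ is the isomorphism class of the left-free $(S,S)$-biset $S\times_{(P,\varphi)} S = (S\times S)/\!\sim$, where $(x,py)\sim(x\varphi(p),y)$ for $p\in P$. The task is to identify the double quotient $T\backslash\bigl(S\times_{(P,\varphi)}S\bigr)/T$, equipped with its induced $(\bar S,\bar S)$-action, with $\bar S\times_{(\bar P,\bar\varphi)}\bar S$.

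The key step is to construct an explicit $(\bar S,\bar S)$-equivariant bijection. I would send the class of $(x,y)\in S\times S$ (modulo both the $\sim$ relation and the double $T$-action) to the class of $(\bar x,\bar y)\in\bar S\times\bar S$ in $\bar S\times_{(\bar P,\bar\varphi)}\bar S$. The content is to check this is well-defined and bijective. Well-definedness on the $\sim$ relation reduces to: $\overline{x\varphi(p)} = \bar x\,\overline{\varphi(p)} = \bar x\,\bar\varphi(\bar p)$, and $\overline{py}=\bar p\,\bar y$, so $(\bar x\bar\varphi(\bar p),\bar y)\sim(\bar x,\bar p\bar y)$ in $\bar S\times_{(\bar P,\bar\varphi)}\bar S$ — this uses exactly that $\bar\varphi$ is the well-defined induced map, which is why the hypothesis $\varphi(P\cap T)\le T$ is needed. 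Well-definedness on the double $T$-action is immediate since $\overline{tx}=\bar x$ and $\overline{yt}=\bar y$ for $t\in T$. For injectivity, if $(\bar x,\bar y)$ and $(\bar x',\bar y')$ become equal in $\bar S\times_{(\bar P,\bar\varphi)}\bar S$, then after adjusting $(x',y')$ within its $T\times T$-orbit one may assume $\bar y=\bar y'$, hence $y'=yt$ for some $t\in T$; replacing $(x',y')$ by the $\sim$-equivalent-after-$T$-adjustment representative and tracking the relation back up to $S\times S$ shows the two original classes in $T\backslash(S\times_{(P,\varphi)}S)/T$ coincide. Surjectivity is clear since $S\to\bar S$ is onto. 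Finally, checking $(\bar S,\bar S)$-equivariance is a one-line verification: the left $\bar S$-action on the left coordinate and right $\bar S$-action on the right coordinate are visibly intertwined by the projection.

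The main (and only real) obstacle is the injectivity verification, since one must be careful that collapsing by $T$ on the left, collapsing by $T$ on the right, and imposing the twisted relation $\sim$ interact correctly; in particular one must confirm that no ``extra'' identifications are created in the quotient $\bar S\times_{(\bar P,\bar\varphi)}\bar S$ beyond those already present in $T\backslash(S\times_{(P,\varphi)}S)/T$. Here the hypothesis $\varphi(P\cap T)\le T$ is again what makes things work: it guarantees that $\bar P = PT/T$ acts via $\bar\varphi$ with the expected size, so a counting argument (comparing $|\bar S\times_{(\bar P,\bar\varphi)}\bar S| = |\bar S|^2/|\bar P|$ against the number of $T\times T$-orbits of the $\sim$-classes, using freeness of the left $S$-action to control $T$-orbits) closes the gap if one prefers counting over the direct argument. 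The last sentence of the lemma is then immediate: if $\varphi\in\Hom_\F(P,S)$ then, since $T$ is strongly $\F$-closed, $\varphi(P\cap T)\le T$, so the hypothesis is satisfied.
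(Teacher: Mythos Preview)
Your approach is correct and yields the result, but the paper's proof is shorter by passing to the $(S\times S)$-set picture rather than working directly with the biset relation $\sim$. The paper observes that the $(S\times S)$-set corresponding to $[P,\varphi]$ is $(S\times S)/\Delta(P,\varphi)$, so the bideflation is the $(\bar S\times\bar S)$-set $(T\times T)\backslash(S\times S)/\Delta(P,\varphi)$. Because $T\trianglelefteq S$, the subgroup $T\times T$ is normal in $S\times S$, so this double coset space is simply $(S\times S)/\bigl((T\times T)\Delta(P,\varphi)\bigr)$, and the third isomorphism theorem identifies it with $(\bar S\times\bar S)/(\pi\times\pi)(\Delta(P,\varphi)) = (\bar S\times\bar S)/\Delta(\bar P,\bar\varphi)$. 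This is exactly the $(\bar S\times\bar S)$-set corresponding to $[\bar P,\bar\varphi]$. What this buys over your argument is that the injectivity check you flagged as the ``main obstacle'' disappears entirely: once you know the left quotient is by a normal subgroup, the double coset collapses to a single coset and the bijection is a one-line isomorphism-theorem identification rather than a tracked-by-hand verification. Your explicit map and the paper's chain of isomorphisms are of course the same map; the difference is only in how much of the work is absorbed into standard group theory.
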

\begin{proof}
Recall that $[P,\varphi]$ is the isomorphism class of the $(S,S)$-biset $S \times_{(P,\varphi)} S$, and the corresponding $(S\times S)$-set is $(S \times S)/ \Delta(P,\varphi)$. The $(\bar{S} \times \bar{S})$-set corresponding to $T \backslash S \times_{P,\varphi} S /T$ is  $(T\times T) \backslash (S\times S) / \Delta_P^\varphi $. Using normalcy of $T$, we make the following 
identifications:
\begin{eqnarray*}
 (T\times T) \backslash (S\times S) / \Delta(P,\varphi) 
 &\cong &(S\times S) / ( (T \times T) \Delta(P,\varphi) )  \\
 &\cong & \frac{S\times S} {T \times T} / \frac{\Delta(P,\varphi)  (T \times T)}{T \times T}\\
 &\cong & (\bar{S} \times \bar{S}) / (\pi \times \pi)(\Delta(P,\varphi)) \\
 &\cong & (\bar{S} \times \bar{S}) / \Delta(\bar{P},\bar{\varphi})
\end{eqnarray*}
As $(\bar{S} \times \bar{S}) / \Delta_{\bar{P}}^{\bar{\varphi}}$ is the $(\bar{S} \times \bar{S})$-set 
corresponding to $\bar{S} \times_{(\bar{P},\bar{\varphi})} \bar{S}$, this proves the first claim. The 
second claim follows since $T$ is assumed to be strongly $\Ff$-closed.
\end{proof}

The bideflation map is a morphism of $\Z$-modules, by definition, but generally not a ring homomorphism. 
Indeed, even when $[P,\varphi]_S^S$ and $[Q,\psi]_S^S$ are basis elements satisfying the condition in 
Lemma \ref{lem:bidefDesc}, one has
\begin{align} 
    \bidef \left([P,\varphi] \circ [Q,\psi] \right) 
 &= \bidef \left( \DCF{P}{\varphi}{Q}{\psi}{S} \right) \notag \\ \label{eq:bidef1}
 &= \sum_{x \in P \backslash S / \psi(Q)} [ \bar{\psi}^{-1}(\bar{\psi}(\bar{Q})\cap \bar{P}^x),\bar{\varphi}\circ \bar{c}_x \circ \bar{\psi}], 
\end{align}
while
\begin{equation} \label{eq:bidef2}
 \bidef \left([P,\varphi]\right) \circ \bidef_{S,T} \left([Q,\psi] \right) 
   = \DCF{\bar{P}}{\bar{\varphi}}{\bar{Q}}{\bar{\psi}}{\bar{S}}.
\end{equation}
These sums contain the same terms, but with different multiplicities, as the indexing 
sets are different in general. In the following lemma we identify an important instance 
when the indexing sets can be identified.

\begin{lem} \label{lem:bidefComp}
Let $(P,\varphi)$ and $(Q,\psi)$ be $(S,S)$-pairs with $\varphi(P\cap T) \leq T$ and $\psi(Q\cap T) \leq T$. If $T \leq P$ or $T \leq \psi(Q)$, then 
\[ \bidef_{S,T} \left([P,\varphi] \circ [Q,\psi] \right) = \bidef_{S,T} \left([P,\varphi]\right) \circ \bidef_{S,T} \left([Q,\psi] \right)\,. \]
\end{lem}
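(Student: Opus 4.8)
The plan is to compare the two double-coset sums \eqref{eq:bidef1} and \eqref{eq:bidef2} term by term, showing that under the hypothesis $T \leq P$ or $T \leq \psi(Q)$ the two indexing sets are in natural bijection and the corresponding summands agree. Recall that $\bidef([P,\varphi]\circ[Q,\psi])$ is indexed by $x \in P\backslash S/\psi(Q)$, while $\bidef([P,\varphi])\circ\bidef([Q,\psi])$ is indexed by $\bar{x}\in \bar{P}\backslash \bar{S}/\bar{\psi}(\bar{Q})$, and that $\bar\psi(\bar Q) = \overline{\psi(Q)}$ since $\psi(Q\cap T)\le T$. The projection $\pi\colon S\to\bar S$ induces a surjection $P\backslash S/\psi(Q) \twoheadrightarrow \bar{P}\backslash\bar{S}/\overline{\psi(Q)}$, so the claim reduces to showing this surjection is a bijection precisely when $T\le P$ or $T\le\psi(Q)$, and that the term attached to $x$ on the left equals the term attached to $\bar x$ on the right.

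First I would verify the bijection of indexing sets. The fibre of $P\backslash S/\psi(Q)\to\bar P\backslash\bar S/\overline{\psi(Q)}$ over the class of $\bar x$ consists of the double cosets $PsT\psi(Q)T$-hm, more precisely it is $P\backslash (PxTx'\psi(Q)\text{-stuff})$; concretely two elements $x,x'$ of $S$ have the same image iff $P x' \psi(Q) \subseteq P T x (T\psi(Q))= PTx\psi(Q)$ (using that $\psi(Q)T = T\psi(Q)$ is moot, but $T$ is normal so $Tx = xT$). Thus $x' \in PTx\psi(Q) = P x T\psi(Q)$. If $T\le P$, then $PxT\psi(Q) = Px\psi(Q)$ (absorbing $T$ into $P$ after moving it past $x$ via normality: $PxT = PTx = Px$), so the fibre is a single double coset and the map is injective; symmetrically if $T\le\psi(Q)$ then $xT\psi(Q) = x\psi(Q)$ gives the same conclusion. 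Surjectivity is automatic. Hence under either hypothesis the indexing sets match.

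Next I would check that corresponding terms agree. Fixing $x$ with image $\bar x$, the left side contributes $[\bar\psi^{-1}(\bar\psi(\bar Q)\cap\bar P^{\,x}),\ \bar\varphi\circ\bar c_x\circ\bar\psi]$ and the right side contributes the term of $\DCF{\bar P}{\bar\varphi}{\overline{\psi(Q)}}{\bar\psi}{\bar S}$ indexed by $\bar x$, which is $[\bar\psi^{-1}(\overline{\psi(Q)}\cap\bar P^{\,\bar x}),\ \bar\varphi\circ \bar c_{\bar x}\circ\bar\psi]$. These are literally the same $(\bar S,\bar S)$-pair since $\bar\psi(\bar Q) = \overline{\psi(Q)}$, $\bar P^{\,x} = \bar P^{\,\bar x}$, and $\bar c_x = \bar c_{\bar x}$ on $\bar S$. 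Summing over the (now identified) indexing set gives the desired equality.

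The main obstacle I anticipate is getting the fibre computation exactly right — i.e. pinning down precisely which condition on $x'$ expresses ``$x,x'$ have the same image in the quotient double-coset set'' and confirming that $T\le P$ (resp.\ $T\le\psi(Q)$) is exactly what collapses the fibre, using normality of $T$ to commute $T$ past elements of $S$. A secondary subtlety is bookkeeping the multiplicities: one must make sure that no collapsing of distinct terms occurs on either side beyond what the double-coset indexing already records, i.e.\ that the bijection of indexing sets is compatible with the identification of summands and doesn't merge or split terms. Once the indexing bijection is established cleanly, the term-by-term comparison is routine via Lemma \ref{lem:bidefDesc} and the double coset formula.
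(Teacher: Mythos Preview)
Your approach is exactly the one the paper takes: the paper's proof is a one-sentence sketch noting that normality of $T$ yields a bijection $P\backslash S/\psi(Q)\to\bar P\backslash\bar S/\bar\psi(\bar Q)$ identifying the sums \eqref{eq:bidef1} and \eqref{eq:bidef2}, and you have correctly filled in those details. Your fibre computation (concluding $x'\in PxT\psi(Q)$ and then absorbing $T$ via $T\le P$ or $T\le\psi(Q)$ using normality) is right, as is the term-by-term identification.
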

\begin{proof}
In this case it is not hard to see (using the fact that $T$ is normal in $S$) that there is a bijection between $P \backslash S / \psi(Q)$ and $\bar{P} \backslash \bar{S} / \bar{\psi}(Q)$ that allows one to identify the sums in \eqref{eq:bidef1} and \eqref{eq:bidef2}.
\end{proof}

Exercising this control over composition, we can approach the $\F$-stability of quotients.

\begin{lem} \label{lem:QuotientFGenAndFStable}
If $\Omega$ is a right characteristic element for $\F$, then $\bidef(\Omega)$ is $\bar{\F}$-generated and right $\bar{\F}$-stable. The analogous statement holds for left characteristic elements.
\end{lem}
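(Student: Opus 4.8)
The plan is to reduce both claims to the two facts about $\bidef$ already established for morphisms of $\F$: Lemma \ref{lem:bidefDesc}, which evaluates $\bidef$ on basis elements $[P,\varphi]$ with $\varphi(P\cap T)\le T$, and Lemma \ref{lem:bidefComp}, which gives multiplicativity of $\bidef$ on such pairs when $T$ is contained in the relevant subgroup. The whole argument is then componentwise bookkeeping in the standard basis.

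For $\bar\F$-generation, write $\Omega=\sum_{\langle Q,\psi\rangle}c_{\langle Q,\psi\rangle}(\Omega)\,[Q,\psi]$; since $\Omega$ is $\F$-generated every term with nonzero coefficient has $\psi\in\Hom_\F(Q,S)$, hence $\psi(Q\cap T)\le T$ because $T$ is strongly $\F$-closed, and Lemma \ref{lem:bidefDesc} gives $\bidef([Q,\psi])=[\bar Q,\bar\psi]$ with $\bar\psi=F_\pi(\psi)\in\Hom_{\bar\F}(\bar Q,\bar S)$ since $F_\pi$ is a morphism of fusion systems. Thus $\bidef(\Omega)$ is a linear combination of standard basis elements whose defining homomorphisms all lie in $\bar\F$ (collecting coefficients for coinciding $(\bar S,\bar S)$-classes preserves this), so $\bidef(\Omega)$ is $\bar\F$-generated.

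For right $\bar\F$-stability, fix $\bar P\le\bar S$ and $\bar\varphi\in\Hom_{\bar\F}(\bar P,\bar S)$, let $P\le S$ be the preimage of $\bar P$, so $T\le P$, and choose $\varphi\in\Hom_\F(P,S)$ inducing $\bar\varphi$, which is possible by the definition of $\F/T$. I would first prove $\bidef(\Omega\circ[P,\varphi]_P^S)=\bidef(\Omega)\circ[\bar P,\bar\varphi]_{\bar P}^{\bar S}$, and the analogue with $\varphi$ replaced by the inclusion. Expanding $\Omega$ in the standard basis, each term of the left side is $\bidef([Q,\psi]\circ[P,\varphi])$, and Lemma \ref{lem:bidefComp} identifies this with $\bidef([Q,\psi])\circ\bidef([P,\varphi])=\bidef([Q,\psi])\circ[\bar P,\bar\varphi]$: its hypothesis holds because $\varphi$ is injective (characteristic elements are bifree) with $\varphi(T)\le T$, forcing $\varphi(T)=T$ and hence $T\le\varphi(P)$, while for the inclusion one simply uses $T\le P$. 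Granting this, applying $\bidef$ to the identity $\Omega\circ[P,\varphi]_P^S=\Omega\circ[P,\incl]_P^S$ coming from right $\F$-stability of $\Omega$ yields $\bidef(\Omega)\circ[\bar P,\bar\varphi]_{\bar P}^{\bar S}=\bidef(\Omega)\circ[\bar P,\incl]_{\bar P}^{\bar S}$, which is right $\bar\F$-stability. The left-characteristic case follows symmetrically, or by applying the above to $\op{\Omega}$ using that $\bidef$ commutes with $\opmap$ on bifree elements. The one genuinely delicate point is the multiplicativity step --- specifically checking that morphisms of $\F$ carry $T$ onto $T$ so that Lemma \ref{lem:bidefComp} applies; everything else is routine.
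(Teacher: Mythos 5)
Your proof is correct and follows essentially the same route as the paper: apply Lemma \ref{lem:bidefDesc} termwise for $\bar\F$-generation, then lift a morphism of $\bar\F$ to one of $\F$ defined on $\pi^{-1}(\bar P)\ge T$ and use Lemma \ref{lem:bidefComp} to pass the stability identity through $\bidef$. One small misattribution: injectivity of $\varphi$ needs no appeal to bifreeness of $\Omega$ — it is automatic since $\varphi$ is a morphism in the fusion system $\F$, and that alone (together with $T$ being strongly $\F$-closed) gives $\varphi(T)=T\le\varphi(P)$, which is the hypothesis of Lemma \ref{lem:bidefComp} you need.
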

\begin{proof}
Since $\Omega$ is $\F$-generated, Lemma \ref{lem:bidefDesc} implies that $\bidef(\Omega)$ is $\bar{\F}$-generated. To show that $\bidef(\Omega)$ is right $\bar{\F}$-stable, let $R \leq \bar{S}$ and $\rho \in \Hom_{\bar{\F}}(R,\bar{S})$. By definition of $\bar{\F}$ there is a homomorphism $\psi \in \Hom_\F(\pi^{-1}(R),S)$ such that $\bar{\psi} = \rho$. Now, since $T\leq \psi(\pi^{-1}(R))$, $\F$-stability of $\Omega$ and Lemma \ref{lem:bidefComp} imply
\[ \bidef(\Omega) \circ [R,\rho] = \bidef(\Omega \circ [\pi^{-1}(R),\psi]) = \bidef(\Omega \circ [\pi^{-1}(R),\incl]) = \bidef(\Omega) \circ [R,\incl]\,, \]
proving right $\bar{\F}$-stability of $\bidef(\Omega)$. The proof for left characteristic elements is analogous.
\end{proof}

The lemma does not suffice to prove that the bideflation of a characteristic element for $\F$ is a characteristic element of $\bar{\F}$, since bideflation does not preserve augmentation. Instead we have the following lemma.

\begin{lem} \label{lem:AugOfQuotient}
If $(P,\varphi)$ is a $(S,S)$-pair such that $\varphi(P \cap T) \leq T$, then
 \[ \countS(\bidef([P,\varphi])) =  \frac{|S|}{|PT|} = \frac{|T\cap P|}{|T|} \cdot \countS([P,\varphi])\,. \]
In particular, if $T \leq P$, then 
 \[ \countS(\bidef([P,\varphi])) = \countS([P,\varphi])\,.\]
\end{lem}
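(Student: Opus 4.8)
The plan is to prove this by a direct computation with the augmentation homomorphism, using the explicit description of $\bidef([P,\varphi])$ provided by Lemma \ref{lem:bidefDesc}. First I would recall from Lemma \ref{lem:bidefDesc} that, under the hypothesis $\varphi(P\cap T)\le T$, we have $\bidef([P,\varphi]) = [\bar P,\bar\varphi]_{\bar S}^{\bar S}$, where $\bar P = PT/T$ and $\bar\varphi\colon\bar P\to\bar S$ is the induced homomorphism. Then by the formula for augmentation on basis elements (the lemma following Definition of $\countS$), $\countS([\bar P,\bar\varphi]) = |\bar S|/|\bar P|$. Now $|\bar S| = |S/T| = |S|/|T|$ and $|\bar P| = |PT/T| = |PT|/|T|$, so $\countS(\bidef([P,\varphi])) = (|S|/|T|)\big/(|PT|/|T|) = |S|/|PT|$, giving the first equality.

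Next I would verify the middle expression. Since $P/(P\cap T) \cong PT/T$, we have $|PT| = |P|\cdot|T|/|T\cap P|$, hence $|S|/|PT| = |S|\cdot|T\cap P|/(|P|\cdot|T|) = (|T\cap P|/|T|)\cdot(|S|/|P|)$. By the basis-element formula again, $\countS([P,\varphi]) = |S|/|P|$, so this equals $(|T\cap P|/|T|)\cdot\countS([P,\varphi])$, establishing the second equality. Finally, the special case $T\le P$ is immediate: then $T\cap P = T$, so the factor $|T\cap P|/|T| = 1$ and $\countS(\bidef([P,\varphi])) = \countS([P,\varphi])$; equivalently $PT = P$, so $|S|/|PT| = |S|/|P| = \countS([P,\varphi])$.

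I do not anticipate any real obstacle here; this is a routine index computation. The only point requiring a modicum of care is making sure the hypothesis $\varphi(P\cap T)\le T$ is genuinely what is needed to invoke Lemma \ref{lem:bidefDesc} (so that $\bar\varphi$ is well defined and $\bidef([P,\varphi])$ is a single basis element rather than a sum), and then simply unwinding the definitions of the indices involved.
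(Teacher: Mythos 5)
Your proof is correct and matches the paper's proof essentially verbatim: invoke Lemma \ref{lem:bidefDesc} to identify $\bidef([P,\varphi])$ with the basis element $[\bar P,\bar\varphi]$, then compute $|\bar S|/|\bar P|$ and rewrite $|PT|$ via $|P|\cdot|T|/|P\cap T|$. No gaps; this is the same routine index computation the authors give.
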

\begin{proof} We have $\countS([P,\varphi]) = |S|/|P|$, and 
\[ \countS([\bar{P},\bar{\varphi}]) = \frac{|\bar{S}|}{|\bar{P}|} = \frac{|S|/|T|}{|PT|/|T|} = \frac{|S|}{|PT|} = \frac{|S|}{|P|\cdot |T|/|P\cap T|} = \frac{|T\cap P|}{|T|} \cdot \frac{|S|}{|P|}\,. \]
\end{proof}

Applying Lemma \ref{lem:AugOfQuotient} directly to control the augmentation of the bideflation of a characteristic element, we obtain the following result.

\begin{prop} \label{prop:CharOfQuotient}
Let $\F$ be a saturated fusion system on a finite $p$-group $S$, and let $T$ be a strongly $\F$-closed subgroup of $S$. Write $\bar{\F}$ for the quotient system $\F/T$ and $\bar{S}$ for the quotient $S/T$. Suppose $\Omega$ is a characteristic element for $\F$ such that for $P < S$ with $PT = S$ we have 
 \[ m_P(\Omega) \defeq \sum_{[\varphi] \in \Rep_\F(P,S)}     c_{\langle P,\varphi   \rangle}(\Omega)\equiv 0~ \modp \,. \]
Then $\bidef(\Omega)$ is a characteristic element for $\bar{\F}$. 
\end{prop}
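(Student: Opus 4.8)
The plan is to verify the three Linckelmann--Webb properties for $\bidef(\Omega)$ as a characteristic element for $\bar{\F}$. The first two properties --- that $\bidef(\Omega)$ is $\bar{\F}$-generated and (fully) $\bar{\F}$-stable --- are exactly the content of Lemma \ref{lem:QuotientFGenAndFStable}, so nothing new is needed there. The remaining work, and the heart of the proposition, is to show that the augmentation $\countS(\bidef(\Omega))$ is prime to $p$, and this is where the hypothesis $m_P(\Omega) \equiv 0 \pmod p$ for proper $P < S$ with $PT = S$ enters.

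The key computation proceeds as follows. Since $\Omega$ is $\F$-generated we may write $\Omega = \sum_{[P]_S} \sum_{[\varphi] \in \Rep_\F(P,S)} c_{\langle P,\varphi\rangle}(\Omega)\,[P,\varphi]$, with every $\varphi \in \Hom_\F(P,S)$, hence every $\varphi$ preserves $T$ in the sense $\varphi(P\cap T)\leq T$ because $T$ is strongly $\F$-closed. Applying $\bidef$ termwise and using Lemma \ref{lem:bidefDesc}, together with the additivity of $\countS$ and Lemma \ref{lem:AugOfQuotient}, we get
\begin{equation*}
 \countS(\bidef(\Omega)) = \sum_{[P]_S} \sum_{[\varphi]\in\Rep_\F(P,S)} c_{\langle P,\varphi\rangle}(\Omega)\cdot \frac{|S|}{|PT|}\,.
\end{equation*}
Now split the outer sum according to whether $PT = S$ or $PT < S$. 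If $PT < S$, then $|S|/|PT|$ is a positive power of $p$, so every such term is divisible by $p$. If $PT = S$ and $P < S$, then $|S|/|PT| = 1$ and the contribution of the $P$-block is precisely $m_P(\Omega)$, which is $\equiv 0 \pmod p$ by hypothesis. Finally the block $P = S$ contributes $m_S(\Omega) = \countS(\Omega)$ times $1$ (note $[\varphi]$ ranges over $\Rep_\F(S,S)$ and $\bidef([S,\varphi]) = [\bar S,\bar\varphi]$ has augmentation $1$), and since $\Omega$ is characteristic for $\F$, $\countS(\Omega)$ is prime to $p$. Hence $\countS(\bidef(\Omega)) \equiv \countS(\Omega) \not\equiv 0 \pmod p$, which is what we want.

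I would then assemble these pieces: by Theorem \ref{thm:QuotientFusion} the quotient $\bar\F$ is a saturated fusion system, and $\bidef(\Omega)$ satisfies conditions (a), (b), (c) of Definition \ref{def:Char} with respect to $\bar\F$ by Lemma \ref{lem:QuotientFGenAndFStable} and the augmentation count above, so it is a characteristic element for $\bar\F$. The main obstacle --- really the only subtle point --- is bookkeeping the augmentation sum correctly: one must be careful that the sum over $[\varphi]\in\Rep_\F(P,S)$ may collapse when passed through $\bidef$ (different $\F$-classes $\langle P,\varphi\rangle$ can map to the same $\bar{\F}$-class), but this causes no problem because we are only computing the augmentation, which is additive, and the grouping by $S$-conjugacy classes $[P]_S$ is exactly the grouping used to define $m_P$. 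One should also double-check that for $P=S$ the relevant ``$m_S$'' coefficient is indeed $\countS(\Omega)$ and not something smaller; this follows from the augmentation formula $\countS([S,\varphi]) = 1$ for all $\varphi$ and the definition of $m_S(\Omega)$ as the sum of the top coefficients, which by $\F$-stability and the fact that $\Omega$ is characteristic equals $\countS(\Omega)$ modulo contributions from proper subgroups --- more cleanly, one simply notes $\countS(\Omega) = \sum_{[P]_S}\sum_{[\varphi]} c_{\langle P,\varphi\rangle}(\Omega)|S|/|P|$ and compares with the displayed formula for $\countS(\bidef(\Omega))$ term by term.
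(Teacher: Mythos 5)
Your proposal is correct and follows essentially the same route as the paper: both dispatch $\bar\F$-generation and $\bar\F$-stability via Lemma \ref{lem:QuotientFGenAndFStable}, compute $\countS(\bidef(\Omega)) = \sum_{[P]_S} m_P(\Omega)\,|S|/|PT|$ using Lemma \ref{lem:AugOfQuotient}, and observe that the terms with $PT<S$ vanish $\modp$, the terms with $PT=S$, $P<S$ vanish by hypothesis, and $m_S(\Omega)\not\equiv 0\;\modp$ since $\countS(\Omega)\equiv m_S(\Omega)\;\modp$. The only blemish is the mid-proof assertion that $m_S(\Omega)=\countS(\Omega)$, which holds only as a congruence $\modp$; you recognize and repair this in the final paragraph, and the appeal to Theorem \ref{thm:QuotientFusion} for saturation of $\bar\F$ is harmless but unnecessary since Definition \ref{def:Char} does not presuppose saturation.
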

\begin{proof}
By Lemma \ref{lem:QuotientFGenAndFStable}, $\bidef{\Omega}$ is $\bar{\F}$-generated and $\bar{\F}$-stable, and it only remains to show that $\countS(\bidef(\Omega)) \not\equiv 0~\modp$. A simple calculation shows 
\begin{align*}
 \countS(\Omega) 
 &= \countS\left( \sum_{[P]_S } \left( \sum_{[\varphi] \in \Rep_\F(P,S)}     c_{\langle P,\varphi   \rangle}(\Omega) \; [P,\varphi] \right) \right) \\
 &= \sum_{[P]_S } m_P(\Omega) \frac{|S|}{|P|} \\
 &\equiv m_S(\Omega) \quad \modp.
\end{align*}
In particular, $m_S$ is not divisible by $p$. Using Lemma \ref{lem:AugOfQuotient}, one similarly obtains
\begin{align*} 
   \countS(\bidef(\Omega)) 
   &= \sum_{[P]_S } m_P(\Omega) \frac{|S|}{|PT|} \\
   &\equiv \sum_{\substack{[P]_S\\ PT = S}} m_P(\Omega) \frac{|S|}{|PT|} \quad \modp.
\end{align*}
By assumption, $m_P(\Omega) \equiv 0 ~\modp$ when $PT = S$ and $P \neq S$, and we are left with
\[  \countS(\bidef(\Omega)) \equiv m_S(\Omega) \not\equiv 0 \quad \modp\,,\]
completing the proof.
\end{proof}

The hypothesis on $m_P(\Omega)$ in Proposition \ref{prop:CharOfQuotient} is not necessarily true for a characteristic element $\Omega$. Nor is it true that $\bidef(\Omega)$ necessarily has augmentation prime to $p$ if no assumptions are made on the numbers $m_P(\Omega)$. Indeed, Example \ref{ex:BidefOmegaNotChar} describes a setting where $\bidef{\Omega}$ is not a characteristic element for $\bar{\F}$.

\begin{example} \label{ex:BidefOmegaNotChar}
Let $S$ be a nontrivial finite $p$-group, and set $\Ff=\Ff_S(S)$ with $T=S$, so $\bar{\Ff} = \Ff/S$ is the trivial fusion system on the trivial group $\bar{S} = 1$. As a characteristic element for $\Ff$ one can take $\Omega = [S,\id] + (p-1)[1,\incl]$. Then $\bidef(\Omega) = p \, [1,\id]$ in $A(1,1)$ has augmentation $p$, and is not a characteristic element for $\bar{\Ff}$.
\end{example}

One instance where we can control the numbers $m_P(\Omega)$ is when $\Omega$ exhibits idempotence, as we saw in Lemma \ref{lem:CoeffsOfIdem}. Combining Proposition \ref{prop:CharOfQuotient} and Lemma \ref{lem:CoeffsOfIdem}, we obtain the following result.

\begin{prop} \label{prop:BidefOfIdempotent}
Let $\F$ be a saturated fusion system on a finite $p$-group $S$, and let $T$ be a strongly $\F$-closed subgroup of $S$. Write $\bar{\F}$ for the quotient system $\F/T$ and $\bar{S}$ for the quotient $S/T$. 
\begin{enumerate}
 \item[(a)] If $\Omega$ is a (right, left or fully) characteristic element for $\F$ that is idempotent $\modp$, then $\bidef(\Omega)$ is a (right, left or fully) characteristic element for $\bar{\F}$ that is idempotent $\modp$.
 \item[(b)] $\bidef(\omega_\F)$ is the characteristic idempotent of $\bar{\F}$.
\end{enumerate}
\end{prop}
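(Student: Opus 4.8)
The plan is to reduce Proposition \ref{prop:BidefOfIdempotent} to the already-established Proposition \ref{prop:CharOfQuotient} by verifying its hypothesis, and then to handle idempotence separately via Lemma \ref{lem:CoeffsOfIdem}. For part (a), suppose $\Omega$ is a (right, left or fully) characteristic element for $\F$ that is idempotent $\modp$. Since $\F$ is saturated, Lemma \ref{lem:CoeffsOfIdem}(b) applies (noting that a right or left characteristic element that is idempotent $\modp$ forces the relevant coefficient identities — this is exactly what Lemma \ref{lem:CoeffsOfIdem}(b) asserts, with the proof going through for one-sided stability just as in Lemma \ref{lem:CoeffsOfIdem}(a)), giving $m_S(\Omega) \equiv 1 \modp$ and $m_P(\Omega) \equiv 0 \modp$ for every proper subgroup $P < S$. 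In particular $m_P(\Omega) \equiv 0 \modp$ whenever $P < S$ with $PT = S$, so the hypothesis of Proposition \ref{prop:CharOfQuotient} is satisfied, and we conclude that $\bidef(\Omega)$ is a characteristic element for $\bar{\F}$.

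It then remains to check that $\bidef(\Omega)$ is idempotent $\modp$, for which I would again invoke Lemma \ref{lem:CoeffsOfIdem}(b), this time applied to the saturated fusion system $\bar{\F}$ (saturation of $\bar{\F}$ is Theorem \ref{thm:QuotientFusion}) and the characteristic element $\bidef(\Omega)$. So I need to compute $m_{\bar{P}}(\bidef(\Omega))$ for subgroups $\bar{P} \leq \bar{S}$. Every subgroup of $\bar{S}$ is of the form $\bar{P} = \pi(P)$ for a subgroup $T \leq P \leq S$, and by Lemma \ref{lem:bidefDesc} the bideflation of a basis element $[P,\varphi]$ with $T \leq P$ (hence $\varphi(P \cap T) = \varphi(T) \leq T$ by strong closure) is $[\bar{P},\bar{\varphi}]$. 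Thus, collecting coefficients, $m_{\bar{S}}(\bidef(\Omega))$ picks up contributions from all $[S,\varphi]$ terms of $\Omega$, giving $m_{\bar{S}}(\bidef(\Omega)) \equiv m_S(\Omega) \equiv 1 \modp$; and for a proper subgroup $\bar{P} < \bar{S}$, the coefficient $m_{\bar{P}}(\bidef(\Omega))$ is a sum of the coefficients $m_Q(\Omega)$ over the (finitely many) subgroups $Q$ with $T \leq Q \leq S$ and $\pi(Q) = \bar{P}$ — all of which are proper in $S$ — so $m_{\bar{P}}(\bidef(\Omega)) \equiv 0 \modp$. Hence $\bidef(\Omega)$ is idempotent $\modp$, proving (a). (The one point requiring a little care: one must check that under $\pi$, distinct $S$-conjugacy classes of homomorphisms $S \to S$ may collapse and that $\Rep_{\bar{\F}}(\bar{P},\bar{S})$ is correctly accounted for; but since $m_{\bar{P}}$ is defined as a sum over \emph{all} of $\Rep_\F$ followed by the coefficient extraction, and bideflation is $\Z$-linear, the bookkeeping just amounts to grouping the basis expansion of $\Omega$ by the fibres of $\pi$ on objects and morphisms.)

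For part (b), apply part (a) to $\Omega = \omega_\F$, which is an honest idempotent, hence in particular idempotent $\modp$. Therefore $\bidef(\omega_\F)$ is a characteristic element for $\bar{\F}$ that is idempotent $\modp$. To upgrade ``idempotent $\modp$'' to ``idempotent'' I would argue as follows: by Theorem \ref{thm:CharIdemExists}, $\bar{\F}$ has a unique characteristic idempotent $\omega_{\bar{\F}}$, and part (a) together with Lemma \ref{lem:CoeffsOfIdem}(a) shows that $\bidef(\omega_\F)$ has $m_{\bar{S}} \equiv 1$ and $m_{\bar{P}} \equiv 0 \modp$ for $\bar{P} < \bar{S}$ — but I actually want the exact (non-mod-$p$) identities. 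Here the cleanest route is via the universal stable element theorem: by Theorem \ref{thm:UnivStableII}, for any $A(\bar{S},\bar{S})\pLoc$-module $M$, an element $x$ is $\bar{\F}$-stable iff $\omega_{\bar{\F}} \cdot x = x$; since $\bidef(\omega_\F)$ is a characteristic (hence $\bar{\F}$-stable and $\bar{\F}$-generated) element, $\bidef(\omega_\F) \cdot x = x$ for every $\bar{\F}$-stable $x$ and $\bidef(\omega_\F) \cdot x$ is $\bar{\F}$-stable for all $x$, which forces $\bidef(\omega_\F) \cdot \bidef(\omega_\F) = \bidef(\omega_\F)$ and $\bidef(\omega_\F) \cdot \omega_{\bar{\F}} = \omega_{\bar{\F}}$, $\omega_{\bar{\F}} \cdot \bidef(\omega_\F) = \bidef(\omega_\F)$; together these give $\bidef(\omega_\F) = \omega_{\bar{\F}}$ by the standard argument that two characteristic idempotents annihilating each other's complements coincide.

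The main obstacle I anticipate is precisely this last step — passing from ``idempotent modulo $p$'' to ``genuinely idempotent'' — since $\bidef$ is only $\Z$-linear and not a ring homomorphism, so one cannot simply write $\bidef(\omega_\F)^2 = \bidef(\omega_\F^2) = \bidef(\omega_\F)$. The resolution must go through the characterization of $\omega_{\bar{\F}}$ as the \emph{unique} idempotent characteristic element (Theorem \ref{thm:CharIdemExists} and the remark following it) combined with the universal stable element property (Theorem \ref{thm:UnivStableII}), exactly as sketched above; alternatively one could cite the uniqueness-of-characteristic-idempotent argument from \cite{KR:ClSpectra} applied to the characteristic element $\bidef(\omega_\F)$ of $\bar{\F}$, noting that its image (as an idempotent-up-to-the-Nishida-ideal operator) already agrees with that of $\omega_{\bar{\F}}$ on the $\bar{\F}$-stable submodule. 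Everything else is routine bookkeeping with the basis expansion and the formula of Lemma \ref{lem:bidefDesc}.
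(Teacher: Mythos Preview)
Your argument for part (a) is essentially the paper's: Lemma \ref{lem:CoeffsOfIdem}(b) feeds the hypothesis of Proposition \ref{prop:CharOfQuotient}, and then the coefficient identity
\[
  m_{\bar P}(\bidef(\Omega)) = \sum_{\substack{[R]_S \\ \bar R = \bar P}} m_R(\Omega)
\]
together with Lemma \ref{lem:CoeffsOfIdem}(b) yields idempotence $\modp$. One correction to your bookkeeping: the sum on the right is over \emph{all} $S$-conjugacy classes $[R]$ with $\bar R = RT/T$ equal (up to $\bar S$-conjugacy) to $\bar P$, not only those $R$ containing $T$; your conclusion survives because every proper $R$ contributes $0\modp$, but the stated indexing set is too small.

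For part (b) there is a genuine gap. You assert that because $\bidef(\omega_\F)$ is a characteristic element for $\bar\F$, one has $\bidef(\omega_\F)\cdot x = x$ for every $\bar\F$-stable $x$. That implication is false in general: the proof of Theorem \ref{thm:UnivStable} shows that a characteristic element $\Omega$ acts on a stable $x$ by $\sum_P m_P(\Omega)\,[P,\incl]\cdot x$, which equals $x$ \emph{precisely} when $m_S(\Omega)=1$ and $m_P(\Omega)=0$ for $P<S$. So your appeal to the universal stable element property presupposes the very coefficient identities you are trying to avoid computing, and the subsequent ``standard argument'' that the two idempotents coincide rests on an unproved premise.

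The paper's route for (b) is much simpler and avoids this entirely: it is literally ``the same reasoning'' as (a), but with exact equalities in place of congruences. Since $\omega_\F$ is an honest idempotent, Lemma \ref{lem:CoeffsOfIdem}(a) gives $m_S(\omega_\F)=1$ and $m_P(\omega_\F)=0$ for $P<S$ on the nose. Plugging into the displayed identity above yields $m_{\bar S}(\bidef(\omega_\F))=1$ and $m_{\bar P}(\bidef(\omega_\F))=0$ for $\bar P<\bar S$ exactly, whence Lemma \ref{lem:CoeffsOfIdem}(a) (``if'' direction) shows $\bidef(\omega_\F)$ is idempotent; uniqueness (Theorem \ref{thm:CharIdemExists}) then forces $\bidef(\omega_\F)=\omega_{\bar\F}$. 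No detour through Theorem \ref{thm:UnivStableII} is needed.
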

\begin{proof} That $\bidef(\Omega)$ is a (right, left or fully) characteristic element for $\bar{\F}$ is an immediate consequence of Proposition \ref{prop:CharOfQuotient} and Lemma \ref{lem:CoeffsOfIdem}. The claim that $\bidef{\Omega}$ is idempotent $\modp$ then follows from the equations
 \[ m_P(\bidef(\Omega)) = \sum_{\substack{[R]_S \leq S \\ \bar{R} = P }} m_R(\Omega)\,, \]
for $P \leq \bar{S}$, and Lemma \ref{lem:CoeffsOfIdem}. The same reasoning applies to part (b).
\end{proof}

The idempotent condition in Proposition \ref{prop:BidefOfIdempotent} is less restrictive than it might appear at first glance. In practice there is usually no reason to prefer one characteristic element over another, unless the former is the characteristic idempotent. Since we know that every saturated fusion system does have a characteristic idempotent, there is at least one characteristic element to which the corollary applies. Even if one is reluctant to work $p$-locally (as one usually must in order to invoke the characteristic idempotent), one can easily obtain a characteristic element ---even a characteristic biset--- that is idempotent $\modp$ for a given saturated fusion system. Indeed, a characteristic biset $\Omega$ always exists by \cite[Proposition 5.5]{BLO2}, and, by \cite[Lemma 4.6]{KR:ClSpectra}, some power of $\Omega$ is idempotent $\modp$.

\section{Relation to stable homotopy of classifying spaces} \label{sec:Stable}
The correspondence between saturated fusion systems and Frobenius idempotents has important consequences in stable homotopy theory. By the Segal conjecture, stable selfmaps of the classifying space of a finite $p$-group $S$ correspond to elements in the $p$-completion of the double Burnside ring, $\pComp{A(S,S)}$. Thus the results in this paper can be used to express saturated fusion systems as stable idempotents of classifying spaces of $p$-groups satisfying Frobenius reciprocity and some technical conditions. In particular, this reinforces the idea that, from a homotopy-theoretic point of view, saturated fusion systems belong to the stable world. 

In Sections \ref{sec:Splittings}, \ref{sec:AW} and \ref{sec:RTT} we apply Theorem \ref{thm:FrobImpliesSat} and Corollary \ref{cor:Bijection} to obtain new results on the stable homotopy of classifying spaces. In this section we provide the tools needed to pass to the stable world. The section is divided into four subsections. In \ref{subsec:Segal} we recall the Segal conjecture. In \ref{subsec:Translate} we discuss the stable homotopy analogues of the Frobenius reciprocity and augmentation conditions of Theorem \ref{thm:FrobImpliesSat}, and also provide motivation for the usage of the term ``Frobenius reciprocity''. The right freeness condition imposed in Theorem \ref{thm:FrobImpliesSat} has no reasonable interpretation in stable homotopy, and in \ref{subsec:Relax} we discuss how to relax this condition by instead  imposing other conditions with more reasonable stable homotopy analogues. Finally, in \ref{subsec:ClSpectra} we recall the theory of classifying spectra of saturated fusion systems from \cite{KR:ClSpectra}, and adapt it to incorporate classifying spectra with added basepoint.

The reader is assumed to have some familiarity with stable homotopy theory and spectra; this is an extensive subject and providing the necessary background can not be reasonably done in this paper. For the results presented here one requires only a category of spectra with minimal structure, such as the homotopy category of spectra developed in \cite{Ad:Stable}. The reader is also referred to \cite{Ad:InfLoopSp} for background on stable transfer maps arising from finite covering maps, such as maps of classifying spaces induced by subgroup inclusions.

\subsection{The Segal conjecture} \label{subsec:Segal}
The stable homotopy of classifying spaces of finite groups is linked to the Burnside category via the Segal conjecture, and we briefly describe this link here. First, recall that for finite groups $G$ and $H$, there is a natural map 
\[ \alpha \colon A(G,H) \longrightarrow \{BG_+,BH_+\}, \quad \quad [K,\varphi] \longmapsto \Stable{B\varphi} \circ tr_K\,, \]
where the subscript $+$ denotes an added disjoint basepoint, $\{BG_+,BH_+\}$ is the group of homotopy classes of stable maps, $tr_K \colon \Stable{BG_+} \to \Stable{BK_+}$ is the stable transfer associated to the subgroup inclusion $K \leq G$, and $\Stable{B\varphi} \colon \Stable{BK_+} \to \Stable{BH_+}$ is the obvious map. The (single) Burnside ring of finite $G$-sets, $A(G)$, acts on $A(G,H)$ by cartesian product, and the Segal conjecture deals with completion at the augmentation ideal $I(G) \subseteq A(G)$.

\begin{thm}[\cite{Car,LMM}]\label{thm:Segal}
For finite groups $G$ and $H$, the natural map $\alpha \colon A(G,H) \longrightarrow \{BG_+,BH_+\}$ is an $I(G)$-adic completion map.
\end{thm}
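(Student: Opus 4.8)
The plan is to reduce the statement to Carlsson's theorem \cite{Car}, the Segal conjecture for the equivariant sphere, by realizing $\{BG_+,BH_+\}$ as the homotopy fixed points of a genuine $G$-spectrum whose equivariant $\pi_0$ is $A(G,H)$; this is the ``strong'' form of the Segal conjecture, and I would follow the argument of \cite{LMM}. The central object is the genuine $G$-spectrum $Y \defeq \Sigma^\infty_G \operatorname{Map}(EG,BH)_+$, where $G$ acts on $\operatorname{Map}(EG,BH)$ through its action on $EG$. Its decisive feature is that for each $K\le G$ one has $\operatorname{Map}(EG,BH)^K = \operatorname{Map}(EG/K,BH) \simeq \operatorname{Map}(BK,BH)$, so the tom Dieck splitting gives
\[ \pi_0^G(Y) \;\cong\; \bigoplus_{(K)\le G}\mathbb{Z}\big[\,\pi_0\operatorname{Map}(BK,BH)/W_GK\,\big]. \]
Since $\pi_0\operatorname{Map}(BK,BH) = \Rep(K,H)$ for finite groups, the indexing set on the right is exactly the set of $(G,H)$-classes, giving a natural isomorphism $\pi_0^G(Y)\cong A(G,H)$ that matches the summand $\langle K,\varphi\rangle$ with the standard basis element $[K,\varphi]$ of Lemma \ref{lem:BurnsideBasis}.

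Next I would identify the homotopy fixed points of $Y$ with the target of $\alpha$. The $G$-map $\operatorname{Map}(EG,BH)\times EG \to BH\times EG$, $(f,e)\mapsto (f(e),e)$, is a fibrewise equivalence and is equivariant for the trivial action on $BH$, so after applying $\Sigma^\infty(-)_+$ it exhibits a $G$-equivalence $EG_+\wedge Y \simeq EG_+\wedge (\Sigma^\infty BH_+)$ with trivial action on the right; thus the underlying Borel $G$-spectrum of $Y$ is the trivial one. Consequently $F(EG_+,Y)^G \simeq (\Sigma^\infty BH_+)^{hG} \simeq F(BG_+,\Sigma^\infty BH_+)$, and hence $\pi_0^G F(EG_+,Y) \cong \{BG_+,BH_+\}$.

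With both identifications in hand, the theorem is Carlsson's generalized Segal conjecture applied to the suspension $G$-spectrum $Y$: the natural map $\pi_0^G(Y)\to \pi_0^G F(EG_+,Y)$ is $I(G)$-adic completion. The last point is to check that, under the identifications above, this map is $\alpha$. This is a diagram chase: the tom Dieck summand for $(K)$ is split off by the stable transfer $tr_K\colon \Sigma^\infty BG_+ \to \Sigma^\infty BK_+$, and the component of $\operatorname{Map}(BK,BH)$ labelled by $\varphi$ corresponds to $\Sigma^\infty B\varphi$, so the basis element $[K,\varphi]$ is carried to the composite $\Sigma^\infty B\varphi \circ tr_K = \alpha([K,\varphi])$, and linearity finishes it.

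The main obstacle is, unavoidably, Carlsson's theorem itself, which is the hard analytic heart of the Segal conjecture and which I would invoke rather than reprove. The remaining work---constructing $Y$, verifying the two identifications, and recognizing $\alpha$---is essentially formal. One technical point deserves care: the tom Dieck splitting and Carlsson's theorem are here applied to the suspension spectrum of the infinite-dimensional $G$-CW complex $\operatorname{Map}(EG,BH)$; this lies within the scope of both results, but if one prefers to work with finite complexes one can instead filter $\operatorname{Map}(EG,BH)$ by its $G$-skeleta, establish the statement at each finite stage, and pass to the colimit, the relevant $\varprojlim^1$ terms vanishing by Mittag--Leffler.
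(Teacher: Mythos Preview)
Your proposal is correct and is essentially a detailed reconstruction of the Lewis--May--McClure reduction to Carlsson's theorem that the paper merely cites; the paper's own ``proof'' consists only of attribution (Carlsson for $H=1$, then LMM's transfer/equivariant argument for general $H$), so your outline via the $G$-spectrum $Y=\Sigma^\infty_G\operatorname{Map}(EG,BH)_+$, the tom Dieck identification $\pi_0^G(Y)\cong A(G,H)$, and the Borel identification $\pi_0^GF(EG_+,Y)\cong\{BG_+,BH_+\}$ is a faithful and more informative expansion of the same approach.
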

\begin{proof}
The case where $H = 1$, which is the original Segal conjecture, was proved by Carlsson in \cite{Car}. Lewis--May--McClure used a transfer argument in \cite{LMM} to extend Carlsson's result to finite groups $H$. (The result was further extended to allow $H$ to be a compact Lie group in \cite{MSZ})
\end{proof}
May--McClure showed in \cite{MM} that when $S$ is a $p$-group, the $I$-adic completion of $A(S,H)$ is ``essentially $p$-completion''. This is helpful as the $p$-completion of $A(S,H)$ admits a convenient description: Since $A(G)$ is Noetherian and $A(S,H)$ is finitely generated, the Artin--Rees lemma applies, showing that $\pComp{A(S,H)} \cong \Zp \otimes A(S,H)$. Hence $\pComp{A(S,H)}$ is a free $\Zp$-module on the standard basis of $A(S,H)$ (cf.~Lemma \ref{lem:BurnsideBasis}). We offer the following useful formulation of the May--McClure result.

\begin{prop}[\cite{MM}] \label{prop:IAndpComp}
For a finite $p$-group $P$ and any finite group $G$, the $I(P)$-adic topology on $A(P,G)$ is finer than the $p$-adic topology, and the resulting completion map $\IComp{A(P,G)} \to \pComp{A(P,G)}$ is an injection whose image is the submodule of elements with augmentation in $\Z$.
\end{prop}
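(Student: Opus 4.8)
The plan is to derive everything from the short exact sequence of $A(P)$-modules
\[ 0 \longrightarrow M \longrightarrow A(P,G) \xrightarrow{\countS} \Z \longrightarrow 0, \qquad M \defeq \ker(\countS), \]
using the action of the single Burnside ring $A(P)$ of finite $P$-sets on $A(P,G)$ through the source $P$ by cartesian product. First I would note that $\countS$ is $A(P)$-linear when $\Z = A(P)/I(P)$ carries the augmentation module structure — indeed $\countS(Y \times x) = |Y|\cdot\countS(x)$ for a $P$-set $Y$ — so that $I(P)\cdot A(P,G) \subseteq M$ and $I(P)$ annihilates $A(P,G)/M \cong \Z$. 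Since $A(P,G)$ is free of finite rank over $\Z$ (Lemma~\ref{lem:BurnsideBasis}), hence finitely generated over the Noetherian ring $A(P)$, completing the sequence $I(P)$-adically remains exact with right-hand term $\IComp{(A(P,G)/M)}\cong\Z$ (as $I(P)$ annihilates the quotient); completing $p$-adically remains exact (flatness of $\Zp$) with right-hand term $\Z \otimes \Zp = \Zp$ and $\pComp{M}=\ker(\countS\colon \pComp{A(P,G)}\to\Zp)$. Granting that the $I(P)$-adic topology on $A(P,G)$ is finer than the $p$-adic one, the resulting ladder
\[ \begin{array}{ccccccccc}
0 & \longrightarrow & \IComp{M} & \longrightarrow & \IComp{A(P,G)} & \longrightarrow & \Z & \longrightarrow & 0 \\
 & & \downarrow & & \downarrow & & \downarrow & & \\
0 & \longrightarrow & \pComp{M} & \longrightarrow & \pComp{A(P,G)} & \xrightarrow{\countS} & \Zp & \longrightarrow & 0
\end{array} \]
has injective right-hand vertical arrow $\Z \hookrightarrow \Zp$, so a diagram chase reduces the entire statement — injectivity of the middle map, and its image being precisely $\{x : \countS(x) \in \Z\}$ — to showing that the left-hand vertical map $\IComp{M} \to \pComp{M}$ is an isomorphism, equivalently that the $I(P)$-adic and $p$-adic topologies agree on $M$.

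The comparison of topologies, and the one spot where $P$ being a $p$-group is indispensable, I would handle with the mark homomorphisms. Embed $A(P,G)$ into $C \defeq \prod_{\langle K,\varphi\rangle}\Z$ by the injection $\Phi = \prod \Phi_{\langle K,\varphi\rangle}$ of Proposition~\ref{prop:PhiIsInjective}, which by the triangularity in Lemma~\ref{lem:FixedPtsOnBasis} has finite cokernel. Under $\Phi$ the $A(P)$-action becomes coordinatewise: $\Phi_{\langle K,\varphi\rangle}(x \times Y) = \Phi_{\langle K,\varphi\rangle}(x)\cdot \Phi^K(Y)$, where $\Phi^K \colon A(P)\to\Z$ is the mark $Y\mapsto|Y^K|$, so $I(P)$ acts on the $\langle K,\varphi\rangle$-coordinate through the ideal $\Phi^K(I(P)) \subseteq \Z$. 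For $K = \{1\}$ this ideal is $0$, since $I(P) = \ker\Phi^{\{1\}}$; moreover on $M$ the $\langle\{1\},1\rangle$-coordinate already vanishes because $\Phi_{\langle\{1\},1\rangle} = |G|\cdot\countS$. For $K \ne \{1\}$ I would check that $\Phi^K(I(P)) = p^{j_K}\Z$ with $1 \le j_K < \infty$: it lies in $p\Z$ because $|X^K| \equiv |X| \pmod p$ for every finite $P$-set $X$ (count $K$-orbits, using that $K$ is a $p$-group), and it contains $|P|$ because $[P/\{1\}] - |P|\cdot[P/P] \in I(P)$ has mark $-|P|$ at $K$; thus its positive generator divides the $p$-power $|P|$ and is itself a nontrivial power of $p$.

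Assembling these facts, $\Phi$ carries $M$ isomorphically onto a finite-index subgroup of $N \defeq \prod_{K\ne\{1\}}\Z$ (the $\langle\{1\},1\rangle$-coordinate of $\Phi(M)$ is zero and $\Phi$ has finite cokernel), and on $N$ one has $p^{n j^+}N \subseteq I(P)^n N \subseteq p^n N$ with $j^+ = \max_{K\ne\{1\}} j_K$, so the $I(P)$-adic and $p$-adic topologies on $N$ coincide. By the Artin--Rees lemma the $I(P)$-adic topology on the submodule $M$ is induced from that of $N$, and by finiteness of $N/M$ so is its $p$-adic topology; hence the two topologies agree on $M$, which is what remained. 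The same inclusions $I(P)^n N \subseteq p^n N$, transported back along $A(P,G) \hookrightarrow C$ and using the finite index $[C : \Phi(A(P,G))]$, also give $I(P)^n A(P,G) \subseteq p^{\,n-c} A(P,G)$ for a fixed $c$, which establishes the comparison of topologies on $A(P,G)$ itself. I expect the genuine difficulty to be concentrated in this coordinatewise computation — in particular the assertion that $\Phi^K(I(P))$ is a nonzero power of $p$ for each nontrivial $K \le P$ — since this is exactly where the $p$-group hypothesis is used, and it is what pins the image in $\pComp{A(P,G)}$ down to the \emph{integral}-augmentation submodule and no larger.
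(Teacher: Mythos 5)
Your proof is correct, and its overall architecture—the short exact sequence $0 \to \ker(\countS) \to A(P,G) \to \Z \to 0$, completing $I(P)$-adically and $p$-adically, and chasing the resulting ladder—is exactly the one in the paper. The substantive difference is in how the two hard facts are handled. The paper simply cites May--McClure \cite{MM} for both the containment $I^{|P|+1}\subseteq pI$ (which gives the comparison of topologies on $A(P,G)$) and for the agreement of the $I(P)$-adic and $p$-adic topologies on the kernel of the augmentation, and then does the diagram chase. You instead supply a self-contained proof of both via the mark homomorphisms of $A(P)$: you observe that the $A(P)$-action on $A(P,G)$ is diagonalized by $\Phi$, so that $I(P)$ acts on each $\langle K,\varphi\rangle$-coordinate through the ideal $\Phi^K(I(P))\subseteq\Z$, which you show is $0$ for $K=\{1\}$ and a nonzero power of $p$ for $K\ne\{1\}$ (using $|X^K|\equiv|X|\pmod p$ for nontrivial $p$-groups $K$, and the element $[P/\{1\}]-|P|[P/P]$). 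That coordinatewise computation, together with the finite index of $\Phi(A(P,G))$ in $\prod\Z$ and Artin--Rees, recovers both of the cited facts simultaneously and makes the argument elementary and explicit. What the paper's citation buys is brevity; what your version buys is transparency about exactly where the $p$-group hypothesis enters (the claim that $\Phi^K(I(P))=p^{j_K}\Z$ with $j_K\ge1$) and why the image in $\pComp{A(P,G)}$ is precisely the integral-augmentation submodule. One small notational slip worth fixing: when you invoke Artin--Rees you speak of the topology on ``the submodule $M$'' of $N$, but it is $\Phi(M)$, not $M$ itself, that sits inside $N=\prod_{K\ne\{1\}}\Z$; of course they are isomorphic as $A(P)$-modules, so the conclusion transfers.
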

\begin{proof}
Write $I = I(P)$. May--McClure showed in \cite{MM} that if $|P| = p^n$, then  $I^{n+1} \subseteq pI$, proving the first claim. Since the $p$-adic completion map $A(P,G) \to \pComp{A(P,G)}$ is injective, it follows that the $p$-adic completion $\IComp{A(P,G)} \to \pComp{A(P,G)}$ is an injection. 

May--McClure also showed that if $K$ is the kernel of the restriction map $A(P,G) \to A(1,G)$, then the $I(P)$-adic topology on $K$ coincides with the $p$-adic topology. Observe also that the $A(P)$-action on $A(1,G) \cong \Z$ is given by $X \cdot n = |X| \cdot n$, so $I$ acts by the zero map, and $\IComp{A(1,G)} \cong A(1,G)$. By the Artin--Rees lemma, $I$-adic and $p$-adic completions are both exact on finitely generated modules, so the short exact sequence $K \to A(P,G) \to A(1,G)$ gives rise to a commutative diagram
\[ 
\xymatrix{ \pComp{K} \ar[r]\ar@{=}[d] & \IComp{A(P,G)} \ar[r] \ar[d]& A(1,G) \ar[d]\\
           \pComp{K} \ar[r] & \pComp{A(P,G)} \ar[r] & \pComp{A(1,G)}
}
\]
with exact rows that arise from $I$-adic and $p$-adic completion, respectively. The vertical maps are the canonical maps from $I$-adic to $p$-adic completion, coming from the fact that $I$-adic topology is finer. Observing that one can identify the restriction $A(P,G) \to A(1,G)$ with the augmentation $\countS \colon A(P,G) \to \Z$ completes the proof.
\end{proof}

Proposition \ref{prop:IAndpComp} says that $I(P)$-adic completion of $A(P,G)$ amounts to $p$-completing everything except one copy of $\Z$, which remains unchanged. This copy of $\Z$ corresponds to stable selfmaps of the sphere spectrum induced by maps $\Stable{BP_+} \to \Stable{BG_+}$ under the natural splitting $\Stable{X_+} \simeq \SphereSpectrum \vee \Stable{X}$, where $\SphereSpectrum$ denotes the sphere spectrum.

In particular, the May--McClure result allows us to regard $\IComp{A(P,G)}$ as a submodule of $\pComp{A(P,G)}$, and so it makes sense to talk about the element in $\pComp{A(P,G)}$ corresponding to a stable map $\Stable{BS_+} \to \Stable{BG_+}$, bypassing the $I$-adic completion. Note that we can also regard $A(S,G)\pLoc$ as a submodule of $\pComp{A(S,G)}$ in the usual way. One can extend the notion of characteristic element to include elements in the $p$-completed or $I$-adically completed double Burnside ring, and all the results obtained thus far for the $p$-localized double Burnside ring carry over to this setting.

\subsection{Translating to stable homotopy} \label{subsec:Translate} 
To translate the augmentation into stable homotopy, we first identify the augmentation $\countS \colon A(G,H) \to \Z$, for finite groups $G$ and $H$, with the map 
 \[ A(G,H) \longrightarrow A(1,1) \cong \Z,\quad X \longmapsto [H,\pi]_H^1 \circ X \circ [1,i]_1^G, \]
where $\pi \colon H \to 1$ and $i \colon 1 \to G$ are projection and inclusion, respectively. Taking the image of this map in the stable homotopy category, using the fact that the map $\alpha$ in the Segal conjecture is natural, one gets a \emph{stable augmentation}
 \[ \{BG_+,BH_+ \} \longrightarrow \{B1_+,B1_+\} \cong \Z, \quad f \longmapsto \Stable B \pi \circ f \circ \Stable Bi. \]
Note that the stable augmentation always takes values in $\Z$, even though, when $G$ is a $p$-group, the only difference between $\{BG_+,BH_+\}$ and the $p$-complete module $\pComp{A(G,H)}$ is one copy of $\Z$ that is not $p$-completed in $\{BG_+,BH_+\}$. This happens because the stable augmentation map factors through this uncompleted summand (cf.~ Proposition \ref{prop:IAndpComp}).  

For a finite group $S$, the Frobenius reciprocity condition 
\begin{equation} \label{eq:FrobeniusAgain}
  (X \times X) \circ [S,\Delta_S] = (X \times 1) \circ [S,\Delta_S] \circ X
\end{equation} 
on an element $X \in A(S,S)$ readily translates to a condition in stable homotopy, that is perhaps more natural-looking than the original, and allows us to explain the relationship to the classical Frobenius reciprocity property in cohomology. Applying $\alpha$ turns \eqref{eq:FrobeniusAgain} into a homotopy  
\begin{equation} \label{eq:FrobeniusStable}
  (\alpha(X) \wedge \alpha(X)) \circ \Stable B\Delta_S \simeq (\alpha(X) \wedge \id_{\Stable{BS_+}}) \circ \Stable B\Delta_S \circ \alpha(X)
\end{equation}
of stable maps from $\Stable{BS_+}$ to $\Stable{BS_+} \wedge \Stable{BS_+}$.

When $G$ is a finite group with Sylow subgroup $S$, let $[G]$ be $G$ regarded as an $(S,S)$-biset. Then $[G]$ is a characteristic biset for $\F_S(G)$, and $\alpha([G])$ factors as 
\[ \alpha([G]) \colon \Stable{BS_+} \xrightarrow{\Stable{B\incl}} \Stable{BG_+} \xrightarrow{tr_S} \Stable{BS_+}\]
where $tr_S$ is the transfer associated to the subgroup inclusion $S \leq G$. A standard result (see \cite{Ad:InfLoopSp}), expressing the naturality of transfers with respect to cartesian products, gives the homotopy 
\begin{equation}  \label{eq:PushPull}
  ( \id_{ \Stable{BG_+}} \wedge tr_S ) \circ \Stable{B\Delta_G}
   \simeq 
   ( \Stable{B\incl}  \wedge \id_{ \Stable{BS_+} }) \circ \Stable{B\Delta_S} \circ tr_S \, .
\end{equation}
of stable maps from $\Stable{BG_+}$ to $\Stable{BG_+} \wedge \Stable{BS_+}$.
Applying the cohomology functor, the diagonal maps $B\Delta_S$ and $B\Delta_G$ induce multiplication maps $\mu_G$ and $\mu_S$, respectively, and we obtain the commutative diagram
\[ 
\xymatrix{ 
   H^*(BG) \otimes H^*(BS) \ar[d]^{\id \otimes Tr} \ar[rr]^{Res \otimes \id} && H^*(BS) \otimes H^*(BS) \ar[r]^{\quad\quad \mu_S} & H^*(BS) \ar[d]^{Tr} \\
   H^*(BG) \otimes H^*(BG) \ar[rrr]^{\mu_G} &&& H^*(BG).
}
\]
This diagram expresses the familiar Frobenius reciprocity relation in cohomology; namely that for all $x\in H^*(BG)$ and $y \in H^*(BS)$, 
 \[ Tr(Res(x) \, y) = x \, Tr(y).   \]

Composing with $( tr_S \wedge \id_{ \Stable{BS_+} } )$ on the left and $ \Stable{B\incl} $ on the right of both sides of \eqref{eq:PushPull}, and using $ \Stable{B\Delta_G} \circ \Stable{B\incl} \simeq ( \Stable{B\incl} \wedge  \Stable{B\incl} ) \circ  \Stable{B\Delta_S} $, yields the homotopy 
 \[ (\alpha([G]) \wedge \alpha([G])) \circ \Stable B\Delta_S \simeq (\alpha([G]) \wedge \id_{\Stable{BS_+}}) \circ \Stable B\Delta_S \circ \alpha([G]), \]
which is equivalent to the Frobenius reciprocity condition for the biset $[G]$.

\subsection{Relaxing the right freeness condition} \label{subsec:Relax}
There is no reasonable interpretation for the right freeness condition in stable homotopy, and in this subsection we describe a way to relax that condition. First we note that the right freeness assumption in Theorem \ref{thm:FrobImpliesSat} cannot be just be removed: If $S$ is a finite group, and $\psi$ is a non-injective, idempotent endomorphism of $S$, then $[S,\psi]$ is an idempotent in $\pComp{A(S,S)}$ that satisfies Frobenius reciprocity and has augmentation $1$, but is certainly not the characteristic idempotent of any fusion system. Instead we show that the right freeness condition is automatically satisfied by an element of $\pComp{A(S,S)}$ that satisfies Frobenius reciprocity if we assume that it is not generated by maps that factor through proper subgroups of $S$. This is a familiar condition in stable homotopy ---first considered by Nishida in \cite{Ni}--- and the resulting statements are also useful in algebraic settings.

We begin by formulating the freeness condition in terms of fixed points. To discuss this, it is convenient to broaden our scope to include bisets with no (left or right) freeness conditions. For finite groups $G$ and $H$, let $B(G,H)$ be the Grothendieck group of isomorphism classes of finite $(G,H)$-bisets, with no freeness requirement. Recall that every $(G,H)$-biset can be regarded as a $(G \times H)$-biset, and this correspondence induces an isomorphism of $\Z$-modules from $B(G,H)$ to the Burnside ring $A(G\times H)$ finite $G \times H$-sets. For a subgroup $K \leq G \times H$ and a $(G,H)$-biset $X$, define the $K$-fixed point set by
 \[ X^K \defeq \{ x \in X \vert \forall (a,b) \in K \colon bx = xa \}, \]
and set
 \[ \Phi_{[K]}(X) = |X^K|, \]
where $[K]$ denotes the conjugacy class of $K$ in $G \times H$. Extending linearly, we get homomorphisms
 \[ \Phi_{[K]} \colon B(G,H) \to \Z, \]
one for every conjugacy class $[K]$, and by \cite{Burnside} an element $X \in B(G,H)$ is determined by the numbers $ \Phi_{[K]}(X)$. This remains true after $p$-localization or $p$-completion.

The Burnside module $A(G,H)$ is a submodule of $B(G,H)$, and for an $(S,S)$-pair $(L,\psi)$, the composite $A(G,H) \hookrightarrow B(G,H) \xrightarrow{\Phi_{[\Delta(L,\psi)]}} \Z$ equals $\Phi_{ \langle L,\psi \rangle }$. Also, the opposite map $\opmap \colon \Afree(G,H) \to \Afree(H,G)$ extends to a map $\opmap \colon B(G,H) \to B(H,G)$, and for $X \in A(G,H)$ we let $\op{X}$ denote the image in $B(H,G)$ under this map. (In Section \ref{sec:Burnside} $\op{X}$ was only defined for $X \in \Afree(G,H)$.)

\begin{lem} \label{lem:FixPtsOfFree}
Let $S$ be a finite $p$-group. 
\begin{enumerate}
 \item[(a)] An element $X \in \pComp{B(S,S)}$ is in $\pComp{A(S,S)}$ if and only if for every subgroup $K \leq S \times S$ that is not of the form $\Delta(P,\psi)$ for some $(S,S)$-pair $(P,\psi)$,
   \[ \Phi_K(X) = 0. \] 
 \item[(b)] An element $X \in \pComp{A(S,S)}$ is in $\pComp{\Afree(S,S)}$ if and only if for every $(S,S)$-pair $(P,\psi)$ where $\psi$ is not injective,
   \[ \Phi_{\langle P, \psi \rangle}(X) = 0. \]
 \item[(c)] If $X \in \pComp{A(S,S)}$ and $(P,\psi)$ is an $(S,S)$-pair where $\psi$ is not injective, then 
   \[ \Phi_{\langle P, \psi \rangle}(\op{X}) = 0.  \]
\end{enumerate}
\end{lem}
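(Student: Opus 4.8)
\textbf{Proof proposal for Lemma \ref{lem:FixPtsOfFree}.}

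The plan is to prove the three parts in order, as each builds naturally on the fixed-point characterization of elements in the relevant Grothendieck groups. For part (a), the key fact is that $B(S,S) \cong A(S\times S)$ as $\Z$-modules, with standard basis indexed by conjugacy classes of subgroups $K \leq S\times S$, and the basis element corresponding to $[K]$ is the transitive biset $(S\times S)/K$. Under the bijection $X \mapsto \widehat{X}$, the submodule $A(S,S)$ consists precisely of those bisets whose underlying $(S\times S)$-set restricts to a free $(1\times S)$-set, i.e., those $\Z$-combinations of basis elements $(S\times S)/K$ where $K \cap (1\times S) = 1$; these $K$ are exactly the twisted diagonals $\Delta(P,\psi)$. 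Since the fixed-point homomorphisms $\Phi_{[K]}$ detect the standard-basis coefficients (via the table of marks argument of \cite{Burnside} and the triangularity in Lemma \ref{lem:FixedPtsOnBasis}, which shows $\Phi_{[K]}((S\times S)/L) = 0$ unless $[K]$ is subconjugate to $[L]$), one direction is immediate: if $X \in \pComp{A(S,S)}$ then $X$ is a combination of basis elements $[\Delta(P,\psi)]$, and $\Phi_{[K]}([\Delta(P,\psi)]) = 0$ whenever $K$ is not subconjugate to $\Delta(P,\psi)$, in particular whenever $K$ is not itself a twisted diagonal (since subgroups of twisted diagonals are twisted diagonals). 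For the converse, I would argue by the usual downward induction on subconjugacy: if $X \in \pComp{B(S,S)}$ has $\Phi_K(X) = 0$ for all non-twisted-diagonal $K$, but $X \notin \pComp{A(S,S)}$, pick $[K]$ maximal among conjugacy classes with $c_{[K]}(X) \neq 0$ and $K$ not a twisted diagonal; then Lemma \ref{lem:FixedPtsOnBasis}-style triangularity gives $\Phi_{[K]}(X) = c_{[K]}(X)\cdot \Phi_{[K]}((S\times S)/K) \neq 0$, a contradiction. (This all holds after $p$-completion since $B(S,S)$ is finitely generated.)

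For part (b), the same strategy applies within $A(S,S)$: the free Burnside module $\Afree(S,S)$ has standard basis the $[P,\psi]$ with $\psi$ injective (the corollary after Lemma on free pairs), and $X \in \pComp{\Afree(S,S)}$ iff it is a combination of such basis elements. One direction uses Lemma \ref{lem:FixedPtsOnBasis} again: $\Phi_{\langle P,\psi\rangle}([Q,\varphi])$ vanishes unless $\langle P,\psi\rangle \subcon \langle Q,\varphi\rangle$, and a pair subconjugate to a free pair is free, so if $\psi$ is non-injective the fixed points of a bifree element vanish. For the converse, run the maximality/triangularity argument once more: a maximal $\langle P,\psi\rangle$ with $c_{\langle P,\psi\rangle}(X) \neq 0$ and $\psi$ non-injective would force $\Phi_{\langle P,\psi\rangle}(X) \neq 0$. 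Part (c) then follows by combining part (b) with Lemma \ref{lem:Phi(op(X))}: since $X \in \pComp{A(S,S)}$ and, after the natural identification, the opposite $\op{X} \in \pComp{B(S,S)}$ has $\Phi_{[\Delta(P,\psi)]}(\op{X}) = \Phi_{[\Delta(\psi(P),\psi^{-1})]}(X)$ when $\psi$ is injective — but here $\psi$ is \emph{not} injective. In that case the relevant subgroup of $S\times S$ detecting $\Phi_{\langle P,\psi\rangle}(\op{X})$ is $\{(\psi(k),k) : k \in P\}$ viewed in the opposite ordering, and reversing the roles shows this equals $\Phi$ of $X$ at a subgroup $K \leq S\times S$ with $K \cap (1 \times S) \neq 1$; since $X \in \pComp{A(S,S)}$, part (a) forces this to vanish. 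I would make this precise by unwinding the definition $\op{X}^{(P,\psi)} = \{x : x\psi(k) = kx \;\forall k\}$ directly and observing left-freeness of $X$ is not available on the correct side, so the defining set is cut out by a non-free condition and corresponds to a non-twisted-diagonal fixed-point functor on $X$.

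The main obstacle I anticipate is bookkeeping in part (c): one must carefully track which copy of $S$ is ``left'' and which is ``right'' when passing between $X$, $\op{X}$, the $(S\times S)$-set picture, and the subgroup $K$ that the fixed-point count $\Phi_{\langle P,\psi\rangle}(\op{X})$ actually corresponds to as a functor on $B(S,S)$. Concretely, $\op{X}$ may no longer lie in $\Afree$ (indeed it need not even be in $A(S,S)$ once $X$ itself is not bifree), so the clean formula of Lemma \ref{lem:Phi(op(X))} does not literally apply, and one has to redo the underlying set-level identification: an element $x$ with $x\psi(k) = kx$ for all $k \in P$, with $\psi$ non-injective, would under the left-freeness of $X$ have forced $\psi$ injective — precisely the mechanism in Remark \ref{rmk:FrobFixPtsNotInj} — so the set is empty. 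I expect that once the indices are pinned down, part (c) reduces to this one-line observation, but stating it without sign errors is the delicate point.
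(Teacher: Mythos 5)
Your proposal is correct and follows the approach the paper sketches. For parts (a) and (b) the paper simply cites ``a general, well-known result'' on fixed points of $G$-sets with isotropy groups in a family closed under subconjugacy; what you wrote out — identify $B(S,S)$ with $A(S\times S)$, observe that the relevant families (twisted diagonals, resp.\ free twisted diagonals) are closed under subgroups, and run a maximality/triangularity argument via Lemma \ref{lem:FixedPtsOnBasis} — is exactly that general result specialized to $G = S\times S$. For part (c) the paper says the claim ``is easy to prove for bisets and the general result follows by linearity,'' and your closing observation is precisely the intended argument: if $\psi$ is not injective, any $x \in (\op{X})^{(P,\psi)}$ satisfies $x\psi(k) = kx$ for all $k \in P$, so taking $k \in \ker\psi\setminus\{1\}$ gives $kx = x$, contradicting left-freeness of $X$; hence $(\op{X})^{(P,\psi)} = \emptyset$, and the case of a general $X \in \pComp{A(S,S)}$ follows by $\Zp$-linearity. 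Your earlier detour through part (a) is unnecessary, and as you anticipated the bookkeeping there is slippery — which copy of $S$ inside $S\times S$ the subgroup $K$ must meet non-trivially flips between the $(H\times G)$- and $(G\times H)$-set conventions, and the paper is itself not fully consistent between Section \ref{sec:Burnside} and Section \ref{sec:Stable} on this point — but the direct argument you end with sidesteps the issue entirely.
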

\begin{proof}
Parts (a) and (b) are special cases of a general, well-known result on the fixed points of $G$-sets with isotropy groups in a prescribed family of subgroups of $G$ that is closed under subconjugacy. Here one takes $S\times S$ for $G$ and the families are the groups of the form $\Delta(P,\psi)$ for any $\psi$ and for injective $\psi$, respectively. Part (c) is easy to prove for bisets and the general result follows by linearity.
\end{proof}

Thus we need to identify conditions under which an element $X \in \pComp{A(S,S)}$ that satisfies Frobenius reciprocity has $\Phi_{\langle  P,\psi \rangle}(X) = 0$ for every $(S,S)$-pair $(P,\psi)$ where $\psi$ is not injective. To this end we note the following consequences of Frobenius reciprocity.

\begin{lem} \label{lem:FrobFixPtsNotFree} Let $S$ be a finite $p$-group, and assume that $X \in \pComp{A(S,S)}$ satisfies Frobenius reciprocity. If $(P,\psi)$ is an $(S,S)$-pair such that $\Phi_{\langle  P,\psi \rangle}(X) \neq 0$,
then 
\begin{enumerate} 
 \item[(a)] $\Phi_{\langle  \psi(P),\incl \rangle}(X) = \Phi_{\langle  P,\psi \rangle}(X)$, and
 \item[(b)] $\Phi_{\langle  P,\psi \rangle}(\op{X}) = \Phi_{\langle  P,\incl \rangle }(X)$ .
\end{enumerate}
\end{lem}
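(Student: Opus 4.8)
The plan is to exploit the fixed-point formula for Frobenius reciprocity elements established in Lemma \ref{lem:ProductsFixPts}, extended to non-injective $\varphi$ as in Remark \ref{rmk:FrobFixPtsNotInj}, together with the behaviour of fixed points under the opposite map. Since both claims are statements about the integers $\Phi_{\langle P,\psi \rangle}$ applied to an element that need not be bifree, I first want to pass from $\pComp{A(S,S)}$ to the larger module $\pComp{B(S,S)}$ of all finite $(S,S)$-bisets, where the opposite map $\opmap$ is defined without any freeness hypothesis. It suffices to prove everything for an actual $(S,S)$-biset $X$ and then extend by $\Zp$-linearity of the homomorphisms $\Phi_{\langle P,\psi \rangle}$ and of $\opmap$.

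First I would record the two relevant consequences of Frobenius reciprocity for a biset $X$. Writing $S_1 \defeq S_2 \defeq S$, equation \eqref{eq:Frobenius} gives, for every $(S,S_1\times S_2)$-pair $(P,\psi\times\varphi)$, the identity $\Phi_{\langle P,\psi\times\varphi \rangle}((X\times X)\circ[S,\Delta]) = \Phi_{\langle P,\psi\times\varphi \rangle}((X\times 1)\circ[S,\Delta]\circ X)$. By part (a) of Lemma \ref{lem:ProductsFixPts} the left side equals $\Phi_{\langle P,\psi \rangle}(X)\cdot\Phi_{\langle P,\varphi \rangle}(X)$. For the right side I take $\varphi = \incl \colon P \hookrightarrow S$ (so $\varphi$ is injective with image $P$ and $\varphi^{-1} = \id_P$), and part (b) of Lemma \ref{lem:ProductsFixPts} gives $\Phi_{\langle P,\psi \rangle}(X)\cdot\Phi_{\langle P,\incl \rangle}(X)$. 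Comparing, $\Phi_{\langle P,\psi \rangle}(X)\cdot\Phi_{\langle P,\incl \rangle}(X) = \Phi_{\langle P,\psi \rangle}(X)\cdot\Phi_{\langle P,\incl \rangle}(X)$, which is vacuous; so instead I take $\varphi$ to be a surjection or, better, iterate the Frobenius relation the way Lemma \ref{lem:FrobFixPts} does. The cleanest route is: apply the identity with the roles arranged so that one factor on each side is $\Phi_{\langle P,\psi \rangle}(X)$ and the other compares $\Phi_{\langle P,\mathrm{id}\rangle}$-type terms with image-restricted terms, exactly as in Remark \ref{rmk:FrobFixPtsNotInj}, which already states $\Phi_{\langle P,\psi \rangle}(X) = \Phi_{\langle \psi(P),\rho \rangle}(X)$ when $\Phi_{\langle P,\varphi \rangle}(X)\neq 0$; taking $\varphi$ there to be a surjection $P \twoheadrightarrow \psi(P)$ and unwinding $\rho$ yields part (a).

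For part (b) I would use Lemma \ref{lem:Phi(op(X))} in its extended form: for a biset, the underlying fixed-point sets $X^{(K,\varphi)}$ and $(\op{X})^{(\varphi(K),\varphi^{-1})}$ literally coincide when $\varphi$ is injective, and more generally $(\op{X})^{(P,\psi)}$ can be identified with the set of $x\in X$ with $xb = \psi(b)x$ for all $b$, i.e. with a fixed-point set of $X$ along the pair $(P,\psi)$ read on the opposite side. Combining this identification with part (a) --- which under the hypothesis $\Phi_{\langle P,\psi \rangle}(X)\neq 0$ collapses the "$\psi$" side to the "$\incl$" side --- should give $\Phi_{\langle P,\psi \rangle}(\op{X}) = \Phi_{\langle P,\incl \rangle}(X)$. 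Concretely I would write the Frobenius identity at the pair $(P,\incl\times\psi)$, apply Lemma \ref{lem:ProductsFixPts}(b) (now with $\varphi=\psi$, which may fail to be injective, so invoking Remark \ref{rmk:FrobFixPtsNotInj}), obtain $\Phi_{\langle P,\incl \rangle}(X)\cdot\Phi_{\langle P,\psi \rangle}(X) = \Phi_{\langle \psi(P),\rho \rangle}(X)\cdot\Phi_{\langle P,\psi \rangle}(X)$ where $\rho$ is the induced map on $\psi(P)$, then read the left-hand factor $\Phi_{\langle \psi(P),\rho \rangle}(X)$ as $\Phi_{\langle P,\psi \rangle}(\op{X})$ via the opposite-fixed-point identification, and cancel the nonzero factor $\Phi_{\langle P,\psi \rangle}(X)$.

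The main obstacle I anticipate is bookkeeping: keeping straight which of the three copies of $S$ plays which role, correctly tracking how a non-injective $\psi$ factors as $P \twoheadrightarrow \psi(P) \hookrightarrow S$ and what $\rho$ is in each invocation of Remark \ref{rmk:FrobFixPtsNotInj}, and making sure the opposite-set identification of fixed points is stated for possibly non-bifree bisets (which is why I would first reduce to honest bisets and work inside $\pComp{B(S,S)}$ before extending by linearity). The cancellation steps are legitimate because the relevant quantities are integers and the hypothesis $\Phi_{\langle P,\psi \rangle}(X)\neq 0$ is exactly what we need; once the fixed-point identities are set up correctly the rest is immediate.
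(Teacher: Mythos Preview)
Your overall strategy --- evaluate the Frobenius identity at a well-chosen $(S,S_1\times S_2)$-pair and cancel the nonzero factor $\Phi_{\langle P,\psi\rangle}(X)$ --- is exactly what the paper does. But the execution has problems in both parts.

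For part (a), your first attempt with $\varphi=\incl$ is vacuous, as you noticed, and your fallback ``take $\varphi$ to be a surjection $P\twoheadrightarrow\psi(P)$'' is not quite right either: to invoke Remark~\ref{rmk:FrobFixPtsNotInj} you need $\Phi_{\langle P,\varphi\rangle}(X)\neq 0$, and the only $\varphi$ for which this is given is $\psi$ itself. The clean move (and this is what the paper means by ``as in Lemma~\ref{lem:FrobFixPts}, using Remark~\ref{rmk:FrobFixPtsNotInj}'') is to take $\varphi=\psi$: then $\Ker(\varphi)=\Ker(\psi)$, the induced $\rho\colon\psi(P)\to\psi(P)$ is the identity, and the remark gives $\Phi_{\langle P,\psi\rangle}(X)=\Phi_{\langle\psi(P),\incl\rangle}(X)$ directly.

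For part (b) there is a genuine gap. You propose to apply Remark~\ref{rmk:FrobFixPtsNotInj} at the pair $(P,\incl\times\psi)$, with $\varphi_{\text{there}}=\psi$ and $\psi_{\text{there}}=\incl$, to obtain a factor $\Phi_{\langle\psi(P),\rho\rangle}(X)$. But the remark requires $\Ker(\varphi_{\text{there}})\subseteq\Ker(\psi_{\text{there}})$, i.e.\ $\Ker(\psi)\subseteq\Ker(\incl)=1$, which fails precisely when $\psi$ is not injective --- the case the lemma is designed to handle. In that regime the remark outputs $0$, not a $\rho$-term, and your identification ``$\Phi_{\langle\psi(P),\rho\rangle}(X)=\Phi_{\langle P,\psi\rangle}(\op{X})$'' has no meaning because no $\rho$ exists. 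The paper sidesteps this by computing the fixed-point set directly for a biset: using Lemma~\ref{lem:DescSetProducts}(b), the condition on the $x$-coordinate at the pair $(P,\incl\times\psi)$ is $ax=x\psi(a)$ for all $a\in P$, which is exactly the defining condition for $x\in(\op{X})^{(P,\psi)}$. This yields
\[
\Phi_{\langle P,\incl\times\psi\rangle}\bigl((X\times 1)\circ[S,\Delta]\circ X\bigr)=\Phi_{\langle P,\psi\rangle}(\op{X})\cdot\Phi_{\langle P,\psi\rangle}(X)
\]
uniformly in $\psi$, and canceling gives (b). Your route can be salvaged by splitting into cases (when $\psi$ is non-injective, Lemma~\ref{lem:FixPtsOfFree}(c) makes the right side of (b) zero, and the Frobenius identity forces the left side to be zero too), but the paper's direct computation is cleaner and is what you should do.
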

\begin{proof} Part (a) is proved as in Lemma \ref{lem:FrobFixPts}, using Remark \ref{rmk:FrobFixPtsNotInj}. Part (b) is proved similarly by first showing that Frobenius reciprocity implies
\[  \Phi_{\langle P, \incl \rangle}(X) \cdot  \Phi_{\langle P, \psi \rangle}(X) = \Phi_{\langle  P,\psi \rangle}(\op{X}) \cdot  \Phi_{\langle P, \psi \rangle}(X) \,. \]
The left side of this equation is equal to $\Phi_{\langle P, \incl\negmedspace \times \psi \rangle}(X\times X \circ [S,\Delta])$, so it is enough to prove
\[ \Phi_{\langle P, \incl \negmedspace \times \psi \rangle}(X\times 1 \circ [S,\Delta]  \circ X) = \Phi_{\langle  P,\psi \rangle}(\op{X}) \cdot  \Phi_{\langle P, \psi \rangle}(X) \, . \]
It suffices to consider the case where $X$ is a biset, in which case we can look at actual fixed-point sets. By Lemma \ref{lem:DescSetProducts}, $X\times 1 \circ [S,\Delta]  \circ X$ is isomorphic to $Z \defeq X \times X$, with $(S,S\times S)$ action given by $(b_1,b_2) (x,y) a= (b_1xb_2^{-1},b_2ya)$. Now, the fixed-point set $Z^{(P, \incl \negmedspace \times \psi)}$ consists of pairs $(x,y) \in X \times X$ such that for all $a \in P$ we have
\[ ax\psi(a)^{-1} = x \quad \text{and} \quad ya = \psi(a) y \, . \]
The latter condition is equivalent to $y \in X^{(P,\psi)}$, and rewriting the former condition as $ax = x\psi(a)$, we see that it is equivalent to $x \in (\op{X})^{(P,\psi)}$. Hence $Z^{(P, \incl \negmedspace \times \psi)} = (\op{X})^{(P,\psi)} \times X^{(P,\psi)}, $ and the result follows.
\end{proof}

Lemma \ref{lem:FrobFixPtsNotFree} suggests a plan of attack for showing that $X \in \pComp{\Afree(S,S)}$. If we can show that $\Phi_{\langle  P,\incl \rangle }(X) \neq 0$ for every $P \leq S$, then part (b) of Lemma \ref{lem:FrobFixPtsNotFree}, combined with part (c) of Lemma \ref{lem:FixPtsOfFree}, implies that $ \Phi_{\langle  P,\psi \rangle}(X) \neq 0$ only when $\psi$ is injective. By part (b) of Lemma \ref{lem:FixPtsOfFree} this implies $X \in \pComp{\Afree(S,S)}$. Furthermore, by part (a) of Lemma \ref{lem:FrobFixPtsNotFree}, to show that $\Phi_{\langle  P,\incl \rangle }(X) \neq 0$ it is enough to show that $\Phi_{\langle  Q,\psi \rangle }(X) \neq 0$ for some $(S,S)$-pair $(Q,\psi)$ with $\psi(Q) = P$. We do this by a counting argument, dualizing Lemma \ref{lem:Cong}. For this we need a dual of the augmentation map.

\begin{defn} \label{defn:RightAug}
For a finite group $S$, let $\countS^R \colon \pComp{A(S,S)} \to \pComp{\Z}$ be the $\Zp$-linear map defined
on bisets by $\countS^R(X)= |X/S|$.
\end{defn}

Notice that for a generator $[P,\psi]$ of $A(S,S)$ we have $\countS([P,\psi]) = |S|/|P|$, while $\countS^R([P,\psi]) = |S|/|\psi(P)|$. Hence, for $X \in \pComp{\Afree(S,S)}$ we have $\countS(X) = \countS^R(X)$, but this is not true for general $X \in \pComp{A(S,S)}$.

\begin{lem} \label{lem:CongFromRight} Let $S$ be a finite group and let $X \in \pComp{A(S,S)}$. For $P \leq S$, let $\Sur(P)$ be the set of $(S,S)$-pairs $(Q,\psi)$ such that $\psi(Q) = P$, and let $\SurRep(P)$ be the set of conjugacy classes under the conjugacy relation $(Q,\psi) \sim (Q^x,\psi \circ c_x)$ for $x \in S$. Then 
\[ 
  \sum_{(Q,\psi) \in \SurRep(P)} \frac{\Phi_{ \langle Q,\psi \rangle }}{|C_S(Q)|} \equiv \countS^R(X)~ \modp
\]
where the sum runs over representatives of conjugacy classes in $\SurRep(P)$.
\end{lem}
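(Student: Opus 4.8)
The statement is a "counting congruence" dual to Lemma~\ref{lem:Cong}(a), and the plan is to mimic that proof after passing to the opposite biset. First I would reduce to the case where $X$ is an actual $(S,S)$-biset: both sides of the congruence are $\Zp$-linear in $X$ (the maps $\Phi_{\langle Q,\psi\rangle}$ and $\countS^R$ are $\Zp$-linear, and the indexing sets $\SurRep(P)$ do not depend on $X$), so the general case follows from the biset case by linearity. As in the discussion preceding Lemma~\ref{lem:FixPtsOfFree}, extend the opposite construction to $\opmap\colon B(S,S)\to B(S,S)$; the key observation is that for a biset $X$ one has $\op{X}^{(Q,\psi)}=\{x\in X\mid \forall g\in Q\colon xg=\psi(g)x\}$, so counting elements of $X$ fixed by the "twisted-diagonal-on-the-right" action is the same as counting fixed points of $\op{X}$ for the ordinary action. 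Concretely, $\Phi_{\langle Q,\psi\rangle}(\op{X})$ counts $x\in X$ with $xg=\psi(g)x$ for all $g\in Q$; note that in general $\psi$ need not be injective here, which is why the statement is phrased with $\SurRep(P)$ and the centralizer $|C_S(Q)|$ of the \emph{source}.

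The heart of the argument runs parallel to the proof of Lemma~\ref{lem:Cong}(a) with the roles of the two copies of $S$ interchanged. Write $X$ as an $(S_1,S_2)$-biset with $S_1=S_2=S$; since $X$ is left-free this time we instead use that $X$ is a biset and consider $X/S_1$ as a \emph{left} $S_2$-set (quotient by the right $S_1$-action). Fix $P\le S$ and let $X_0\subseteq X$ be the preimage of $(X/S_1)^P$ under $X\to X/S_1$, where $P$ acts via $P\le S_2$. For $x\in X_0$ there is a unique homomorphism $\theta(x)\colon P\to S_1$ with $g\cdot x = x\cdot \theta(x)(g)$ for all $g\in P$; this $\theta(x)$ need only be a homomorphism, not a monomorphism, since the right action is not assumed free. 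This gives $\theta\colon X_0\to\Hom(P,S_1)$ with $\theta^{-1}(\psi)=\op{X}^{(P,\psi)}$, so
\[ |X_0| = \sum_{\psi\in\Hom(P,S_1)} \Phi_{\langle P,\psi\rangle}(\op{X}). \]
However, the more natural bookkeeping here is to index by the \emph{source}: for a pair $(Q,\psi)$ with $\psi(Q)=P$, fixed points of $X$ for the right twisted action $xg=\psi(g)x$ ($g\in Q$) correspond to elements of $X_0$ whose associated homomorphism has image $P$, and the $S_1$-orbit structure gives, exactly as in Lemma~\ref{lem:Cong}, that each conjugacy class $[(Q,\psi)]\in\SurRep(P)$ contributes $|S_1|/|C_{S_1}(Q)|\cdot\Phi_{\langle Q,\psi\rangle}/|S_1|$ after dividing out the free $S_1$-action, i.e. $\Phi_{\langle Q,\psi\rangle}/|C_S(Q)|$. (One must check that $|C_{S_1}(Q)|$ is the relevant stabilizer: the $S_1$-conjugacy class of $\psi$ has size $|S|/|C_S(\psi(\cdot))|$, but because we are twisting on the right and indexing by the source, the stabilizer of $\psi$ in the $S_1$-conjugation action is the centralizer of the \emph{domain} $Q$ — this is the one point deserving care.)

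Finally, apply the standard fixed-point congruence for a $p$-group acting on a finite set: since $P$ is a $p$-group acting on $X/S_1$,
\[ |(X/S_1)^P| \equiv |X/S_1| \pmod p. \]
The left side is $|X_0/S_1| = \sum_{(Q,\psi)\in\SurRep(P)} \Phi_{\langle Q,\psi\rangle}(X)/|C_S(Q)|$ by the computation above (using $\Phi_{\langle Q,\psi\rangle}(\op X)=$ number of right-twisted fixed points $=\Phi_{\langle Q,\psi\rangle}(X)$ computed against the right action — more precisely one identifies these counts directly from the definitions, the underlying fixed-point subsets of $X$ being literally equal), while the right side $|X/S_1| = \countS^R(X)$ by Definition~\ref{defn:RightAug}. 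Putting the two together yields the asserted congruence, and linearity extends it to all of $\pComp{A(S,S)}$.

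The main obstacle I anticipate is purely bookkeeping: keeping straight which copy of $S$ acts freely, which is being quotiented, and — most delicately — verifying that the stabilizer appearing in the orbit count is $|C_S(Q)|$ (centralizer of the source) rather than $|C_S(P)|$. This is exactly the feature that distinguishes $\countS^R$ from $\countS$ and $\Phi_{\langle Q,\psi\rangle}(\op X)$ from $\Phi_{\langle Q,\psi\rangle}(X)$ for non-bifree elements, so it must be handled carefully; once the correct orbit-counting identity is in place, the congruence is immediate from the $p$-group fixed-point lemma.
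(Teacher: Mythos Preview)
Your overall strategy matches the paper's: reduce to bisets by linearity, analyze the left $P$-action on $X/S_1$, and apply the $p$-group fixed-point congruence $|(X/S_1)^P|\equiv |X/S_1|=\countS^R(X)\pmod p$. That is exactly what the paper means by ``the argument of Lemma~\ref{lem:Cong} with the roles of the actions reversed.''

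There is, however, a genuine slip in your execution. You write that for $x\in X_0$ ``there is a unique homomorphism $\theta(x)\colon P\to S_1$ with $g\cdot x=x\cdot\theta(x)(g)$,'' attributing the possible failure of injectivity of $\theta(x)$ to the lack of right-freeness. The problem is more basic: without right-freeness the element $h\in S_1$ with $gx=xh$ is not unique, so $\theta(x)(g)$ is not even well-defined. Your subsequent pivot to ``index by the source'' is the right instinct, but the argument needs to be set up that way from the start.

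The fix is to use the freeness you \emph{do} have. For $x\in X_0$ set
\[
  Q_x \defeq \{\,h\in S_1 \mid xh\in Px\,\},
\]
and for $h\in Q_x$ let $\psi_x(h)\in P$ be the unique element with $\psi_x(h)\,x=xh$; uniqueness here comes from \emph{left}-freeness of $X$, which is guaranteed for $X\in A(S,S)$. One checks that $Q_x\le S_1$, that $\psi_x\colon Q_x\to P$ is a homomorphism, and that $\psi_x$ is surjective precisely because $x\in X_0$. Thus $x\mapsto (Q_x,\psi_x)$ defines a map $X_0\to\Sur(P)$. For $a\in S_1$ one has $Q_{xa}=a^{-1}Q_xa$ and $\psi_{xa}=\psi_x\circ c_a$, so this descends to $X_0/S_1\to\SurRep(P)$, and the fibre over the class of $(Q,\psi)$ has size $\Phi_{\langle Q,\psi\rangle}(X)/|C_{S_1}(Q)|$ --- the stabilizer of the pair $(Q,\psi)$ under the right $S_1$-conjugation is $C_{S_1}(Q)$ because $\psi$ is determined by $x$ once $Q$ is, again by left-freeness. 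Summing over $\SurRep(P)$ and invoking the congruence finishes the proof. This is the ``bookkeeping'' you flagged; once the map is oriented correctly, the centralizer of the source falls out naturally.
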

\begin{proof} This follows from an argument similar to Lemma \ref{lem:FrobFixPtsNotFree}, but with the roles of the actions reversed. One first notes that it is enough to prove this congruence when $X$ is a biset. The difference from Lemma \ref{lem:FrobFixPtsNotFree} is that there we analyzed the $P$-fixed points of the right $S$-set $S\backslash X$, obtaining information about homomorphisms originating in $P$, but now we analyze the $P$-fixed points of the left $S$-set $X/S$, obtaining information about homomorphisms with image $P$. The generalization to non-injective maps comes about because we are not assuming that $X$ is right-free. 
\end{proof}

We now have all the ingredients to prove the first result relaxing the right freeness condition in Theorem \ref{thm:FrobImpliesSat}.

\begin{prop} \label{prop:RightAugRightFree}
Let $S$ be a finite group, and let $X$ be an element in $\pComp{A(S,S)}$. If $X$ satisfies Frobenius reciprocity 
and $\countS^R(X)$ is not divisible by $p$, then $X \in \pComp{\Afree(S,S)}$. In particular $\countS(X) = \countS^R(X)$,
and Theorem \ref{thm:FrobImpliesSat} applies to $X$.
\end{prop}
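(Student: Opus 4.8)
The plan is to reduce everything to the single assertion that $\Phi_{\langle P,\incl \rangle}(X) \neq 0$ for every subgroup $P \leq S$. Indeed, by part (b) of Lemma~\ref{lem:FixPtsOfFree} we have $X \in \pComp{\Afree(S,S)}$ as soon as $\Phi_{\langle P,\psi \rangle}(X) = 0$ for every $(S,S)$-pair $(P,\psi)$ with $\psi$ non-injective, and I would deduce the latter by contradiction: if some such $\Phi_{\langle P,\psi \rangle}(X)$ were nonzero, then part (b) of Lemma~\ref{lem:FrobFixPtsNotFree} would give $\Phi_{\langle P,\psi \rangle}(\op{X}) = \Phi_{\langle P,\incl \rangle}(X)$, which is nonzero by the assertion, whereas part (c) of Lemma~\ref{lem:FixPtsOfFree} forces $\Phi_{\langle P,\psi \rangle}(\op{X}) = 0$ because $\psi$ is not injective.

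To establish the assertion, I would fix $P \leq S$ and apply the counting congruence of Lemma~\ref{lem:CongFromRight}, namely
\[ \sum_{(Q,\psi) \in \SurRep(P)} \frac{\Phi_{\langle Q,\psi \rangle}(X)}{|C_S(Q)|} \equiv \countS^R(X) \quad \modp \,. \]
Since $\countS^R(X)$ is not divisible by $p$ the right-hand side is nonzero, so the left-hand side is not the zero sum; hence there is an $(S,S)$-pair $(Q,\psi)$ with $\psi(Q) = P$ and $\Phi_{\langle Q,\psi \rangle}(X) \neq 0$. Feeding this pair into part (a) of Lemma~\ref{lem:FrobFixPtsNotFree} gives $\Phi_{\langle P,\incl \rangle}(X) = \Phi_{\langle \psi(Q),\incl \rangle}(X) = \Phi_{\langle Q,\psi \rangle}(X) \neq 0$, as wanted. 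With the assertion in hand, the first paragraph yields $X \in \pComp{\Afree(S,S)}$.

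The ``in particular'' clause is then a formality: on $\pComp{\Afree(S,S)}$ the two augmentations coincide, so $\countS(X) = \countS^R(X)$ is not divisible by $p$, and since $X$ still satisfies Frobenius reciprocity, Theorem~\ref{thm:FrobImpliesSat} applies (in its $p$-complete form, using the extension of the theory to $\pComp{A(S,S)}$ recorded in~\ref{subsec:Segal}). Given that all the hard work is already packaged in Lemmas~\ref{lem:FixPtsOfFree}, \ref{lem:FrobFixPtsNotFree} and~\ref{lem:CongFromRight}, the remaining obstacle is minor: it is simply making sure that the ``image-side'' congruence of Lemma~\ref{lem:CongFromRight}---the analogue of Lemma~\ref{lem:Cong} obtained by counting $P$-fixed points of the left $S$-set $X/S$ rather than of the right $S$-set $S\backslash X$---is invoked correctly, since it is precisely this dual count, which does not require right-freeness, that detects a (possibly non-injective) morphism onto each $P$; everything after it is routine bookkeeping with fixed-point homomorphisms.
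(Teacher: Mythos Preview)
Your proof is correct and follows essentially the same approach as the paper: the paper explicitly describes the ``plan of attack'' you execute (after Lemma~\ref{lem:FrobFixPtsNotFree}), then invokes Lemma~\ref{lem:CongFromRight} exactly as you do to produce the needed $(Q,\psi)$ with $\psi(Q)=P$ and $\Phi_{\langle Q,\psi\rangle}(X)\neq 0$. Your write-up is in fact a fuller unpacking of what the paper compresses into a reference to that plan.
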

\begin{proof}
The condition on $\countS^R(X)$ and Lemma \ref{lem:CongFromRight} imply that for every $P \leq S$ there exists an  $(S,S)$-pair $(Q,\psi)$ with $\psi(Q) = P$ such that $\Phi_{\langle  Q,\psi \rangle }(X) \neq 0$. Hence the ``plan of attack'' described after Lemma \ref{lem:FrobFixPtsNotFree}  can be used to prove that $X \in \pComp{\Afree(S,S)}$. The last two claims follow.
\end{proof}

Proposition \ref{prop:RightAugRightFree} can sometimes be useful in certain situations, but stable homotopy is not among them as the right orbit map $\countS^R$ translates no better to stable homotopy than the right freeness condition. However, with further work we can use Proposition \ref{prop:RightAugRightFree} to prove a result that translates better to stable homotopy. For this we need the following definition.

\begin{defn}
Let $S$ be a finite group. The \emph{Nishida ideal} $J(S) \subset A(S,S)$ is the $\Z$-submodule generated by elements $[P,\psi]$, where $\psi(P) < S$. An element $X \in \pComp{A(S,S)}$ is \emph{dominant} if $X \notin \pComp{J(S)}$.
\end{defn} 
The double coset formula readily shows that $J(S)$ is a two-sided ideal of $A(S,S)$.

\begin{lem} \label{lem:DomFrobImpliesFree}
Let $S$ be a finite $p$-group, and let $X$ be an element of $\pComp{A(S,S)}$. If $X$ is dominant and satisfies Frobenius reciprocity, then, for every  non-injective homomorphism $\psi \colon S \to S$, we have $c_{ \langle S,\psi \rangle }(X) = 0 $. In particular $\countS^R(X) \equiv \countS(X)~ \modp$.
\end{lem}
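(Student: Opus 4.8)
The plan is to translate everything into a statement about the fixed-point homomorphisms $\Phi_{\langle S,\chi\rangle}$ for homomorphisms $\chi\colon S\to S$, and then to feed it into the Frobenius-reciprocity calculus of Lemmas \ref{lem:FrobFixPts}, \ref{lem:FrobFixPtsNotFree} and \ref{lem:FixPtsOfFree}. (Lemma \ref{lem:FrobFixPts} was stated $p$-locally but, as noted in \ref{subsec:Segal}, carries over to $\pComp{A(S,S)}$, while the other two are already stated there.) The first, bookkeeping step is the observation that $\langle S,\chi\rangle$ is subconjugate only to itself among all $(S,S)$-classes: since $|\Delta(S,\chi)|=|S|$ is maximal, $\langle S,\chi\rangle\subcon\langle L,\eta\rangle$ forces $L=S$ and then $\langle S,\eta\rangle=\langle S,\chi\rangle$. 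Hence by Lemma \ref{lem:FixedPtsOnBasis} only the $\langle S,\chi\rangle$ term of the standard-basis expansion of $X$ contributes to $\Phi_{\langle S,\chi\rangle}(X)$, giving $\Phi_{\langle S,\chi\rangle}(X)=c_{\langle S,\chi\rangle}(X)\cdot|C_S(\chi(S))|$ (here $N_{\chi,\chi}=S$, as $c_{\chi(x)}\circ\chi=\chi\circ c_x$). As $\pComp{\Z}$ is a domain and $|C_S(\chi(S))|\neq 0$, this shows $c_{\langle S,\chi\rangle}(X)=0$ if and only if $\Phi_{\langle S,\chi\rangle}(X)=0$, so the assertion of the lemma is equivalent to $\Phi_{\langle S,\psi\rangle}(X)=0$ for every non-injective $\psi\colon S\to S$.

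Next I would exploit dominance. The standard-basis elements lying outside $J(S)$ are precisely the $[S,\alpha]$ with $\alpha\in\Aut(S)$ — indeed $\eta(Q)=S$ forces $Q=S$ and $\eta$ injective — so $X\notin\pComp{J(S)}$ yields $c_{\langle S,\alpha\rangle}(X)\neq 0$, and hence $\Phi_{\langle S,\alpha\rangle}(X)\neq 0$, for some $\alpha\in\Aut(S)$. Applying Lemma \ref{lem:FrobFixPts} with $P=S$ and $\varphi=\psi=\alpha$ gives $\Phi_{\langle S,\alpha\rangle}(X)^{2}=\Phi_{\langle S,\alpha\rangle}(X)\cdot\Phi_{\langle S,\id\rangle}(X)$, so cancelling in the domain $\pComp{\Z}$ we get $\Phi_{\langle S,\id\rangle}(X)=\Phi_{\langle S,\alpha\rangle}(X)\neq 0$. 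This passage from ``some automorphism component of $X$ is nonzero'' to ``the identity component of $X$ is nonzero'' is the crux of the argument and the step I expect to require the most care; everything else is formal.

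With $\Phi_{\langle S,\id\rangle}(X)\neq 0$ in hand the proof finishes by contradiction. Suppose $\psi\colon S\to S$ is non-injective with $c_{\langle S,\psi\rangle}(X)\neq 0$, equivalently $\Phi_{\langle S,\psi\rangle}(X)\neq 0$. Then Lemma \ref{lem:FrobFixPtsNotFree}(b) gives $\Phi_{\langle S,\psi\rangle}(\op{X})=\Phi_{\langle S,\incl\rangle}(X)=\Phi_{\langle S,\id\rangle}(X)\neq 0$, whereas Lemma \ref{lem:FixPtsOfFree}(c) forces $\Phi_{\langle S,\psi\rangle}(\op{X})=0$ because $\psi$ is not injective, a contradiction. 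Hence $c_{\langle S,\psi\rangle}(X)=0$ for every non-injective $\psi\colon S\to S$.

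Finally, for the ``in particular'' I would expand $\countS^R(X)-\countS(X)=\sum_{\langle Q,\psi\rangle}c_{\langle Q,\psi\rangle}(X)\bigl(|S|/|\psi(Q)|-|S|/|Q|\bigr)$, the sum over all $(S,S)$-classes. Injective $\psi$ contribute $0$ since then $|\psi(Q)|=|Q|$, and a non-injective $\psi$ contributes $c_{\langle Q,\psi\rangle}(X)\cdot\tfrac{|S|}{|Q|}\bigl(|\ker\psi|-1\bigr)$, which lies in $p\,\pComp{\Z}$: if $Q<S$ then $|S|/|Q|$ is a positive power of $p$, and if $Q=S$ then $c_{\langle S,\psi\rangle}(X)=0$ by the previous paragraph. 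Therefore $\countS^R(X)\equiv\countS(X)\pmod p$.
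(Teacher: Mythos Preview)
Your proof is correct and follows essentially the same line as the paper's: translate the vanishing of $c_{\langle S,\psi\rangle}(X)$ into vanishing of $\Phi_{\langle S,\psi\rangle}(X)$ via maximality of $\langle S,\psi\rangle$, use dominance plus Frobenius reciprocity to get $\Phi_{\langle S,\id\rangle}(X)\neq 0$, and then combine Lemma~\ref{lem:FrobFixPtsNotFree}(b) with Lemma~\ref{lem:FixPtsOfFree}(c) to force injectivity. The only cosmetic differences are that you invoke Lemma~\ref{lem:FrobFixPts} directly to pass from $\Phi_{\langle S,\alpha\rangle}(X)\neq 0$ to $\Phi_{\langle S,\id\rangle}(X)\neq 0$ (the paper uses Lemma~\ref{lem:FrobFixPtsNotFree}(a), which amounts to the same thing), and your constant $|C_S(\chi(S))|$ in the $\Phi$--$c$ relation is in fact more accurate than the paper's blanket $|Z(S)|$, though either suffices for the nonvanishing equivalence.
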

\begin{proof}
First note that for a homomorphism $\psi \colon S \to S$, Lemma \ref{lem:FixedPtsOnBasis} implies
\begin{equation} \label{eq:PhiAndc} 
  \Phi_{ \langle S,\psi \rangle }(X) = |Z(S)| \cdot c_{ \langle S,\psi \rangle }(X). 
\end{equation} 
Hence $c_{ \langle S,\psi \rangle }(X) \neq 0$ if and only if $\Phi_{ \langle S,\psi \rangle }(X) \neq 0$.
 
Now, since $X$ is dominant, there exists an automorphism $\varphi \in \Aut(S)$ such that $c_{ \langle S,\varphi \rangle }(X) \neq 0$, and hence $\Phi_{ \langle S,\varphi \rangle }(X) \neq 0$. Lemma \ref{lem:FrobFixPtsNotFree} then implies that $\Phi_{ \langle S,\incl \rangle }(X) \neq 0$. If $\psi \colon S \to S$ is a group homomorphism such that $\Phi_{ \langle S,\psi \rangle }(X) \neq 0$, then Lemma \ref{lem:FrobFixPtsNotFree} implies that $\Phi_{ \langle S,\psi \rangle }(\op{X}) = \Phi_{ \langle S,\incl \rangle }(X) \neq 0$, and by Lemma \ref{lem:FixPtsOfFree} this means that $\psi$ must be injective. Thus we conclude that for non-injective homomorphisms $\psi \colon S \to S$ we have $c_{ \langle S,\psi \rangle }(X) = 0 $.

For an $(S,S)$-pair $(P,\psi)$ we have $\countS([P,\psi]) = |S|/|P|$ and $\countS^R([P,\psi]) = |S|/|\psi(P)|$. It follows that $\countS(X)$ is congruent $\modp$ to the sum of coefficients $c_{ \langle S,\psi \rangle }(X)$ where $\psi$ runs over conjugacy classes of homomorphisms $S \to S$, while $\countS^R(X)$ is congruent $\modp$ to the sum of coefficients $c_{ \langle S,\varphi \rangle }(X)$ where $\varphi$ runs over conjugacy classes of automorphisms of $S$. Since $c_{ \langle S,\psi \rangle }(X) = 0$ for non-injective $\psi$, these sums are the same and we have $\countS^R(X) \equiv \countS(X)~ \modp$. 
\end{proof}

We can now prove a stable-homotopy-friendly version of Theorem \ref{mthm:FrobImpliesSat}, replacing right freeness by a dominance condition.

\begin{thm}\label{thm:DomFrobImpliesSat}
Let $S$ be a finite $p$-group, and let $X$ be a dominant element of $\pComp{A(S,S)}$ that satisfies Frobenius reciprocity. If $\countS(X)$ is not divisible by $p$, then $\FusRStab(X)$ is a saturated fusion system and $X$ is a right-characteristic element for $\FusRStab(X)$.
\end{thm}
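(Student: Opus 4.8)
The plan is to reduce Theorem \ref{thm:DomFrobImpliesSat} to the already-proved Theorem \ref{thm:FrobImpliesSat} by showing that a dominant, augmentation-prime-to-$p$ element of $\pComp{A(S,S)}$ satisfying Frobenius reciprocity is automatically \emph{bifree}, i.e.\ lies in $\pComp{\Afree(S,S)}$. This is exactly the content that has been set up in the preceding lemmas, so the proof should be short.

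First I would invoke Lemma \ref{lem:DomFrobImpliesFree}: since $X$ is dominant and satisfies Frobenius reciprocity, we get $\countS^R(X) \equiv \countS(X) \pmod p$. Because $\countS(X)$ is assumed not divisible by $p$, it follows that $\countS^R(X)$ is also not divisible by $p$. Next I would apply Proposition \ref{prop:RightAugRightFree}, whose hypotheses are now met: $X$ satisfies Frobenius reciprocity and $\countS^R(X)$ is prime to $p$, so $X \in \pComp{\Afree(S,S)}$, and moreover $\countS(X) = \countS^R(X)$ is not divisible by $p$. At this point $X$ is a bifree element of augmentation prime to $p$ satisfying Frobenius reciprocity.

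Finally I would appeal to Theorem \ref{thm:FrobImpliesSat} (the $p$-complete version, which holds since everything carries over to the $p$-completed double Burnside ring as noted after Proposition \ref{prop:IAndpComp}), applied to $X \in \pComp{\Afree(S,S)}$: this gives that $\PreFix(X) = \FusFix(X) = \FusOrb(X) = \FusRStab(X)$, that $\FusRStab(X)$ is saturated, and that $X$ is a right-characteristic element for $\FusRStab(X)$. That completes the proof.

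The only subtle point — and the one I'd expect to need the most care — is making sure the chain of completions is handled correctly: Theorem \ref{thm:FrobImpliesSat} and Proposition \ref{prop:RightAugRightFree} are stated in slightly different completions ($p$-local versus $p$-complete), but as the excerpt explicitly remarks (end of \ref{subsec:Segal}), all the fixed-point and congruence machinery, and hence these results, carry over verbatim to $\pComp{A(S,S)}$; so no genuine obstacle arises, only bookkeeping. No separate argument is needed beyond quoting Lemma \ref{lem:DomFrobImpliesFree}, Proposition \ref{prop:RightAugRightFree}, and Theorem \ref{thm:FrobImpliesSat} in sequence.
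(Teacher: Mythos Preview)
Your proof is correct and follows essentially the same approach as the paper: the paper's proof is the two-line argument that Lemma \ref{lem:DomFrobImpliesFree} gives $\countS^R(X)$ not divisible by $p$, whence the result follows from Proposition \ref{prop:RightAugRightFree} and Theorem \ref{thm:FrobImpliesSat}. Your version is slightly more detailed (spelling out the congruence $\countS^R(X)\equiv\countS(X)\pmod p$ and the passage from $p$-local to $p$-complete), but the logical structure is identical.
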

\begin{proof}
By Lemma \ref{lem:DomFrobImpliesFree}, $\countS^R(X)$ is not divisible by $p$. The result now follows from Proposition \ref{prop:RightAugRightFree} and Theorem \ref{thm:FrobImpliesSat}.
\end{proof}

When working with dominant idempotents we can even remove the augmentation condition.

\begin{cor}
Let $S$ be a finite $p$-group, and let $\omega$ be a dominant idempotent in $\pComp{A(S,S)}$. If $\omega$ satisfies Frobenius reciprocity, then $\omega$ is a characteristic idempotent for  $\FusStab(\omega)$, which is saturated. 
\end{cor}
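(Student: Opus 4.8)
The plan is to reduce everything to Theorem \ref{thm:DomFrobImpliesSat}, the only missing hypothesis being that $\countS(\omega)$ is not divisible by $p$. Granting that for a moment: Theorem \ref{thm:DomFrobImpliesSat} shows $\FusRStab(\omega)$ is saturated and that $\omega$ is a right characteristic element for it; since $\omega$ is idempotent, the remark following Theorem \ref{thm:CharIdemExists} (which carries over to $\pComp{A(S,S)}$, cf.\ the discussion at the end of \ref{subsec:Segal}) makes $\omega$ fully characteristic, hence equal to the characteristic idempotent of $\FusRStab(\omega)$ by uniqueness, hence symmetric by Theorem \ref{thm:CharIdemExists}. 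By Lemma \ref{lem:StabOp} the three stabilizer fusion systems then coincide, so $\FusStab(\omega) = \FusRStab(\omega)$ is saturated with characteristic idempotent $\omega$. So the content is entirely in showing $\countS(\omega) \not\equiv 0 \pmod{p}$.

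Since $\countS$ is multiplicative and $\Zp$ is a domain, idempotence of $\omega$ forces $\countS(\omega) \in \{0,1\}$, and as $\countS([P,\psi]) = |S|/|P|$ is divisible by $p$ for $P < S$, we get $\countS(\omega) \equiv \sum_{[\alpha]} c_{\langle S,\alpha \rangle}(\omega) \pmod{p}$, the sum over $\Aut_S(S)$-classes of endomorphisms $\alpha$ of $S$. By Lemma \ref{lem:DomFrobImpliesFree} the non-injective terms vanish, so only automorphisms contribute, and I will show the remaining sum is $1$. Set $W \defeq \Aut(S)/\Aut_S(S)$. The double coset formula gives $[S,\alpha]\circ[S,\beta] = [S,\alpha\beta]$, and $[S,\alpha]$ depends only on the class of $\alpha$ in $W$, so the $\Zp$-span of the $[S,\alpha]$ is a subring isomorphic to $\Zp[W]$, on which $\countS$ restricts to the usual group-algebra augmentation. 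Under this identification the "top part'' $e_S$ of $\omega$ equals $c\sum_{w\in T} w$, where $T\subseteq W$ is the support; dominance of $\omega$ gives $e_S\ne 0$, and Lemma \ref{lem:FrobFixPtsNotFree}(a) together with Lemma \ref{lem:FixedPtsOnBasis} (giving $\Phi_{\langle S,\alpha\rangle}(\omega) = \Phi_{\langle S,\id\rangle}(\omega) = |Z(S)|\,c_{\langle S,\id\rangle}(\omega)$ whenever $c_{\langle S,\alpha\rangle}(\omega)\ne 0$) forces all nonzero coefficients to be equal to $c \defeq c_{\langle S,\id\rangle}(\omega)\ne 0$, with $\bar 1 \in T$.

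It remains to prove $e_S^2 = e_S$ and to run an elementary computation in $\Zp[W]$. Writing $\omega = e_S + e_<$ with $e_<$ supported on basis elements $[P,\psi]$, $P<S$, a direct inspection of the double coset formula shows that no product of two basis elements, at least one of which comes from $e_<$, has a component supported on a subgroup of order $|S|$ --- here one uses crucially that, by Lemma \ref{lem:DomFrobImpliesFree}, $e_S$ involves only automorphisms (otherwise $[P,\rho]\circ[S,\chi]$ with $\chi$ far from injective could produce a top component). Hence extracting the top part of $\omega^2=\omega$ gives $e_S^2 = e_S$. Comparing the coefficient of $\bar 1$ in $c^2\bigl(\sum_{w\in T}w\bigr)^2 = c\sum_{w\in T}w$ shows $c\,m = 1$ with $m \defeq |T\cap T^{-1}|$, so $c$ is a unit with $c^{-1}=m\in\Z_{>0}$; comparing the coefficient of an element $w\notin T$ shows $TT\subseteq T$, so the finite set $T$ containing the identity is a subgroup, whence $m = |T|$, $c = 1/|T|$ with $p\nmid |T|$, and the group-algebra augmentation of $e_S$ is $c|T| = 1$. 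Therefore $\countS(\omega)\equiv 1 \pmod{p}$, completing the proof. The main obstacle is precisely this last point, ruling out $\countS(\omega)=0$: this is where Frobenius reciprocity is essential, forcing the rigidity (all equal, and a genuine subgroup of automorphisms) of the top-dimensional part; the composition bookkeeping and the passage to $\Zp[W]$ around it are routine.
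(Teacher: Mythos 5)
Your argument is correct and follows essentially the same route as the paper. Both proofs reduce to Theorem~\ref{thm:DomFrobImpliesSat} by showing $\countS(\omega)\not\equiv 0\pmod p$, express $\countS(\omega)$ mod $p$ as the sum of the coefficients on the basis elements $[S,\alpha]$, invoke Lemmas~\ref{lem:DomFrobImpliesFree} and~\ref{lem:FrobFixPtsNotFree} to see the top part is supported only on automorphism classes with a single nonzero constant coefficient $c = c_{\langle S,\incl\rangle}(\omega)$, and then exploit idempotence of that top part in $\Zp\Out(S)$. The one place where the paper is slicker is the last step: since $J(S)$ is a two-sided ideal (noted right after its definition), the projection $\pi\colon\pComp{A(S,S)}\twoheadrightarrow\pComp{A(S,S)}/\pComp{J(S)}\cong\Zp\Out(S)$ is a $\Zp$-algebra homomorphism, so $\pi(\omega)$ is automatically idempotent, and its augmentation $|W|\cdot c$ is an idempotent in $\Zp$, hence $0$ or $1$; dominance rules out $0$. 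Your direct double-coset verification of $e_S^2=e_S$ and the hands-on $\Zp[W]$ computation (that $T$ is a subgroup with $c=1/|T|$) reach the same conclusion with a bit more bookkeeping --- and indeed you had already observed $\countS(\omega)\in\{0,1\}$ at the outset, which is exactly this dichotomy one level up, so the explicit identification of $T$ as a subgroup is more than strictly necessary.
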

\begin{proof}
The result follows from Theorem \ref{thm:DomFrobImpliesSat} if we can show that $\omega$ has augmentation not divisible by $p$. We have 
 \[ \countS(\omega) \equiv \sum_{\psi \in W} c_{ \langle S,\psi \rangle }(\omega) \quad  \modp, \]
where $W$ is the set of conjugacy classes of homomorphisms $\psi \colon S \to S$ with $c_{ \langle S,\psi \rangle }(\omega) \neq 0 $. By Lemma \ref{lem:DomFrobImpliesFree} we have $W \subseteq \Out(S)$. Using \eqref{eq:PhiAndc}, Lemma \ref{lem:FrobFixPtsNotFree} then implies that for all $\varphi \in W$, we have $c_{ \langle S,\varphi \rangle }(\omega) = c_{ \langle S,\incl \rangle }(\omega) $, and hence
 \[ \countS(\omega) \equiv |W| \cdot c \quad  \modp \]
where $c = c_{ \langle S,\incl \rangle }(\omega)$ is a nonzero constant.

Now consider the projection 
 \[ \pi \colon \pComp{A(S,S)} \twoheadrightarrow \pComp{A(S,S)} / \pComp{J(S)} \cong \Zp \Out(S). \]
This is a homomorphism of $\Zp$-algebras, so $\pi(\omega) = \sum_{\varphi \in W} c \cdot \varphi$ is an idempotent in $\Zp \Out(S)$ with augmentation $|W| \cdot c$. Hence $|W| \cdot c$ equals $0$ or $1$. The former would imply that $|W|$ or $c$ is zero, contradicting dominance, and hence $|W| \cdot c = 1$. Consequently $\countS(\omega)$ is not divisible by $p$, and the result follows from Theorem \ref{thm:DomFrobImpliesSat}. (In fact $\countS(\omega) = 1$ since $\omega$ is idempotent.)
\end{proof}

We now get a stable-homotopy-friendly version of the bijection in Corollary \ref{cor:Bijection}.

\begin{cor} \label{cor:BijectionForStable}
For a finite $p$-group $S$ there is a bijective correspondence between saturated fusion systems on $S$ and dominant idempotents in $\pComp{A(S,S)}$ that satisfy Frobenius reciprocity. The bijection sends a saturated fusion system to its characteristic idempotent, and an idempotent to its stabilizer fusion system.
\end{cor}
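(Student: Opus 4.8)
The plan is to assemble the corollary from the ``dominant'' versions of the results established in Sections \ref{sec:FrobImpliesSat} and \ref{subsec:Relax}, in exactly the way Corollary \ref{cor:Bijection} is obtained from Theorem \ref{thm:FrobImpliesSat} in the $p$-local setting. The two candidate maps are: send a saturated fusion system $\F$ on $S$ to its characteristic idempotent $\omega_\F$ (which exists and is unique by Theorem \ref{thm:CharIdemExists}, and lies in $A(S,S)\pLoc \subseteq \pComp{A(S,S)}$), and send a dominant idempotent $\omega \in \pComp{A(S,S)}$ satisfying Frobenius reciprocity to its stabilizer fusion system $\FusStab(\omega)$. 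I would verify first that each map lands in the prescribed target, and then that the two are mutually inverse.

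For the first map, idempotence of $\omega_\F$ holds by definition and Frobenius reciprocity by Proposition \ref{prop:CharIsFrob}; the only point requiring an argument is dominance. Here I would use that the Nishida ideal $J(S)$ is spanned, as a $\Z$-submodule, by the standard basis elements $[P,\psi]$ with $\psi(P) < S$, so that $\pComp{A(S,S)}/\pComp{J(S)} \cong \Zp\Out(S)$ with the class of $X$ recorded by the coefficients $c_{\langle S,\varphi \rangle}(X)$, $\varphi \in \Aut(S)$. Since $\omega_\F$ is $\F$-generated, these coefficients are supported on automorphisms lying in $\F$, and by Lemma \ref{lem:CoeffsOfIdem} they sum to $m_S(\omega_\F) = 1 \neq 0$; hence the image of $\omega_\F$ in the quotient is nonzero and $\omega_\F$ is dominant.

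For the second map and the two compositions: by the corollary immediately preceding the present statement, a dominant idempotent $\omega$ satisfying Frobenius reciprocity is a characteristic idempotent for the saturated fusion system $\FusStab(\omega)$, and uniqueness in Theorem \ref{thm:CharIdemExists} then forces $\omega = \omega_{\FusStab(\omega)}$, so the second map followed by the first is the identity. Conversely, $\omega_\F$ is a fully characteristic element for $\F$, so Theorem \ref{thm:StabOfChar} gives $\FusStab(\omega_\F) = \F$ (using Lemma \ref{lem:StabOp} and the symmetry of $\omega_\F$ to identify the left, right and full stabilizer systems); hence the first map followed by the second is the identity as well, and the correspondence is a bijection.

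The main obstacle --- really the only step that is not a direct citation --- is the dominance of $\omega_\F$. One has to be careful that an augmentation prime to $p$ does not by itself guarantee dominance (for instance $[S,\psi]$ for a non-injective endomorphism $\psi$ of $S$ has augmentation $1$ but lies in $J(S)$), so the argument must genuinely exploit $\F$-generation to locate a nonzero coefficient of $\omega_\F$ at a basis element outside the Nishida ideal. Everything else follows formally from Theorems \ref{thm:CharIdemExists} and \ref{thm:StabOfChar}, Proposition \ref{prop:CharIsFrob}, Lemma \ref{lem:CoeffsOfIdem}, and the preceding corollary.
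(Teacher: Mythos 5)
Your proof is correct and takes essentially the same route as the paper: the paper presents the corollary without an explicit proof, relying on the immediately preceding corollary together with the uniqueness in Theorem~\ref{thm:CharIdemExists} and Theorem~\ref{thm:StabOfChar}, exactly as you do. Your isolation of the dominance of $\omega_\F$ as the one step needing an argument is well taken, and your argument (via $\F$-generation forcing the coefficients at $\langle S,\psi\rangle$ to be supported on automorphisms, then $m_S(\omega_\F)=1$ from Lemma~\ref{lem:CoeffsOfIdem}) is correct; bifreeness of $\omega_\F$ would serve equally well in place of $\F$-generation.
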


\subsection{Pointed classifying spectra of saturated fusion systems} \label{subsec:ClSpectra}
When Linckelmann--Webb defined characteristic elements, part of their motivation was to construct classifying spectra for fusion systems. More precisely, if $\Omega$ is a characteristic element for a saturated fusion system $\F$ on a finite $p$-group $S$, then $\alpha(\Omega)$ is a stable selfmap of $BS_+$ that in $\Fp$-homology is idempotent up to scalar with image the $\F$-stable elements in $\widetilde{H}_*(BS_+,\Fp)$ (see \cite{BLO2}), and hence the mapping telescope of $\alpha(\Omega)$ is a stable summand of $BS_+$ that can be regarded as a classifying spectrum for $\F$. 

In \cite{KR:ClSpectra} more control was exercised over this construction by introducing the characteristic idempotent of a fusion system rather than using an arbitrary characteristic element. This resulted in a functorial assignment of classifying spectra for fusion systems that admits a transfer theory, and is consistent with the (unstable) homotopy theory of fusion system from \cite{BLO2}. However, an unfortunate choice was made in \cite{KR:ClSpectra} by constructing the classifying spectrum as a stable summand of $BS$, rather than $BS_+$, in order to bypass technicalities arising from basepoint issues. In this subsection we recall the construction and some basic properties of classifying spectra for saturated fusion systems, and also use the opportunity to remedy the oversight of \cite{KR:ClSpectra} by introducing ``pointed'' classifying spectra in this account.

We start by observing that if $\F$ is a saturated fusion system on a finite $p$-group $S$, then the characteristic idempotent $\omega_\F$ can, by Proposition \ref{prop:IAndpComp}, be regarded as an element of $\IComp{A(S,S)}$, since it has augmentation 1. Hence there is a corresponding stable map $\tilde{\omega}_\F \defeq \alpha(\omega_\F) \colon \Stable BS_+ \to \Stable BS_+$, which we call the \emph{stable characteristic idempotent.} The \emph{classifying spectrum} of $\F$ is defined as the stable summand carved out of $BS_+$ by $\tilde{\omega}_\F$ via the standard mapping telescope construction
 \[ \ClSpectrum{\F}_+ 
    \defeq \Tel(\tilde{\omega}_\F )
    \defeq \operatorname{HoColim} \left( \Stable BS_+ \xrightarrow{\tilde{\omega}_\F} \Stable BS_+ \xrightarrow{\tilde{\omega}_\F} \cdots \right) \, .\]
We denote the structure map of the homotopy colimit by $\sigma_\F \colon \Stable{BS_+} \to \ClSpectrum{\F}_+ $ and refer to it as the \emph{structure map} of the classifying spectrum. There is a unique (up to homotopy) map $t_\F \colon \ClSpectrum{\F}_+ \to \Stable{BS_+}$ such that $\sigma_\F \circ t_\F \simeq 1_{\ClSpectrum{\F}_+}$ and $t_\F \circ \sigma_\F \simeq \tilde{\omega}_\F$, to which we refer as a \emph{transfer map}. Classifying spectra are functorial with respect to fusion-preserving homomorphisms. A fusion-preserving monomorphism also gives rise to a transfer between classifying spectra, and this transfer construction is functorial.

Restricting the diagonal map $\Delta_S$ of $\Stable{BS_+}$ to $\ClSpectrum{\F}_+$, one obtains a map 
\[ \Delta_\F \defeq (\sigma_\F \wedge \sigma_\F) \circ \Delta_S \circ t_\F \colon \ClSpectrum{\F}_+ \to \ClSpectrum{\F}_+ \wedge \ClSpectrum{\F}_+ \, .\]
The Frobenius reciprocity relation for $\omega_\F$ implies that $\Delta_\F$ is coassociative up to homotopy, so we can think of $\Delta_\F$ as a homotopy diagonal map of $\ClSpectrum{\F}_+$. Frobenius reciprocity for $\omega_\F$ also implies 
 \[ \Delta_\F \circ \sigma_\F \simeq (\sigma_\F \wedge \sigma_\F) \circ \Delta_S, \] \
and the Frobenius reciprocity relation
 \[ (1_{\ClSpectrum{\F}_+}\wedge t_\F) \circ \Delta_{\F} \simeq (\sigma_\F \wedge 1_{\Stable{BS_+}}) \circ \Delta_{S} \circ t_\F. \]

As a consequence of the universal stable element theorem (Theorem \ref{thm:UnivStable}), one can determine the homotopy classes of stable maps to or from classifying spectra of saturated fusion systems.
 
\begin{cor}[\cite{KR:ClSpectra}]  \label{cor:MapsToBF}
Let $\F$ be a saturated fusion system on a finite $p$-group $S$, and let $E$ be any spectrum. The maps
 \[  E^*(\sigma_{\F}) \colon E^*(\PtdClSpectrum{\F}) \longrightarrow  E^*(\Stable BS_+) 
     \quad \text{and} \quad 
     E_*(t_{\F}) \colon E_*(\PtdClSpectrum{\F}) \longrightarrow  E_*(\Stable BS_+)  \]
are split injections with image the $\F$-stable elements in $E^*(\Stable BS_+) $ and $E_*(\Stable BS_+)$, respectively. Furthermore, if $E^*$ is a ring spectrum, then $E^*(\sigma_{\F})$ is a map of algebras, and $E^*(t_{\F})$ is a map of $ E^*(\PtdClSpectrum{\F})$-modules.
\end{cor}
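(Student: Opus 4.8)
The plan is to reduce the statement to the universal stable element theorem (Theorem~\ref{thm:UnivStable}) together with the defining properties of $\ClSpectrum{\F}_+$ recalled in~\ref{subsec:ClSpectra}. The first point is that for any spectrum $E$ and any finite group $G$, the graded group $E^*(\Stable BG_+)$ is a module over the stable endomorphism ring $\{BG_+,BG_+\}$, which by the Segal conjecture (Theorem~\ref{thm:Segal}) and Proposition~\ref{prop:IAndpComp} is $\pComp{A(G,G)}$; in particular $E^*(\Stable BG_+)$ is a $\Zp$-module, so the assignment $G \mapsto E^*(\Stable BG_+)$ is a globally-defined, $p$-local, contravariant Mackey functor, and likewise $G \mapsto E_*(\Stable BG_+)$ is a covariant one. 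Applying Theorem~\ref{thm:UnivStable} with $M = E^*(\Stable B(-)_+)$ (resp.\ $M = E_*(\Stable B(-)_+)$), and noting that $M(\omega_\F)$ is the map induced by $\tilde\omega_\F = \alpha(\omega_\F)$, we obtain: an element of $E^*(\Stable BS_+)$ is $\F$-stable if and only if it is fixed by $E^*(\tilde\omega_\F)$, and an element of $E_*(\Stable BS_+)$ is $\F$-stable if and only if it is fixed by $E_*(\tilde\omega_\F)$.

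Next I would run the standard mapping-telescope bookkeeping. Since $\alpha$ is multiplicative on the completed Burnside rings and $\omega_\F$ is idempotent, $\tilde\omega_\F$ is homotopy idempotent, so $E^*(\tilde\omega_\F)$ and $E_*(\tilde\omega_\F)$ are idempotent and their images coincide with their fixed sets. Applying $E^*$ to the identities $\sigma_\F \circ t_\F \simeq 1_{\ClSpectrum{\F}_+}$ and $t_\F \circ \sigma_\F \simeq \tilde\omega_\F$ gives $E^*(t_\F)\circ E^*(\sigma_\F) = \id$ and $E^*(\sigma_\F)\circ E^*(t_\F) = E^*(\tilde\omega_\F)$. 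The first identity exhibits $E^*(\sigma_\F)$ as a split injection with retraction $E^*(t_\F)$; together they show that the image of $E^*(\sigma_\F)$ equals the image of $E^*(\tilde\omega_\F)$, which by the first paragraph is precisely the submodule of $\F$-stable elements. The homology statement is obtained in the same way from $E_*(\sigma_\F)\circ E_*(t_\F) = \id$ and $E_*(t_\F)\circ E_*(\sigma_\F) = E_*(\tilde\omega_\F)$, exhibiting $E_*(t_\F)$ as a split injection onto the $\F$-stable elements of $E_*(\Stable BS_+)$.

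For the multiplicative part, assume $E$ is a ring spectrum, so that the cup products on $E^*(\Stable BS_+)$ and on $E^*(\ClSpectrum{\F}_+)$ are induced by $\Delta_S$ and $\Delta_\F$ respectively. The relation $\Delta_\F \circ \sigma_\F \simeq (\sigma_\F \wedge \sigma_\F)\circ \Delta_S$ from~\ref{subsec:ClSpectra} shows, after applying $E^*$, that $E^*(\sigma_\F)$ carries one product to the other, so it is a map of graded $E^*$-algebras; being a split monomorphism, it identifies $E^*(\ClSpectrum{\F}_+)$ with the $\F$-stable subalgebra. Equipping $E^*(\Stable BS_+)$ with the $E^*(\ClSpectrum{\F}_+)$-module structure through this algebra map, the assertion that $E^*(t_\F)$ is a module homomorphism is the projection formula $E^*(t_\F)\bigl(E^*(\sigma_\F)(a)\cdot m\bigr) = a\cdot E^*(t_\F)(m)$, which follows by applying $E^*$ to the Frobenius reciprocity relation $(1_{\ClSpectrum{\F}_+}\wedge t_\F)\circ\Delta_\F \simeq (\sigma_\F\wedge 1_{\Stable BS_+})\circ\Delta_S\circ t_\F$ for the classifying spectrum.

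The only genuinely delicate point, I expect, is the first paragraph: one must be sure that $E^*(\Stable B(-)_+)$ and its homological analogue really are $p$-local Mackey functors on which the characteristic idempotent $\omega_\F \in A(S,S)\pLoc$ acts through $\tilde\omega_\F = \alpha(\omega_\F)$, so that Theorem~\ref{thm:UnivStable} applies verbatim; granting this, the remainder is formal manipulation with the telescope and the diagonal maps of the classifying spectrum.
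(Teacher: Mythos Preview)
Your argument is correct and follows essentially the same route as the paper: use the telescope identities $\sigma_\F\circ t_\F\simeq 1$ and $t_\F\circ\sigma_\F\simeq\tilde\omega_\F$ to exhibit $E^*(\sigma_\F)$ (resp.\ $E_*(t_\F)$) as a split injection with image the image of $E^*(\tilde\omega_\F)$ (resp.\ $E_*(\tilde\omega_\F)$), then invoke the universal stable element theorem to identify that image with the $\F$-stable elements, and finally deduce the multiplicative claims from the diagonal compatibility $\Delta_\F\circ\sigma_\F\simeq(\sigma_\F\wedge\sigma_\F)\circ\Delta_S$ and the Frobenius reciprocity relation for $t_\F$. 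Your write-up is simply more explicit than the paper's one-paragraph proof; the only place to be slightly careful is your claim that $E^*(\Stable BS_+)$ is a $\Zp$-module via $\{BS_+,BS_+\}\cong\pComp{A(S,S)}$, since Proposition~\ref{prop:IAndpComp} gives $\{BS_+,BS_+\}\cong\IComp{A(S,S)}$ rather than the full $p$-completion, but this does not affect the argument because $\omega_\F$ has augmentation $1$ and hence $\tilde\omega_\F$ is a genuine stable selfmap acting on $E^*(\Stable BS_+)$.
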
 
\begin{proof}
From $\sigma_{\F} \circ t_{\F} \simeq 1_{\PtdClSpectrum{\F}}$ and 
$t_{\F} \circ \sigma_{\F} \simeq \tilde{\omega}_{\F}$ one deduces 
that $E^*(\sigma_{\F})$ is a split injection with image 
$E^*(\tilde{\omega}_{\F})E^*(\Stable BS_+)$, which, by the universal stable 
element theorem, consists of the $\F$-stable elements in 
$E^*(\Stable BS_+)$. The homological statement is proved analogously. 
The last claim follows from Frobenius reciprocity.
\end{proof} 
 
In particular, applying Corollary \ref{cor:MapsToBF} twice, one obtains 
a description of the group of homotopy classes of stable maps between
classifying spectra.
\begin{cor} \label{cor:[BF,BF]}
If $\F_1$ and $\F_2$ are saturated fusion systems on finite $p$-groups $S_1$ and $S_2$, respectively, then the map
 \[ [\PtdClSpectrum{\F_1},\PtdClSpectrum{\F_2}] \longrightarrow [\Stable {BS_1}_+,\Stable {BS_2}_+]\, , 
 \quad \quad f \longmapsto t_{\F_2} \circ f \circ \sigma_{\F_1}, \]
is a split injection with image the $(\F_1,\F_2)$-stable maps in $[\Stable {BS_1}_+,\Stable {BS_1}_+]$.
\end{cor}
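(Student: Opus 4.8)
The plan is to deduce Corollary~\ref{cor:[BF,BF]} by applying Corollary~\ref{cor:MapsToBF} twice, first with the spectrum $E$ chosen to be a classifying spectrum and then unwinding what ``$\F$-stable'' means on both sides. Concretely, for fixed $\F_2$ on $S_2$, specialize Corollary~\ref{cor:MapsToBF} to the spectrum $E = \PtdClSpectrum{\F_2}$: it says that the map
\[
  [\,-\, , \PtdClSpectrum{\F_2}]\big(\sigma_{\F_2}\big)\colon [\,\Stable{BS_1}_+,\PtdClSpectrum{\F_2}\,] \longrightarrow [\,\Stable{BS_1}_+, \Stable{BS_2}_+\,]
\]
is a split injection with image the $\F_2$-stable elements of $[\,\Stable{BS_1}_+, \Stable{BS_2}_+\,]$, meaning those $g$ with $g \cdot [P,\varphi]_P^{S_2} = g \cdot [P,\incl]_P^{S_2}$ for all $\varphi \in \Hom_{\F_2}(P,S_2)$ (this is $\F_2$-stability of $g$ viewed in the appropriate Mackey functor, i.e.\ postcomposition with $\tilde\omega_{\F_2}$ fixes $g$). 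On the source of this map, apply Corollary~\ref{cor:MapsToBF} again, now to the spectrum $E = \PtdClSpectrum{\F_2}$ but with classifying spectrum $\PtdClSpectrum{\F_1}$: the map $[\,-\,,\PtdClSpectrum{\F_2}]\big(\sigma_{\F_1}\big) \colon [\,\PtdClSpectrum{\F_1},\PtdClSpectrum{\F_2}\,] \to [\,\Stable{BS_1}_+,\PtdClSpectrum{\F_2}\,]$ is a split injection with image the $\F_1$-stable elements. Composing the two split injections gives a split injection
\[
  [\,\PtdClSpectrum{\F_1},\PtdClSpectrum{\F_2}\,] \longrightarrow [\,\Stable{BS_1}_+,\Stable{BS_2}_+\,], \qquad f \longmapsto t_{\F_2} \circ f \circ \sigma_{\F_1},
\]
where I have used $[\,-\,,\PtdClSpectrum{\F_2}](\sigma_{\F_2})$ sends $h$ to $t_{\F_2}\circ h$ (this is precisely the splitting formula from the proof of Corollary~\ref{cor:MapsToBF}, since $t_{\F_2} = $ the retraction induced by $\sigma_{\F_2}$), and similarly precomposition with $\sigma_{\F_1}$ realizes $[\,-\,,\PtdClSpectrum{\F_2}](\sigma_{\F_1})$. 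That the formula is exactly $f \mapsto t_{\F_2}\circ f \circ \sigma_{\F_1}$ follows by tracking the two identifications.

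It then remains to identify the image as the set of $(\F_1,\F_2)$-stable maps in $[\Stable{BS_1}_+,\Stable{BS_2}_+]$. The composite is in the image iff it is both $\F_2$-stable (from the first application, a right-module condition over $A(S_2,S_2)\pLoc$) and the preimage in $[\Stable{BS_1}_+,\PtdClSpectrum{\F_2}]$ is $\F_1$-stable (from the second application, a left-module condition over $A(S_1,S_1)\pLoc$). Since $\sigma_{\F_2}$ is injective on the relevant stable maps and $\F_1$-stability in the source translates, via $[\,-\,,\PtdClSpectrum{\F_2}](\sigma_{\F_2})$ being an algebra/module map, to the corresponding left-stability of the image, these two conditions together are exactly the definition of an $(\F_1,\F_2)$-stable map: $[P,\varphi]_{S_1}^{S_1}$-invariance on the left for $\varphi \in \Hom_{\F_1}$ and $[Q,\psi]_{S_2}^{S_2}$-invariance on the right for $\psi \in \Hom_{\F_2}$. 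Here one uses the version of the universal stable element theorem for $A(S,S)\pLoc$-modules (Theorem~\ref{thm:UnivStableII}) applied to $[\Stable{BS_1}_+,\Stable{BS_2}_+] \cong \pComp{A(S_1,S_2)}$ (via the Segal conjecture, Theorem~\ref{thm:Segal}, and Proposition~\ref{prop:IAndpComp}), which is both a left $A(S_1,S_1)\pLoc$-module and a right $A(S_2,S_2)\pLoc$-module, so that $\F_1$-stability on the left is equivalent to $\omega_{\F_1}\cdot g = g$ and $\F_2$-stability on the right to $g\cdot \omega_{\F_2}=g$.

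I expect the only genuine subtlety — the ``hard part'' — to be the bookkeeping in verifying that the composite of the two abstractly-produced split injections is literally the map $f \mapsto t_{\F_2}\circ f\circ\sigma_{\F_1}$, and that the two stability conditions assemble correctly into $(\F_1,\F_2)$-stability without double-counting or missing compatibility between the left and right module structures. This is essentially a diagram chase combining the identities $\sigma_{\F_i}\circ t_{\F_i}\simeq 1$, $t_{\F_i}\circ\sigma_{\F_i}\simeq\tilde\omega_{\F_i}$, and the commutation of the two telescope constructions with composition; once those are set up the commutativity of precomposition with $\sigma_{\F_1}$ and postcomposition with $t_{\F_2}$ makes the identification automatic. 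No estimate or delicate analytic step is needed; everything reduces to the already-established universal stable element theorems and the Segal conjecture.
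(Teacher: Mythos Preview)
Your strategy is exactly the paper's: the corollary is obtained by applying Corollary~\ref{cor:MapsToBF} twice, once on each side, and composing the resulting split injections. However, the execution has two slips that need correcting.

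First, your specialization for the $\F_2$ step is mis-stated. Taking $E = \PtdClSpectrum{\F_2}$ and applying the cohomological part $E^*(\sigma_{\F_2})$ of Corollary~\ref{cor:MapsToBF} yields a map $[\PtdClSpectrum{\F_2},\PtdClSpectrum{\F_2}] \to [\Stable{BS_2}_+,\PtdClSpectrum{\F_2}]$, which does not involve $\Stable{BS_1}_+$ at all; and the functor $[\,-\,,\PtdClSpectrum{\F_2}]$ applied to $\sigma_{\F_2}$ is certainly not ``postcomposition with $t_{\F_2}$''. What you actually want is the \emph{homological} half of Corollary~\ref{cor:MapsToBF} with $\F = \F_2$, applied to a spectrum $E$ for which $E_0(X) \cong [\Stable{BS_1}_+,X]$ (or, more simply, just rerun the proof of Corollary~\ref{cor:MapsToBF} directly for the left $A(S_2,S_2)\pLoc$-module $[\Stable{BS_1}_+,-]$ using Theorem~\ref{thm:UnivStableII}). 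That gives the split injection $h \mapsto t_{\F_2} \circ h$ from $[\Stable{BS_1}_+,\PtdClSpectrum{\F_2}]$ into $[\Stable{BS_1}_+,\Stable{BS_2}_+]$ with image the left $\F_2$-stable elements. Your second application, with $E = \PtdClSpectrum{\F_2}$ and $\F = \F_1$, is correctly set up.

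Second, your left/right conventions are reversed. In the paper's conventions $[\Stable{BS_1}_+,\Stable{BS_2}_+] \cong \IComp{A(S_1,S_2)}$ is a \emph{left} $A(S_2,S_2)\pLoc$-module by postcomposition and a \emph{right} $A(S_1,S_1)\pLoc$-module by precomposition. Hence $\F_2$-stability is the left-module condition $\omega_{\F_2}\cdot g = g$, and $\F_1$-stability is the right-module condition $g\cdot\omega_{\F_1} = g$; you have these swapped at the end. This does not affect the substance of the argument, only the labels, and once corrected the image identification goes through exactly as you describe.
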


Corollary \ref{cor:[BF,BF]} can be regarded as a generalization of the Segal conjecture to fusion systems. We strengthen this analogy by reformulating the result as a completion theorem. This was done independently by Diaz--Libman in \cite{DL} in the case where $S_2 =1$, so 
$[\PtdClSpectrum{\F_1},\PtdClSpectrum{\F_2}] = [\PtdClSpectrum{\F_1},\SphereSpectrum]$ is the zeroth cohomotopy group of $\PtdClSpectrum{\F_1}$, which matches Segal's original formulation. To this end, we define $A(\F_1,\F_2)$ to be the set of $(\F_1,\F_2)$-stable elements in $A(S_1,S_2)$, meaning elements that are left $\F_2$-stable and right $\F_1$-stable. Observe that $A(\F_1,\F_2)$ is a $\Z$-submodule of $A(S_1,S_2)$, but not an $A(S_1)$-submodule. However, setting $A(\F_1) \subseteq A(S_1)$ to be the subring of $\F_1$-stable elements, the $A(S_1)$-action on $A(S_1,S_2)$ restricts to an $A(\F_1)$-action on $A(\F_1,\F_2)$. The augmentation of $A(S_1)$ restricts to an augmentation of $A(\F_1)$, and we write $I(\F_1)$ for the augmentation ideal.

\begin{cor} \label{cor:FusionSegal}
If $\F_1$ and $\F_2$ are saturated fusion systems on finite $p$-groups $S_1$ and $S_2$, respectively, then the map
\[ A(\F_1,\F_2) \longrightarrow [\PtdClSpectrum{\F_1},\PtdClSpectrum{\F_2}] \, , \quad \quad X \mapsto \sigma_{\F_2} \circ \alpha(X) \circ t_{\F_1}  \]
is an $I(\F_1)$-adic completion map.
\end{cor}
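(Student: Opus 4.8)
The plan is to deduce this from the ordinary Segal conjecture for $S_1$ and $S_2$ (Theorem \ref{thm:Segal}), the description of $[\PtdClSpectrum{\F_1},\PtdClSpectrum{\F_2}]$ as a direct summand of $[\Stable{BS_1}_+,\Stable{BS_2}_+]$ from Corollary \ref{cor:[BF,BF]}, and a May--McClure-style analysis of the $I(\F_1)$-adic topology on $A(\F_1,\F_2)$. Write $N\defeq[\Stable{BS_1}_+,\Stable{BS_2}_+]$ and let $e\colon N\to N$ be the endomorphism $f\mapsto\tilde{\omega}_{\F_2}\circ f\circ\tilde{\omega}_{\F_1}$. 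Since $\tilde{\omega}_{\F_1}$ and $\tilde{\omega}_{\F_2}$ are idempotent, so is $e$, and by (the proof of) Corollary \ref{cor:[BF,BF]} the split injection $\iota\colon[\PtdClSpectrum{\F_1},\PtdClSpectrum{\F_2}]\hookrightarrow N$, $f\mapsto t_{\F_2}\circ f\circ\sigma_{\F_1}$, identifies $[\PtdClSpectrum{\F_1},\PtdClSpectrum{\F_2}]$ with $eN$. First I would check that $\iota$ carries the map $\Psi\colon X\mapsto\sigma_{\F_2}\circ\alpha(X)\circ t_{\F_1}$ of the corollary onto the restriction of the Segal map: for $X\in A(\F_1,\F_2)$ one has $\iota(\Psi(X))=\tilde{\omega}_{\F_2}\circ\alpha(X)\circ\tilde{\omega}_{\F_1}$, and since $\alpha$ is compatible with composition it sends the $(\F_1,\F_2)$-stable element $X$ to an $(\F_1,\F_2)$-stable stable map, so the universal stable element theorem (Theorem \ref{thm:UnivStableII}, which holds verbatim for $\pComp{A(S_1,S_2)}$ viewed as a left $\pComp{A(S_2,S_2)}$-module and a right $\pComp{A(S_1,S_1)}$-module) gives $\tilde{\omega}_{\F_2}\circ\alpha(X)=\alpha(X)=\alpha(X)\circ\tilde{\omega}_{\F_1}$, whence $\iota(\Psi(X))=\alpha(X)$. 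Thus $\iota\circ\Psi$ is the restriction of $\alpha$ to $A(\F_1,\F_2)$, and as $\iota$ is an isomorphism onto $eN$ it suffices to prove that $\alpha\colon A(\F_1,\F_2)\to eN$ is an $I(\F_1)$-adic completion map. (Here $eN$ is an $A(\F_1)$-module by restricting the cartesian-product $A(S_1)$-action on $N$ along $A(\F_1)\hookrightarrow A(S_1)$; that this action preserves $(\F_1,\F_2)$-stability is the same formal fact used for $A(S_1,S_2)$, and $\alpha$ is $A(\F_1)$-linear.)

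Next I would identify $eN$ algebraically. By Theorem \ref{thm:Segal} and Proposition \ref{prop:IAndpComp}, $N$ is the submodule of elements of integer augmentation in $\pComp{A(S_1,S_2)}=\Zp\otimes A(S_1,S_2)$; since $\countS(\omega_{\F_i})=1$ the idempotent $e$ preserves augmentation, so $eN$ is the integer-augmentation part of $e\cdot\pComp{A(S_1,S_2)}$. By the universal stable element theorem for $\pComp{}$-modules, $e\cdot\pComp{A(S_1,S_2)}$ equals the submodule of $(\F_1,\F_2)$-stable elements of $\pComp{A(S_1,S_2)}$; as this submodule is the kernel of finitely many homogeneous $\Z$-linear maps and $\Zp$ is flat over $\Z$, it equals $\Zp\otimes A(\F_1,\F_2)=\pComp{A(\F_1,\F_2)}$. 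Hence $eN$ is the integer-augmentation submodule of $\pComp{A(\F_1,\F_2)}$, and $\alpha\colon A(\F_1,\F_2)\hookrightarrow eN$ is the tautological inclusion; so it only remains to prove the exact analogue of Proposition \ref{prop:IAndpComp} with $(P,G)$ replaced by $(\F_1,\F_2)$.

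For that I would argue as follows. The ring $A(\F_1)$ is a finitely generated $\Z$-module, hence Noetherian, and $A(\F_1,\F_2)$ is a finitely generated $A(\F_1)$-module, so the Artin--Rees lemma applies and $\pComp{A(\F_1,\F_2)}=\Zp\otimes A(\F_1,\F_2)$. Since $A(S_1)$ is torsion-free and $\F_1$-stability is cut out by homogeneous $\Z$-linear equations, one has $pI(S_1)\cap A(\F_1)=pI(\F_1)$; combined with the May--McClure bound $I(S_1)^{n+1}\subseteq pI(S_1)$ (where $p^n=|S_1|$) this gives $I(\F_1)^{n+1}\subseteq pI(\F_1)$, so the $I(\F_1)$-adic topology on $A(\F_1,\F_2)$ is finer than the $p$-adic one and $\IComp{A(\F_1,\F_2)}\hookrightarrow\pComp{A(\F_1,\F_2)}$. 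To pin down the image I would replay the proof of Proposition \ref{prop:IAndpComp} on the short exact sequence $K\to A(\F_1,\F_2)\xrightarrow{\countS}\countS(A(\F_1,\F_2))$ of $A(\F_1)$-modules, where $K=\ker(\countS)$: the quotient is a subgroup of $\Z$ on which $I(\F_1)$ acts through the augmentation of $A(\F_1)$, hence trivially, so it is already $I(\F_1)$-adically complete, and the five lemma applied to the $I(\F_1)$-adically and $p$-adically completed sequences identifies $\IComp{A(\F_1,\F_2)}$ with the integer-augmentation submodule of $\pComp{A(\F_1,\F_2)}$, i.e. with $eN$.

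The one input this argument still needs, and the part I expect to be the main obstacle, is the fusion-theoretic analogue of the key May--McClure estimate: that the $I(\F_1)$-adic and $p$-adic topologies agree on $K=\ker(\countS\colon A(\F_1,\F_2)\to\Z)$, equivalently that $p^mK\subseteq I(\F_1)^nK$ for some $m=m(n)$. One inclusion is the refinement bound above; for the other I would try to bootstrap from the group-level statement for $A(S_1,S_2)$ (with the case $S_2=1$ essentially being the computation of Diaz--Libman \cite{DL}, and the general case reducing to it by a transfer argument as in Lewis--May--McClure \cite{LMM}). The difficulty is that $A(\F_1,\F_2)$ sits in $A(S_1,S_2)$ only as an $A(\F_1)$-submodule, not as an $A(S_1)$-submodule, so the Artin--Rees lemma does not directly transport the $I(S_1)$-filtration of $A(S_1,S_2)$ to the $I(\F_1)$-filtration of $A(\F_1,\F_2)$; closing that gap (for instance using a fully characteristic element of $\F_1$ to mediate between the two filtrations) is the technical heart of the proof.
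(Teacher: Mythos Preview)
Your overall architecture matches the paper's proof: identify $[\PtdClSpectrum{\F_1},\PtdClSpectrum{\F_2}]$ with the $(\F_1,\F_2)$-stable summand of $[\Stable{BS_1}_+,\Stable{BS_2}_+]$ via Corollary~\ref{cor:[BF,BF]}, reduce via Proposition~\ref{prop:IAndpComp} to showing that $\pComp{A(\F_1,\F_2)}$ coincides with the $(\F_1,\F_2)$-stable elements of $\pComp{A(S_1,S_2)}$, and then invoke a fusion analogue of the May--McClure description of $I$-adic completion. So in outline you and the paper agree.

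There are two points of divergence worth noting. First, for the identification of $\pComp{A(\F_1,\F_2)}$ with the stable elements of $\pComp{A(S_1,S_2)}$, you use flatness of $\Zp$ over $\Z$ applied to the finite system of $\Z$-linear equations cutting out stability; the paper instead gives a direct Cauchy-sequence argument, approximating a stable element $X\in\pComp{A(S_1,S_2)}$ by $\Omega_{2,n}\circ X_n\circ\Omega_{1,n}$ where $(X_n)\to X$ and $(\Omega_{i,n})$ are integral characteristic elements converging to $\omega_{\F_i}$. Your flatness argument is cleaner and more conceptual; the paper's argument has the virtue of exhibiting explicit integral approximants without invoking module-theoretic generalities.

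Second, and more interestingly, the step you single out as the ``main obstacle'' --- that the $I(\F_1)$-adic and $p$-adic topologies agree on $\ker(\countS)\subseteq A(\F_1,\F_2)$ --- is precisely the step the paper does \emph{not} prove: it simply asserts that ``a similar argument'' to Proposition~\ref{prop:IAndpComp} identifies $A(\F_1,\F_2)^{\wedge}_{I(\F_1)}$ with the integer-augmentation part of $\pComp{A(\F_1,\F_2)}$. Your derivation of $I(\F_1)^{n+1}\subseteq pI(\F_1)$ from the May--McClure bound and torsion-freeness is correct and handles one direction; your honest acknowledgment that the reverse inclusion on $K$ needs work (and your suggestion to mediate via a characteristic element or to invoke \cite{DL}) goes beyond what the paper supplies. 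In short, you are being more scrupulous here than the published proof.
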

\begin{proof}
By Corollary \ref{cor:[BF,BF]}, $[\PtdClSpectrum{\F_1},\PtdClSpectrum{\F_2}]$ is isomorphic to the submodule of $(\F_1,\F_2)$-stable maps in $[\Stable {BS_1}_+,\Stable {BS_1}_+]$. The claim is proved by showing that we can identify $A(\F_1,\F_2)_{I(\F_1)}^{\wedge}$ with the submodule of $(\F_1,\F_2)$-stable elements in $A(S_1,S_2)_{I(S_1)}^{\wedge}$. By Proposition \ref{prop:IAndpComp}, 
$A(S_1,S_2)_{I(S_1)}^{\wedge}$ can be identified with the submodule of $\pComp{A(S_1,S_2)}$ consisting of elements with integer augmentaion, and a similar argument allows us to identify $A(\F_1,\F_2)_{I(\F_1)}^{\wedge}$ with the submodule of $\pComp{A(\F_1,\F_2)}$ consisting of elements with integer augmentation. Thus it suffices to show that $\pComp{A(\F_1,\F_2)}$ consists of the $(\F_1,\F_2)$-stable elements in $\pComp{A(S_1,S_2)}$.

As $(\F_1,\F_2)$-stability is determined by a system of linear equations, the set of $(\F_1,\F_2)$-stable elements form a closed subspace in $\pComp{A(S_1,S_2)}$. It follows that every element in $\pComp{A(\F_1,\F_2)}$ is $(\F_1,\F_2)$-stable.

Conversely, suppose that $X \in \pComp{A(S_1,S_2)}$ is a $(\F_1,\F_2)$-stable element. Then $X$ is the limit of a $p$-adic Cauchy sequence $(X_n)$ in $A(S_1,S_2)$. Recall from \cite{KR:ClSpectra} that $\omega_{\F_1}$ is the limit of a $p$-adic Cauchy sequence $(\Omega_{1,n})$ in $A(S_1)$, where every $\Omega_{1,n}$ is a characteristic element for $\F_1$, and similarly $\omega_{\F_2}$ is the limit of a Cauchy sequence $(\Omega_{2,n})$ of elements in $A(S_2)$ that are characteristic for $\F_2$. Now consider the sequence $(Y_n) = (\Omega_{2,n} \circ X_n \circ \Omega_{1,n})$. As $ \Omega_{1,n}$ and $ \Omega_{2,n}$ are $\F_1$-stable and $\F_2$-stable, respectively, this is a sequence in $A(\F_1,\F_2)$. As composition is continuous in the $p$-adic topology, we deduce that $(Y_n)$ is a Cauchy sequence with limit $\omega_{\F_1} \circ X \circ \omega_{\F_2} = X$, so $X \in \pComp{A(\F_1,\F_2)}$.
\end{proof}
 
A $\Zp$-basis for $\pComp{A(\F_1,\F_2)}$ is given in \cite[Proposition 5.2]{KR:ClSpectra}, and from this one can deduce the $\Z$-module structure of $\IComp{A(\F_1,\F_2)}$.

The theory of $p$-local finite groups was developed by Broto--Levi--Oliver in \cite{BLO2} as a framework for classifying spaces of saturated fusion systems. The definition of a $p$-local finite group will be recalled in Section \ref{sec:RTT}. For now it suffices to explain that if $\F$ is a saturated fusion system on a finite $p$-group $S$, then a classifying space for $\F$ is modeled by a \emph{centric linking system} $\Link$, a category with objects the $\F$-centric subgroups of $S$ and where the morphism set $\Mor_\Link(P,Q)$ is a free $Z(P)$-module with quotient $\Mor_\Link(P,Q) / Z(P) \cong \Hom_\F(P,Q)$. The \emph{classifying space} is then given by $\ClSp$, and comes equipped with a natural map $\theta \colon BS \to \ClSp$. The triple $(S,\F,\Link)$ is referred to as a \emph{$p$-local finite group}, and can be thought of as a saturated fusion system with a chosen classifying space.

The following proposition describes the compatibility of classifying spectra with classifying spaces. In practice it allows us to identify $\Stable{\theta} \colon \PtdStable{BS} \to \PtdStable{\ClSp}$ with $\sigma_\F \colon \PtdStable{BS} \to \PtdClSpectrum{\F}$. 
\begin{prop}[\cite{KR:ClSpectra}]
For a $p$-local finite group $(S,\F,\Link)$, there is a unique (up to homotopy) map $h \colon \PtdClSpectrum{\F} \to \PtdStable{\ClSp}$ such that $h \circ \sigma_\F \simeq \theta$, and this $h$ is a homotopy equivalence.
\end{prop}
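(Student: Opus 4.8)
The plan is to reduce the statement to the unpointed identification $\ClSpectrum{\F}\simeq\Stable{\ClSp}$ proved in \cite{KR:ClSpectra} and then bolt on the sphere summand. Write $\Stable{\theta}\colon\PtdStable{BS}\to\PtdStable{\ClSp}$ for the stabilization of $\theta$ (this is the map denoted $\theta$ in the statement). Recall that $\PtdClSpectrum{\F}=\Tel(\tilde{\omega}_\F)$ with structure map $\sigma_\F$ and transfer $t_\F$ satisfying $\sigma_\F\circ t_\F\simeq 1_{\PtdClSpectrum{\F}}$ and $t_\F\circ\sigma_\F\simeq\tilde{\omega}_\F$.

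First I would dispose of uniqueness and pin down a formula for $h$. If $h$ satisfies $h\circ\sigma_\F\simeq\Stable{\theta}$, then precomposing with $t_\F$ and using $\sigma_\F\circ t_\F\simeq 1$ gives $h\simeq h\circ\sigma_\F\circ t_\F\simeq\Stable{\theta}\circ t_\F$. So there is at most one such $h$ up to homotopy, and we may as well \emph{define} $h\defeq\Stable{\theta}\circ t_\F$. It then remains to verify that (i) $h\circ\sigma_\F\simeq\Stable{\theta}$, equivalently (using $t_\F\circ\sigma_\F\simeq\tilde{\omega}_\F$) that $\Stable{\theta}\circ\tilde{\omega}_\F\simeq\Stable{\theta}$; and (ii) $h$ is a homotopy equivalence.

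Both (i) and (ii) follow once one knows there is a stable map $\tau\colon\PtdStable{\ClSp}\to\PtdStable{BS}$ with $\Stable{\theta}\circ\tau\simeq 1$ and $\tau\circ\Stable{\theta}\simeq\tilde{\omega}_\F$. This is exactly the unpointed theorem of \cite{KR:ClSpectra} with the sphere summand accounted for: \cite{BLO2} produces a stable retraction of $\Stable{\theta}$ exhibiting (the $p$-completion of) $\PtdStable{\ClSp}$ as a summand of $\PtdStable{BS}$, and \cite{KR:ClSpectra} refines this so that the resulting idempotent $\tau\circ\Stable{\theta}$ represents a characteristic element; being idempotent and, since $\ClSp$ is connected, of augmentation $1$, Theorem \ref{thm:CharIdemExists} (in its $I$-adically completed form) forces $\tau\circ\Stable{\theta}\simeq\tilde{\omega}_\F$. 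Granting $\tau$, claim (i) is immediate: $\Stable{\theta}\circ\tilde{\omega}_\F\simeq\Stable{\theta}\circ\tau\circ\Stable{\theta}\simeq\Stable{\theta}$. For (ii), the map $g\defeq\sigma_\F\circ\tau\colon\PtdStable{\ClSp}\to\PtdClSpectrum{\F}$ is a homotopy inverse to $h$, since $h\circ g\simeq\Stable{\theta}\circ t_\F\circ\sigma_\F\circ\tau\simeq\Stable{\theta}\circ\tilde{\omega}_\F\circ\tau\simeq\Stable{\theta}\circ\tau\simeq 1$ and $g\circ h\simeq\sigma_\F\circ\tau\circ\Stable{\theta}\circ t_\F\simeq\sigma_\F\circ\tilde{\omega}_\F\circ t_\F\simeq\sigma_\F\circ t_\F\circ\sigma_\F\circ t_\F\simeq 1$.

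The main obstacle is the pointed enhancement itself, not the formal argument above: one must check that BLO's stable splitting and all the structure maps ($\theta$, $\tau$, $\sigma_\F$, $t_\F$, $\tilde{\omega}_\F$) are compatible with the natural decomposition $\PtdStable{Y}\simeq\SphereSpectrum\vee\Stable{Y}$, acting as the identity on the sphere factor because the relevant augmentations/degrees all equal $1$. With that bookkeeping in place, the unpointed identification of $\ClSpectrum{\F}$ with $\Stable{\ClSp}$ from \cite{KR:ClSpectra} splices with the trivial identification of sphere summands to produce $\tau$, hence $h$, and Theorem \ref{thm:CharIdemExists} is what pins the idempotent down as $\tilde{\omega}_\F$ rather than some other characteristic element.
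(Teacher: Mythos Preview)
The paper does not supply its own proof of this proposition; it is stated with attribution to \cite{KR:ClSpectra} and no argument is given in the text. So there is nothing in the paper to compare your proposal against directly.

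That said, your argument is sound and is essentially the natural one. The uniqueness step is immediate from the retraction identity $\sigma_\F\circ t_\F\simeq 1$, and your reduction of both remaining claims to the existence of a stable retraction $\tau$ of $\Stable{\theta}$ with $\tau\circ\Stable{\theta}\simeq\tilde{\omega}_\F$ is exactly right. The existence of such a $\tau$ is precisely what \cite{KR:ClSpectra} establishes (building on the Broto--Levi--Oliver stable splitting), and your invocation of the uniqueness of the characteristic idempotent to pin down the idempotent as $\tilde{\omega}_\F$ is the correct mechanism. Your remark about the pointed enhancement is also apt: the paper itself notes in \ref{subsec:ClSpectra} that \cite{KR:ClSpectra} worked with $BS$ rather than $BS_+$, and that the present account remedies this by tracking the sphere summand; since all the maps involved have augmentation~$1$, the bookkeeping is routine.
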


\section{Applications to stable splittings} \label{sec:Splittings}
The stable splitting of $p$-completed classifying spaces has been studied intensively by many authors, most notably by Martino--Priddy in \cite{MP2} and Benson--Feshbach in \cite{BenFes}. A good overview of the subject was given by Benson in \cite{Be:StablySplittingBG}. Let $G$ be a finite group with Sylow subgroup $S$. By a simple transfer argument, $\pComp{BG_+}$ is a stable summand of $BS_+$. Thus the stable splitting of $\pComp{BG_+}$ can be described by first determining the complete stable splitting of $BS_+$, and then determining how many copies of each stable summand of $BS_+$ can be found in a stable splitting of $\pComp{BG_+}$. This is done in \cite{MP2} and \cite{BenFes}.

Given an idempotent in $e \in \{BS_+,BS_+\}$, the mapping telescope $\Tel(e)$ is a stable summand of $BS_+$. This gives a correspondence between (homotopy types of) stable summands of $BS_+$ and (conjugacy classes of) stable idempotents of $BS_+$, which, by the Segal conjecture, correspond to (conjugacy classes of) idempotents in $\pComp{A(S,S)}$. Under this correspondence, an indecomposable summand corresponds to an irreducible idempotent. Thus a complete stable splitting of $BS$ corresponds to a decomposition of the identity in $\pComp{A(S,S)}$ as an orthogonal sum of irreducible idempotents. In particular, since the double Burnside ring satisfies the Krull--Schmidt theorem, one obtains a uniqueness result for the complete stable splitting of $BS_+$.

Stewart Priddy has asked how one can combine indecomposable stable summands of $BS_+$ together to obtain a stable summand of $BS_+$ that has the stable homotopy type of $\pComp{BG_+}$ for some finite group $G$ with Sylow subgroup $S$. Using the results of this paper, we can give an answer to this question. To do this, we first need to broaden the scope of the question to incorporate fusion systems. Classifying spectra of saturated fusion systems (see \ref{subsec:ClSpectra}) possess all the important stable homotopy-theoretic properties of $p$-completed classifying spaces of finite groups. Thus, instead of asking how we can group indecomposable summands together to obtain the stable homotopy type of a $p$-completed classifying space, we ask how we can obtain the classifying spectrum of a saturated fusion system.

The classifying spectrum of a saturated fusion system $\F$ on $S$ is the stable summand of $BS_+$ corresponding to the characteristic idempotent of $\F$. Thus a reformulation of Priddy's question is to ask when an idempotent in $\pComp{A(S,S)}$ can be the characteristic idempotent of a saturated fusion system. The following result answers that question.
\begin{cor} \label{cor:StableSplittings}
Let $S$ be finite $p$-group. An idempotent in $\pComp{A(S,S)}$ is the characteristic idempotent of a saturated fusion system on $S$ if and only if it is dominant and satisfies Frobenius reciprocity.
\end{cor}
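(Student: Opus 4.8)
The plan is to derive Corollary~\ref{cor:StableSplittings} as an essentially formal consequence of the bijection established in Corollary~\ref{cor:BijectionForStable} together with the identification of characteristic idempotents with classifying-spectrum splittings recalled in \ref{subsec:ClSpectra}. Recall that by the Segal conjecture (Theorem~\ref{thm:Segal}) and Proposition~\ref{prop:IAndpComp}, homotopy classes of stable selfmaps of $BS_+$ are identified with $\IComp{A(S,S)} \subseteq \pComp{A(S,S)}$, and idempotents therein correspond to stable summands of $BS_+$ via the mapping telescope construction; an idempotent of augmentation $1$ carves out the classifying spectrum of the fusion system it determines. So the content of the corollary is purely a statement about which idempotents in $\pComp{A(S,S)}$ arise as characteristic idempotents, and the phrase ``classifying spectrum of a saturated fusion system'' is just a topological name for ``stable summand split off by a characteristic idempotent.''

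First I would prove the forward implication. Suppose $\omega \in \pComp{A(S,S)}$ is the characteristic idempotent $\omega_\F$ of a saturated fusion system $\F$ on $S$. Then $\omega$ is dominant: by Theorem~\ref{thm:CharIdemExists} and the preceding results, $\F = \FusRStab(\omega) = \PreFix(\omega)$, and since $\F$ contains all of $\Hom_S(S,S) = \Aut_S(S)$, in particular the identity of $S$, we have $\Phi_{\langle S,\incl\rangle}(\omega)\neq 0$ (indeed Proposition~\ref{prop:PreFixOfChar} gives $\id_S \in \PreFix(\omega)$, so by \eqref{eq:PhiAndc} the coefficient $c_{\langle S,\incl\rangle}(\omega)\neq 0$), whence $\omega \notin \pComp{J(S)}$. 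That $\omega$ satisfies Frobenius reciprocity is Proposition~\ref{prop:CharIsFrob}.

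For the converse, suppose $\omega \in \pComp{A(S,S)}$ is a dominant idempotent satisfying Frobenius reciprocity. By the corollary to Theorem~\ref{thm:DomFrobImpliesSat} (the statement ``\emph{Let $S$ be a finite $p$-group, and let $\omega$ be a dominant idempotent in $\pComp{A(S,S)}$\dots}''), $\omega$ is a characteristic idempotent for $\FusStab(\omega)$, which is saturated; moreover that proof shows $\countS(\omega)=1$. So $\omega$ is the characteristic idempotent of a saturated fusion system on $S$, namely $\FusStab(\omega)$, and by the uniqueness clause of Theorem~\ref{thm:CharIdemExists} it is \emph{the} characteristic idempotent of that system. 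This completes both directions. I expect there is no genuine obstacle here: the entire difficulty has already been absorbed into Theorem~\ref{thm:DomFrobImpliesSat} and Corollary~\ref{cor:BijectionForStable}, and the remaining work is bookkeeping. The one point worth stating carefully is the dominance check in the forward direction —- one must make sure that ``$\F$ nonempty at the top'' (which holds because a fusion system always contains $\Aut_S(S)$) translates, via Lemma~\ref{lem:FixedPtsOnBasis} / \eqref{eq:PhiAndc}, into $c_{\langle S,\incl\rangle}(\omega_\F)\neq 0$ and hence $\omega_\F\notin \pComp{J(S)}$. With that observation in hand the corollary follows immediately, and I would present it in roughly the two short paragraphs above.
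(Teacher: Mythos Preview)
Your proof is correct and follows essentially the same approach as the paper: the paper's proof consists of a single sentence observing that the statement is a direct reformulation of Corollary~\ref{cor:BijectionForStable}. Your version unpacks both directions explicitly and in particular supplies the small argument that $\omega_\F$ is dominant (via $c_{\langle S,\incl\rangle}(\omega_\F)\neq 0$), a detail the paper leaves implicit in the statement of Corollary~\ref{cor:BijectionForStable}; so your write-up is somewhat more self-contained but not substantively different.
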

\begin{proof} This is a direct reformulation of Corollary \ref{cor:BijectionForStable}.
\end{proof}

Getting closer to Priddy's original question about groupings of indecomposable summands, we can consider, given a specified irreducible idempotent $e$ in $\pComp{A(S,S)}$, which other irreducible idempotents must occur as summands of a characteristic idempotent $\omega$ in which $e$ appears. Answering this would involve analyzing the idempotents ``detecting'' $(e \times e) \circ [S,\Delta]$ and $(e \times 1) \circ [S,\Delta] \circ e$. There is much interesting work to do here. 

There is a subtle, but significant, difference between the way in which Priddy phrased his original question, and the answer given here, even beyond the generalization to fusion systems. Sticking with the fusion formulation, Priddy's question is when the wedge sum  of a collection of indecomposable stable summands of $BS$ has the \emph{homotopy type} of a $\PtdClSpectrum{\F}$, while our answer is closer to saying when the wedge sum ``is'' $\PtdClSpectrum{\F}$. This really amounts to asking when an idempotent in $\pComp{A(S,S)}$ is conjugate to a characteristic idempotent rather than asking when it \emph{is} a characteristic idempotent. See \cite{KR:NSC} for a discussion on the difference between the information contained in an idempotent in $\pComp{A(S,S)}$ and the information contained in the homotopy class of the corresponding stable summand of $BS_+$.

\section{On the Adams--Wilkerson theorem} \label{sec:AW}
In their celebrated paper \cite{AW}, Adams--Wilkerson developed and studied Galois theory for even-graded integral algebras over the $\modp$ Steenrod algebra $\Ap$. Their motivation was to determine which polynomial algebras can appear as the cohomology of a space, and in particular they determined when an even-graded integral ring over the Steenrod algebra can be realized as a ring of invariants in a polynomial ring with generators of degree 2. Following ideas of Lannes, Goerss--Smith--Zarati described in \cite{GSZ} how the $\Fp$-cohomology of the classifying space of an elementary abelian $p$-group is controlled by its evenly graded part, which is a polynomial ring with generators in degree 2. Thus the work of Adams--Wilkerson can be applied to the cohomology of elementary abelian $p$-groups, yielding the following variant of their result.

\begin{thm}[\cite{AW, GSZ, KR:Transfers&plfgs}]\label{thm:AW}
Let $V$ be a finite, elementary abelian $p$-group, put $H^* \defeq H^*(BV;\Fp)$, regarded as an $\Ap$-algebra, and let $f \colon R^* \to H^*$ be the inclusion of a $\Ap$-subalgebra, making $H^*$ a finite $R^*$-algebra. There exists a subgroup $W \leq \Aut(V)$ of order prime to $p$ such that $R^* = (H^*)^W$ if and only if, there exists an $R^*$-linear map of $\Ap$-modules  $t^* \colon H^* \to R^*$ such that $t \circ f = 1_{R^*}$.
\end{thm}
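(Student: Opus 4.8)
The plan is to deduce Theorem \ref{thm:AW} from the machinery already developed, by passing through the double Burnside ring and the universal stable element theorem. The key observation is that for an elementary abelian $p$-group $V$, a saturated fusion system $\F$ on $V$ is the same data as a subgroup $W \le \Aut(V)$ of order prime to $p$: since $V$ is abelian, every subgroup is fully centralized and fully normalized, so Axiom II is vacuous and Axiom I$_V$ just says $\Aut_V(V) = 1$ is a Sylow $p$-subgroup of $\Aut_\F(V) = W$, i.e.\ $p \nmid |W|$; and all morphisms in $\F$ are restrictions of automorphisms of $V$, so $\F$ is determined by $W$. Under this correspondence, the $W$-invariants $(H^*)^W$ are exactly the $\F$-stable elements $H^*(\F)$ in the sense of the Universal Stable Element Theorem (Theorem \ref{thm:UnivStable}), applied to the contravariant Mackey functor $M = H^*(-;\Fp)$.

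First I would establish the ``only if'' direction, which is the easy one: given $W$, let $\F = \F_V(W)$ (equivalently the fusion system with $\Aut_\F(V) = W$) and let $\omega_\F \in A(V,V)_{(p)}$ be its characteristic idempotent, which exists and is unique by Theorem \ref{thm:CharIdemExists}. By Theorem \ref{thm:UnivStable}, $M(\omega_\F) \colon H^* \to H^*$ is a projection onto $H^*(\F) = (H^*)^W = R^*$, and it is a map of $\Ap$-modules because $H^*(-;\Fp)$ is a functor to $\Ap$-modules. The retraction $t^* \colon H^* \to R^*$ is then the cor60 astriction of $M(\omega_\F)$ to its image, and $R^*$-linearity follows from the Frobenius reciprocity relation satisfied by $\omega_\F$ (Proposition \ref{prop:CharIsFrob}) together with the translation of that relation into cohomology explained in \ref{subsec:Translate}; concretely, $t^*(f(r)\, h) = r \cdot t^*(h)$ is exactly the commutative diagram at the end of \ref{subsec:Translate} with $\alpha([G])$ replaced by $\tilde\omega_\F$. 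That $t^* \circ f = 1_{R^*}$ is immediate since $M(\omega_\F)$ fixes $R^*$ pointwise.

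For the harder ``if'' direction, the idea is to upgrade the data $(f, t^*)$ in cohomology to an idempotent in the ($I$-adically or $p$-completed) double Burnside ring that satisfies Frobenius reciprocity, and then invoke Corollary \ref{cor:BijectionForStable} (or Theorem \ref{thm:DomFrobImpliesSat}). Using the Goerss--Smith--Zarati result \cite{GSZ}, the $\Fp$-cohomology of $BV$ is controlled by its degree-$2$ part, so an $\Ap$-linear retraction $t^*$ in cohomology is rigid enough to force $e \defeq f \circ t^* \colon H^* \to H^*$ to be induced by a (necessarily unique, by the Segal conjecture and injectivity of $M$ on the relevant range) stable selfmap, hence by an idempotent $\omega \in \pComp{A(V,V)}$ with $M(\omega) = e$. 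The hypotheses that $f$ makes $H^*$ a finite $R^*$-algebra and that $t^*$ is $R^*$-linear translate, respectively, into $\omega$ being dominant (its image is ``large'') and into $\omega$ satisfying Frobenius reciprocity --- this last translation is again the diagram of \ref{subsec:Translate}, read backwards. Corollary \ref{cor:BijectionForStable} then produces a saturated fusion system $\F = \FusStab(\omega)$ whose characteristic idempotent is $\omega$, and by the ``only if'' direction (or directly by Theorem \ref{thm:UnivStable}) we get $R^* = \operatorname{Im}(e) = H^*(\F) = (H^*)^W$ where $W = \Aut_\F(V)$.

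The main obstacle, and the step I would spend the most care on, is the middle move in the ``if'' direction: showing that an $R^*$-linear $\Ap$-module retraction $t^*$ in cohomology actually \emph{comes from} an idempotent in $\pComp{A(V,V)}$, and that $R^*$-linearity is faithfully encoded by Frobenius reciprocity of that idempotent rather than something weaker. This requires the Adams--Wilkerson/Goerss--Smith--Zarati input to control realizability of the self-map of $BV_+$ and the injectivity of $\alpha$ (Segal conjecture, Theorem \ref{thm:Segal}) to get uniqueness; one must be careful that the idempotent so produced is genuinely dominant, i.e.\ not supported on proper subgroups of $V$, which is where the finiteness of $H^*$ over $R^*$ enters. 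I expect the bookkeeping translating ``$R^*$-linear'' into equation \eqref{eq:Frobenius} to be the technical heart, essentially reversing the cohomological computation sketched in \ref{subsec:Translate}. Note that Theorem \ref{mthm:AWAnalogue} is the clean statement of exactly this upgraded result at the level of the double Burnside ring, so in practice the proof of Theorem \ref{thm:AW} will be organized as: prove \ref{mthm:AWAnalogue} first, then combine it with \cite{GSZ} to descend to cohomology.
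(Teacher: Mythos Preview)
Your proposal misreads the role of Theorem~\ref{thm:AW} in the paper. This theorem is not a result proved \emph{in} the paper; it is a classical result being recalled from the literature, and the paper's ``proof'' is simply a citation chain: the $R^*$-linearity of $t^*$ implies the hypotheses of \cite[Theorem~1.3]{GSZ} (this implication is \cite[Proposition~3.11]{KR:Transfers&plfgs}), and \cite{GSZ} then applies the Adams--Wilkerson Galois theory of \cite{AW} to conclude that $R^* = (H^*)^W$. Everything happens in cohomology and in the category of unstable $\Ap$-algebras; the double Burnside ring never enters. Theorem~\ref{thm:AWAnalogue} is presented as an \emph{analogue} of Theorem~\ref{thm:AW} lifted to the Burnside-ring setting, not as a generalization that specializes back to it.

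Your proposed detour through the Burnside ring has a genuine gap in the ``if'' direction. You need to lift the cohomological idempotent $e = f\circ t^*$ to an element $\omega \in \pComp{A(V,V)}$ and then argue that $R^*$-linearity of $t^*$ forces $\omega$ to satisfy the Frobenius reciprocity relation \eqref{eq:Frobenius} in $A(V,V\times V)\pLoc$. But the passage from $\pComp{A(V,V)}$ (or $A(V,V\times V)\pLoc$) to $\Ap$-linear maps on $\Fp$-cohomology is not injective, so a relation holding in cohomology does not imply the corresponding relation in the Burnside module. Reversing the diagram in \ref{subsec:Translate} does not help: that diagram shows Frobenius reciprocity in $A(S,S\times S)$ \emph{implies} the classical cohomological relation, not the converse. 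You appeal to \cite{GSZ} for ``realizability,'' but what \cite{GSZ} actually provides, once its hypotheses are verified, is already the conclusion $R^* = (H^*)^W$ --- so at that point there is nothing left for the Burnside-ring machinery to do, and you have simply reproduced the paper's citation proof with an unused detour attached. The ``only if'' direction of your argument is fine, though the paper does not spell it out since the result is classical.
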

\begin{proof}
The result is ultimately a consequence of \cite{AW}. It is shown in \cite{GSZ} how to apply the work from \cite{AW} to the setting of the cohomology of elementary abelian groups using properties of reduced $\mathcal{U}$-injectives, and the proof of \cite[Proposition 3.11]{KR:Transfers&plfgs} shows how the $R^*$-linearity implies the conditions in \cite[Theorem 1.3]{GSZ}, yielding the desired result.
\end{proof}

\begin{rmk} In the setting of Theorem \ref{thm:AW} one can think of $f$ as the restriction map $H^*(BG;\Fp) \to H^*(BV;\Fp)$, where $G = W \ltimes V$ is the semidirect product, and $t$ as a normalized transfer map. The $R^*$-linearity condition on $t$ is then the usual Frobenius reciprocity relation in cohomology.
\end{rmk}

The results in this paper can be used to prove a more general analogue of Theorem \ref{thm:AW}. ``More general'' means that we allow general finite $p$-groups $S$ instead of just elementary abelian ones, and we recognize a ring of stable elements with respect to a saturated fusion system on $S$ instead of just considering rings of invariants with respect to subgroups of $W \leq \Out(S)$. (The latter is in fact the ring of stable elements with respect to the fusion system associated to the semi-direct product $W \ltimes S$ when $S$ is abelian.) ``Analogue'' means that we lift the statement from cohomology to the $p$-localized double Burnside ring. This should not be surprising as cohomology is not a sufficiently powerful invariant to distinguish between group homomorphisms unless the groups involved are elementary abelian. Some care is needed to reformulate Theorem \ref{thm:AW} in terms of the double Burnside ring, and this is explained in the following paragraphs.

First, the action of the Steenrod algebra plays a crucial part in \cite{AW}, and consequently in the proof of Theorem \ref{thm:AW}. Recall that the cohomology group $H^n(BV;\Fp)$ is naturally isomorphic to the graded group $[\Stable{BV_+},\Sigma^* H\Fp]$, where $H\Fp$ is the Eilenberg--MacLane spectrum with $\Fp$-coefficients, and $[-,-]$ denotes homotopy classes of maps between spectra. The Steenrod algebra can similarly be thought of as the graded group $[H\Fp, \Sigma^* H\Fp]$. Thus asking that $f$ and $t$ preserve the action of the Steenrod algebra corresponds to asking that they be natural with respect to selfmaps of $H\Fp$. (This is not quite precise as a priori we do not know that $R^*$ is realized by a spectrum, but this point of view is helpful to explain the analogy.) Moving to the double Burnside ring we replace the functor $H^*(B-;\Fp)$ by the functor $A(-,S_2)\pLoc$, now applied to a general finite $p$-group $S_1$, with the understanding that $S_1 = S_2 = S$, subscripts being used to notationally distinguish the different roles of $S$. Now we demand naturality with respect to selfmaps of $S_2$, so we replace the Steenrod algebra by the double Burnside ring $A(S_2,S_2)\pLoc$. More precisely, $A(S_1,S_2)\pLoc$ is a left $A(S_2,S_2)\pLoc$-module by composition, and we will consider the inclusion of an $A(S_2,S_2)\pLoc$-submodule $f \colon R \to A(S_1,S_2)\pLoc$ with a morphism of $A(S_2,S_2)\pLoc$-modules $t \colon  A(S_1,S_2)\pLoc \to R$ such that $t \circ f = 1.$

Second, the ring structure of $H^*(BV;\Fp)$ also plays an important part in Theorem \ref{thm:AW} and we must determine the analogous structure for the double Burnside ring. The double Burnside ring does, of course, have a ring structure of its own, obtained by composition. However this plays a very different role from the ring structure in cohomology. The latter is obtained as the composite
 \[ H^*(BV;\Fp) \otimes_{\Fp} H^*(BV;\Fp) \xrightarrow{\cong} H^*(BV\times BV;\Fp) \xrightarrow{\Delta^*} H^*(BV;\Fp), \]
where the first map is the K\"unneth isomorphism, and the second is induced by the diagonal map $BV \to BV \times BV$. This does not carry directly over when lifting to the double Burnside ring, as there is no K\"unneth isomorphism in this case. Instead we look at the composite 
\[ \kappa_A \colon A(S_1,S_2) \times  A(S_1,S_2) \xrightarrow{-\times -} A(S_1 \times S_1, S_2 \times S_2) \xrightarrow{-\circ [S_1,\Delta]} A(S_1, S_2 \times S_2), \]
where the first map is induced by Cartesian product, and the second map is induced by the diagonal $\Delta \colon S_1 \to S_1 \times S_1$. We will in fact work with the $p$-localization of $\kappa_A$, which we also denote by $\kappa_A$. To reduce notation, we adopt the following conventions:
\begin{eqnarray} \label{eq:ANotation}
  A_{1,2}    &\defeq &A(S_1,S_2)\pLoc \notag \\
  A_{2,2}  &\defeq &A(S_2,S_2)\pLoc \notag \\
  A_{1,22}   &\defeq &A(S_1,S_2 \times S_2)\pLoc \\
  A_{2,22} &\defeq &A(S_2,S_2 \times S_2)\pLoc \notag \\
  A_{22,22}  &\defeq &A(S_2\times S_2,S_2 \times S_2)\pLoc \notag 
\end{eqnarray}
Of course $A_{1,2} = A_{2,2}$ and $A_{1,22} = A_{2,22}$, but we distinguish them notationally as they appear in different roles, allowing us to think of $A_{1,2}$ and $A_{1,22}$ as modules where elements live, while $A_{2,2}$, $A_{2,22}$ and $A_{22,22}$ induce actions. We regard $A_{1,22}$ as a left   $(A_{2,2} \times A_{2,2})$-module by setting $(a,b)\cdot z \defeq (a \times b) \circ z$ for $a,b \in A_{2,2}$ and $z \in A_{1,22}$. With this convention we have a morphism of $A_{2,2} \times A_{2,2}$-modules
 \[ \kappa_A \colon A_{1,2} \times A_{1,2} \to A_{1,22}.\] 

Having replaced the ring structure in cohomology with $\kappa_A$ for the double Burnside ring, we next replace
the notion of a subring by the notion of a \emph{$\kappa$-preserving submodule}. This consists of an inclusion of left $A_{2,2}$-modules 
 $f \colon R \to A_{2,2} $
and an inclusion of left $A_{22,22}$-modules (in particular an inclusion of $A_{2,2} \times A_{2,2}$-modules)
 $f' \colon R' \to A_{1,22}$
equipped with a morphism of $A_{2,2} \times A_{2,2}$-modules
 \[ \kappa_R \colon R \times R \to R',\]
such that 
\begin{enumerate}
 \item[($\kappa1$)] 
            Every $h \in A_{2,22}$ induces a commutative square 
            \[ \xymatrix{ A_{1,2} \ar[rr]^{h_* = h \circ -}  && A_{1,22} \\ 
                          R \ar[u]^{f} \ar[rr]^{h_*} && R'. \ar[u]^{f'} }
            \] 
            (In other words, the top map restricts to the bottom map.)
 \item[($\kappa2$)] 
                  $f' \circ \kappa_R = \kappa_A \circ (f\times f)$.
\end{enumerate}
Condition ($\kappa1$) is a naturality condition. It can be rephrased to say that we have a $p$-local functor $\rho$ defined on the full subcategory of the $p$-localized Burnside category ${\Burnside}\pLoc$ with objects $S_2$ and $S_2 \times S_2$ given by $\rho(S_2) = R$ and $\rho(S_2 \times S_2) = R'$, and that the pair $(f,f')$ constitutes a natural transformation from $\rho$ to ${A(S_1,-)}\pLoc$. Condition ($\kappa2$) replaces the hypothesis in Theorem \ref{thm:AW} that $f$ preserves multiplication.

The appropriate replacement for the $R^*$-linear retraction map $t^*$ in Theorem \ref{thm:AW} is then formulated as follows. Given a $\kappa$-preserving submodule $(R,R')$, a \emph{retractive transfer} for $(R,R')$ consists of a morphism of $A_{2,2}$-modules $t \colon A_{1,2} \to R$ such that $f \circ t = \id_{R}$, and a morphism of left $A_{22,22}$-modules $t' \colon A_{1,22} \to R'$ such that $f' \circ t' = \id_{R'}$, satisfying
\begin{enumerate}
  \item[($\kappa3$)] Every $h \in A_{2,22}$ induces a commutative square 
            \[ \xymatrix{ A_{1,2} \ar[d]^{t} \ar[rr]^{h_* = h \circ -}  && A_{1,22} \ar[d]^{t'} \\ 
                          R  \ar[rr]^{h_*} && R'.  }
            \] 
  \item[($\kappa4$)] The following diagram commutes:
  \[ \xymatrix{ 
     R \times A_{1,2} \ar[d]^{\id \times t} \ar[r]^{f \times \id } & A_{1,2} \times A_{1,2} \ar[r]^{\kappa_A} & A_{1,22} \ar[d]^{t'} \\
     R \times R \ar[rr]^{\kappa_R} && R'. 
     }
  \] 
\end{enumerate}
Condition ($\kappa3$) is a naturality condition similar to ($\kappa1$), and can be rephrased as saying that $(t,t')$ constitutes a natural transformation that is a retract of $(f,f')$. Condition ($\kappa4$) replaces the $R^*$-linearity condition on $t^*$ in Theorem \ref{thm:AW}.

Finally, the finiteness condition on the ring extension $f^* \colon R^* \to H^*$ in Theorem \ref{thm:AW} is replaced by the condition that $R$ should not be contained in the Nishida ideal $J(S)\pLoc$ of $A_{1,2}$. 

\begin{thm}\label{thm:AWAnalogue}
Let $S$ be a finite $p$-group, and let $(R,R')$ be a $\kappa$-preserving submodule of $A(S,S)\pLoc$ such that $R$ is not contained in $J(S)\pLoc$. There exists a saturated fusion system $\F$ on $S$ such that $R$ is the ring of right $\F$-stable elements in $A(S,S)\pLoc$ and $R'$ is the module of right $\F$-stable elements in $A(S,S \times S)$ if and only if $(R,R')$ has a retractive transfer.
\end{thm}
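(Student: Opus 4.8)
The strategy is to deduce both directions from Theorem \ref{thm:DomFrobImpliesSat} (or equivalently Corollary \ref{cor:BijectionForStable}) by extracting, from a retractive transfer $(t,t')$ for a $\kappa$-preserving submodule $(R,R')$, an idempotent $\omega \defeq f\circ t \in A(S,S)\pLoc$, and showing that the hypotheses on $(R,R')$ translate precisely into the conditions ``dominant'', ``augmentation prime to $p$'', and ``Frobenius reciprocity'' on $\omega$ --- and conversely.

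First I would set $\omega \defeq f \circ t$ and $\omega' \defeq f'\circ t'$. Since $f\circ t = \id_R$ and $f$ is injective, $\omega$ is idempotent in $A_{2,2} = A(S,S)\pLoc$, and likewise $\omega'$ is idempotent in $\End_{A_{22,22}}(A_{1,22})$. Condition ($\kappa1$) together with ($\kappa3$) says that for every $h \in A_{2,22}$ we have $h_* \circ \omega = \omega' \circ h_*$ as maps $A_{1,2}\to A_{1,22}$; applying this with $h$ ranging over basis elements will identify $\omega'$ with ``$\omega$ composed on the source'', i.e. one can show $\omega' (z) = (\text{something built from }\omega)$ acting on $z \in A_{1,22}$, and in particular that $\omega'$ is determined by $\omega$. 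The key computation is then to feed the multiplication data: ($\kappa2$) gives $f'\circ\kappa_R = \kappa_A\circ(f\times f)$ and ($\kappa4$) gives $t'\circ\kappa_A\circ(f\times\id) = \kappa_R\circ(\id\times t)$. Composing these and using $\kappa_A(a,b) = (a\times b)\circ[S,\Delta]$, I would substitute $a = \omega(x)$, $b = x$ for $x\in A_{1,2}$ and chase the two sides through $f'$ and $t'$ to obtain, after using the identification of $\omega'$ from ($\kappa1$)/($\kappa3$), exactly the Frobenius reciprocity relation
\[ (\omega \times \omega)\circ [S,\Delta] = (\omega \times 1)\circ[S,\Delta]\circ\omega \]
in $A(S,S\times S)\pLoc$. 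The hypothesis that $R \not\subseteq J(S)\pLoc$ says $\omega(R) = R \not\subseteq J(S)\pLoc$, hence $\omega$ is not in the (two-sided) Nishida ideal, i.e. $\omega$ is dominant; idempotence of $\omega$ then forces $\countS(\omega) = 1$ by the argument in the last corollary of \ref{subsec:Relax} (projecting to $\Zp\Out(S)$). Now Theorem \ref{thm:DomFrobImpliesSat} applies: $\F \defeq \FusRStab(\omega)$ is saturated and $\omega$ is its (right, hence by Theorem \ref{thm:CharIdemExists} full) characteristic idempotent $\omega_\F$. By Theorem \ref{thm:UnivStableII}, the image of $\omega$ acting on $A(S,S)\pLoc$ from the left is precisely the module of right $\F$-stable elements, and $f$ identifies this image with $R$; the same reasoning applied to $\omega'$ acting on $A(S,S\times S)\pLoc$ identifies $R'$ with the right $\F$-stable elements there, using the identification of $\omega'$ in terms of $\omega$.

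For the converse, given a saturated fusion system $\F$ with $R$ the right $\F$-stable elements in $A(S,S)\pLoc$ and $R'$ the right $\F$-stable elements in $A(S,S\times S)\pLoc$, I would take $t \defeq \omega_\F \circ - \colon A_{1,2}\to R$ and $t' \defeq \omega_\F \circ - \colon A_{1,22}\to R'$ (composing $\omega_\F$ on the $S_1$-side), where $f,f'$ are the inclusions. That $\omega_\F$ is right $\F$-stable and $\countS(\omega_\F) = 1$ gives $f\circ t = \id_R$, $f'\circ t' = \id_{R'}$; $\F$-generation and the fact that $\omega_\F$ acts as the identity on $\F$-stable elements (Theorem \ref{thm:UnivStableII}) give the module-map properties over $A_{2,2}$ and $A_{22,22}$; ($\kappa3$) is the associativity/naturality of composition, since $h_*$ acts on the $S_2$-side and $t = \omega_{\F}\circ -$ acts on the $S_1$-side, so they commute; and ($\kappa4$) is exactly Frobenius reciprocity for $\omega_\F$ (Proposition \ref{prop:CharIsFrob}), rewritten via $\kappa_A$. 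The $\kappa$-preserving data $\kappa_R$ on the fusion side is just the restriction of $\kappa_A$, and ($\kappa1$), ($\kappa2$) hold because $\F$-stable elements are closed under the operations involved; finally $R = \omega_\F \cdot A_{1,2} \not\subseteq J(S)\pLoc$ since $\omega_\F$ is dominant (its image under the projection to $\Zp\Out(S)$ contains the class of the identity).

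\textbf{Main obstacle.} The genuinely delicate step is the bookkeeping in the first direction: correctly identifying the ``second-coordinate'' operator $\omega'$ in terms of $\omega$ purely from the naturality squares ($\kappa1$) and ($\kappa3$) applied over the generating set of $A_{2,22}$, and then verifying that the compatibility ($\kappa2$), ($\kappa4$) really collapses to the single equation $(\omega\times\omega)\circ[S,\Delta] = (\omega\times 1)\circ[S,\Delta]\circ\omega$ rather than some weaker consequence. This requires care because $\kappa_A$ is not a ring map and $A_{1,22}$ carries two distinct $A_{2,2}$-actions; one must track which copy of $S_2$ each operator acts on. Once that identification is pinned down --- essentially showing $\omega' = (\omega\times\omega)$-restriction is consistent with $(\omega\times 1)$-restriction on the image of $\kappa_A$ --- the rest is a direct appeal to the machinery of Sections \ref{sec:FrobImpliesSat} and \ref{sec:Stable}, and the definitions in \ref{subsec:Relax} of dominance and the Nishida ideal.
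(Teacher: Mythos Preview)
Your overall strategy matches the paper's: extract an idempotent $\omega$ from the retractive transfer, verify dominance and Frobenius reciprocity, and invoke Theorem~\ref{thm:DomFrobImpliesSat}; conversely, for a given $\F$, take $t(x)=x\cdot\omega_\F$. The one thing you are missing is the observation that dissolves your ``main obstacle'' entirely. Since $f$ and $t$ are morphisms of \emph{left} $A_{2,2}$-modules and $A_{1,2}=A_{2,2}$ as a left module over itself, the endomorphism $f\circ t$ of $A_{1,2}$ is necessarily right multiplication by the \emph{element} $\omega\defeq f(t(1))$: indeed $f(t(x))=f(t(x\cdot 1))=x\cdot f(t(1))=x\cdot\omega$. (This also resolves the ambiguity in your write-up between $\omega$ as an operator and $\omega\in A(S,S)\pLoc$ as an element; the latter is what one needs.) The relation $\omega'\circ h_*=h_*\circ\omega$ you correctly derive from $(\kappa1)$ and $(\kappa3)$, evaluated at $1\in A_{1,2}$, immediately gives $f'(t'(h))=h\cdot\omega$ for every $h\in A_{1,22}$ (since $h=h\cdot 1$ with $h\in A_{2,22}$), so $f'\circ t'$ is also right multiplication by $\omega$ and no further bookkeeping is required. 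The paper then applies $f'$ to both sides of $(\kappa4)$, uses $(\kappa2)$ on one side, and evaluates at the single pair $(t(1),1)\in R\times A_{1,2}$ to obtain $(\omega\times\omega)\cdot[S,\Delta]=(\omega\times 1)\cdot[S,\Delta]\cdot\omega$ in one line; there is no need to track ``which copy of $S_2$'' anything acts on, nor to range over basis elements. Your identification of the remaining ingredients (dominance from $R\not\subseteq J(S)\pLoc$, idempotence, and the converse direction) agrees with the paper.
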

\begin{proof} To reduce confusion, we will write $x \cdot y$ for the composition of elements in Burnside modules, and $g \circ h$ for the composition of maps between Burnside modules, while we write $1$ for the unit element in $A(S,S)\pLoc$ and $\id$ for the identity morphism of $A(S,S)\pLoc$.

If $R$ is the ring of elements in $A(S,S)\pLoc$ that are right stable with respect to a saturated fusion system $\F$, then, by the universal stable element theorem (Theorem \ref{thm:UnivStableII}), we have $R = A(S,S)\pLoc \cdot  \omega_\F$, and similarly $R' = A(S,S\times S)\pLoc \cdot \omega_\F$. The retractive transfer is then obtained by letting $t \colon A(S,S)\pLoc \to A(S,S)\pLoc \cdot \omega_F$ be the map $x \mapsto x \cdot \omega_\F$, and similarly for $t'$. We leave the reader to check that the required naturality conditions are satisfied, and that the Frobenius reciprocity relation for $\omega_{\F}$ implies $(\kappa4)$.

Conversely, if $(R,R')$ has a retractive transfer $(t,t')$, put $\omega = f \circ t(1) \in A(S,S)\pLoc$. Since $f$ and $t$ are both morphisms of left $A(S,S)\pLoc$-modules, we then have
 \[ f \circ t(x) = f \circ t(x \cdot 1) = x \cdot (f \circ t(1)) = x \cdot \omega \]
for all $x \in A(S,S)\pLoc$. In particular,
 \[ \omega \cdot \omega = f\circ t(\omega) = f\circ \underbrace{t \circ f}_{\id} \circ t(1) = f\circ t(1) = \omega, \]
so $\omega$ is idempotent. Similarly, for $h \in A(S,S\times S)$, write $h = h\cdot 1$, where $1$ is still the unit in 
$A(S,S)\pLoc$. Then the naturality conditions ($\kappa1$) and ($\kappa3$) imply that
\begin{equation} \label{eq:f't'} 
  f'(t'(h \cdot 1)) = h \cdot f(t(1)) = h \cdot \omega .
\end{equation}
We deduce that $R = A(S,S)\pLoc \cdot \omega$ and $R' = A(S,S\times S)\pLoc \cdot \omega$. Thus by the universal stable element theorem it suffices to
show that $\omega$ is the characteristic idempotent of a saturated fusion system. The assumption that $R$ is not contained in $J(S)\pLoc$ implies that $\omega$ is dominant, so by Theorem \ref{thm:DomFrobImpliesSat} it is enough to show that $\omega$ satisfies Frobenius reciprocity. 

We deduce Frobenius reciprocity from $(\kappa4)$ as follows. Composing with $f'$, ($\kappa4$) implies
 \[ f' \circ \kappa_R \circ (\id \times t) = f' \circ  t' \circ \kappa_A \circ (f\times \id), \]
as morphisms $R \times A(S,S)\pLoc \to A(S,S\times S)\pLoc$. Applying these morphisms to $(t(1),1)$ yields, 
on the left,
\begin{eqnarray*} 
   f' \circ \kappa_R \circ (\id \times t) (t(1),1) 
   &= &\kappa_A \circ (f \times f) (t(1),t(1)) \text{\qquad (by ($\kappa2$))}\\
   &= &\kappa_A (\omega,\omega) \\
   &= &(\omega \times \omega) \cdot [S,\Delta], \text{\qquad (by definition of $\kappa_A$)}
\end{eqnarray*}
and, on the right,
\begin{eqnarray*}
  f' \circ t' \circ \kappa_A \circ(f \times \id)(t(1),1) 
  &= & (f' \circ t') \big( \kappa_A (\omega,1) \big) \\
  &= & (f' \circ t') \big( (\omega \times 1) \cdot [S,\Delta] \big)  \text{\qquad (by definition of $\kappa_A$)}\\
  &= & (\omega \times 1) \cdot [S,\Delta] \cdot \omega  \text{\qquad (by \eqref{eq:f't'}).}
\end{eqnarray*}
Thus we have the Frobenius reciprocity relation
 \[ (\omega \times \omega) \cdot [S,\Delta] = (\omega \times 1) \cdot [S,\Delta] \cdot \omega, \]
completing the proof.
\end{proof}

\begin{rmk} As we noted earlier, a $\kappa$-preserving submodule amounts to a subfunctor $\rho$ of the functor $A(S,-)\pLoc$ defined on the full subcategory of $\Burnside\pLoc$ with objects $S$ and $S\times S$. If the $\kappa$-preserving submodule has a retractive transfer, then Theorem \ref{thm:AWAnalogue} tells us that $\rho(-) = A(S,-)\pLoc\cdot \omega$, where $\omega$ is the characteristic idempotent of the right stabilizer of $\rho(S)$. Thus $\rho$ can be extended to a $p$-local, $p$-defined Mackey functor by setting $\rho(P) \defeq A(S,P)\pLoc\cdot \omega$ for every finite $p$-group $P$. An alternative formulation of Theorem \ref{thm:AWAnalogue} is therefore to consider an inclusion of $p$-local, $p$-defined Mackey functors $f \colon \rho \hookrightarrow A(S,-)\pLoc$ that preserves the generalized morphism of $A(P,P)\pLoc \times A(Q,Q)\pLoc$-modules 
\[ \kappa_{P,Q} \colon A(S,P)\pLoc \times A(S,Q)\pLoc \xrightarrow{- \times -} A(S\times S, Q \times P) \xrightarrow{-\circ[S,\Delta]} A(S,P\times Q)\pLoc \]
for finite $p$-groups $P$ and $Q$. An argument analogous to the proof of Theorem \ref{thm:AWAnalogue} shows that there exists a saturated fusion system $\F$ on $S$ such that each map $f_P \colon \rho(P) \to A(S,P)\pLoc$ is the inclusion of right $\F$-stable elements, if and only if there exists a natural transformation $t \colon A(S,-)\pLoc \to R$ such that $t \circ f = \id$ and $t$ satisfies a Frobenius reciprocity relation generalizing ($\kappa4$). This formulation is in some ways more satisfying as it involves a more categorical framework encoding the naturality conditions ($\kappa1$) and ($\kappa3$). On the other hand, there is more work involved in producing a functor $\rho$ than just the modules $R = \rho(S)$ and $R' = \rho(S\times S)$, and the functor formulation seems further removed from the statement of Theorem \ref{thm:AW} (although this too can be phrased in terms of functors defined on elementary abelian, finite $p$-groups).
\end{rmk}

\section{Retractive transfer triples and $p$-local finite groups} \label{sec:RTT}

In this section we address a conjecture of Haynes Miller, proposing an alternative model for $p$-local finite groups, which provided a starting point for the work in this paper and motivated the investigation of the Frobenius reciprocity condition. 

\subsection{$p$-local finite groups}
A $p$-local group, as defined by Broto--Levi--Oliver in \cite{BLO2}, can be thought of as a saturated fusion system with a classifying space. Broto--Levi--Oliver explained what a classifying space means in this context, and gave a model for classifying spaces in terms of centric linking systems. We recall their definitions here.

\begin{defn}
Let $\F$ be a fusion system on a finite $p$-group $S$. A \emph{centric
linking system associated to} $\F$ is a category $\Link$ whose
objects are the $\F$-centric subgroups of $S$, together with a
functor
 \[ \pi: \Link \rightarrow \F^{c}, \]
and distinguished monomorphisms \mbox{$P \xrightarrow{\delta_P}
\Aut_{\Link}(P),$} for each $\F$-centric subgroup $P \leq S$, which
satisfy the following conditions.
\begin{enumerate}
  \item[(A)] The functor $\pi$ is the identity on objects and
surjective on morphisms. More precisely, for each pair of objects
$P,Q \in \Link,$ the centre $Z(P)$ acts freely on $\Mor_\Link(P,Q)$ by
composition (upon identifying $Z(P)$ with \mbox{$\delta_P(Z(P))
\leq \Aut_{\Link}(P)$}), and $\pi$ induces a bijection
  \[
\begin{CD}
{\Mor_\Link(P,Q)/Z(P)} @ > {\cong} >> {\Hom_\F(P,Q).} \\
\end{CD}
\]
  \item[(B)] For each $\F$-centric subgroup $P \leq S$ and each $g
\in P$, $\pi$ sends $\delta_{P}(g) \in \Aut_{\Link}(P)$ to $c_{g}
\in \Aut_\F(P)$.
  \item[(C)] For each $f \in \Mor_\Link(P,Q)$ and each $g \in P$, the
following square commutes in $\Link$:
 \[
\begin{CD}
{P} @ > {f} >> {Q} \\ @ VV {\delta_{P}(g)} V @ VV
{\delta_{Q}(\pi(f)(g))} V \\ {P} @> {f} >> {Q.} \\
\end{CD}
\]
\end{enumerate}
\end{defn}

\begin{defn}
A \emph{$p$-local finite group} is a triple $(S,\F,\Link)$ where $S$ is a finite $p$-group, $\F$ is a saturated fusion system on $S$, and $\Link$ is a centric linking system associated to $\F$. The \emph{classifying space} of $(S,\F,\Link)$ is the $p$-completed geometric realization $\ClSp$.
\end{defn}

The distinguished homomorphism $\delta_S$ induces a map $\theta \colon BS \to \ClSp$, which we think of as an inclusion map. 

The driving question in the subject of $p$-local finite groups is on the existence and uniqueness of centric linking systems (and hence classifying spaces) associated to saturated fusion systems. Broto--Levi--Oliver developed an obstruction theory to the existence and uniqueness questions, and using techniques developed by Grodal in \cite{Grodal}, they settled the question for fusion systems on small groups, proving existence when the group has $p$-rank less than $p^3$, and uniqueness when the $p$-rank is less than $p^2$. For a fusion system $\F_S(G)$ arising from a Sylow inclusion $S \leq G$, a centric linking system $\Link_S(G)$ can be constructed from $G$. This linking system satisfies $\pComp{|\Link_S(G)|} \simeq \pComp{BG}$ (\cite{BLO1}), and Oliver, in his proof of the Martino--Priddy conjecture \cite{bob1,bob2}, proved that $\Link_S(G)$ is the only centric linking system associated to $\F_S(G)$. These facts motivate the term \emph{classifying space} for $\ClSp$, with further justification coming from the homotopical properties of $\ClSp$. We recall only one of these properties, on recovering a $p$-local finite group from its classifying space here, and refer the interested reader to \cite{BLO2} for further information. 

Given a finite $p$-group $S$, a space $X$ and a map $f \colon BS \to X$, define a fusion system $\F_{S,f}(X)$ by setting
 \[ \Hom_{\F_{S,f}(X)}(P,Q) \defeq \{ \varphi \in \Inj(P,Q) \mid f\vert_{BP} \simeq f\vert_{BQ} \circ B\varphi \}   \]
for groups $P,Q\leq S$. Here $f\vert_{BP}$ is the composite $BP \xrightarrow{B\incl} BS \xrightarrow{f} X$, and $\simeq$ means non-basepoint-preserving homotopy. In general one should not expect $\F_{S,f}(X)$ to be a saturated fusion system, although this is true when $X$ is the classifying space of a $p$-local finite group (see Theorem \ref{thm:F(X)andL(X)} below). One can also define a category $\Link_{S,f}(X)$ whose objects are the $\F_{S,f}(X)$-centric subgroups of $S$, with morphism sets
 \[ \Mor_{\Link_{S,f}(X)}(P,Q) = \{ (\varphi,[H])  \}, \]
where $[H]$ is a homotopy class of homotopies between $f\vert_{BP}$ and $f\vert_{BQ} \circ B\varphi$. As the following result shows, a $p$-local finite group is determined by its classifying space. We refer the reader to \cite{BLO2} for the precise meaning of the isomorphism of linking systems in the statement.

\begin{thm}[\cite{BLO2}] \label{thm:F(X)andL(X)}
For a $p$-local finite group $(S,\F,\Link)$ we have 
\[ \F_{S,\theta}(\ClSp) = \F \quad \text{ and } \quad \Link_{S,\theta}(\ClSp) \cong \Link . \]
\end{thm}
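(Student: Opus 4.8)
The plan is to pin down $\F_{S,\theta}(\ClSp)$ first and then bootstrap to the linking system, with everything ultimately resting on the homotopy type of the mapping spaces $\mathrm{Map}(BP,\ClSp)$ for $P\leq S$. The inclusion $\F\subseteq\F_{S,\theta}(\ClSp)$ I would treat directly: given $\varphi\in\Hom_\F(P,Q)$, pick a lift $\widetilde\varphi\in\Mor_\Link(P,Q)$ with $\pi(\widetilde\varphi)=\varphi$; realizing $\widetilde\varphi$ and using axioms (B) and (C) of a centric linking system — which say exactly that the distinguished subgroups $\delta_P(P)$ are carried along by morphisms over the corresponding conjugations — produces a (free) homotopy $\theta|_{BP}\simeq\theta|_{BQ}\circ B\varphi$, since $\theta$ is induced by $\delta_S$. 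Hence $\varphi$ is a morphism of $\F_{S,\theta}(\ClSp)$. Note that the stable equivalence $\PtdClSpectrum{\F}\simeq\PtdStable{\ClSp}$ from Section \ref{sec:Stable} is not enough here, because $\F_{S,\theta}(\ClSp)$ is built from unstable mapping sets; we need finer, unstable input.

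The central input I would invoke is the description of $\ClSp$ as the $p$-completion of the homotopy colimit of $B(-)$ over the orbit category of $\F$-centric subgroups — available precisely because $\Link$ exists — together with the Bousfield--Kan mapping-out spectral sequence and the vanishing of the relevant higher limits $\varprojlim^i$ over that orbit category (this obstruction-theoretic computation is the heart of \cite{BLO2}, see also \cite{Grodal}). It yields the two facts I need: (i) for every $p$-subgroup $Q\leq S$, the natural map from the set of $\F$-conjugacy classes of group homomorphisms $Q\to S$ to $[BQ,\ClSp]$ is a bijection; and (ii) for $\F$-centric $P$, the component of $\mathrm{Map}(BP,\ClSp)$ containing $\theta|_{BP}$ is homotopy equivalent to $\pComp{BZ(P)}$. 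From (i): if $\varphi\in\Inj(P,Q)$ satisfies $\theta|_{BP}\simeq\theta|_{BQ}\circ B\varphi$, then the classes of the inclusion $P\hookrightarrow S$ and of $P\xrightarrow{\varphi}Q\hookrightarrow S$ coincide in $[BP,\ClSp]$, hence these homomorphisms are $\F$-conjugate, so there is $\chi\in\Hom_\F(\varphi(P),S)$ with $\chi\circ\varphi=\id_P$; thus $\chi$ is the inverse isomorphism of $\varphi$, whence $\varphi\in\Hom_\F(P,Q)$ by the fusion-system axioms. This proves $\F_{S,\theta}(\ClSp)=\F$; in particular the two fusion systems have the same centric subgroups, so $\Link$ and $\Link_{S,\theta}(\ClSp)$ have the same objects.

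For the equivalence $\Link\cong\Link_{S,\theta}(\ClSp)$ I would construct the comparison functor $\Psi$ that is the identity on objects and sends $\widetilde\varphi\in\Mor_\Link(P,Q)$ to the pair $(\pi(\widetilde\varphi),[H_{\widetilde\varphi}])$, where $H_{\widetilde\varphi}$ is the track of the realization of $\widetilde\varphi$, i.e. the same homotopy built in the first paragraph; functoriality is immediate from functoriality of geometric realization, and well-definedness of the homotopy class $[H_{\widetilde\varphi}]$ is part of that construction. To see $\Psi$ is bijective on each morphism set, observe that both $\Mor_\Link(P,Q)\to\Hom_\F(P,Q)$ and $\Mor_{\Link_{S,\theta}(\ClSp)}(P,Q)\to\Hom_{\F_{S,\theta}(\ClSp)}(P,Q)$ are surjective with fibres that are free transitive $Z(P)$-sets — by axiom (A) on the algebraic side, and on the topological side because a nonempty set of homotopy classes of homotopies between two fixed maps $BP\to\ClSp$ is a torsor over $\pi_1$ of the relevant component of $\mathrm{Map}(BP,\ClSp)$, which by (ii) is $\pi_1(\pComp{BZ(P)})\cong Z(P)$. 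Since $\Psi$ covers the identity of $\F=\F_{S,\theta}(\ClSp)$, it suffices to check $\Psi$ is $Z(P)$-equivariant on fibres, and this is exactly the compatibility of $\delta_P$ with realization encoded in axioms (B) and (C). Hence $\Psi$ is an isomorphism of linking systems.

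The hard part — essentially all the real content — is item (ii), equivalently the computation of $[BQ,\ClSp]$ and of the self-equivalences of $\ClSp$: establishing the orbit-category decomposition of $\ClSp$ and controlling the obstruction groups $\varprojlim^i$ over the orbit category in the range needed to evaluate $\mathrm{Map}(BP,\ClSp)$. Once those homotopy-theoretic inputs are in hand, the remainder is formal bookkeeping with fusion systems, linking systems, and $Z(P)$-torsors as above.
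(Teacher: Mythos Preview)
The paper does not give its own proof of this theorem: it is stated with attribution to \cite{BLO2} and no argument is supplied. Your outline is essentially the strategy carried out in \cite{BLO2} (their mapping-space computations, culminating in what is usually stated as their Theorems~4.4 and~7.4), and you correctly identify the hard analytic input as the description of $\mathrm{Map}(BP,\ClSp)$ via the orbit-category decomposition and the associated higher-limit vanishing.

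One genuine wrinkle in your first paragraph: the lift $\widetilde\varphi\in\Mor_\Link(P,Q)$ only exists when $P$ and $Q$ are $\F$-centric, since $\Link$ has only those as objects. For arbitrary $P\leq S$ you cannot directly lift $\varphi\in\Hom_\F(P,Q)$ to $\Link$. The standard fix is Alperin's fusion theorem for saturated fusion systems: every morphism in $\F$ is a composite of restrictions of $\F$-automorphisms of $\F$-centric (indeed $\F$-radical centric) subgroups, so it suffices to produce the required homotopies for those automorphisms and then restrict. Alternatively, one appeals directly to the bijection in your item (i) for both inclusions at once, bypassing the lifting argument entirely; this is closer to how \cite{BLO2} actually packages the statement. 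Either way your overall plan is sound, but as written the lifting step needs this caveat.
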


A stable version of this result was obtained in \cite{KR:ClSpectra}. For a spectrum $E$ and a stable map $f \colon \Stable{BS_+} \to E$, define a fusion system $\F_{S,f}(E)$ by 
 \[ \Hom_{\F_{S,f}(E)}(P,Q) \defeq \{ \varphi \in \Inj(P,Q) \mid f\vert_{\Stable{BP_+}} \simeq f\vert_{\Stable{BQ_+}} \circ \Stable{B\varphi} \} . \]
Again, one should not expect $\F_{S,f}(E)$ to be a saturated fusion system, although this is true when $E$ is the classifying spectrum of a saturated fusion system on $S$ and $f$ is the structure map.

\begin{thm}[\cite{KR:ClSpectra}] \label{thm:F(ClSpec)}
For a saturated fusion system $\F$ on a finite $p$-group $S$ we have
  \[ \F_{S,\sigma_\F}(\ClSpectrum{\F}) = \F.   \]
\end{thm}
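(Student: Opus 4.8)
The plan is to unwind the definition of $\F_{S,\sigma_\F}(\ClSpectrum{\F})$ and identify it with the stabilizer fusion system $\FusRStab(\omega_\F)$, which by Theorem~\ref{thm:StabOfChar} (or Proposition~\ref{prop:PreFixOfChar}) equals $\F$. Concretely, for $P,Q \leq S$ and $\varphi \in \Inj(P,Q)$, the condition $\varphi \in \Hom_{\F_{S,\sigma_\F}(\ClSpectrum{\F})}(P,Q)$ says $\sigma_\F|_{\PtdStable{BP_+}} \simeq \sigma_\F|_{\PtdStable{BQ_+}} \circ \PtdStable{B\varphi}$ as stable maps $\PtdStable{BP_+} \to \ClSpectrum{\F}_+$. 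Writing these restrictions in terms of the standard basis elements $[P,\incl]_P^S$ and $[P,\varphi]_P^S$ of $A(P,S)$ and using the identification $\sigma_\F = h^{-1} \circ \Stable{\theta}$ together with $\alpha$ being injective on $\IComp{A(P,S)} \hookrightarrow \pComp{A(P,S)}$ (Proposition~\ref{prop:IAndpComp}), the homotopy condition becomes an equation in $\pComp{A(P,S)}$. So the first step is to set up this translation carefully and reduce the problem to a statement purely in the $p$-completed double Burnside category.

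The second step is to show that, under this translation, $\sigma_\F|_{\PtdStable{BP_+}} \simeq \sigma_\F|_{\PtdStable{BQ_+}} \circ \PtdStable{B\varphi}$ corresponds exactly to $\sigma_\F \circ \alpha([P,\varphi]_P^S) \simeq \sigma_\F \circ \alpha([P,\incl]_P^S)$, i.e. to $\sigma_\F$ equalizing the two biset classes after precomposition. Now $t_\F$ is a section-retraction pair with $\sigma_\F$: we have $\sigma_\F \circ t_\F \simeq 1$ and $t_\F \circ \sigma_\F \simeq \tilde\omega_\F = \alpha(\omega_\F)$. Composing with $t_\F$ on the left converts "$\sigma_\F$ equalizes $x$ and $y$" into "$\alpha(\omega_\F)\circ x \simeq \alpha(\omega_\F)\circ y$"; conversely, composing with $\sigma_\F$ recovers the original statement since $\sigma_\F \circ \alpha(\omega_\F) \circ x = \sigma_\F \circ t_\F \circ \sigma_\F \circ x \simeq \sigma_\F \circ x$. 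Hence $\varphi$ lies in $\F_{S,\sigma_\F}(\ClSpectrum{\F})$ if and only if $\omega_\F \circ [P,\varphi]_P^S = \omega_\F \circ [P,\incl]_P^S$ in $\pComp{A(P,S)}$ (using injectivity of $\alpha$ and that both sides have integer augmentation), which is precisely the defining condition for $\varphi \in \Hom_{\FusRStab(\omega_\F)}(P,Q)$.

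The final step is to invoke that $\omega_\F$ is a characteristic idempotent for the saturated fusion system $\F$ (Theorem~\ref{thm:CharIdemExists}), so by Theorem~\ref{thm:StabOfChar} we have $\FusRStab(\omega_\F) = \F$, completing the identification $\F_{S,\sigma_\F}(\ClSpectrum{\F}) = \F$.

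I expect the main obstacle to be the bookkeeping in the first two steps: keeping track of basepoints and the splitting $\PtdStable{X_+} \simeq \SphereSpectrum \vee \PtdStable{X}$, making sure the passage between unbasepointed homotopies of maps $\Stable{BP} \to \ClSpectrum{\F}$ and stable maps of $+$-spectra is clean (this is exactly the sort of technicality the pointed classifying spectra in \ref{subsec:ClSpectra} were introduced to handle), and verifying that the relevant elements of $A(P,S)$ sit inside the $I$-adically completed submodule so that $\alpha$ is injective and the homotopy equation genuinely detects equality of biset classes. The algebraic core — converting the equalizer condition for $\sigma_\F$ into a stability condition for $\omega_\F$ via the retraction $t_\F$ — is formal once the setup is in place, and the identification $\FusRStab(\omega_\F) = \F$ is quoted. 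One should also double-check that no saturation hypothesis beyond what is already assumed is needed, since $\F$ is saturated by hypothesis and all the cited results (the universal stable element theorem, $\FusRStab(\omega_\F)=\F$, uniqueness of $\omega_\F$) apply in that setting.
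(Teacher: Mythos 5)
Your argument is correct, and since the paper simply cites \cite{KR:ClSpectra} for this theorem without reproducing a proof, I can only compare against what the paper's definitions and lemmas suggest: the route you take — translating the defining homotopy condition into an equation in the double Burnside module via the Segal conjecture, converting the $\sigma_\F$-equalizing condition into the $\omega_\F$-stabilizing condition using the retraction pair $\sigma_\F, t_\F$, and then invoking $\FusRStab(\omega_\F) = \F$ (Theorem~\ref{thm:StabOfChar}) — is exactly the natural one given how the machinery is set up, and is almost certainly the proof in the cited source. The chain of biconditionals is sound: composing with $t_\F$ gives necessity, composing back with $\sigma_\F$ and using $\sigma_\F \circ t_\F \circ \sigma_\F \simeq \sigma_\F$ gives sufficiency, and both $\omega_\F \circ [P,\varphi]$ and $\omega_\F \circ [P,\incl]$ have integer augmentation, so they live in $\IComp{A(P,S)}$ where $\alpha$ is an isomorphism onto $\{\Stable{BP_+},\Stable{BS_+}\}$ and the homotopy really does detect equality in $A(P,S)\pLoc$.

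One small point worth fixing: the identification $\sigma_\F = h^{-1}\circ\Stable{\theta}$ in your first step is unnecessary and mildly misleading. It presupposes the existence of a centric linking system associated to $\F$ (so that $\ClSp$ and $\theta$ exist), which is precisely what the classifying spectrum construction is designed to avoid assuming — $\sigma_\F$ is defined directly from $\omega_\F$ via the mapping telescope, with no reference to $\Link$. As you in fact demonstrate, the rest of your argument never touches $h$ or $\theta$; the translation from the homotopy condition to the equation $\omega_\F \circ [P,\varphi]_P^S = \omega_\F \circ [P,\incl]_P^S$ goes through by working directly with $\sigma_\F$, $t_\F$ and the Segal conjecture for maps into $\Stable{BS_+}$. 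Drop the $h^{-1}\circ\Stable\theta$ clause and the proof stands as is, with no extraneous hypotheses.
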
 
Classifying spectra are compatible with classifying spaces in that $\Stable{\theta} \colon \Stable{BS_+} \to \Stable{{\ClSp}_+}$ is homotopy equivalent to $\sigma_\F \colon \Stable{BS_+} \to \ClSpectrum{\F}$ for a $p$-local finite group $(S,\F,\Link)$ (although the existence of the classifying spectrum is independent of the existence of a centric linking system). Therefore Theorem \ref{thm:F(ClSpec)} says that the equation of fusion systems in Theorem \ref{thm:F(X)andL(X)} holds after suspension. However, there is a subtle difference between the two theorems: Theorem \ref{thm:F(X)andL(X)} can be strengthened to show that $\F$ depends on the homotopy type of $\ClSp$ (and the homotopy type of $\theta$ is determined by the homotopy type of $\ClSp$), while in Theorem \ref{thm:F(ClSpec)} $\F$ really depends on the homotopy type of $\ClSpectrum{\F}$ \emph{and} the structure map $\sigma_\F$. This point is illustrated by \cite[Example 5.2]{MP3}, where Martino--Priddy construct two groups with different fusion systems whose $p$-completed classifying spaces have the same stable homotopy type. Consequently we refer to the pair $(\sigma_\F,\ClSpectrum{\F})$ as the \emph{structured classifying spectrum} of $\F$.

\subsection{Retractive transfer triples}
The definition of $p$-local finite groups includes elements of group theory and category theory, and it would be highly desirable to have a purely homotopy-theoretic model for the $p$-local homotopy theory of classifying spaces of finite groups. Such a model was suggested by Haynes Miller, defined as follows.
\begin{defn}
A \emph{retractive transfer triple} on a finite $p$-group $S$ is a 
triple $(f,t,X)$ where
\begin{itemize}
\item
 $X$ is a connected, $p$-complete, nilpotent space with finite 
fundamental group; 
\item $f \colon BS \to X$ is a homotopy monomorphism; and
\item $t \colon \PtdStable{X} \to \PtdStable{BS}$ is a retractive transfer of $f$.
\end{itemize}
\end{defn}

Here \emph{homotopy monomorphism} means that the map induced by $f$ in $\Fp$-cohomology makes $H^*(BS;\Fp)$ a finitely generated $H^*(X;\Fp)$-algebra. The term \emph{retractive transfer} means that $\PtdStable{f} \circ t \simeq 1_{\PtdStable{X}}$, and we have the Frobenius reciprocity relation
\[ (1_{\PtdStable{X}}\wedge t) \circ \Delta_X \simeq (\PtdStable{f} \wedge 1_{\PtdStable{BS}}) \circ \Delta_{BS} \circ t,\]
where $\Delta_{BS}$ and $\Delta_X$ denote the diagonals of $\PtdStable{BS}$ and $\PtdStable{X}$, respectively. In this case $t \circ \PtdStable{f}$ is idempotent up to homotopy, and hence corresponds to an idempotent $\omega$ in $\pComp{A(S,S)}$, and it is not hard to show that $\omega$ satisfies Frobenius reciprocity (the argument given in \ref{subsec:Translate} can easily be adapted). Note that $X$ is then the stable summand of $BS$ corresponding to the idempotent $\omega$. We say that the retractive transfer triple is \emph{dominant} if $X$ contains a dominant summand of $BS$, which is equivalent to $\omega \notin \pComp{J(S)}.$

\subsection{The connection}
Miller conjectured that (perhaps in the presence of additional structure) retractive transfer triples are equivalent to $p$-local finite groups. A partial confirmation was obtained in the first author's thesis and published in \cite{KR:Transfers&plfgs}. More precisely, it was shown that if $(f,t,X)$ is a retractive transfer triple on an elementary abelian $p$-group $V$, then $(V,\F_{V,f}(X),\Link_{V,f}(X))$ is a $p$-local finite group with classifying space homotopy equivalent to $X$. The converse direction was treated more generally, showing that if $(S,\F,\Link)$ is a $p$-local finite group on \emph{any} finite $p$-group $S$, then there exists a unique (up to homotopy) retractive transfer $t$ for $f$, and $(\theta,t,\ClSp)$ is a retractive transfer triple on $S$. 

The results in the current paper allow us to make further progress toward proving Miller's conjecture. One major obstacle to showing that a retractive transfer triple gives rise to a $p$-local finite group is associating a saturated fusion system to it. This was overcome in \cite{KR:Transfers&plfgs}, when the Sylow $p$-subgroup is an elementary abelian group $V$, by using a variant of the Adams--Wilkerson theorem \cite{AW} (Theorem \ref{thm:AW}) to show that $X$ has the homology type of the classifying space of a semi-direct product $W \ltimes V$, and then using Miller's theorem \cite{Mil} to deduce the necessary homotopical information from the homological information. As discussed in Section \ref{sec:AW}, the Adams--Wilkerson theorem can be replaced by Corollary \ref{thm:DomFrobImpliesSat} to obtain the following result.
 
\begin{prop}\label{prop:F(RTT)}
If $(f,t,X)$ is a dominant retractive transfer triple on a finite $p$-group $S$, then $\F_{S,f}(\Stable{X_+})$ is a saturated fusion system on $S$.
\end{prop}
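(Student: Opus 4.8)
The strategy is to reduce the statement to the already-established machinery for the $p$-completed double Burnside ring. A dominant retractive transfer triple $(f,t,X)$ on $S$ yields an idempotent $\omega \defeq \alpha^{-1}(t\circ \PtdStable{f}) \in \pComp{A(S,S)}$, using the Segal conjecture (Theorem \ref{thm:Segal}) together with Proposition \ref{prop:IAndpComp} to identify stable selfmaps of $BS_+$ with elements of integer augmentation in $\pComp{A(S,S)}$. The first point to verify is that $\omega$ satisfies Frobenius reciprocity in the sense of Definition \ref{defn:Frobenius}: this follows from the Frobenius reciprocity relation satisfied by $t$, by taking the relation $(1_{\PtdStable{X}}\wedge t)\circ \Delta_X \simeq (\PtdStable{f}\wedge 1)\circ \Delta_{BS}\circ t$, precomposing with $\PtdStable{f}$ and postcomposing with $t\wedge t$, and using $\Delta_X \circ \PtdStable{f} \simeq (\PtdStable{f}\wedge \PtdStable{f})\circ \Delta_{BS}$ exactly as in the computation in \ref{subsec:Translate}; applying the (natural) inverse of $\alpha$ then transfers this homotopy into the stated algebraic identity for $\omega$. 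The second point is that $\omega$ is dominant, i.e.\ $\omega \notin \pComp{J(S)}$: this is precisely the hypothesis that the triple is dominant (by definition $X$ contains a dominant summand of $BS$, which translates to $\omega \notin \pComp{J(S)}$).

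\textbf{Conclusion from the algebraic results.} Once $\omega$ is known to be a dominant idempotent in $\pComp{A(S,S)}$ satisfying Frobenius reciprocity, the corollary to Theorem \ref{thm:DomFrobImpliesSat} applies directly: $\omega$ is the characteristic idempotent of $\FusStab(\omega)$, which is saturated. It then remains to identify $\FusStab(\omega)$ with $\F_{S,f}(\PtdStable{X_+})$. For this I would argue that a monomorphism $\varphi\colon P\to S$ lies in $\Hom_{\F_{S,f}(\PtdStable{X_+})}(P,S)$, i.e.\ $(\PtdStable f)|_{\PtdStable{BP_+}} \simeq (\PtdStable f)|_{\PtdStable{BS_+}}\circ \PtdStable{B\varphi}$, if and only if $\PtdStable{f}\circ \alpha([P,\varphi]) \simeq \PtdStable{f}\circ\alpha([P,\incl])$ (using that $\alpha([P,\varphi]) = \PtdStable{B\varphi}\circ tr_P$ and that $tr_P$ is the same transfer in both cases). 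Composing the latter with $t$ on the left and translating back through $\alpha^{-1}$, this is equivalent to $\omega\circ [P,\varphi] = \omega\circ[P,\incl]$ in $\pComp{A(S,S)}$, i.e.\ $\varphi\in\Hom_{\FusRStab(\omega)}(P,S)$; since $\omega$ is a characteristic idempotent, $\FusRStab(\omega)=\FusStab(\omega)$ by the uniqueness statement in Theorem \ref{thm:CharIdemExists} and the remark following it. Hence $\F_{S,f}(\PtdStable{X_+}) = \FusStab(\omega)$ is saturated.

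\textbf{Expected main obstacle.} The genuinely delicate step is the translation between the homotopy-theoretic Frobenius reciprocity relation for $t$ and the algebraic Frobenius reciprocity identity for $\omega$, because it requires care with the naturality of the Segal-conjecture map $\alpha$ across the three groups $S$, $S$, $S\times S$ and with the identification of the smash product $\PtdStable{BS_+}\wedge\PtdStable{BS_+}$ with $\PtdStable{B(S\times S)_+}$; the diagonal $\Delta_{BS}$ on the topological side must be matched with $[S,\Delta]$ on the algebraic side. The argument given in \ref{subsec:Translate} for the biset $[G]$ provides the template, but one must check it goes through for the general idempotent $t\circ\PtdStable f$ rather than a genuine transfer, and in particular that $\alpha$ restricted to the relevant modules is injective with the expected image (which is exactly what Proposition \ref{prop:IAndpComp} guarantees, since all elements in play have integer augmentation). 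Everything else — dominance, invoking the corollary to Theorem \ref{thm:DomFrobImpliesSat}, and the identification of the stabilizer fusion system — is routine given the results already in the paper.
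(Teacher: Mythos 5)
Your proposal is correct and takes essentially the same route as the paper's (very terse) proof: identify $t\circ\PtdStable{f}$ with a dominant idempotent $\omega\in\pComp{A(S,S)}$ satisfying Frobenius reciprocity, invoke the corollary following Theorem~\ref{thm:DomFrobImpliesSat} to see that $\FusRStab(\omega)$ is saturated with characteristic idempotent $\omega$, and identify $\F_{S,f}(\PtdStable{X_+})$ with $\FusRStab(\omega)$. One small notational slip worth fixing in the last step: the definition of $\FusRStab(\omega)$ uses the elements $[P,\varphi]_P^S\in A(P,S)$, for which $\alpha([P,\varphi]_P^S)=\PtdStable{B\varphi}$ with no transfer factor; with that correction the equivalence between $\PtdStable{f}\circ\PtdStable{B\varphi}\simeq\PtdStable{f}\circ\PtdStable{B\incl}$ and $\omega\circ[P,\varphi]_P^S=\omega\circ[P,\incl]_P^S$ follows cleanly by composing with $t$ in one direction and with $\PtdStable{f}$ (using $\PtdStable{f}\circ t\simeq 1$) in the other, with no need to cancel a transfer.
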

\begin{proof}
Let $\omega$ be the idempotent in $\pComp{A(S,S)}$ corresponding to the homotopy idempotent $t \circ f$ of $\Stable{BS_+}$. Then $\omega$ is a dominant idempotent that satisfies Frobenius reciprocity, so $\FusRStab(\omega)$ is a saturated fusion system on $S$, and  $\F_{S,f}(\Stable{X_+}) = \FusRStab(\omega)$.
\end{proof}
Proposition \ref{prop:F(RTT)} shows that $X$ has the stable homotopy of the classifying spectrum of a saturated fusion. More precisely, it shows that $(f,\Stable{X_+})$ has the homotopy type of the structured classifying spectrum of a saturated fusion system. To show that $X$ has the homotopy type of the classifying space of a $p$-local finite group, it now remains to show that the necessary unstable homotopy information can be extracted from the stable homotopy information. This matter will be taken up in a separate article, and in the current paper we preview the anticipated results by presenting a statement with rather strong technical conditions that can be proved using existing techniques from \cite{BLO4} and Wojtkowiak obstruction theory \cite{Wo}. 

\begin{thm} \label{thm:RTTgivesPLFG}
Let $(f,t,X)$ be a dominant retractive transfer triple on a finite $p$-group $S$ and assume that
\begin{enumerate}
  \item for every $P \leq S$, the map $\Stable \colon [BP,X] \to \{BP_+,X_+\}$ is injective; and
  \item for every $\F_{S,f}(X)$-centric subgroup $P \leq S$, the induced map of mapping space components,
          \[ \Map(BP,BS)_{B\incl} \xrightarrow{f\circ -} \Map(BP,X)_{f\vert_{BP}}, \]
        is a homotopy equivalence.        
\end{enumerate}
Then $(S,\F_{S,f}(X),\Link_{S,f}(X))$ is a $p$-local finite group with classifying space homotopy equivalent to $X$. 
\end{thm}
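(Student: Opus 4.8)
\textbf{Proof strategy for Theorem \ref{thm:RTTgivesPLFG}.}
The plan is to split the argument into three stages: first identify the fusion system, then construct the centric linking system, and finally identify the classifying space. By Proposition \ref{prop:F(RTT)}, $\F \defeq \F_{S,f}(X) = \F_{S,\Stable{f}}(\Stable{X_+})$ is already known to be a saturated fusion system, and $X$ carries the stable homotopy type of $\ClSpectrum{\F}$; in fact, since $t$ is a retractive transfer for $f$, the idempotent $\omega = t \circ \Stable{f}$ is (after \S\ref{subsec:Translate}) the characteristic idempotent $\omega_\F$, so $(\Stable{f},\Stable{X_+})$ is the structured classifying spectrum of $\F$ in the sense of Theorem \ref{thm:F(ClSpec)}. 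Thus the genuinely new content is purely unstable: we must promote this stable data to an unstable classifying space. Assumption (1) is the key leverage here, since it says the suspension map $[BP,X]\to\{BP_+,X_+\}$ is injective, so unstable maps out of $BP$ are detected stably.

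First I would use assumption (1) to show that $f$ preserves the relevant fusion and centricity data. Given $\varphi\in\Hom_\F(P,Q)$, stability of $\omega_\F$ gives $\Stable{f}\vert_{\Stable{BP_+}} \simeq \Stable{f}\vert_{\Stable{BQ_+}}\circ\Stable{B\varphi}$ in the stable category; injectivity of the suspension map then yields $f\vert_{BP}\simeq f\vert_{BQ}\circ B\varphi$ unstably, so that $\F_{S,f}(X)$ contains $\F$. The reverse containment $\F_{S,f}(X)\subseteq\F$ follows by suspending and applying Theorem \ref{thm:F(ClSpec)}. Hence $\F_{S,f}(X) = \F$ as fusion systems, and in particular the hypothesis (2) on $\F_{S,f}(X)$-centric subgroups is a hypothesis about $\F$-centric subgroups. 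Next I would construct the linking system: condition (2) says that for every $\F$-centric $P$, the map on mapping space components $\Map(BP,BS)_{B\incl}\to\Map(BP,X)_{f\vert_{BP}}$ is a homotopy equivalence. Following the strategy of \cite{BLO4}, this lets us define $\Mor_{\Link_{S,f}(X)}(P,Q)$ using homotopy classes of homotopies between $f\vert_{BP}$ and $f\vert_{BQ}\circ B\varphi$, and the equivalence of mapping space components (together with the fact that $\Map(BP,BS)_{B\incl}\simeq B Z(P)$ by centricity) shows these morphism sets carry the required free $Z(P)$-action with orbit set $\Hom_\F(P,Q)$. Verifying axioms (A), (B), (C) for a centric linking system is then a matter of tracking composites of homotopies and using Wojtkowiak obstruction theory \cite{Wo} to see that the necessary coherence choices can be made; the obstruction groups vanish because $P$ is $\F$-centric, exactly as in \cite{BLO4}.

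Finally, to identify the classifying space, I would compare $\ClSp$ (where $\Link = \Link_{S,f}(X)$) with $X$ via the natural map induced by $f$ and the structure of $\Link$. There is a canonical map $\phi\colon \ClSp\to X$ extending $f$ along $\theta\colon BS\to\ClSp$, arising from the universal property of the realization of the linking system together with the compatible homotopies packaged into the morphisms of $\Link$. Both spaces are $p$-complete and nilpotent with finite fundamental group, so by the arithmetic-square/mod-$p$ homology comparison it suffices to show $\phi$ is an $\Fp$-homology isomorphism. On $\Fp$-cohomology, $H^*(\ClSp;\Fp)$ is the ring of $\F$-stable elements in $H^*(BS;\Fp)$ by the main theorem of \cite{BLO2}, while $H^*(X;\Fp)$ is the image of $H^*(\Stable{f})$, which by Corollary \ref{cor:MapsToBF} (using $\omega = \omega_\F$) is exactly the $\F$-stable submodule of $H^*(BS;\Fp)$; one checks $\phi^*$ is compatible with the inclusions into $H^*(BS;\Fp)$, hence an isomorphism. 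Therefore $\phi$ is a homotopy equivalence.

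\textbf{The main obstacle.} The crux is the linking-system construction: showing that condition (2) really does produce a \emph{category} $\Link_{S,f}(X)$ with well-defined, associative composition and the distinguished monomorphisms $\delta_P$, rather than merely a collection of morphism sets with the right cardinality. This requires running the Wojtkowiak/BLO4 obstruction-theoretic machinery to rigidify the homotopy-coherent data coming from $f$ into a strict functor on $\Link$, and it is here that all the hypotheses (the $p$-completeness of $X$, the finite fundamental group, the centric-mapping-space condition, and the injectivity of suspension) are simultaneously needed. Once the linking system exists as a bona fide category, identifying $\ClSp$ with $X$ is comparatively routine, reducing to the $\Fp$-cohomology computation above.
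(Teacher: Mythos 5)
The opening two stages of your proposal—identifying $\F_{S,f}(X)$ with the saturated fusion system $\F_{S,\Stable{f}}(\Stable{X_+})$ via assumption (1), and invoking the BLO4 mapping-space criterion together with assumption (2) to see $\Link_{S,f}(X)$ is a centric linking system—track the paper's argument faithfully. (One small slip: Proposition \ref{prop:F(RTT)} only gives saturation of the \emph{stable} fusion system $\F_{S,\Stable{f}}(\Stable{X_+})$; the equality with $\F_{S,f}(X)$ is not part of that proposition and is precisely what condition (1) is for, as you then correctly argue. Your opening sentence conflates the two.)

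The genuine gap is in the third stage. You assert that a map $\phi\colon \ClSp \to X$ extending $f$ along $\theta$ exists \emph{canonically}, "arising from the universal property of the realization of the linking system together with the compatible homotopies packaged into the morphisms of $\Link$." There is no such universal property. The realization $|\Link|$ is a purely combinatorial homotopy colimit; the morphisms of $\Link_{S,f}(X)$ record homotopy classes of homotopies between the restrictions of $f$, but a map out of the geometric realization requires actual compatible homotopies at every simplicial level, not just their homotopy classes, and making those choices coherently is a nontrivial lifting problem. The paper's proof addresses it by expressing $|\Link|$ as a homotopy colimit over the centric orbit category $\Oo^c$ of a lift $\B$ of the Borel construction, and then running the Wojtkowiak obstruction theory \cite{Wo} skeleton by skeleton to build $h \colon |\Link| \to X$ with $h\circ\theta \simeq f$.

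Moreover, you state "the obstruction groups vanish because $P$ is $\F$-centric." That is not true. Under assumption (2), $\Map(\B(P),X)_{h_P} \simeq BZ(P)$, which kills $\pi_{n-1}$ for $n>2$, but at $n=2$ the obstruction lies in ${\underleftarrow{\textup{lim}}}^2_{\Oo^c} Z(-)$, and this group is exactly the obstruction group classifying centric linking systems; it need not vanish. The paper's key new observation is a bootstrap: if the obstruction is some $\alpha$, one can instead complete the construction for a twisted linking system $\Link_\alpha$, obtaining a $p$-completed equivalence $|\Link_\alpha| \to X$, and then the rigidity statement $\Link_{S,f}(X) \cong \Link_{S,\theta}(\pComp{|\Link_\alpha|}) \cong \Link_\alpha$ forces $\alpha = 0$ after the fact. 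Without this argument, the construction of $h$ remains open. You also invoke Wojtkowiak in the wrong place—for verifying the linking system axioms, which the paper outsources entirely to \cite[Lemma 1.8]{BLO4}—while missing the place where it is actually indispensable, namely the construction of $h$.

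The $\Fp$-cohomology comparison you describe at the end is fine and is indeed the easy part once $h$ exists. But "once the linking system exists as a bona fide category, identifying $\ClSp$ with $X$ is comparatively routine" inverts where the real work lies.
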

\begin{proof}
By Proposition \ref{prop:F(RTT)}, $\F_{S,\Stable{f}}(\Stable{X_+})$ is a saturated fusion system on $S$. In general we obviously have an inclusion $\F_{S,f}(X) \subseteq \F_{S,\Stable{f}}(\Stable{X_+})$, and condition (1) implies that this is an equality in this case. Condition (2) and \cite[Lemma 1.8]{BLO4} now imply that $\Link_{S,f}(X)$ is a centric linking system associated to $\F_{S,f}(X)$. Hence $(S,\F_{S,f}(X),\Link_{S,f}(X))$ is a $p$-local finite group. We will write $\F$ for $\F_{S,f}(X)$ and $\Link$ for $\Link_{S,f}(X)$ for the remainder of the proof. 

It remains to show that $\ClSp \simeq X$, and to do this it is enough to construct a map $h \colon |\Link| \to X$ such that $h \circ \theta \simeq f$. The reason is that in cohomology $f$ and $\theta$ both induce injections with image the $\F$-stable elements in $H^*(BS;\Fp)$, and thus $h$ induces an isomorphism in cohomology with $\Fp$-coefficients, whence the $p$-completion $h \colon \ClSp \to \pComp{X}\simeq X$ is a homotopy equivalence. We outline the construction of $h$ here, leaving a more detailed discussion for a better time.

Recall that $\displaystyle |\Link| \cong \operatorname{HoColim}\limits_\Link (*)$, where $* \colon \Link \to \mathcal{TOP}$ is the functor to topological spaces that sends every object to a point. Let $\Oo^c = \Oo(\F^c)$ be the \emph{centric orbit system} of $\F$; that is, the category with objects the $\F$-centric subgroups of $S$ and morphism sets 
 \[\Mor_{\Oo^c}(P,Q) \defeq Q\backslash\Hom_\F(P,Q) \,. \] 
Let $\tilde{\pi} \colon \Link \twoheadrightarrow \Oo^c$ be the canonical projection functor, and let $\B \colon \Oo^c \to \mathcal{TOP}$ be the left homotopy Kan extension of $*$ along $\tilde{\pi}$. By standard results we have a natural equivalence
 \[ \underset{\Oo^c}{\rm{HoColim}} (\B) \simeq \underset{\Link}{\rm{HoColim}} (*) = |\Link|, \]
so it suffices to construct a map $\tilde{h} \colon \rm{HoColim}_{\Oo^c} (\B) \to X$ compatible with the inclusion maps.

The homotopy colimit $Y \defeq  \rm{HoColim}_{\Oo^c} (\B)$ is constructed  as a disjoint union with one copy of $\B(P_0) \times \Delta^n$ for each sequence of composable morphisms 
 \[ P_0 \xrightarrow{[\varphi_1]} P_1 \xrightarrow{[\varphi_2]} P_2  \xrightarrow{[\varphi_3]} \cdots  \xrightarrow{[\varphi_n]} P_n, \]
subject to the usual identifications. We filter $Y$ by letting $Y_n \subset Y$ be the subspace obtained from sequences of length at most $n$. Following Wojtkowiak \cite{Wo}, we construct $\tilde{h}$ inductively by constructing maps $\tilde{h}_n \colon Y_n \to X$ for each $n \geq 0$, so that for $n \geq 1$, $\tilde{h}_{n+1}$ extends $\tilde{h}_n$ up to homotopy relative to $Y_{n-1}$. The map $\tilde{h}$ is the direct limit of the maps $\tilde{h}_n$.

By \cite[Proposition 2.2]{BLO2}, $\B$ is a homotopy lifting of the homotopy functor $P \mapsto BP$ on $\Oo^c$. This means that for each object $P$ in $\Oo^c$ we have a homotopy equivalence $BP \xrightarrow{u_P} \B(P)$, and for morphisms $P \xrightarrow{[\varphi]} Q$ in $\Oo^c$, $B\varphi \circ u_P$ is homotopic to  $u_Q \circ \B(\varphi)$. Indeed, a fixed homotopy $U_{[\varphi]}$ is given in \cite{BLO2}.
Setting  
 \[ \tilde{h}_P \defeq f\vert_{BP} \circ u_P \colon \B(P) \to X \] 
for every $\F$-centric $P$, the collection $(\tilde{h}_P)_{P \in \Oo^c}$ induces a map $\tilde{h}_0 \colon Y_0 \to X.$ The inclusion $\theta \colon BS \to |\Link|$ factors through the inclusion $\B(S) \hookrightarrow |\Link|$ up to homotopy, so $\tilde{h}_0 \circ \theta \simeq f$ is compatible with inclusions. Extending $\tilde{h}_0$ to $Y_1$ amounts to choosing a homotopy $h_{[\varphi]}$ between $\tilde{h}_Q \circ \B([\varphi])$ and $\tilde{h}_P$ for every $[\varphi] \in \Mor_{\Oo^c}(P,Q)$. This is achieved by choosing a homotopy between $f\vert_{BQ} \circ B\varphi$ and $f\vert_{BP}$, and concatenating this with $f\vert_{BQ} \circ U_{[\varphi]}$, keeping the following diagram in mind:
\[ 
\xymatrix{
   \B(P) \ar[r]^{u_P} \ar[d]_{\B([\varphi])} & BP \ar[r]^{f\vert_{BP}}\ar[d]_{B\varphi} & X \\
   \B(Q) \ar[r]^{u_Q} & BQ \ar[ru]_{f\vert_{BQ}} &
} 
\]

Wojtkowiak showed in \cite{Wo} that for $n \geq 2$, the obstruction to extending $\tilde{h}_{n-1}$ to $Y_{n}$ up to homotopy relative to $Y_{n-2}$ lies in ${\underleftarrow{\textup{lim}}}_{\Oo^c}^n \pi_{n-1}(\Map(\B(-),X)_{h_{(-)}})$. By condition (2) and $\F$-centricity we have 
  \[ \Map(\B(P),X)_{h_{P}} \simeq \Map(BP,X)_{f\vert_{BP}} \simeq \Map(BP,BS)_{B\incl} \simeq BC_S(P) \simeq BZ(P), \] 
so 
 \[ \pi_{n-1}( \Map(\B(P),X)_{h_{P}}) \cong 
    \begin{cases} Z(P) \, , &\text{if $n=2$}; \\
                  0 \, ,    &\text{if $n>2$}.
    \end{cases} 
\]
Thus the only possible obstruction occurs at $n = 2$, and lies in ${\underleftarrow{\textup{lim}}}_{\Oo^c}^2 Z(P)$. Now, by Broto--Levi--Oliver \cite{BLO2}, ${\underleftarrow{\textup{lim}}}_{\Oo^c}^2 Z(P)$ acts freely and transitively on the set of isomorphism classes of centric linking systems for $\F$, and this action is compatible with the Wojtkowiak obstruction theory. That is, if we let $\alpha \in {\underleftarrow{\textup{lim}}}_{\Oo^c}^2 Z(P) $ be the obstruction to extending $\tilde{h}_1 \colon Y_1 \to X$ to $\tilde{h}_2 \colon Y_2 \to X$, then there exists a centric linking system $\Link_\alpha$ for which the corresponding obstruction vanishes, and we can construct a map $h_\alpha \colon |\Link_\alpha| \to X$ that is compatible with the inclusion maps. As before, a cohomological argument shows that $h_\alpha$ induces an equivalence upon $p$-completion. However, this implies that 
 \[ \Link = \Link^c_{S,f}(X) \cong \Link^c_{S,\theta}(\pComp{|\Link_\alpha|}) \cong \Link_\alpha \, , \]  
and we deduce that $\alpha = 0$, so the obstructions to constructing the homotopy equivalence $h$ vanish, proving the claim.
\end{proof}

The conditions in Theorem \ref{thm:RTTgivesPLFG} may appear very stringent, but they are actually always satisfied when $X$ is the classifying space of a $p$-local finite group: condition (1) follows from the computation of $[BP,\ClSp]$ in \cite{BLO2} and the computation of $\{BP_+,{\ClSp}_+\}$ in \cite{KR:ClSpectra}, and condition (2) is proved in \cite{BLO2}. Evidence suggests that these conditions are also automatically satisfied by a retractive transfer triple. Proving this would result in a full, affirmative resolution of Miller's conjecture.

\end{document}